\newtheorem{thm}{Theorem}[section]
\newtheorem{lem}[thm]{Lemma}
\newtheorem{prop}[thm]{Proposition}
\newtheorem{exam}[thm]{Example}
\theoremstyle{definition}
\theoremstyle{definition}
\newtheorem{defn}[thm]{Definition}
\newtheorem{remark}[thm]{Remark}
\newtheorem*{claim*}{Claim}
\newtheorem*{quest*}{Question}
\newtheorem*{remark*}{Remark}
\newtheorem*{fact*}{Fact}
\newcommand{\Z}{\ensuremath{\mathbb{Z}}}
\newcommand{\Q}{\ensuremath{\mathbb{Q}}}
\newcommand{\R}{\ensuremath{\mathbb{R}}}
\title{Representation Stability for Disks in a Strip}
\author{Nicholas Wawrykow}
\date{}
\begin{document}
\maketitle
\begin{abstract}
We consider the ordered configuration space of $n$ open unit-diameter disks in the infinite strip of width $w$.
In the spirit of Arnol'd and Cohen, we provide a finite presentation for the rational homology groups of this ordered configuration space as a twisted algebra.
We use this presentation to prove that the ordered configuration space of open unit-diameter disks in the infinite strip of width $w$ exhibits a notion of first-order representation stability similar to Church--Ellenberg--Farb and Miller--Wilson's first-order representation stability for the ordered configuration space of points in a manifold.
In addition, we prove that for large $w$ this disk configuration space exhibits notions of second- (and higher) order representation stability.
\end{abstract}

\section{Introduction}
Given a manifold $X$, the \emph{ordered configuration space of $n$ points in $X$}, denoted $F_{n}(X)$, is the space of ways of embedding $n$ labeled points in $X$, where the topology is the subspace topology of $X^{n}$:
\[
F_{n}(X):=\big\{(x_{1}, \dots, x_{n})\in X^{n} | x_{i}\neq x_{j}\big\}.
\]
The \emph{unordered configuration space of $n$ points in $X$}, denoted $C_{n}(X)$, is the quotient of $F_{n}(X)$ by the symmetric group action on the labels of the points:
\[
C_{n}(X):=F_{n}(X)/S_{n}.
\]
Stability patterns among the homology groups of configuration spaces of points have been studied for almost half a century. 
If $X$ is a connected non-compact finite type manifold of dimension $d\ge2$, McDuff \cite{mcduff1975configuration} and Segal \cite{segal1979topology} proved that the homology groups of the unordered configuration space of points in $X$ stabilize: if $n\ge 2k+2$, there is an isomorphism $H_{k}\big(C_{n}(X)\big)\xrightarrow{\sim}H_{k}\big(C_{n+1}(X)\big)$. 
This is \emph{homological stability} for unordered configuration spaces of points.
Ordered configuration spaces of points do not exhibit homological stability as the rank of homology is polynomial in the number of points in the configuration space.
However, Church--Ellenberg--Farb proved that if $X$ is orientable, the homology groups of the ordered configuration space of points in $X$ stabilize in a representation-theoretic sense. 
In particular, they proved that for fixed $k$, the sequence $H_{k}\big(F_{\bullet}(X)\big)$ has an algebraic structure called an FI\#-module structure, and this structure is finitely generated in degree at most $2k$ \cite{church2015fi}. 
This \emph{first-order representation stability} for ordered configuration spaces of points, which was extended to non-orientable $X$ by Miller--Wilson \cite{miller2019higher}, implies that if $n>2k$, the decomposition of $H_{k}\big(F_{n}(X);\Q\big)$ into irreducible symmetric group representations is entirely determined by the decompositions of the $H_{k}\big(F_{m}(X);\Q\big)$ into irreducible symmetric group representations for all $m\le 2k$. 
The stabilization maps $H_{k}\big(C_{n}(X)\big)\xrightarrow{\sim}H_{k}\big(C_{n+1}(X)\big)$ and $H_{k}\big(F_{n}(X)\big)\to H_{k}\big(F_{n+1}(X)\big)$ arise from maps between configuration spaces $C_{n}(X)\to C_{n+1}(X)$ and $F_{n}(X)\to F_{n+1}(X)$ that ``add a point at infinity"; for a more precise definition, see, for example, \cite[Section 1.1]{miller2019higher}. 
In the case of homological stability, the stabilization map takes a class in $H_{k}\big(C_{n}(X)\big)$ and tensors it with the fundamental class of $H_{0}\big(C_{1}(\R^{d})\big)$ to get a class in $H_{k}\big(C_{n+1}(X)\big)$; see Figure \ref{configofpointsinclusionhom}.
The representation stability map is defined similarly, up to the relabeling of points.

\begin{figure}[h]
\centering
\captionsetup{width=.8\linewidth}
\includegraphics[width = 10cm]{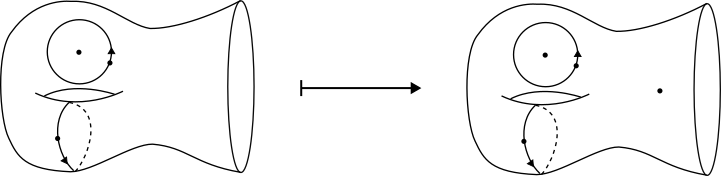}
\caption{The action of the stabilization map on a class in $H_{2}\big(C_{3}(T^{\circ})\big)$, where $T^{\circ}$ denotes the once-punctured torus. 
The point at the far right in the class in $H_{2}\big(C_{4}(T^{\circ})\big)$ arises from the fundamental class of $H_{0}\big(C_{1}(\R^{2})\big)$. 
It does not interact with the topology of $T^{\circ}$ or the other points, which correspond to an embedded non-bounding torus in configuration space.
}
\label{configofpointsinclusionhom}
\end{figure}

Miller--Wilson also proved that there is a stability pattern among the unstable classes of first-order representation stability. 
Namely, they proved that for fixed $k$ the sequence $\mathcal{W}^{X}_{i}(\bullet):=H^{\text{FI\#}}_{0}\Big(H_{\frac{|\bullet|+i}{2}}\big(F_{\bullet}(X);\Q\big)\Big)$ of FI\#-homology generators has an algebraic structure, namely that of a finitely generated FIM$^{+}$-module \cite{miller2019higher}. 
In this \emph{second-order representation stability}, the stabilization map arises from the map on the homology of configuration space that tensors a class in $H_{k}\big(F_{n}(X)\big)$ with the fundamental class of $H_{1}\big(F_{2}(\R^{2})\big)$, which can be thought of as an orbiting pair of points at infinity, producing a class in $H_{k+1}\big(F_{n+2}(X)\big)$; see Figure \ref{configofpointsinclusiontwohom} and \cite[Section 1.1]{miller2019higher}. 
Their proof, like many other (representation) stability arguments, relies on a Quillen-type argument on a carefully chosen spectral sequence.
See \cite{miller2019higher, ho2020higher, wawrykow2022secondary} for more on higher-order representation stability for ordered configuration spaces of points.

\begin{figure}[h]
\centering
\captionsetup{width=.8\linewidth}
\includegraphics[width = 10cm]{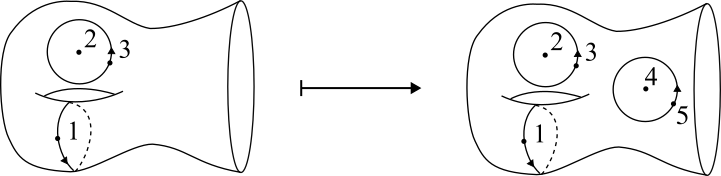}
\caption{The second-order stabilization map that sends a class in $H_{2}\big(F_{3}(T^{\circ})\big)$ to a class in $H_{3}\big(F_{5}(T^{\circ})\big)$.
The orbiting pair of points labeled $4$ and $5$ arise from the fundamental class of $H_{1}\big(F_{2}(\R^{2})\big)$.
}
\label{configofpointsinclusiontwohom}
\end{figure}

Recently, there has been increased interest in geometric generalizations of configuration spaces of points.
Perhaps the most studied of these generalizations are disk configuration spaces, where disks replace points, and the geometry of the underlying manifold $X$ comes to the forefront.
The structure of these disk configuration spaces, which had been studied in the contexts of motion planning in \cite{farber2008invitation}, physical chemistry in \cite{diaconis2009markov}, and physics in \cite{carlsson2012computational}, can be approached via Morse theory as done by Barynshnikov--Bubenik--Kahle in \cite{BBK}.
The most studied of these disk configuration spaces is \emph{$\text{conf}(n,w)$}, \emph{the ordered configuration space of $n$ open unit-diameter disks in the infinite strip of width $w$}:
\[
\text{conf}(n,w):=\Big\{(x_{1}, y_{1},\dots, x_{n}, y_{n})\in \R^{2n}|(x_{i}-x_{j})^{2}+(y_{i}-y_{j})^{2}\ge 1\text{ and }\frac{1}{2}\le y_{i}\le w-\frac{1}{2}\Big\},
\]
here $(x_{i}, y_{i})$ parametrizes the center of the $i^{\text{th}}$ disk.
Alpert--Kahle--MacPherson proved that the rank of $H_{k}\big(\text{conf}(n,w)\big)$ grows  exponentially in $n$ \cite{alpert2021configuration}; as a result, the ordered configuration space of open unit-diameter disks in the infinite strip of width $w$ is not first-order representation stable in the sense of Church--Ellenberg--Farb, Miller--Wilson.
However, in the case $w=2$, Alpert proved that the ordered configuration space of $n$ open unit-diameter disks in the infinite strip of width $2$ exhibits a reasonable notion of first-order representation stability, as for all $k$ the homology groups $H_{k}\big(\text{conf}(\bullet, 2)\big)$ have the structure of a finitely generated FI$_{k+1}$-module, generated in degree at most $3k$, where FI$_{k+1}$ is a generalization of FI\# \cite{alpert2020generalized}.
One can use Alpert's results to decompose $H_{k}\big(\text{conf}(n, 2);\Q\big)$ into a direct sum of irreducible $S_{n}$-representations as was done in \cite{wawrykow2022On}.
Alpert's proof relies on determining a basis for $H_{k}\big(\text{conf}(n, 2)\big)$, and using this basis to show that there are well-defined ways to insert the fundamental class of $H_{0}\big(\text{conf}(1,w)\big)$ into a class in $H_{k}\big(\text{conf}(n, 2)\big)$ to get a class in $H_{k}\big(\text{conf}(n+1, 2)\big)$.

Does Alpert's proof technique extend to larger widths $w$?
No.
Alpert--Manin found bases for all $H_{k}\big(\text{conf}(n, w)\big)$ analogous to Alpert's bases for $H_{k}\big(\text{conf}(n, 2)\big)$, and they proved that these homology groups are free abelian \cite[Theorem B]{alpert2021configuration1}.
They also proved that for $w\ge 3$, these bases are not well behaved under the symmetric group action \cite[Proposition 8.6]{alpert2021configuration1}, and that this precludes the possibility of there being enough well-defined ways of inserting the fundamental class of $H_{0}\big(\text{conf}(1,w)\big)$ into a homology class in $H_{k}\big(\text{conf}(n,w)\big)$ to get all the classes in $H_{0}\big(\text{conf}(n+1,w)\big)$, that is, it shows there are not enough ways to give $H_{k}\big(\text{conf}(\bullet,w)\big)$ the structure of a finitely generated FI$_{d}$ module.
In fact, their argument suggests that such problems are unavoidable when one considers integral homology.
In addition to computing bases for homology, Alpert--Manin proved that $H_{*}\big(\text{conf}(\bullet, w);\Z\big)$ has an algebraic structure, that of a finitely generated twisted (non-commutative) algebra \cite[Theorem A]{alpert2021configuration1}.

In this paper, we consider the rational homology of the ordered configuration space of open unit-diameter disks in the infinite strip of width $w$, and we find a new set of generators for $H_{*}\big(\text{conf}(\bullet, w);\Q\big)$ as a twisted algebra.
Through a series of inductive arguments we use these generators to construct a basis for $H_{k}\big(\text{conf}(n, w);\Q\big)$, distinct from Alpert--Manin's.
We exhibit families of relations among products of our generators by studying certain cellular complexes, and use our basis to prove that these relations and our generators give a finite presentation of $H_{*}\big(\text{conf}(\bullet, w);\Q\big)$ as a twisted algebra.

\begin{thm}\label{presentation lem intro}
The homology groups $H_{*}\big(\text{conf}(\bullet, w);\Q\big)$ have the structure of a finitely presented twisted algebra.
\end{thm}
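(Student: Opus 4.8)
The plan is to exhibit $H_*\big(\text{conf}(\bullet,w);\Q\big)$ as the quotient of an explicit free twisted algebra by a finitely generated ideal, controlling the generators and the relations together through the combinatorics of a cellular chain model for $\text{conf}(n,w)$. First I would fix the finite set $\mathcal{G}$ of generating classes introduced above — each involving only a bounded (in terms of $w$) number of disks — and check that horizontal juxtaposition of strips gives an associative product that is equivariant for the symmetric group actions, so that it assembles into a morphism of twisted algebras $\Phi\colon \mathcal{F}(\mathcal{G}) \to H_*\big(\text{conf}(\bullet,w);\Q\big)$ out of the free twisted algebra $\mathcal{F}(\mathcal{G})$ on $\mathcal{G}$. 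Surjectivity of $\Phi$ follows because $H_*\big(\text{conf}(\bullet,w);\Z\big)$ is finitely generated as a twisted algebra by Alpert--Manin, and each of their generators is rationally a product of elements of $\mathcal{G}$.

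Next I would record a finite list $\mathcal{R}$ of relations among monomials in $\mathcal{G}$, read off from the cellular boundary map: a monomial whose generators occupy overlapping or out-of-order positions represents the boundary of an explicit cellular chain, hence equals a $\Q$-linear combination of other monomials. In the spirit of Arnol'd, these fall into two families: \emph{commutation} relations, asserting that two generators supported on horizontally separated windows commute up to a Koszul sign; and \emph{straightening} relations, expressing the product of two generators whose windows overlap vertically as a $\Q$-linear combination of products of other generators. Since each relation involves only boundedly many disks, finitely many of each kind generate the rest under the twisted algebra operations, so $\langle\mathcal{R}\rangle$ is a finitely generated ideal and $\Phi$ descends to a surjection $\overline{\Phi}\colon \mathcal{F}(\mathcal{G})/\langle\mathcal{R}\rangle \twoheadrightarrow H_*\big(\text{conf}(\bullet,w);\Q\big)$.

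To finish, fix a total order on monomials in $\mathcal{G}$ — say by left-to-right position of windows, with ties broken by a chosen order on $\mathcal{G}$ — and call a monomial \emph{admissible} if its windows are pairwise non-overlapping and listed in order. By induction on the number of disks and on the monomial order, I would show that the relations in $\mathcal{R}$ rewrite every monomial as a $\Q$-linear combination of admissible ones, so that the admissible monomials span $\mathcal{F}(\mathcal{G})/\langle\mathcal{R}\rangle$ on each $n$-disk piece. By construction, the images under $\Phi$ of the admissible monomials with $n$ disks form exactly the basis of $H_*\big(\text{conf}(n,w);\Q\big)$ assembled earlier in the paper; in particular there are precisely $\dim_\Q H_*\big(\text{conf}(n,w);\Q\big)$ of them. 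Since $\overline{\Phi}$ is surjective and its source is spanned by these same monomials, a dimension count for each $n$ shows $\overline{\Phi}$ is an isomorphism of twisted algebras, and, $\mathcal{G}$ and $\mathcal{R}$ being finite, this is the desired finite presentation.

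I expect the main obstacle to be the rewriting step — showing that the relations in $\mathcal{R}$ genuinely suffice to straighten an arbitrary monomial. This needs a termination argument, namely a complexity measure on monomials that strictly decreases under each straightening move, together with a compatibility check ensuring that overlapping applications of straightening moves never force a relation outside $\langle\mathcal{R}\rangle$. The cellular complex supplies the candidate moves and the relations among them via $d^2=0$, but verifying that no further relations are needed is combinatorially delicate; it is precisely here that having the explicit basis in hand lets us substitute the clean dimension count above for a full diamond-lemma verification.
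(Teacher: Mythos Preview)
Your proposal is correct and follows essentially the same architecture as the paper's proof (carried out in Theorem~\ref{finitepresentation}): exhibit a finite generating set, record a finite list of relations coming from the cellular chain complex (in the paper these are the relations of Propositions~\ref{properwheels}, \ref{commutewheels}, \ref{sign of averaged-filter}, \ref{replace filter with sum of proper proper} and Lemma~\ref{relationlem}), show by an inductive rewriting argument that every monomial can be straightened into a sum of the basis elements of Theorem~\ref{AMWthmB''}, and conclude by the dimension count you describe. Your identification of the rewriting step as the crux, and of $\partial^2=0$ as the source of the key straightening relation, matches the paper exactly; the paper's Lemma~\ref{relationlem} is precisely this $\partial^2=0$ relation, and the inductive rewriting in the proof of Theorem~\ref{finitepresentation} plays the role of your termination argument.
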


The relations in the presentation given in the proof of Theorem \ref{presentation lem intro} show that our generators are well-behaved under the symmetric group action, and we use this to show that there is a family of well-defined disk insertion maps between rational homology classes.
Additionally, letting $w\to \infty$, our presentation recovers results of Arnol'd \cite{arnold1969cohomology} and Cohen \cite[Chapter 3]{cohen2007homology} on the structure of $H_{*}\big(F_{\bullet}(\R^{2});\Q\big)$ as a twisted algebra; for more, see, for example, \cite{sinha2006homology, knudsen2018configuration}.
We use this presentation to prove that the ordered configuration space of open unit-diameter disks in the infinite strip of width $w$ satisfies a notion of first-order representation stability.

\begin{thm}\label{first order stab intro}
For all $k\ge0$ and $w\ge 2$, the sequence $H_{k}\big(\text{conf}(\bullet, w);\Q\big)$ has the structure of a finitely generated FI$_{b+1}$-module over $\Q$, where $b=\big\lfloor\frac{k}{w-1}\big\rfloor$, finitely generated in degree at most $2k$ for $w\ge 3$, and degree at most $3k$ for $w=2$.
\end{thm}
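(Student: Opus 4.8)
The plan is to obtain the $\mathrm{FI}_{b+1}$-module structure as a consequence of the finite presentation of the twisted algebra $H_{*}\big(\text{conf}(\bullet,w);\Q\big)$ from Theorem~\ref{presentation lem intro}, and then to read off the generation degree from the explicit list of twisted-algebra generators. First I would recall from \cite{alpert2020generalized} the category $\mathrm{FI}_{d}$, whose morphisms interpolate between those of $\mathrm{FI}$ and those of FI\# by allowing one to insert and delete clusters of at most $d-1$ new labels, so that an $\mathrm{FI}_{b+1}$-module structure on the sequence of $S_{n}$-representations $H_{k}\big(\text{conf}(\bullet,w);\Q\big)$ is exactly a system of disk-insertion and disk-deletion maps, with insertions clustered in groups of size at most $b$, that is functorial for composition in $\mathrm{FI}_{b+1}$. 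Since $\text{conf}(j,w)$ is connected for every $j$ and every $w\ge2$, each $H_{0}\big(\text{conf}(j,w);\Q\big)$ is one-dimensional, so there is a canonical degree-zero class available to insert; the cluster-insertion map $H_{k}\big(\text{conf}(n,w);\Q\big)\to H_{k}\big(\text{conf}(n+j,w);\Q\big)$ is twisted-algebra multiplication by this class placed far to the right of an existing configuration, and the deletion maps are the usual forgetful maps on configuration spaces.

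The heart of the argument, and the step I expect to be the main obstacle, is verifying that these maps assemble into a genuine $\mathrm{FI}_{b+1}$-module rather than merely an $\mathrm{FI}$-module or a partially-defined structure: one must check that permuting the labels inside an inserted cluster, and commuting two insertions past one another, act on homology exactly as the composition law of $\mathrm{FI}_{b+1}$ prescribes. This is precisely where the relations of Theorem~\ref{presentation lem intro} are used---they say that our $\Q$-generators transform equivariantly under the $S_{n}$-action in a controlled way, which is the respect in which passing to rational coefficients circumvents the integral obstruction of \cite[Proposition~8.6]{alpert2021configuration1}. Pinning down the exact value $b=\big\lfloor\frac{k}{w-1}\big\rfloor$ of the cluster bound is a second technical point: a bookkeeping argument with the relations should show that, to rewrite a homology class of degree $k$ in terms of the generators, one never needs to invoke a cluster of size larger than $b$, reflecting that homological degree in a strip of width $w$ is spent at a rate of at least $w-1$ per ``vertically complex'' factor.

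Finally, for finite generation in bounded degree I would use that a finitely presented twisted algebra is in particular finitely generated: fix generators $g_{1},\dots,g_{r}$ of homological degrees at least $1$, together with the single-disk class in $H_{0}\big(\text{conf}(1,w);\Q\big)$. Every element of $H_{k}\big(\text{conf}(n,w);\Q\big)$ is a $\Q$-linear combination of products $g_{i_{1}}\cdots g_{i_{\ell}}$ of total homological degree $k$, multiplied by copies of the single-disk class; these extra single-disk copies are removed by $\mathrm{FI}_{b+1}$-morphisms, so $H_{k}\big(\text{conf}(\bullet,w);\Q\big)$ is generated as an $\mathrm{FI}_{b+1}$-module by the finitely many products $g_{i_{1}}\cdots g_{i_{\ell}}$, which live in $H_{k}\big(\text{conf}(N,w);\Q\big)$ for $N$ the total number of disks appearing in $g_{i_{1}},\dots,g_{i_{\ell}}$. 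It therefore suffices to check, from the explicit generators constructed in the proof of Theorem~\ref{presentation lem intro}, that a generator of homological degree $d$ involves at most $2d$ disks when $w\ge3$ (respectively at most $3d$ disks when $w=2$); summing over the factors of a product of total degree $k$ yields the bounds $2k$ and $3k$. Specializing to $w=2$ (so $b=k$) recovers Alpert's $\mathrm{FI}_{k+1}$-module statement \cite{alpert2020generalized}, and letting $w\to\infty$ (so that $b=0$ and $\mathrm{FI}_{b+1}=\mathrm{FI}^{\#}$) recovers the Church--Ellenberg--Farb bound of $2k$ for $F_{\bullet}(\R^{2})$.
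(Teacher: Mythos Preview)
Your proposal rests on a mistaken description of the category $\mathrm{FI}_{d}$. In the sense of Ramos \cite{ramos2017generalized} (which is what the paper uses), a morphism $S\to T$ in $\mathrm{FI}_{d}$ is an injection together with a $d$-coloring of the complement $T\setminus S$; equivalently, an $\mathrm{FI}_{d}$-module is a module over the twisted commutative algebra $\text{Sym}\big((\Q)^{\oplus d}\big)$. There are no deletion maps, and there is no ``cluster size'' parameter---the integer $d$ counts the number of \emph{distinct} single-point insertion operations, not the size of an inserted block. Consequently your construction, which supplies only one insertion map (concatenation at the far right) together with forgetful deletions, does not define an $\mathrm{FI}_{b+1}$-module at all, and your interpretation of $b=\lfloor k/(w-1)\rfloor$ as a bound on cluster size is not the correct one.

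The paper's mechanism is different and is really the content of the theorem. One introduces the notion of a \emph{rational $1$-barrier}---a wheel on $w$ disks or a nontrivial averaged-filter---and proves (Proposition~\ref{decompose into d barrier subspaces}) that the number of rational $1$-barriers in a class is well-defined and $S_n$-invariant; this is exactly the step that fails integrally by \cite[Proposition~8.6]{alpert2021configuration1}, and it works rationally only because of the averaged-filter relations in the presentation of Theorem~\ref{presentation lem intro}. The $b+1$ colors of $\mathrm{FI}_{b+1}$ then correspond to the $b+1$ possible insertion \emph{positions} for a single new disk: immediately to the left of the $i^{\text{th}}$ barrier for $1\le i\le b$, or at the far right. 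The value $b=\lfloor k/(w-1)\rfloor$ arises because a rational $1$-barrier contributes at least $w-1$ to homological degree, so a degree-$k$ class has at most $\lfloor k/(w-1)\rfloor$ barriers. Your final paragraph on the generation degree---bounding disks-per-homological-degree among the twisted-algebra generators---is correct in spirit and matches the paper's argument, but it only becomes relevant once the module structure itself has been set up correctly.
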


In the spirit of \cite{church2014fi, church2015fi, ramos2017generalized}, one can use this FI$_{b+1}$-module structure and results of Ramos on finitely generated FI$_{b+1}$-modules to get upper bounds for the multiplicities of irreducible symmetric group representations in $H_{k}\big(\text{conf}(n, w);\Q\big)$ for large $n$.
Namely, for $w\ge3$, if for every $m\le 2k$ we have a direct sum decomposition of $H_{k}\big(\text{conf}(m, w);\Q\big)$ into irreducible $S_{m}$-representations, then for $n>2k$ we have upper bounds for the multiplicities of the irreducible $S_{n}$-representations that occur in a direct sum decomposition of $H_{k}\big(\text{conf}(n, w);\Q\big)$ into irreducible $S_{n}$-representations.
This in turn gives upper bounds on the Betti numbers of $H_{k}\big(\text{conf}(n, w);\Q\big)$ for $n> 2k$, if for every $m\le 2k$ we know the Betti numbers of $H_{k}\big(\text{conf}(m, w);\Q\big)$.

The FI$_{b+1}$-module structure of $H_{k}\big(\text{conf}(\bullet, w);\Q\big)$ arises from the twisted algebra structure of homology as follows:
Any class in $H_{k}\big(\text{conf}(n, w);\Q\big)$ can be written as a sum of products of generators called \emph{wheels} and \emph{averaged-filters}; see Section \ref{rat hom sec} for their definitions.
Given a decomposition of a class in $H_{k}\big(\text{conf}(n, w);\Q\big)$ as a product of these generators, at most $b=\big\lfloor\frac{k}{w-1}\big\rfloor$ of the generators occupy the entire width of the strip as the most ``efficient'' generators use $w$ disks and have homological degree $w-1$.
To send a class in $H_{k}\big(\text{conf}(n, w);\Q\big)$ to a class in $H_{k}\big(\text{conf}(n+1, w);\Q\big)$, one can insert a wheel on $1$ disk, that is, the fundamental class of $H_{0}\big(\text{conf}(1,w);\Q\big)$, immediately to the left of any generator that occupies the entire width of the strip, or at the far right.
The relations given in our proof of Theorem \ref{presentation lem intro} ensure that these wheel insertion operations are well defined, see Figure \ref{firstorderstabmapdisks}.
The finite generation in degree at most $2k$ for $w\ge 3$ (or $3k$ for $w=2$) is the fact that for $n>2k$, after an action of $S_{n}$, every class in $H_{k}\big(\text{conf}(n, w);\Q\big)$ can be written as sum of classes in $H_{k}\big(\text{conf}(n, w);\Q\big)$, each of which can be thought of as a class in $H_{k}\big(\text{conf}(n-1, w);\Q\big)$ with a new disk inserted in one of the $b+1$ possible places.

\begin{figure}[h]
\centering
\captionsetup{width=.8\linewidth}
\includegraphics[width = 12cm]{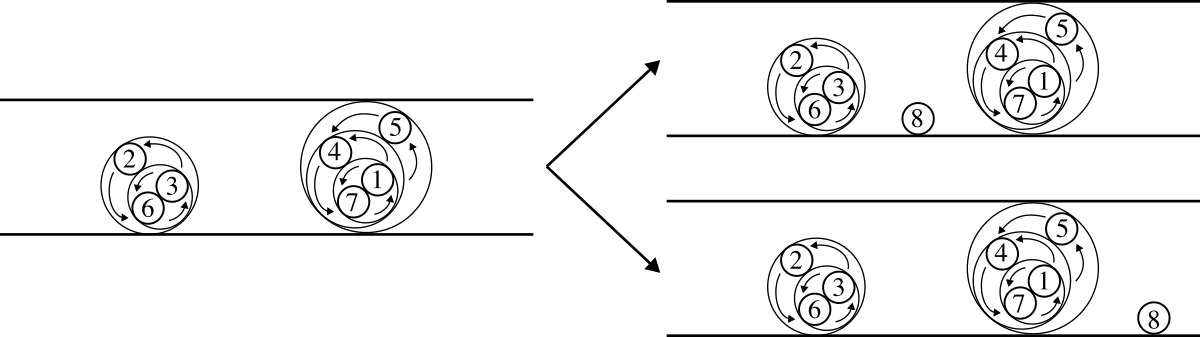}
\caption{On the left: A class in $H_{5}\big(\text{conf}(7, 4);\Q\big)$ that is the product of $2$ wheels, i.e., homology classes arising from disks orbiting each other, corresponding to embedded tori in the configuration space.
On the right: Two classes in $H_{5}\big(\text{conf}(8, 4);\Q\big)$ arising the FI$_{2}$-module structure on $H_{5}\big(\text{conf}(\bullet, 4);\Q\big)$.
The wheel on the disks $1$, $4$, $5$, and $7$ occupies the entire width of the strip; therefore, we can place a new disk labeled $8$ representing the fundamental class of $H_{0}\big(\text{conf}(1,4);\Q\big)$ immediately to the left of this wheel (the top image) or at the far right (the bottom image). 
Placing the disk labeled $8$ to the left of the wheel consisting of the disks $2$, $3$, and $6$ is homologous to placing it immediately to the left of the larger wheel, that is, in between the two wheels as in the top image, as there is space above the wheel on $3$ disks to slide the disk labeled $8$ across it.
}
\label{firstorderstabmapdisks}
\end{figure}

Miller--Wilson's proof of second-order representation stability for ordered configuration spaces of points relies on the Noetherianity of a twisted skew-commutative algebra.
Since the twisted algebras corresponding to third and higher order representation stability are not known to be Noetherian, it is not possible to extend Miller--Wilson's methods to the higher order cases at the moment.
We run into a similar problem when trying to use such a method to prove higher order representation stability for the ordered configuration space of open-unit diameter disks in the infinite strip of width $w$.
As such, we must get our hands dirty studying the homology groups of $\text{conf}(n,w)$.
Fortunately, a large part of this work is already done in the proof of Theorem \ref{presentation lem intro}.
An inductive argument determining the structure of the quotient of the twisted algebra $H_{*}\big(\text{conf}(\bullet, w);\Q\big)$ by the twisted commutative algebras corresponding to higher order representation stability allows us to prove that the ordered configuration space of open unit-diameter disks in the infinite strip of width $w$ satisfies notions of $1^{\text{st}}$- through $\big\lfloor\frac{w+1}{3}\big\rfloor^{\text{th}}$-order representation stability.

\begin{thm}\label{higher order stability intro}
Let
\[
\mathcal{W}^{m}_{i}(\bullet):=H^{FIW(m-1)_{\infty}}_{0}\Bigg(\cdots \bigg(H^{FI_{\infty}}_{0}\Big(H_{\frac{(m-1)|\bullet|+i}{m}}\big(\text{conf}(\bullet, w);\Q\big)\Big)\bigg)\cdots\Bigg),
\]
where FIW$(j)_{*}$ is an appropriate category to encode $j^{\text{th}}$-order representation stability. Then, for $1\le m\le\big\lfloor\frac{w+1}{3}\big\rfloor$, we have that $\mathcal{W}^{m}_{i}(\bullet)$ is a finitely generated FIW$(m)_{b+1}$-module, where $b=\big\lfloor\frac{mi}{w-m}\big\rfloor$. Moreover, this FIW$(m)_{b+1}$-module is generated in degree $\le(m+1)i$ for $w\ge 2m+1$, and degree $\le(m+1)i+m$ for $w=2m$.
\end{thm}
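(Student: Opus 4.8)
The plan is to run the proof of Theorem~\ref{first order stab intro} one ``layer'' at a time, driven by the finite presentation of the twisted algebra $A:=H_*\big(\text{conf}(\bullet, w);\Q\big)$ from Theorem~\ref{presentation lem intro} together with its distinguished generating set of wheels and averaged-filters. The key observation is that each application of $H^{FIW(j)_\infty}_0$ in the definition of $\mathcal W^m_i$ (with $FI_\infty=FIW(1)_\infty$, and the subscript $\infty$ meaning insertions are allowed at all positions) is, after reindexing, the passage from a twisted algebra to an explicit quotient. For $0\le j\le m-1$ let $I_j\subseteq A$ be the two-sided ideal generated by all wheel classes on at most $j$ disks and set $A^{(j)}:=A/I_j$, so $A^{(0)}=A$. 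Because $m\le\tfrac{w}{2}$, wheels on at most $m$ disks fit and move in the strip and only finitely many of them occur in the generating set, so each $I_j$ with $j\le m-1$ is finitely generated; combined with Theorem~\ref{presentation lem intro} this shows every $A^{(j)}$ --- in particular $A^{(m-1)}$ --- is again a finitely presented twisted algebra, with a presentation obtained from that of $A$ by adjoining the finitely many relations ``wheel on $\le j$ disks $=0$'' and straightening against the basis produced in the proof of Theorem~\ref{presentation lem intro}. No Noetherianity is needed for this: a quotient of a finitely presented twisted algebra by a finitely generated ideal is finitely presented.

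Next I would show, by induction on $j$, that applying $H^{FIW(j)_\infty}_0$ to the object $A^{(j-1)}$ produced by the first $j-1$ layers yields $A^{(j)}$, and that the operation of multiplying by wheels on $j+1$ disks descends to a well-defined $FIW(j+1)_\infty$-action on it. The first point holds because, with insertions allowed everywhere, the union of the images of the wheel-on-$j$ insertions into $A^{(j-1)}$ is the two-sided ideal they generate, so the $H_0$-quotient is $A^{(j-1)}/(\text{that ideal})=A^{(j)}$; the second requires that wheel-on-$(j+1)$ multiplication commute, modulo the relations of Theorem~\ref{presentation lem intro}, with wheel-on-$\le j$ multiplication, which is where the twisted \emph{commutative} subalgebras generated by the wheels enter. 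Running $j$ from $1$ to $m-1$ then identifies $\mathcal W^m_i(\bullet)$ with the ``defect $i$'' part of the $FIW(m)_\infty$-module underlying $A^{(m-1)}$, where the defect of a class in $H_k\big(\text{conf}(n,w);\Q\big)$ is $mk-(m-1)n$; this quantity is additive over products and is unchanged by multiplying with wheels on exactly $m$ disks (which have defect $0$), so the defect-$i$ part is indeed $FIW(m)_\infty$-stable.

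The heart of the argument is a normal-form analysis of $A^{(m-1)}$ in these bidegrees. Using the presentation from the first paragraph --- crucially the relations, which allow products of averaged-filters to be rewritten --- one shows that, after an action of $S_n$, every class of $\mathcal W^m_i(n)$ is a sum of products of surviving generators (averaged-filters and the wheels on at least $m$ disks) together with freely insertable wheels on exactly $m$ disks, and that a class which is a module generator --- one not obtained from a smaller class by inserting such a wheel --- has only a bounded number of factors that ``occupy the full width'' of the strip, the remaining factors contributing boundedly many disks. As in the first-order case, an inserted wheel on $m$ disks can be slid above or below, hence past, any generator that does not occupy the full width, so it is precisely the full-width generators that obstruct insertion (cf.\ Figure~\ref{firstorderstabmapdisks}). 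Tracking the defect against the disk count through this reduction forces the stated constants: at most $b=\big\lfloor\tfrac{mi}{w-m}\big\rfloor$ full-width generators can appear, which pins the category down to $FIW(m)_{b+1}$, and a minimal generating class uses at most $(m+1)i$ disks when $w\ge2m+1$, and at most $(m+1)i+m$ disks when $w=2m$.

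I expect this last step to be the main obstacle: deciding exactly which products of averaged-filters and wheels survive modulo $I_{m-1}$, and determining precisely when a freshly inserted wheel on $m$ disks can be slid past such a product, is a genuinely combinatorial study of the cellular models for $\text{conf}(n,w)$, and it is what forces the numbers $(m+1)i$, $(m+1)i+m$, and $w-m$. A secondary difficulty is the bookkeeping of the second paragraph: checking that the successive residual wheel-multiplications descend to, and are compatible with, the categories $FIW(j)_*$ for all $j\le m$, uniformly across the homological degrees $k=\tfrac{(m-1)|\bullet|+i}{m}$ swept out as $|\bullet|\to\infty$.
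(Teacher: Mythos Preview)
Your proposal is correct and follows essentially the same route as the paper. The paper's Lemma~\ref{higherorderpresentation} is exactly your inductive identification $H^{FIW(j)_\infty}_0(A^{(j-1)})\cong A^{(j)}=A/I_j$, and your ``defect'' decomposition and barrier-counting argument match the paper's Propositions~\ref{finitely generated infinity module} and Theorem~\ref{higherorderstability}. The one place where the paper is more explicit than your sketch is in making your ``full-width generators'' precise: it defines a \emph{rational $d$-barrier} to be any wheel on at least $w+1-d$ disks or any averaged-filter, and proves (Proposition~\ref{decompose into d barrier subspaces}) that the number of such barriers is a well-defined $S_n$-invariant of a homology class---this is what guarantees the insertion maps are well-defined and is the technical hinge you flag as your ``secondary difficulty.''
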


The FIW$(m)_{b+1}$-module structure of $\mathcal{W}^{m}_{i}(\bullet)$ arises from the twisted algebra structure of homology as follows:
Any class in $\mathcal{W}^{m}_{i}(\bullet)$ can be written as a sum of products of averaged-filters and wheels on at least $m$ disks.
To send a class in $\mathcal{W}^{m}_{i}(n)$ to a class in $\mathcal{W}^{m}_{i}(n+m)$, one can insert a wheel on $m$ disks, that is, a set of $m$ disks orbiting each other that corresponds to an embedded $(m-1)$-torus in configuration space, immediately to the left of any averaged-filter or any wheel on at least $w+1-m$ disks, or at the far right.
In a class of $\mathcal{W}^{m}_{i}(n)$ that can be written as a single product of generators, at most $b=\big\lfloor\frac{mi}{w-m}\big\rfloor$ of these generators can be averaged-filters or wheels on at least $w+1-m$ disks.
The relations given in our proof of Lemma \ref{presentation lem intro} ensure that this wheel insertion operation is well defined, see Figure \ref{firstorderstabmapdisks2}.

\begin{figure}[h]
\centering
\captionsetup{width=.8\linewidth}
\includegraphics[width = 12cm]{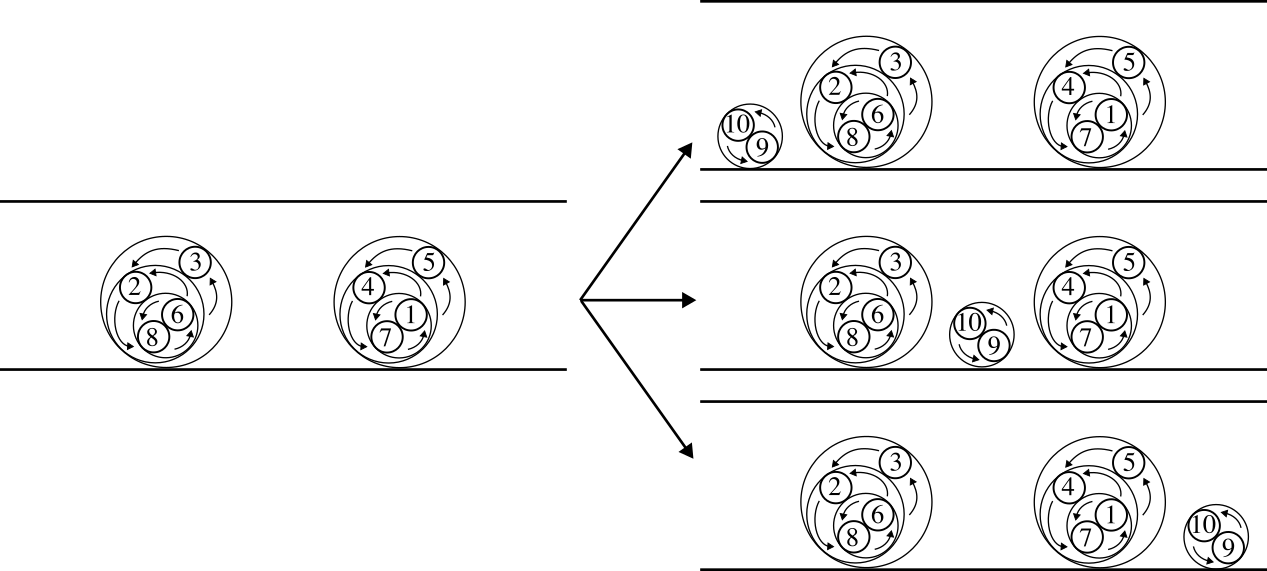}
\caption{On the left: A class in $H_{6}\big(\text{conf}(8, 5);\Q\big)$ that has non-trivial quotient in $\mathcal{W}^{2}_{3}(8)$.
On the right: Three classes in $H_{7}\big(\text{conf}(10, 5);\Q\big)$ arising from the FIW$(2)_{3}$-module structure on $\mathcal{W}^{2}_{3}(\bullet)$.
Both the wheel on the disks $2$, $3$, $6$, and $8$ and the wheel on the disks $1$, $4$, $5$, and $7$ occupy enough of the strip to prevent one from moving a wheel on 2 disks past either.
Therefore, we can place a new wheel on the disks labeled $9$ and $10$ immediately to the left of the wheel on the disks $2$, $3$, $6$, and $8$ (the top image), in between the two wheels (the middle image), or to at the far right (the bottom image).
}
\label{firstorderstabmapdisks2}
\end{figure}

Theorem \ref{higher order stability intro} is the first proof of third- or higher order representation stability for the homology of an ordered configuration space other than the ordered configuration space of points in the plane. See Figure \ref{stability diagram for disks}.

\subsection*{Outline}\label{outline}
In section \ref{cell complexes} we recall the cellular complexes $\text{cell}(n,w)$ and $P(n, w)$ and generalize them by defining the weighted cellular complexes $\text{cell}(A, \mathcal{W}, w)$ and $P(A, \mathcal{W}, w)$.
Then, we relate these cellular complexes to weighted disk configuration spaces.
Next, in section \ref{rat hom sec}, we define chain-maps that we will use to generate a family of classes in rational homology. 
In Lemma \ref{relationlem}, we prove the existence of a family of relations in the rational homology of $\text{conf}(n,w)$. 
We recall Alpert--Manin's basis for $H_{k}\big(\text{conf}(n,w);\Q\big)$, and through a series of propositions relying on Lemma \ref{relationlem}, we determine a new basis for rational homology in Theorem \ref{AMWthmB''}.
In section \ref{rep stab sec}, we recall the definition of a twisted (commutative) algebra, and we use the basis of Theorem \ref{AMWthmB''} to provide a finite presentation for $H_{*}\big(\text{conf}(\bullet,w);\Q\big)$ as a twisted algebra in Theorem \ref{finitepresentation}.
Finally, we use the presentation given in the proof of Theorem \ref{finitepresentation} to prove Theorems \ref{first order stability} and \ref{higherorderstability} showing that the ordered configuration space of unit-diameter disks in the infinite strip of width $w$ exhibits notions of $1^{\text{st}}$- through $\big\lfloor\frac{w+1}{3}\big\rfloor^{\text{th}}$-order representation stability.

\begin{figure}[H]
\centering
\captionsetup{width=.8\linewidth}
\includegraphics[width = \textwidth]{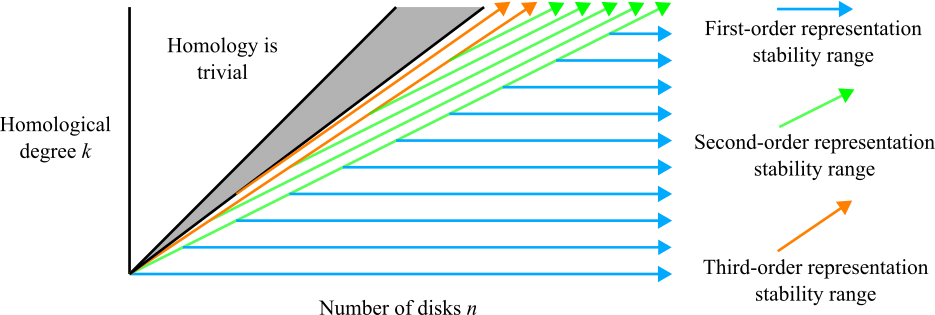}
\caption{Representation stability for $H_{k}\big(\text{conf}(n, 8);\Q\big)$.}
\label{stability diagram for disks}
\end{figure}

\subsection*{Acknowledgements}
The author would like to thank Hannah Alpert, Nir Gadish, Fedor Manin, Andrew Snowden, and Jennifer Wilson for helpful conversations and comments. 
The author would also like to thank the anonymous referee for pointing out an error in the previous version of Theorem \ref{higher order stability intro} and their helpful comments on the organization of this paper.

\section{$\text{conf}(n,w)$ and $\text{cell}(n,w)$}\label{cell complexes}
In this section we recall the definitions of the cellular complexes $\text{cell}(n)$ and $P(n)$ and some of their generalizations. 
As we will see, these complexes greatly simplify the study of the topology of $\text{conf}(n,w)$ due to their combinatorial characteristics.
We use Alpert--Manin's conventions for giving these complexes cellular chain structures and we recall the definitions of certain chain maps Alpert--Manin used in their paper.

\begin{defn}
The \emph{ordered configuration space of $n$ open unit-diameter disks in the infinite strip of width $w$}, denoted \emph{$\text{conf}(n,w)$}, is the space of ways of putting $n$ non-overlapping labeled open unit-diameter disks in the infinite strip of width $w$ such that the topology is the subspace topology of $\R^{2n}$:
\[
\text{conf}(n,w):=\Big\{(x_{1}, y_{1},\dots, x_{n}, y_{n})\in \R^{2n}|(x_{i}-x_{j})^{2}+(y_{i}-y_{j})^{2}\ge 1\text{ and }\frac{1}{2}\le y_{i}\le w-\frac{1}{2}\Big\},
\]
Here $(x_{i}, y_{i})$ are the coordinates of the center of the $i^{\text{th}}$ disk. See Figure \ref{point in conf(10,3)} for an example.
\end{defn}

\begin{figure}[h]
\centering
\captionsetup{width=.8\linewidth}
\includegraphics[width = 12cm]{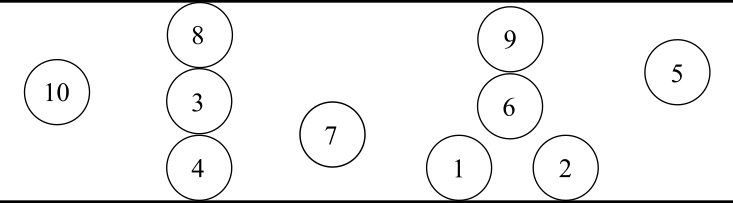}
\caption{A visualization of a point in $\text{conf}(10,3)$.}
\label{point in conf(10,3)}
\end{figure}

Directly studying the topology of these configuration spaces is a daunting task; fortunately, Alpert--Kahle--MacPherson proved that $\text{conf}(n,w)$ is homotopy equivalent to a polyhedral cellular complex introduced by Blagojevi\`{c}--Ziegler in \cite{blagojevic2014convex} called $\text{cell}(n,w)$ \cite[Theorem 3.1]{alpert2021configuration}.

\begin{defn}
Given two disjoint ordered sets $A=\{a_{1},\dots, a_{n}\}$ and $B=\{b_{1}, \dots, b_{m}\}$, a \emph{shuffle} of the ordered set $\{a_{1}, \dots, a_{n}, b_{1}, \dots, b_{m}\}$ is a permutation of the ordered set $\{a_{1}, \dots, a_{n}, b_{1}, \dots, b_{m}\}$ preserving the order of the elements of $A$ and the order of the elements of $B$.
\end{defn}

\begin{defn}
The cellular complex \emph{$\text{cell}(n)$} has cells represented by \emph{symbols} consisting of an ordering of the numbers $1,\dots, n$ separated by vertical bars into \emph{blocks} such that no block is empty.
A cell $f\in \text{cell}(n)$ is a top dimensional \emph{face} of a cell $g\in \text{cell}(n)$ if $g$ can be obtained by deleting a bar in $f$ and shuffling the resulting block.
\end{defn}

By restricting how big a block can be, one gets the cellular complexes $\text{cell}(n,w)$. 

\begin{defn}
The cell complex \emph{$\text{cell}(n, w)$} consists of the cells in $\text{cell}(n)$ represented by symbols whose blocks have at most $w$ elements. 
See Figure \ref{cell(3,2)} for an example.
\end{defn}

\begin{figure}[h]
\centering
\captionsetup{width=.8\linewidth}
\includegraphics[width = 8cm]{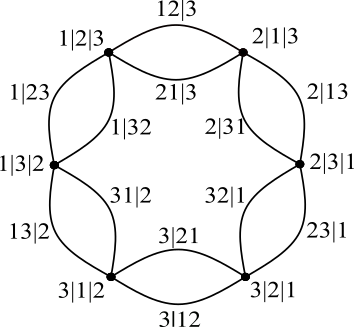}
\caption{The complex $\text{cell}(3,2)$.}
\label{cell(3,2)}
\end{figure}

\begin{exam}
The cell $2\,4|3\, 5\,1$ is in $\text{cell}(5)$ and $\text{cell}(5,3)$, but not in $\text{cell}(5,2)$.
\end{exam}

It will be useful to consider disk configuration spaces where the disks have different diameters. 
In the case of the infinite strip of width $w$ we can study such configuration spaces through the use of weighted sets.

\begin{defn}
A \emph{weighted set} $(A, \mathcal{W})$ is a pair of sets with a fixed bijection between elements $a\in A$ and elements $w_{a}\in\mathcal{W}$.
We say that $w_{a}$ is the \emph{weight} of $a$.
\end{defn}

We will only consider weighted sets where all weights are positive integers. 
Next, we define two topological objects that are homotopy equivalent.

\begin{defn}
Given a weighted set $(A,\mathcal{W})$, the weighted configuration space \emph{$\text{conf}(A,\mathcal{W}, w)$} is the space of ways of putting $|A|$ disks with distinct labels in $A$ such that the diameter of the disk labeled $a$ is $w_{a}\in \mathcal{W}$ in the infinite strip of width $w$ such that no two disks overlap. 
See Figure \ref{weightedconfig} for an example.
\end{defn}

\begin{figure}[h]
\centering
\captionsetup{width=.8\linewidth}
\includegraphics[width = 14cm]{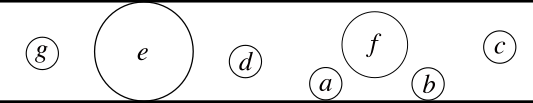}
\caption{A point in $\text{conf}\big(\{a, b, c, d, e, f, g\}, \{1, 1, 1, 1, 3, 2, 1\}, 3\big)$}
\label{weightedconfig}
\end{figure}

We generalize the cellular complexes $\text{cell}(n,w)$ through the use of weighted sets.

\begin{defn}
Given a weighted set $(A,\mathcal{W})$, the cellular complex \emph{$\text{cell}(A, \mathcal{W}, w)$} has cells represented by symbols consisting of an ordering of the elements of $A$ separated by vertical bars into blocks such that no block is empty, and such that sum of the weights of the elements in a block is at most $w$. 
See Figure \ref{weightedcell3} for an example.
\end{defn}

\begin{figure}[h]
\centering
\captionsetup{width=.8\linewidth}
\includegraphics[width = 6cm]{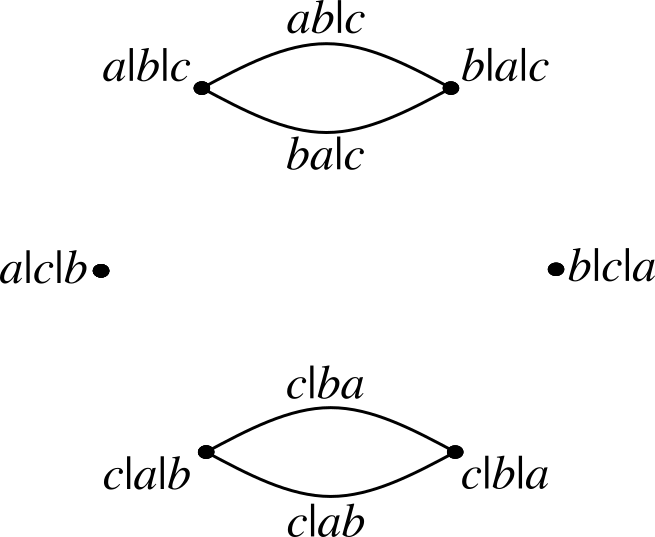}
\caption{The weighted cell complex $\text{cell}\big(\{a, b, c\}, \{1, 1, 3\}, 3\big)$. 
The two components with non-trivial first homology correspond to the subspaces of $\text{conf}\big(\{a, b, c\}, \{1, 1, 3\}, 3\big)$ where the disks $a$ and $b$ are on the same side of disk $c$.
The other two components correspond to the subspaces of $\text{conf}\big(\{a, b, c\}, \{1, 1, 3\}, 3\big)$ where the disks $a$ and $b$ are on different sides of disk $c$.
Since disk $c$ has weight (diameter) $3$, it fills the entire width of the strip and the other two disks cannot pass by it.}
\label{weightedcell3}
\end{figure}

Alpert--Kahle--MacPherson's proof that $\text{conf}(n,w)$ and $\text{cell}(n,w)$ are homotopy equivalent \cite[Theorem 3.1]{alpert2021configuration} generalizes to $\text{conf}(A, \mathcal{W}, w)$ and $\text{cell}(A, \mathcal{W}, w)$.

\begin{prop}\label{weightedconfiscell}
There is a homotopy equivalence $\text{conf}(A,\mathcal{W},w)\simeq \text{cell}(A,\mathcal{W},w)$. 
Moreover, the homotopy equivalences for $w$ and $w+1$ commute up to homotopy with the inclusions $\text{cell}(A,\mathcal{W},w)\hookrightarrow \text{cell}(A,\mathcal{W}, w+1)$ and $\text{conf}(A,\mathcal{W}, w)\hookrightarrow\text{conf}(A,\mathcal{W},w+1)$.
\end{prop}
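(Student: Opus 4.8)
The plan is to show that the Alpert--Kahle--MacPherson argument is sufficiently local and combinatorial that it adapts verbatim to the weighted setting, and then to check the naturality statement by tracking the deformation retraction carefully. First I would recall the structure of the Alpert--Kahle--MacPherson proof of \cite[Theorem 3.1]{alpert2021configuration}: one builds a ``combing'' or discretization of a configuration in $\text{conf}(n,w)$ that records the left-to-right order in which disks are encountered and which disks are horizontally ``blocked'' by which, producing a point of the polyhedral complex $\text{cell}(n,w)$; conversely a cell of $\text{cell}(n,w)$ is realized by placing the disks of each block into a vertical stack at a common $x$-coordinate. The key point is that the only geometric constraint used is that two unit disks obstruct each other's horizontal passage precisely when they are within vertical distance $1$, so that a block of $\text{cell}(n,w)$ corresponds to a vertically-stacked column, and the bound ``at most $w$ disks per block'' is exactly the condition that such a column fits in a strip of width $w$.

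The main modification is bookkeeping: replace ``number of disks in a block $\le w$'' by ``sum of weights in a block $\le w$,'' which is exactly the condition that a vertical stack of disks of those diameters fits in a strip of width $w$. Since all the retraction moves in the Alpert--Kahle--MacPherson argument (sliding a disk horizontally past another when there is vertical room, collapsing a near-column to an exact column) only ever depend on the diameters of the two disks involved, not on their being equal to $1$, the homotopy equivalence $\text{conf}(A,\mathcal{W},w)\simeq\text{cell}(A,\mathcal{W},w)$ follows by rerunning their proof with these diameters inserted. I would either quote their argument and indicate the line-by-line substitutions, or, if a self-contained treatment is wanted, reproduce the Morse-theoretic/discretization skeleton with weights as parameters throughout.

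For the naturality claim, I would use that the strip of width $w$ includes into the strip of width $w+1$ as the sub-strip $\tfrac12\le y\le w-\tfrac12$, which induces $\text{conf}(A,\mathcal{W},w)\hookrightarrow\text{conf}(A,\mathcal{W},w+1)$, and that $\text{cell}(A,\mathcal{W},w)\hookrightarrow\text{cell}(A,\mathcal{W},w+1)$ is the subcomplex inclusion (any symbol legal for width $w$ is legal for width $w+1$). The homotopy equivalence of Alpert--Kahle--MacPherson is realized by an explicit deformation retraction of $\text{conf}(A,\mathcal{W},w)$ onto a subspace identified with (a rectilinear model of) $\text{cell}(A,\mathcal{W},w)$; I would observe that this retraction, applied inside the wider strip, restricts on the sub-strip to the retraction for the narrower strip --- the retracting homotopy only pushes disks horizontally and compresses vertical stacks, and a configuration already living in the sub-strip, whose associated symbol uses only width-$w$ blocks, never needs to use the extra vertical room, so its retraction trajectory stays in the sub-strip. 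This gives a commuting square of deformation retractions, hence a square commuting up to homotopy after passing to the homotopy inverses $\text{cell}\to\text{conf}$; a standard diagram chase with the homotopies converts ``the retractions agree on the nose'' into ``the equivalences commute up to homotopy with the inclusions.''

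The main obstacle I anticipate is not conceptual but expository: making precise the claim that the Alpert--Kahle--MacPherson retraction ``restricts correctly,'' because their homotopy equivalence is stated, not built by a single clean explicit formula, and one must verify that at no intermediate time does a disk that started inside $\tfrac12\le y\le w-\tfrac12$ get pushed above $y=w-\tfrac12$. This amounts to checking that the vertical repositioning in their proof is monotone --- it only ever compresses stacks toward a baseline, never expands them --- which is true in their construction but needs to be isolated and stated as a property before it can be invoked. Once that monotonicity is in hand, both halves of the proposition are immediate.
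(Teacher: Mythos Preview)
Your proposal is correct and takes essentially the same approach as the paper: the paper simply asserts, in the sentence preceding the proposition, that ``Alpert--Kahle--MacPherson's proof that $\text{conf}(n,w)$ and $\text{cell}(n,w)$ are homotopy equivalent \cite[Theorem 3.1]{alpert2021configuration} generalizes to $\text{conf}(A, \mathcal{W}, w)$ and $\text{cell}(A, \mathcal{W}, w)$,'' and gives no further proof. Your outline is thus considerably more detailed than what the paper provides---in particular your discussion of the naturality square and the monotonicity of the vertical repositioning goes well beyond the paper's one-line justification.
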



A cell $a_{1, 1}\,\cdots\, a_{1, n_{1}}|\cdots|a_{m, 1}\,\cdots\, a_{m, n_{m}}$ in $\text{cell}(A,\mathcal{W},w)$ is homotopy equivalent to the following subspace of $\text{conf}(A,\mathcal{W},w)$:
A block $a_{i, 1}\,\cdots\, a_{i, n_{i}}$ corresponds to the subspace of the weighted configuration space $\text{conf}\big(\{a_{i, 1},\dots, a_{i, n_{i}}\}, \{w_{a_{i, 1}},\dots, w_{a_{i, n_{i}}}\},w\big)$ where the disk labeled $a_{i, n_{i}}$ (of diameter $w_{i, n_{i}}$) is never above all of the disks $a_{i, 1},\dots a_{i, n_{i}-1}$.
The disk labeled $a_{i, n_{i}-1}$ (of diameter $w_{i, n_{i}-1}$) can be anywhere in the strip as long as its bottom does not go below the bottom of $a_{i, n_{i}}$.
The other disks are similar, as the disk labeled $a_{i, j}$ (of diameter $w_{i,j}$) must be no lower than the bottom of the disk labeled $a_{i, j+1}$.
Taking the product of these subspaces in this order gives a subspace of $\text{conf}(A,\mathcal{W},w)$, as the disks in the $i^{\text{th}}$ block are always to the left of any of the disks in the $(i+1)^{\text{th}}$ block. 
See Figure \ref{celltoconf}.

\begin{figure}[h]
\centering
\captionsetup{width=.8\linewidth}
\includegraphics[width = 14cm]{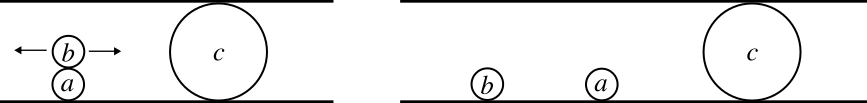}
\caption{On the left: The subspace of $\text{conf}\big(\{a, b, c\}, \{1,1,3\}, 3\big)$ corresponding to the cell $b\,a|c$ in $\text{cell}\big(\{a, b, c\}, \{1,1,3\}, 3\big)$.
On the right: The subspace of $\text{conf}\big(\{a, b, c\}, \{1,1,3\}, 3\big)$ corresponding to the cell $b|a|c$ in $\text{cell}\big(\{a, b, c\}, \{1,1,3\}, 3\big)$. 
This subspace forms part of the boundary of the subspace on the left. 
See the top right of Figure \ref{weightedcell3}.}
\label{celltoconf}
\end{figure}

If $S_{A}$ is the symmetric group on the set $A$ and $S_{A, \mathcal{W}}\subseteq S_{A}$ is the subgroup of $S_{A}$ that consists of permutations of elements with the same weight, then there is an action of $S_{A, \mathcal{W}}$ on $\text{conf}(A,\mathcal{W},w)$ and $\text{cell}(A,\mathcal{W},w)$, and the homotopy equivalence of Proposition \ref{weightedconfiscell} is equivariant with respect to this action.

From now on we use $\text{conf}(n,w)$ and $\text{cell}(n,w)$ interchangeably, and we will do the same for their weighted variants.

Our results depend on determining relations between classes in the rational homology of $\text{cell}(A, \mathcal{W}, w)$. 
We use Alpert--Manin's conventions to specify orientations on cells and signs for the boundary operator to determine a cellular chain structure on these cellular complexes. 
First, we recall the definition of the concatenation product of cells. 

\begin{defn}
Given symbols $f_{A}$ and $f_{B}$ on disjoint sets $A$ and $B$ the \emph{concatenation product} of $f_{A}$ and $f_{B}$ is $f_{A}|f_{B}$. 
This induces a map
\[
|:\text{cell}(A)\times \text{cell}(B)\to \text{cell}(A\sqcup B).
\]
\end{defn}

Given Proposition \ref{weightedconfiscell} the concatenation product of $f_{A}$ with $f_{B}$ corresponds to putting a configuration of disks with labels in $A$ to the left of a configuration of disks with labels in $B$.  

\begin{exam}
The cell $2\,4|3\,5\,1$ is a concatenation product of top dimensional cells of $\text{cell}\big(\{2,4\}\big)$ and $\text{cell}\big(\{1, 3, 5\}\big)$. 
See Figure \ref{concatenationproduct}.
\end{exam}

\begin{figure}[h]
\centering
\captionsetup{width=.8\linewidth}
\includegraphics[width = 14cm]{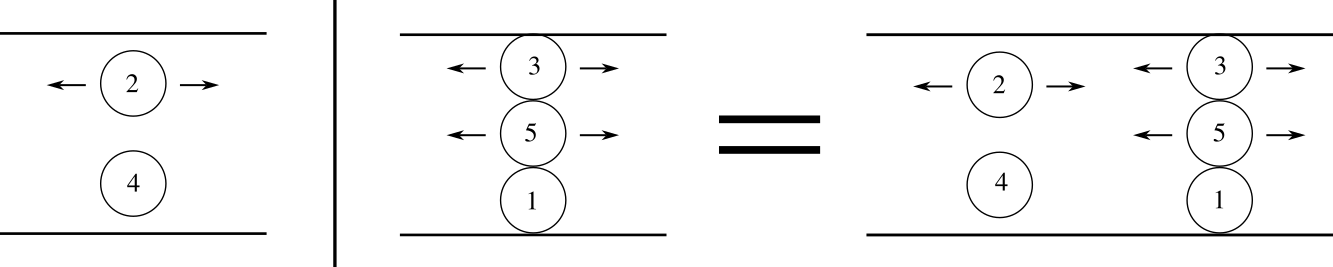}
\caption{The corresponding image in $\text{conf}(5,3)$ of the concatenation product of $2\,4$ with $3\,5\,1$.
 Note this is the product of a $1$-cell with a $2$-cell, and the resulting object is the $3$-cell $2\,4|3\,5\,1$.}
\label{concatenationproduct}
\end{figure}

Any cell in $\text{cell}(A)$ is either a top-dimensional cell or a concatenation product of cells of smaller dimension. 
The sign of a cell $f=e_{1}|e_{2}$ in the boundary of a top dimensional cell $g$ of $\text{cell}(A)$ is defined to be
\[
(-1)^{\text{length}(e_{1})}\cdot \text{sign}(\text{permutation }g\to e_{1}e_{2}),
\]
where $\text{length}(e_{1})$ is the number of elements in the block $e_{1}$, and this allows us to define a boundary map $\partial$ on the top dimensional cells. 
We extend $\partial$ to the product of cells $g_{1}|g_{2}$ via a Leibniz rule:
\[
\partial(g_{1}|g_{2}):=\partial g_{1}|g_{2}+(-1)^{\dim(g_{1})}g_{1}|\partial g_{2}.
\]

Moreover, the concatenation product defines an injective chain complex homomorphism on cellular chains,
\[
|:C_{*}\big(\text{cell}(A)\big)\otimes C_{*}\big(\text{cell}(B)\big)\to C_{*}\big(\text{cell}(A\sqcup B)\big),
\]
using the standard tensor product on chain complexes, where the differential is defined by
\[
\partial(a\otimes b):=\partial a\otimes b+(-1)^{\text{deg}(a)}a\otimes \partial b.
\]

While we could simply adopt these conventions for our weighted cellular complexes without any modification as Alpert--Manin do, we choose instead choose to define weighted versions of $\text{length}$, $\text{dim}$, and $\text{deg}$, which we denote $\text{wlength}$, $\text{wdim}$, and $\text{wdeg}$, respectively. 
This will allow us to more easily consider certain symmetric group actions on chains in these cellular complexes.
We do this on a block $e$ by setting $\text{wlength}(e)$ to be the sum of the weights of the elements in $e$, setting $\text{wdim}(e)$ to be $1$ less than the sum of the weights of elements in $e$, and the weighted degree $\text{wdeg}(e)$ to be $1$ less than the sum of the weights of elements in $e$.
Additionally, we define the weighted sign of a permutation of weighted elements, which we denote $\text{wsgn}$, by setting the weighted sign of a transposition of weighted elements $a$ and $b$ to be $(-1)^{w_{a}w_{b}}$ and extending this to other permutations.
These definitions give $\text{cell}(A,\mathcal{W})$ a chain complex structure that we will use throughout the paper.

Alpert--Manin proved that $\partial$ is a boundary operator \cite[Proposition 2.3]{alpert2021configuration1}, and 
their proof extends to the weighted complexes.

In their computation of a basis for $H_{k}\big(\text{cell}(n,w)\big)$, Alpert--Manin used simpler cellular complexes called (weighted) permutohedra. 
We will use weighted permutohedra and our weighted differential for several important calculations.

\begin{defn}
The \emph{permutohedron} $P(n)$ is the cellular complex where cells are represented by \emph{symbols} consisting of the set $\{1,\dots, n\}$, broken into nonempty \emph{blocks} where the ordering of the elements of a block does not matter.
A cell $f$ is a top dimensional face of a cell $g$ if $g$ can be obtained by deleting a bar in $f$.
\end{defn}

\begin{exam}
The permutohedron $P(3)$ has a single $2$-cell: $1\,2\,3$; six $1$-cells: $1|2\,3$, $2\,3|1$, $2|1\,3$, $1\,3|2$, $3|1\,2$, $1\,2|3$, each of which is a face of the $2$-cell; and six $0$-cells: $1|2|3$, $1|3|2$, $2|1|3$, $2|3|1$, $3|1|2$, and $3|2|1$. 
See Figure \ref{permutohedra}.
\end{exam}

\begin{defn}
Given a weighted set $(A, \mathcal{W})$, the \emph{weighted permutohedron} $P(A, \mathcal{W}, w)$ is the cellular complex such that cells are represented by symbols, i.e., the elements of $A$, broken into blocks such that the order of the elements in a block does not matter and the total weight of the elements in any block is at most $w$. 
A cell $f$ is a top dimensional face of a cell $g$ if $g$ can be obtained by deleting a bar in $f$. 
If there are no restrictions on the weight of a cell, we write $P(A)$. 
See Figure \ref{permutohedra} for an example.
\end{defn}

\begin{figure}[H]
\centering
\captionsetup{width=.8\linewidth}
\includegraphics[width = 12cm]{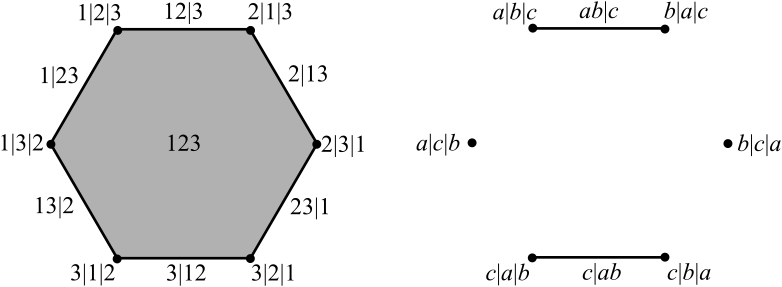}
\caption{On the left the permutodron $P(3)$. On the right, the weighted permutohedron $P\big(\{a, b, c\}, \{1, 1, 3\}, 3\big)$. Compare with Figure \ref{weightedcell3}.}
\label{permutohedra}
\end{figure}

From now on, we will order $A=\{a_{1},\dots, a_{n}\}$ such that $a_{1}<\cdots<a_{n}$ and represent the cells of $P(A)$ by symbols such that elements in each block are increasing order. 
This ordering induces an inclusion 
\[
i_{id}:P(A,\mathcal{W}, w)\hookrightarrow\text{cell}(A, \mathcal{W}, w),
\]
that sends a cell in $P(A,\mathcal{W}, w)$ to the cell in $\text{cell}(A, \mathcal{W}, w)$ with the same label.
By pulling back the boundary operator $\partial$ along $i_{id}$ we can study cellular chains in $P(A,\mathcal{W}, w)$, see Figure \ref{permutohedrainclusion}.

\begin{figure}[h]
\centering
\captionsetup{width=.8\linewidth}
\includegraphics[width = 12cm]{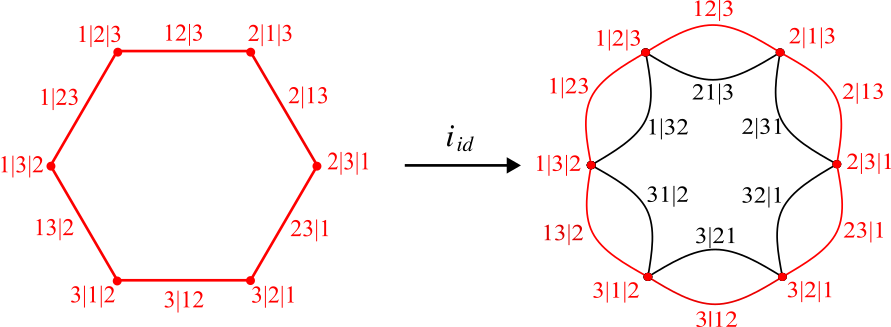}
\caption{The image (red outer loop) of the permutohedron $P\big(\{1,2,3\}, \{1,1,1\}, 2\big)$ in $\text{cell}\big(\{1,2,3\}, \{1,1,1\}, 2\big)$, under the map $i_{id}$.}
\label{permutohedrainclusion}
\end{figure}

Much like $\text{cell}(A, \mathcal{W}, w)$, one can interpret $P(A, \mathcal{W}, w)$ as a configuration space.

\begin{defn}
Given a weighted set $(A, \mathcal{W})$ the \emph{weighted no-$(w+1)$-equal space} $\text{no}_{w+1}(A, \mathcal{W})$ is the space of configurations of $|A|$ weighted points in $\R$ with weights in $\mathcal{W}$ such that no set of coincident points has total weight greater than $w$.
\end{defn}

Alpert--Manin proved that $P(A, \mathcal{W}, w)$ is homotopy equivalent to $\text{no}_{w+1}(A, \mathcal{W})$ \cite[Theorem 2.4]{alpert2021configuration1}, and used this homotopy equivalence to decompose the homology of the ordered configuration space of $n$ unit-diameter disks in the infinite strip of width $w$.

Next, we study the rational homology of $\text{conf}(n,w)$ by examining families of chain maps.

\section{The Rational Homology of $\text{conf}(n,w)$}\label{rat hom sec}
In this section we recall the definitions of Alpert--Manin's two types of homology generators: wheels and filters. 
We define a new type of rational homology class that we will call averaged-filters, and demonstrate a relation in rational homology between concatenation products of wheels and these averaged-filters. 
We use this relation and several others to give a basis for rational homology distinct from Alpert--Manin's basis.

\subsection{Spin-maps}
In order to define the homology classes called wheels, filters, and averaged-filters that we will use throughout the paper we need to define what we will call $\text{spin}$-maps between weighted cellular complexes. 
These are not the $\text{spin}$-maps of Section 4 of Alpert--Manin \cite[Section 4]{alpert2021configuration1}, which are maps from weighted permutohedra to weighted configuration spaces, rather they are maps from weighted configuration spaces to weighted configuration spaces that Alpert--Manin reference in passing.
First, we state a generalized version of a technical lemma of Alpert--Manin that will allow us to construct our $\text{spin}$-maps \cite[Lemma 4.6]{alpert2021configuration1}.
The proof of this lemma follows from their proof with the obvious generalizations.

\begin{lem}\label{AM technical lemma}
Let $(A, \mathcal{W})$ be a finite weighted set and $\big(\{a\}, \{w_{a}\}\big)$, $\big(\{b\}, \{w_{b}\}\big)$, and $\big(\{c\}, \{w_{c}\}\big)$ be weighted pairs not in $(A, \mathcal{W})$ such that $w_{a}=w_{b}+w_{c}$. 
Then, there is a map
\[
\iota:[0,1]\times P\big(A\cup \{a\}, \mathcal{W}\cup \{w_{a}\}\big)\to P\big(A\cup\{b,c\},\mathcal{W} \cup \{w_{b},w_{c}\}\big)
\]
with the following properties:
\begin{enumerate}
\item It is a homeomorphism and a cellular map. 
That is, it sends every $k$-face to a disk which is a union of $k$-faces.
\item It is equivariant with respect to the $\Z/ 2\Z$-actions given by $t\to 1-t$ on the domain $[0,1]$ and $b\leftrightarrow c$ on the codomain.
\item For each cell $f$ of $P\big(A\cup \{a\}, \mathcal{W}\cup \{w_{a}\}\big)$, the image of $(0,1)\times f$ under $\iota$ is the interior of the cell of $P\big(A\cup\{b,c\},\mathcal{W} \cup \{w_{b},w_{c}\}\big)$ with the same blocks except that $a$ is replaced in its block by $b\,c$.
\item It takes $\{0\}\times P\big(A\cup \{a\}, \mathcal{W}\cup \{w_{a}\}\big)$ to the union of those cells in which $b$ and $c$ are contained in separate blocks with $b$ preceding $c$. 
Similarly, it takes $\{1\}\times P\big(A\cup \{a\}, \mathcal{W}\cup \{w_{a}\}\big)$ to the union of those cells in which $c$ precedes $b$.
\end{enumerate}
See Figure \ref{iotamap}.
\end{lem}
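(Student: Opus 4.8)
The plan is to deduce this from Alpert--Manin's Lemma 4.6 \cite[Lemma 4.6]{alpert2021configuration1} by essentially a verbatim transcription, tracking the weights. The original lemma is the special case in which every element of $A\cup\{a\}$ has weight $1$ and $w_a = 2$, $w_b = w_c = 1$; the construction there is geometric and does not actually use that the weights are $1$, only the combinatorial structure of which blocks are allowed and the fact that splitting $a$ into the consecutive pair $b\,c$ in a block is legal precisely when the original block containing $a$ had room. So first I would recall their construction: on the no-$(w{+}1)$-equal space model, a point of $P\big(A\cup\{b,c\},\mathcal{W}\cup\{w_b,w_c\}\big)$ determines a point of $[0,1]\times P\big(A\cup\{a\},\mathcal{W}\cup\{w_a\}\big)$ by recording where $b$ and $c$ sit relative to each other (the $[0,1]$ coordinate, degenerating to the endpoints when $b,c$ land in different blocks) and then merging $b,c$ into a single weight-$w_a$ point $a$. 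The inverse map spreads $a$ back into an ordered consecutive pair $b\,c$ inside its block, using the $[0,1]$ coordinate to interpolate between ``$b$ just left of $c$'' and ``$c$ just left of $b$.'' Because $w_b + w_c = w_a$, the total weight of every block is unchanged under merging/splitting, so the weight constraint ``total weight $\le w$'' is respected in both directions, and the map is well defined on the nose.

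Next I would verify the four listed properties, each of which is immediate once the map is in hand. For (1): the map is a homeomorphism by construction (it has an explicit inverse), and it is cellular because merging $b,c$ sends the open cell indexed by a given block decomposition to the open cell with $b\,c$ replaced by $a$, while the endpoint slices $\{0\},\{1\}$ hit unions of the lower cells where $b,c$ separate; a $k$-cell of the domain maps onto a union of $k$-cells of the codomain since splitting one point into two consecutive points inside a block raises the dimension of that block by exactly one only when the two points occupy distinct blocks, and otherwise preserves it — the $[0,1]$ factor supplies the extra dimension. Property (2) is the symmetry $t\mapsto 1-t$ corresponding to swapping the roles of $b$ and $c$, which is visibly built into the interpolation. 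Property (3) is the description of the interiors of top-and-intermediate cells, which is exactly how the map was defined on open cells. Property (4) is the boundary behavior at $t=0,1$, again by construction: at $t=0$ we are in the limit where $b$ has been pushed just to the left of $c$ and then separated into the preceding block, and symmetrically at $t=1$.

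The main (and essentially only) obstacle is bookkeeping rather than mathematics: one must check that Alpert--Manin's argument, which is written for the homotopy model $\mathrm{no}_{w+1}$ and for unit weights, goes through when blocks are weighted, i.e., that nowhere in their proof is the integer $w$ or the cardinality of a block used except through the inequality (sum of weights in a block) $\le w$, and that the cellular structure on $P(A,\mathcal{W},w)$ pulled back along $i_{id}$ matches the one used implicitly there. Since the excerpt already asserts (after Definition of $P(A,\mathcal{W},w)$ and in Proposition \ref{weightedconfiscell}) that the Alpert--Kahle--MacPherson and Alpert--Manin homotopy equivalences generalize to the weighted setting, and since the hypothesis $w_a = w_b + w_c$ is precisely what makes the block-weight inequality transparently invariant under the split/merge operation, this check is routine. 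I would therefore present the proof as: ``This is \cite[Lemma 4.6]{alpert2021configuration1} with the evident modifications; we indicate the needed changes,'' then give the merge/split description of $\iota$ and its inverse, and conclude by noting that properties (1)--(4) are verified exactly as in \emph{loc. cit.}, with ``cardinality of a block'' replaced throughout by ``total weight of a block'' and with the legality of the split guaranteed by $w_b + w_c = w_a$.
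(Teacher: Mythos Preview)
Your proposal is correct and matches the paper's own treatment: the paper does not give an independent proof of this lemma but simply states that ``the proof of this lemma follows from their proof with the obvious generalizations,'' referring to \cite[Lemma 4.6]{alpert2021configuration1}. Your write-up supplies exactly those obvious generalizations (replace cardinality of a block by total weight, use $w_a=w_b+w_c$ to see that block weights are preserved under merge/split), which is precisely what the paper leaves implicit; one small note is that the lemma as stated concerns the unrestricted permutohedra $P(\cdot,\cdot)$ rather than $P(\cdot,\cdot,w)$, so the ``total weight $\le w$'' check you mention is not actually needed here, though it does no harm.
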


\begin{figure}[h]
\centering
\captionsetup{width=.8\linewidth}
\includegraphics[width = 10cm]{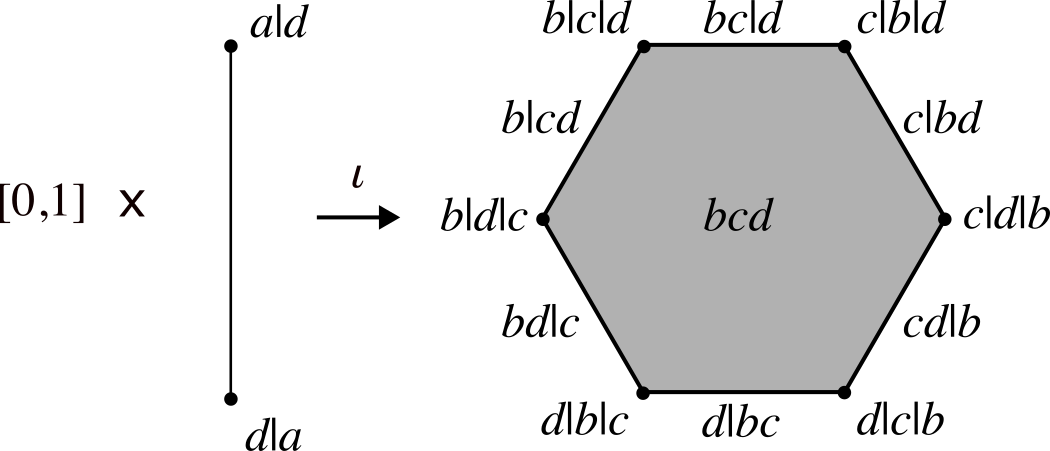}
\caption{The map $\iota:[0,1]\times P\big(\{a, d\}, \{2, 1\}\big)\to P\big(\{b,c, d\}, \{1,1, 1\}\big)$ that replaces $\big(\{a\}, \{2\}\big)$ with $\big(\{b,c\}, \{1,1\}\big)$. 
}
\label{iotamap}
\end{figure}
Recall that ordering $A\cup\{b,c\}$ and labeling the cells of $P\big(A\cup\{b,c\},\mathcal{W}\cup\{w_{b}, w_{c}\}\big)$ so that in each block of a cell the labels are in increasing order induces the map
\[
i_{id}:P\big(A\cup\{b,c\},\mathcal{W}\cup\{w_{b}, w_{c}\}\big)\hookrightarrow\text{cell}\big(A\cup\{b,c\},\mathcal{W}\cup\{w_{b}, w_{c}\}\big)
\]
that sends a cell in $P\big(A\cup\{b,c\},\mathcal{W}\cup\{w_{b}, w_{c}\}\big)$ to the cell in $\text{cell}\big(A\cup\{b,c\},\mathcal{W}\cup\{w_{b}, w_{c}\}\big)$ with the same label.
It follows that $i_{id}$ induces a chain map.

We also have the map
\[
i_{\tau}:P\big(A\cup\{b,c\},\mathcal{W}\cup\{w_{b}, w_{c}\}\big)\hookrightarrow\text{cell}\big(A\cup\{b,c\},\mathcal{W}\cup\{w_{b}, w_{c}\}\big)
\]
that sends a cell in $P\big(A\cup\{b,c\},\mathcal{W}\cup\{w_{b}, w_{c}\}\big)$ where $b$ and $c$ are in different blocks to the cell with the same label in $\text{cell}\big(A\cup\{b,c\},\mathcal{W}\cup\{w_{b}, w_{c}\}\big)$, and that sends a cell in $P\big(A\cup\{b,c\},\mathcal{W}\cup\{w_{b}, w_{c}\}\big)$ where $b$ and $c$ are in the same block to the similarly labeled cell in $\text{cell}\big(A\cup\{b,c\},\mathcal{W}\cup\{w_{b}, w_{c}\}\big)$ where every label but $b$ and $c$ is in the same place, but the order of $b$ and $c$ is switched.
Like $i_{id}$, the map $i_{\tau}$ induces a chain map.

In Lemma \ref{AM technical lemma} we made the choice to replace $a$ with $b\,c$; alternatively, we could have replace $a$ with $c\,b$.
These two choices coincide on the image of $\{0, 1\}\times P\Big((A, \mathcal{W})\cup \big(\{a\}, \{w_{a}\}\big)\Big)$ under $\iota$. 
Identifying the two subspaces gives a map we call $\text{spin}'$:
\[
\text{spin}':S^{1}\times P\big(A\cup \{a\}, \mathcal{W}\cup \{w_{a}\}\big)\to \text{cell}\big(A\cup\{b,c\},\mathcal{W}\cup\{w_{b}, w_{c}\}\big).
\]
Since top cells of $\text{cell}\big(A\cup \{a\}\big)$ correspond to orderings of $A\cup \{a\}$, this yields a map
\[
\text{spin}_{a:b,c}:\text{cell}\big(A\cup \{a\}, \mathcal{W}\cup \{w_{a}\}\big)\to \text{cell}\big(A\cup\{b,c\},\mathcal{W}\cup\{w_{b}, w_{c}\}\big),
\]
which, by restricting to blocks of weight at most $w$, gives a map
\[
\text{spin}_{a:b,c}:\text{cell}\big(A\cup \{a\}, \mathcal{W}\cup \{w_{a}\}, w\big)\to \text{cell}\big(A\cup\{b,c\},\mathcal{W}\cup\{w_{b}, w_{c}\}, w\big).
\]
The $\Z/2\Z$-equivariance of $\iota$ means that $\text{spin}_{a:b,c}$ is well-defined.

Note that the $\text{spin}$-maps induce chain maps, which we also call $\text{spin}$:
\[
\text{spin}_{a:b,c}:C_{*}\Big(\text{cell}\big(A\cup \{a\}, \mathcal{W}\cup \{w_{a}\}, w\big)\Big)\to C_{*+1}\Big(\text{cell}\big(A\cup\{b,c\},\mathcal{W}\cup\{w_{b}, w_{c}\}, w\big)\Big).
\]
These maps send an $n$-cell in $\text{cell}\big(A\cup \{a\}, \mathcal{W}\cup \{w_{a}\}, w\big)$ to the signed sum of the two $(n+1)$-cells in $\text{cell}\big(A\cup\{b,c\},\mathcal{W}\cup\{w_{b}, w_{c}\}, w\big)$ where $b\,c$ and $c\,b$ take the place of $a$ and every other label remains the same.
Here the cell where $b\,c$ replaces $a$ has the same sign as the cell with $a$ in it, and the cell with $c\,b$ in it has a sign of $(-1)^{w_{b}w_{c}-1}$.
This ensures that $\text{spin}_{a:b,c}$ is a chain map.
See Figure \ref{spinabcmap} for an example of a $\text{spin}$-map.

\begin{figure}[h]
\centering
\captionsetup{width=.8\linewidth}
\includegraphics[width = 10cm]{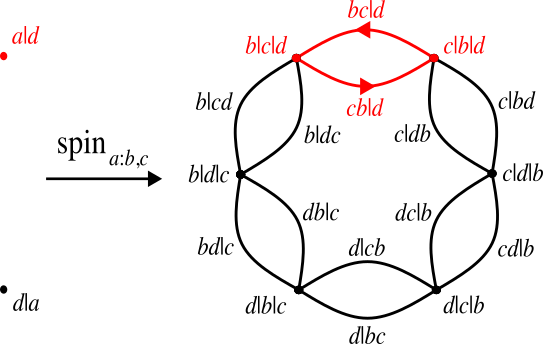}
\caption{The map $\text{spin}_{a:b,c}$ from $\text{cell}\big(\{a, d\}, \{2, 1\}, 2\big)$ to $\text{cell}\big(\{b,c, d\}, \{1,1, 1\}, 2\big)$. 
The image of the $0$-cycle $a|d$ is in red.
}
\label{spinabcmap}
\end{figure}

\begin{prop}\label{spinorder}
Let $(A,\mathcal{W})$ be a weighted set and $\big(\{a\}, \{w_{a}\}\big)$, $\big(\{b\}, \{w_{b}\}\big)$, $\big(\{c\}, \{w_{c}\}\big)$, $\big(\{a'\}, \{w_{a'}\}\big)$, $\big(\{b'\}, \{w_{b'}\}\big)$, and $\big(\{c'\}, \{w_{c'}\}\big)$ be weighted symbols not in $(A,\mathcal{W})$ such that $w_{a}=w_{b}+w_{c}$ and $w_{a'}=w_{b'}+w_{c'}$. Then, $\text{spin}_{a:b,c}$ and $\text{spin}_{a':b',c'}$ commute when their composition is viewed as a map $C_{*}\Big(\text{cell}\big(A\cup\{a, a'\}, \mathcal{W}\cup \{w_{a},w_{a'}\}, w\big)\Big)\to C_{*+2}\Big(\text{cell}\big(A\cup\{b,c, b', c'\}, \mathcal{W}\cup \{w_{b},w_{c}, w_{b'}, w_{c'}\}, w\big)\Big)$.
\end{prop}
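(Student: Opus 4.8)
The plan is to prove the slightly stronger statement that the two composites agree \emph{exactly}, not merely up to sign, and to do this by a direct computation on cellular chains; the whole content is sign bookkeeping. Recall from the construction preceding the proposition that, on chains, $\text{spin}_{a:b,c}$ sends a cell $g$ of $\text{cell}\big(A\cup\{a,a'\},\mathcal{W}\cup\{w_{a},w_{a'}\},w\big)$ to $g[a\to b\,c]+(-1)^{w_{b}w_{c}-1}\,g[a\to c\,b]$, where $g[a\to b\,c]$ denotes the cell obtained from the symbol $g$ by replacing the symbol $a$, wherever it occurs, by the ordered pair $b\,c$ while leaving all other symbols and all bars in place (this term has coefficient $+1$), and $g[a\to c\,b]$ is defined the same way with $b$ and $c$ in the opposite order; the analogous formula describes $\text{spin}_{a':b',c'}$ in terms of $a',b',c'$.

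First I would record two elementary facts. The substitution $g\mapsto g[a\to b\,c]$ alters only the symbol $a$ and introduces the new symbols $b,c$; it leaves the symbol $a'$, its position, and its block untouched, and symmetrically for $\text{spin}_{a':b',c'}$. Hence, for any cell $g$, each of the four cells $g[a\to b\,c][a'\to b'\,c']$, $g[a\to b\,c][a'\to c'\,b']$, $g[a\to c\,b][a'\to b'\,c']$, $g[a\to c\,b][a'\to c'\,b']$ is well defined independently of the order in which the two substitutions are applied, whether or not $a$ and $a'$ lie in the same block of $g$. Moreover, since $w_{a}=w_{b}+w_{c}$ and $w_{a'}=w_{b'}+w_{c'}$, neither substitution changes the total weight of any block, so the constraint ``every block has weight $\le w$'' holds at every intermediate stage in both orders, and both composites are maps $C_{*}\Big(\text{cell}\big(A\cup\{a,a'\},\mathcal{W}\cup\{w_{a},w_{a'}\},w\big)\Big)\to C_{*+2}\Big(\text{cell}\big(A\cup\{b,c,b',c'\},\mathcal{W}\cup\{w_{b},w_{c},w_{b'},w_{c'}\},w\big)\Big)$.

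Then I would expand. Applying the two-term formula for $\text{spin}_{a:b,c}$ and then that for $\text{spin}_{a':b',c'}$ (or the other way round) to $g$ gives, in both orders, the sum of the same four cells above, in which $g[a\to b\,c][a'\to b'\,c']$ carries coefficient $+1$, the cell with only the $a$-part reversed carries $(-1)^{w_{b}w_{c}-1}$, the cell with only the $a'$-part reversed carries $(-1)^{w_{b'}w_{c'}-1}$, and $g[a\to c\,b][a'\to c'\,b']$ carries $(-1)^{w_{b}w_{c}+w_{b'}w_{c'}}$. Since these coefficients do not depend on the order, the two composites coincide.

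The step deserving the most care — and the reason the statement is worth recording — is the absence of any extra sign arising from applying one $\text{spin}$-map to a chain whose degree has already been raised by the other. This works precisely because the chain-level formula for a $\text{spin}$-map carries no degree-dependent sign: every cell has its dimension raised by exactly one, and the coefficients $1$ and $(-1)^{w_{b}w_{c}-1}$ depend only on the weights, not on the cell to which the map is applied. Consequently the two substitution signs simply multiply, in the same way in either order, and one does not pick up the extra sign $(-1)^{1\cdot 1}=-1$ that a naive ``graded commutativity of two odd-degree operations'' heuristic might lead one to expect.
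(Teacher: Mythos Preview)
Your proof is correct and is exactly what the paper's one-line proof (``This follows immediately from the definition of the $\text{spin}$-maps'') means when spelled out: you expand both composites using the two-term substitution formula for each $\text{spin}$-map and observe that the resulting four cells and their weight-dependent coefficients are the same in either order. Your additional remark about the absence of a Koszul-type sign is a useful clarification, but the underlying approach is identical to the paper's.
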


\begin{proof}
This follows immediately from the definition of the $\text{spin}$-maps. See Figure \ref{spinmaponconfpic}.
\end{proof}


We will write $\text{spin}_{a:b,c}$ for the chain maps induced by $\text{spin}_{a:b,c}$. 
Next, we use the $\text{spin}$-maps to define the homology classes of wheels and filters that Alpert-Manin uses to describe a basis for the homology of $\text{cell}(n,w)$.

\subsection{Wheels and Filters}

Alpert--Manin noted that ordering the elements of $S_{n}$ and considering the correspondence between the top dimensional cells of $\text{cell}(n)$ and the elements of $S_{n}$ gives one a filtration of $\text{cell}(n)$, see \cite[Section 4.1]{alpert2021configuration1}.
They used this filtration to decompose the homology of $\text{cell}(n,w)$ as a direct sum of homology groups of certain weighted permutohedra. 
We recall some of their results, which will be of use to us later, and define generalized notions of wheels and filters, which will allow us to better describe the relations in our presentation for $H_{*}\big(\text{conf}(\bullet, w);\Q\big)$ as a twisted algebra.

\begin{defn}
Write the elements of $S_{n}$ as permutations of $1\cdots n$. The \emph{lexicographic ordering} on $S_{n}$ is a total ordering on $S_{n}$ such that $\sigma<\rho$ if the first value where the permutations of representing $\sigma$ and $\rho$ disagree when read left to right is smaller in $\sigma$ than in $\rho$.
\end{defn}

\begin{exam}
In $S_{5}$, $2\,3\,1\,4\,5<2\,3\,4\,1\,5$, as $2\,3\,1\,4\,5$ and $2\,3\,4\,1\,5$ first differ in the third entry where $1<4$.
\end{exam}

\begin{defn}
Given an element $\sigma\in S_{n}$, an \emph{axle} of $\sigma$ is the largest entry of $\sigma$ to that point, and a \emph{wheel} is the axle and all the following smaller entries before the next axle.
\end{defn}

\begin{defn}
Given $\sigma\in S_{n}$, we write \emph{$n-\#\sigma$} for \emph{number of wheels} in $\sigma$, and \emph{$\big(n-\#\sigma,\mathcal{W}(\sigma)\big)$} for the ordered weighted set whose elements are labeled by the wheels of $\sigma$ such that the weight of a wheel is the number of entries in the wheel and such that the set of wheels is ordered by the largest element in each wheel.
For a weighted set $(A, \mathcal{W})$ and $\sigma\in S_{A}$, we write \emph{$\big(A-\#\sigma,\mathcal{W}(\sigma)\big)$} for the ordered weighted set whose elements are labeled by the wheels of $\sigma$ such that the weight of a wheel is the total weight of the elements in the wheel and such that the set of wheels is ordered by the largest element in each wheel.
\end{defn}

\begin{exam}
The permutation $4\,5\,6\,7\,2\,8\,1\,3\in S_{8}$ has wheels $4$, $5$, $6$, $7\,2$, and $8\,1\,3$ with axles $4$, $5$, $6$, $7$ and $8$, respectively. 
The weighted set corresponding to $4\,5\,6\,7\,2\,8\,1\,3$ is $\big(\{4, 5, 6, 7\,2, 8\,1\,3\}, \{1,1,1,2,3\}\big)$.
\end{exam}

We will consider the weighted cellular complex $\text{cell}\big(n-\#\sigma, \mathcal{W}(\sigma),w\big)$,
and note that when all the cells have weight $1$ we have $\text{cell}\big(\{1,\dots, n\}, (1,\dots, 1), w\big)\simeq \text{cell}(n,w)$. 

\begin{defn}
Given $\sigma\in S_{A}$ the map \emph{$\text{spin}_{\sigma}$} is the map from $\text{cell}(A-\#\sigma, \mathcal{W}(\sigma), w)$ to $\text{cell}(A,\mathcal{W},w)$ obtained by applying a sequence of maps of the form $\text{spin}_{a:b,c}$, where these $\text{spin}$-maps unravel each wheel of $\sigma$ one disk at a time by splitting the wheel $a$ into wheels $b$ and $c$ such that $c$ is the last entry of the wheel $a$ and $b$ is the remainder of $a$. 
The map \emph{$\text{spin}_{\sigma}$} is obtained by repeating this process until all labels are recovered.
\end{defn}

By Proposition \ref{spinorder} we can expand the wheels of $\sigma$ in any order we want, explaining the lack of reference to which wheel we expand first when calculating $\text{spin}_{\sigma}$. 
We will also write $\text{spin}_{\sigma}$ for the corresponding chain map.

\begin{exam}
Let $\sigma=2\,1\,4\,3\in S_{4}$ and $w=3$. Then
\[
\text{spin}_{2\,1\,4\,3}:\text{cell}\big(\{2\,1, 4\,3\},\{2,2\}, 3\big)\to \text{cell}(4,3)
\]
makes the following diagram commute:

\begin{center}
\begin{tikzcd}
{\text{cell}\big(\{2\,1,4\,3\},\{2,2\},3\big)} \arrow[rr, "{\text{spin}_{4\,3;4,3}}"] \arrow[dd, "{\text{spin}_{2\,1;2,1}}"'] \arrow[rrdd, "{\text{spin}_{2\,1\,4\,3}}"] &  & {\text{cell}\big(\{2\,1,3,4\},\{2,1,1\},3\big)} \arrow[dd, "{\text{spin}_{2\,1;2,1}}"] \\
                                                                                                                                                                         &  &                                                                                        \\
{\text{cell}\big(\{2,1,4\,3\},\{1,1,2\},3\big)} \arrow[rr, "{\text{spin}_{4\,3;4,3}}"']                                                                                  &  & {\text{cell}(4,3)}                                                                    
\end{tikzcd}
\end{center}
Moreover, this diagram commutes at the level of chain maps. 
See Figure \ref{spinmaponconfpic}.
\end{exam}

\begin{figure}[h]
\centering
\captionsetup{width=.8\linewidth}
\includegraphics[width = 14cm]{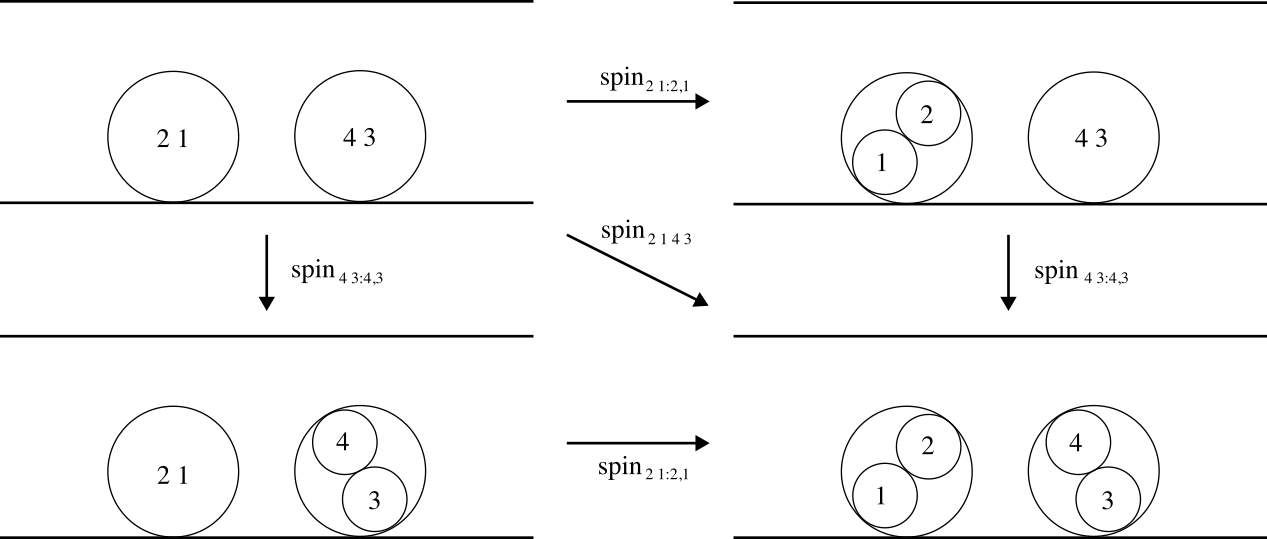}
\caption{The spin-map $\text{spin}_{2\,1\,4\,3}$ and the of the commutativity of the $\text{spin}$-maps $\text{spin}_{2\,1:2,1}$ and $\text{spin}_{4\,3:4,3}$ as maps on disk configuration space. 
The smaller disks inside the larger disks can take any position relative to each other.
This corresponds to two $1$-dimensional tori, or a $2$-torus in configuration space.
}
\label{spinmaponconfpic}
\end{figure}

\begin{defn}
Given $\sigma \in S_{n}$ let $S_{Wh(\sigma)}$ denote the symmetric group on the set of wheels of $\sigma$, and let \emph{$S(\sigma)$} denote the elements in $S_{n}$ that when written out as permutations of $\{1,\dots, n\}$ contain all the wheels of $\sigma$ as proper substrings, i.e., the $S_{Wh(\sigma)}$-orbit of $\sigma$.
\end{defn}

\begin{exam}
Let $\sigma=2\,1\,3\,4$.
Then, $S(\sigma)=\{2\,1\,3\,4, 2\,1\, 4\,3, 3\,2\,1\,4, 3\,4\,2\,1, 4\,2\,1\,3, 4\,3\,2\,1\}$, there are $6$ permutations of $\{1, 2,3,4\}$ in which $1$ immediately follows $2$ as is the case in the wheel $2\,1$.
\end{exam}

All the elements of $S(\sigma)$ other than $\sigma$ have strictly fewer wheels than $\sigma$, and for any $\tau\in S(\sigma)$ there are sequences of $\text{spin}$-maps from $\text{cell}\big(n-\#\tau, \mathcal{W}(\tau), w\big)$ to $\text{cell}(n, w)$ that break up the wheels of $\tau$ to get the wheels of $\sigma$.

\begin{defn}
Given $\sigma\in S_{n}$ and $\tau\in S(\sigma)$, let the map \emph{$\text{spin}_{\tau, \sigma}$} from $\text{cell}\big(n-\#\tau, \mathcal{W}(\tau), w\big)$ to $\text{cell}\big(n-\#\sigma, \mathcal{W}(\sigma), w\big)$ be the product of $\text{spin}$-maps that recovers the wheels of $\sigma$ from the wheels of $\tau$ by unwinding the wheels of $\tau$ right to left via maps of the form $\text{spin}_{a:b,c}$ where $a$ is a wheel of $\tau$ and $c$ is the first wheel of $\sigma$ on the right of $a$.
That is, if $w_{\tau}=w_{\sigma, 1}\cdots w_{\sigma, k}$ is a wheel of $\tau$ composed of wheels $w_{\sigma, 1}, \dots, w_{\sigma, k}$ of $\sigma$ we first apply the map  $\text{spin}_{w_{\sigma, 1}\cdots w_{\sigma, k}:w_{\sigma, 1}\cdots w_{\sigma, k-1}, w_{\sigma, k}}$, and repeat this process for the wheel $w_{\sigma, 1}\cdots w_{\sigma, k-1}$.
\end{defn}

\begin{exam}
Let $\sigma=2\,1\,3\,4$ and $\tau=4\,3\,2\,1$. 
Then $\tau\in S(\sigma)$, and $\text{spin}_{\tau, \sigma}=\text{spin}_{4\,3:4,3}\circ\text{spin}_{4\,3\,2\,1:4\,3,2\,1}$.
\end{exam}

Note that the composition
\[
\text{spin}_{\sigma}\circ i_{id}:P\big(A-\#\sigma,\mathcal{W}(\sigma), w\big) \to \text{cell}(A,\mathcal{W},w)
\]
induces a map on the chain complexes. 
Alpert--Manin proved that this map splits, giving a splitting on homology \cite[Theorem 4.1]{alpert2021configuration1}. 
We will need a generalized version of this result to give a new basis for $H_{k}\big(\text{conf}(n,w);\Q\big)$.

\begin{prop}\label{AMdecomp2}
The homology of $\text{cell}(A,\mathcal{W}, w)$ decomposes as
\[
H_{*}\big(\text{cell}(A,\mathcal{W}, w)\big)=\bigoplus_{\sigma\in S_{A}}H_{*-\#\sigma}\Big(P\big(A-\#\sigma, \mathcal{W}(\sigma), w\big)\Big).
\]
\end{prop}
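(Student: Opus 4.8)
The plan is to deduce Proposition \ref{AMdecomp2} from Alpert--Manin's decomposition of $H_*\big(\text{cell}(n,w)\big)$ \cite[Theorem 4.1]{alpert2021configuration1} by running their argument verbatim in the weighted setting, using the weighted conventions ($\text{wlength}$, $\text{wdim}$, $\text{wdeg}$, $\text{wsgn}$) introduced in Section \ref{cell complexes} and the weighted $\text{spin}$-maps and maps $i_{id}$, $\text{spin}_\sigma$, $\text{spin}_{\tau,\sigma}$ of this section. First I would put a total order on $S_A$ (say the lexicographic order on permutations of $A=\{a_1<\cdots<a_n\}$) and build the induced filtration of $\text{cell}(A,\mathcal{W},w)$ by subcomplexes $F_{\le\sigma}$ spanned by those top cells whose associated permutation is $\le\sigma$, together with all their faces. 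The key point, exactly as in Alpert--Manin, is to identify the subquotient $F_{\le\sigma}/F_{<\sigma}$: I would show it is homotopy equivalent (or, at the chain level, chain-homotopy equivalent after restricting to blocks of weight $\le w$) to a suspension-type object built from the weighted permutohedron $P\big(A-\#\sigma,\mathcal{W}(\sigma),w\big)$, the suspension degree being exactly $\#\sigma$, the number of wheels expanded. This is where the composite $\text{spin}_\sigma\circ i_{id}\colon P\big(A-\#\sigma,\mathcal{W}(\sigma),w\big)\to\text{cell}(A,\mathcal{W},w)$ enters: it realizes the generator cells of the subquotient, and Proposition \ref{spinorder} guarantees the expansion is independent of the order in which wheels are unravelled.

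Next I would assemble the associated spectral sequence of the filtration. The content of Alpert--Manin's Theorem 4.1 is that this spectral sequence degenerates: the maps $\text{spin}_\sigma\circ i_{id}$ furnish an explicit splitting, so that the inclusion of each graded piece into $H_*\big(\text{cell}(A,\mathcal{W},w)\big)$ is split injective and the images together span. Concretely, one produces a chain-level section of the quotient $C_*\big(\text{cell}(A,\mathcal{W},w)\big)\twoheadrightarrow \big(F_{\le\sigma}/F_{<\sigma}\big)$ compatible with differentials, built from $\text{spin}_\sigma$ and the maps $\text{spin}_{\tau,\sigma}$ which handle the ``lower-order'' cells $\tau\in S(\sigma)$; these are precisely the cells where some wheels of $\sigma$ have been merged, and they account for why the naive subquotient must be corrected. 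Summing over all $\sigma\in S_A$ and shifting degree by $\#\sigma$ yields the stated direct sum decomposition
\[
H_{*}\big(\text{cell}(A,\mathcal{W}, w)\big)=\bigoplus_{\sigma\in S_{A}}H_{*-\#\sigma}\Big(P\big(A-\#\sigma, \mathcal{W}(\sigma), w\big)\Big).
\]

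The main obstacle—and really the only non-bookkeeping step—is verifying that the weight restriction ``every block has total weight $\le w$'' interacts correctly with the filtration and the $\text{spin}$-maps, i.e.\ that passing to the subcomplexes cut out by the weight bound commutes with taking the subquotients $F_{\le\sigma}/F_{<\sigma}$ and with the splitting maps. In the unweighted case this is the statement that restricting block size to $\le w$ simply restricts the weighted permutohedron to $P\big(n-\#\sigma,\mathcal{W}(\sigma),w\big)$; in the weighted case the weight of a wheel is the total weight of its constituent elements (Definition of $\mathcal{W}(\sigma)$), so a block in $\text{cell}\big(A-\#\sigma,\mathcal{W}(\sigma)\big)$ has weight $\le w$ if and only if the corresponding merged block in $\text{cell}(A,\mathcal{W})$ does. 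Once this compatibility is checked, the degeneration argument is formal. I would therefore spend the bulk of the proof setting up the filtration and the subquotient identification carefully, then invoke the relevant steps of \cite[Theorem 4.1]{alpert2021configuration1} with the observation that every ingredient—the $\text{spin}$-maps, the sign conventions, Proposition \ref{spinorder}, the splitting—has already been generalized to the weighted setting above, so their proof goes through mutatis mutandis.
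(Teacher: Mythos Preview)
Your proposal is correct and takes essentially the same approach as the paper: the paper's proof simply states that the result follows from Alpert--Manin's argument \cite[Theorem 4.1]{alpert2021configuration1} by replacing $S_n$ with $S_A$, using the correspondence between elements of $S_A$ and top cells of $\text{cell}(A,\mathcal{W})$ to filter the chain complex, and invoking the weighted Lemma~\ref{AM technical lemma}. Your write-up is a more detailed unpacking of exactly this strategy, with the filtration, subquotient identification, and splitting via $\text{spin}_\sigma\circ i_{id}$ all matching what the paper intends (one small wording slip: $\#\sigma$ is not the number of wheels but $|A|$ minus the number of wheels, though you use it correctly as the degree shift).
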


To obtain a class in $H_{*}\big(\text{cell}(A,\mathcal{W}, w)\big)$ from a class in $H_{*-\#\sigma}\Big(P\big(A-\#\sigma, \mathcal{W}(\sigma), w\big)\Big)$ one applies a spin-map, unraveling the wheels of $A-\#\sigma$ to get the elements (weighted disks) of $A$.

\begin{exam}
Consider the weighted set $(A,\mathcal{W})=\big(\{1, 2, 3\},\{1,1,1\}\big)$.
We have that $\text{cell}(A, \mathcal{W}, 2)=\text{cell}(3,2)\simeq \text{conf}(3,2)$.
Note $S_{A}=S_{3}$ has $6$ elements, namely $1\,2\,3$, $1\,3\,2$, $2\,1\,3$, $2\,3\,1$, $3\,1\,2$, and $3\,2\,1$.
From these elements we get the weighted sets $\big(3-\#1\,2\,3, \mathcal{W}(1\,2\,3)\big)=\big(\{1,2,3\}, \{1,1,1\}\big)$, 
$\big(3-\#1\,3\,2,\mathcal{W}(1\,3\,2)\big)=\big(\{1,3\,2\}, \{1,2\}\big)$, 
$\big(3-\#2\,1\,3,\mathcal{W}(2\,1\,3)\big)=\big(\{2\,1,3\}, \{2,1\}\big)$, 
$\big(3-\#2\,3\,1,\mathcal{W}(2\,3\,1)\big)=\big(\{2,3\,1\}, \{1,2\}\big)$, 
$\big(3-\#3\,1\,2, \mathcal{W}(3\,1\,2)\big)=\big(\{3\,1\,2\}, \{3\}\big)$, and $\big(3-\#3\,2\,1, \mathcal{W}(3\,2\,1)\big)=\big(\{3\,2\,1\}, \{3\}\big)$
By Proposition \ref{AMdecomp2}, we have that
\[
H_{1}\big(\text{cell}(3,2)\big)=\bigoplus_{\sigma\in S_{3}}H_{1-\#\sigma}\Big(P\big(3-\#\sigma, \mathcal{W}(\sigma), w\big)\Big),
\]
i.e.,
\begin{multline*}
H_{1}\big(\text{cell}(3,2)\big)=H_{1}\Big(P\big(\{1,2,3\}, \{1,1,1\},2\big)\Big)\oplus H_{0}\Big(P\big(\{1,3\,2\}, \{1,2\},2\big)\Big)\oplus H_{0}\Big(P\big(\{2\,1,3\}, \{2,1\}, 2\big)\Big)\\
\oplus H_{0}\Big(P\big(\{2,3\,1\}, \{1,2\}, 2\big)\Big)\oplus H_{-1}\Big(P\big(\{3\,1\,2\}, \{3\},2\big)\Big)\oplus H_{-1}\Big(P\big(\{3\,2\,1\}, \{3\},2\big)\Big).
\end{multline*}
The last two summands are trivial as $P\big(\{3\,1\,2\}, \{3\},2\big)$ and $P\big(\{3\,2\,1\}, \{3\},2\big)$ are empty as the weight of the single element is more than the limit. 
The weighted permutohedra $P\big(\{1,3\,2\}, \{1,2\},2\big)$, $P\big(\{2\,1,3\}, \{2,1\}, 2\big)$, and $P\big(\{2,3\,1\}, \{1,2\}, 2\big)$ are two points sets, and the weighted permutahedron $P\big(\{1,2,3\}, \{1,1,1\},2\big)$ is homotopy equivalent to a circle.
It follows that $H_{1}\big(\text{conf}(3,2)\big)\cong H_{1}\Big(\text{cell}\big(\{1,2,3\}, \{1,1,1\},2\big)\Big)\cong \Z^{7}$.
See Figure \ref{gluepermutadratogetcell}.
\begin{figure}[h]
\centering
\captionsetup{width=.8\linewidth}
\includegraphics[width = 14cm]{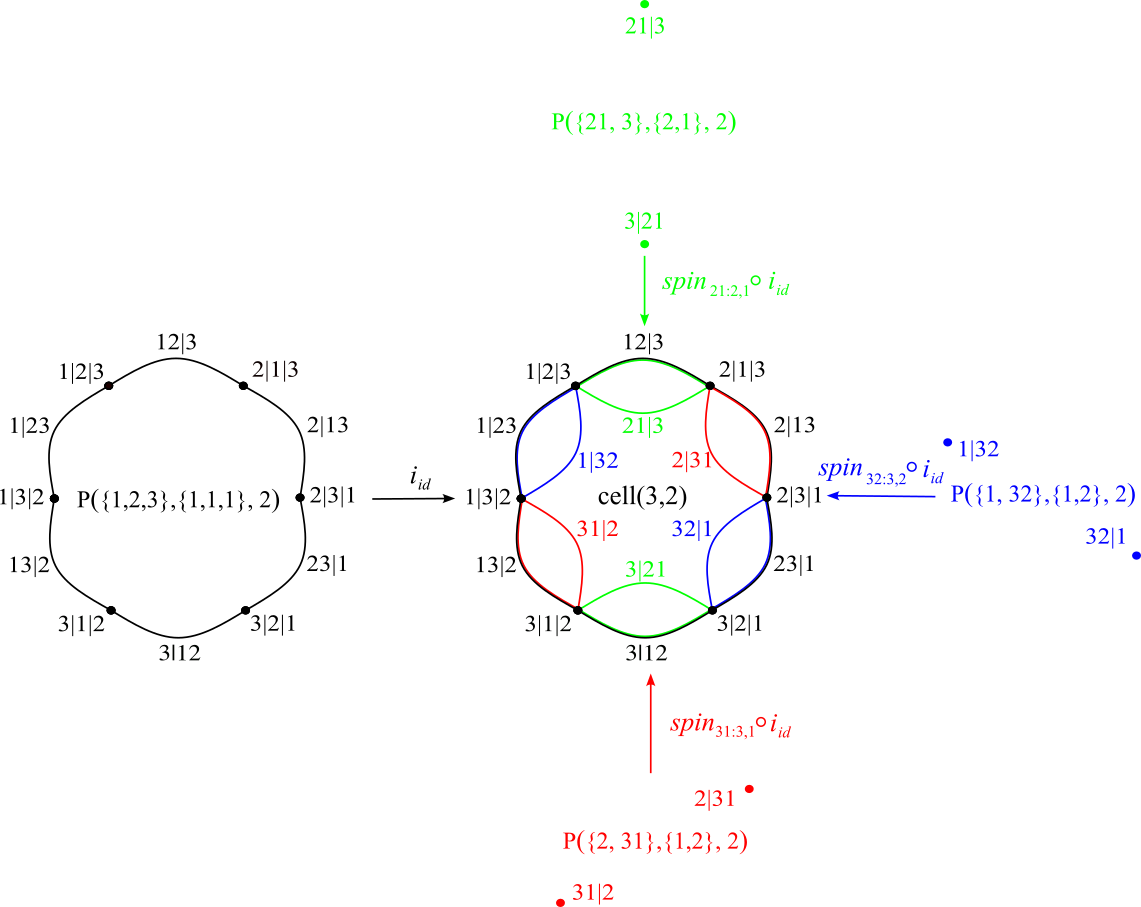}
\caption{The decomposition of Proposition \ref{AMdecomp2} of the homology of $\text{cell}(3,2)$ visualized at the level of cell complexes.
A quick visual check confirms that $H_{1}\big(\text{cell}(3,2)\big)\cong \Z^{7}$, which is the direct sum of $H_{0}\Big(P\big(\{1,3\,2\}, \{1,2\},2\big)\Big)\cong \Z^{2}$, $H_{0}\Big(P\big(\{2\,1,3\}, \{2,1\}, 2\big)\Big)\cong \Z^{2}$, $H_{0}\Big(P\big(\{2,3\,1\}, \{1,2\}, 2\big)\Big)\cong \Z^{2}$ and  $H_{1}\Big(P\big(\{1,2,3\}, \{1,1,1\},2\big)\Big)\cong \Z$.
Additionally, we can how the classes in $H_{1}\big(\text{cell}(3,2)\big)$ can be viewed as arising by applying the spin-maps to the classes corresponding to the weighted permutohedra.
}
\label{gluepermutadratogetcell}
\end{figure}
\end{exam}

This proposition follows from Alpert--Manin's argument by replacing $S_{n}$ with $S_{A}$, ordering the elements of $S_{A}$, noting the correspondence of these elements with top dimensional cells of $\text{cell}(A, \mathcal{W})$, using this to get a filtration of $C_{*}\big(\text{cell}(A, \mathcal{W})\big)$, and noting that Lemma \ref{AM technical lemma} holds when we replace $S_{n}$ with $S_{A}$. 

This result proves that the integral homologies of $\text{cell}(n,w)$ and $\text{cell}(A,\mathcal{W}, w)$ are free $\Z$-modules as the homology groups of the weighted permutohedra are free. 
Unfortunately, these decompositions are not well behaved with respect to the action of the corresponding permutation groups as the wheels of $\sigma\in S_{A}$ are not preserved by the $S_{A}$-action on $\sigma$. 
This poor behavior with respect to the symmetric group action is carried over to Alpert--Manin's basis for homology, causing several problems. 
As a result, we seek a new basis for (rational) homology. 
Before we do that, we recall Alpert--Manin's basis.

We begin by defining two classes of elements in homology: wheels and filters. 
Our definition of wheels, and, as a result, filters is more general than Alpert--Manin's, as they only consider what we will call proper wheels, see \cite[Section 1]{alpert2021configuration1}. 
We extend the definition of filters to include several classes that are trivial in homology.

\begin{defn}
Let $(A, \mathcal{W})=\big(\{W\}, \{n\}\big)$ be a weighted set such that $W$ is a permutation of $\{1,\dots, n\}$, where $n\le w$. 
A \emph{wheel} is the image of the fundamental class of the cycle $W\in C_{0}\big(P(A,\mathcal{W}, w)\big)$ in $H_{n-1}\big(\text{cell}(n,w)\big)$ under the composition of $i_{id}$ and any sequence of spin-maps. 
If the $\text{spin}$-maps are of the form $\text{spin}_{W';W_{1}',W_{2}'}$ where $W'$ is a permutation of some ordered subset of $\{1, \dots, n\}$, the last element of the ordered subset is $W_{2}'$, and $W_{1}'$ is the remaining elements with their order preserved, then we say that the wheel is \emph{proper}.
\end{defn}

Since $n\le w$, it follows that $P(A,\mathcal{W}, w)$ and $\text{cell}(A, \mathcal{W}, w)$ are points and $i_{id}$ is an isomorphism between $C_{*}\big(P(A,\mathcal{W}, w)\big)$ and $C_{*}\big(\text{cell}(A, \mathcal{W}, w)\big)$. 
By Theorem \ref{AMthmB} every wheel is homologous to a sum of a subset of the proper wheels. 
Letting $w\to \infty$, Theorem \ref{AMthmB} recovers the ``tall trees" basis for $H_{n-1}\big(F_{n}(\R^{2})\big)$, see, for example, \cite{knudsen2018configuration, sinha2006homology}.

Wheels have a nice topological interpretation: A wheel on $n$ disks is a non-contractible embedded $(n-1)$-torus in $\text{conf}(n,w)$.
This is best seen in the case of proper wheels.
We write $W(i_{1},\dots, i_{n})$ for the proper wheel arising from $(A,\mathcal{W})=\big(\{i_{1}\cdots i_{n}\}, \{n\}\big)$. 
This corresponds to the disk labeled $i_{2}$ orbiting the disk labeled $i_{1}$, the disk labeled $i_{3}$ independently orbiting that orbiting pair, and so on.
See Figure \ref{wheels}.

Given a wheel $W$, we write $\text{spin}_{W}$ for the product of $\text{spin}$-maps that produce the wheel $W$.

\begin{figure}[h]
\centering
\captionsetup{width=.8\linewidth}
\includegraphics[width = 10cm]{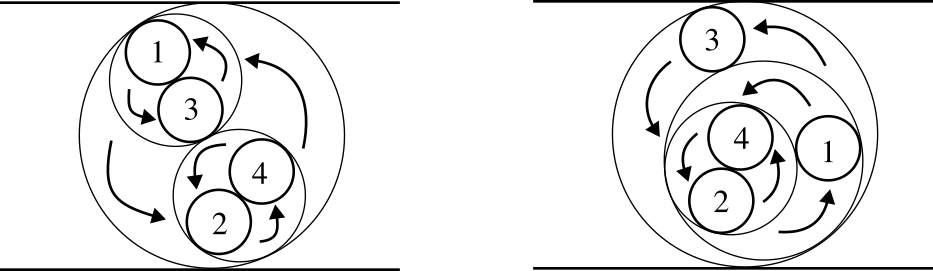}
\caption{On the left: A wheel on the disks $1, 2, 3, 4$. 
On the right: The proper wheel $W(4,2,1,3)$.
}
\label{wheels}
\end{figure}

Next, we use the wheels to define filters, though we first make a remark on the topology of certain weighted permutohedra.
\begin{remark}
Given $w$ and $(A, \mathcal{W})=\big(\{a_{1}, \dots, a_{m}\}, \{w_{1}, \dots, w_{m}\}\big)$, a weighted set on $m\ge 2$ elements such that $w_{1}+\cdots+\widehat{w_{k}}+\cdots+w_{m}\le w$ for all $k$ and $w_{1}+\cdots+w_{m}>w$, then the weighted permutohedron $P\big(\{a_{1}, \dots, a_{m}\}, \{w_{1}, \dots, w_{m}\}, w\big)$ is homotopy equivalent to $S^{m-2}$, and removing a single $(m-2)$-cell from $P\big(\{a_{1}, \dots, a_{m}\}, \{w_{1}, \dots, w_{m}\}, w\big)$ changes the homotopy type to $\R^{m-2}$.
Additionally, every $(m-2)$-cell of $P\big(\{a_{1}, \dots, a_{m}\}, \{w_{1}, \dots, w_{m}\}\big)$ is in $P\big(\{a_{1}, \dots, a_{m}\}, \{w_{1}, \dots, w_{m}\}, w\big)$.
\end{remark}

\begin{defn}
For $m\ge 2$ let $(A, \mathcal{W})=\big(\{W_{1}, \dots, W_{m}\}, \{n_{1}, \dots, n_{m}\}\big)$ be a weighted set such that each $W_{j}$ is a permutation of $A_{j}\subset\{1,\cdots, n\}$ such that $A_{j}\cap A_{i}=\emptyset$ if $j\neq i$, and $\bigcup A_{j}=\{1,\dots, n\}$; $n_{j}=|A_{j}|$, and for any $k$ the sum of all but one of the weights $n_{1}+\cdots+\widehat{n_{k}}+\cdots+n_{m}$ is at most $w$. 
Let $Z$ denote the cycle in $C_{m-2}\big(P(A, \mathcal{W},w)\big)$ that is the boundary of $W_{1}\,\cdots\, W_{m}\in C_{m-1}\big(P(A,\mathcal{W})\big)$.
A \emph{filter} is the induced image of $Z$ in $H_{n-2}\big(\text{conf}(n,w)\big)$ under the composition of $i_{id}$ and any sequence of $\text{spin}$-maps that yield $m$ wheels such that the $j^{\text{th}}$ wheel is on the disks in $A_{j}$.
\end{defn}

If $n\le w$, then $P(A, \mathcal{W}, w)$ is contractible, so $H_{m-2}\big(P(A, \mathcal{W}, w)\big)$ is trivial and any filter arising from $Z$ is homologically trivial.
Though these filters are trivial, we will consider them as we will need to consider analogues of them in Lemma \ref{relationlem}. 

There is a nice interpretation of filters in the $\text{conf}(n,w)$-model: A non-trivial filter on $m$ wheels arises from $m$ wheels moving left and right in the strip past each other such that the centers of mass of all the wheels cannot have the same $x$-coordinate and the $i^{\text{th}}$ wheel always moves above the $j^{\text{th}}$ wheel if $i<j$. 
A filter is an embedded $S^{m-2}\times T^{n-m}$ in configuration space where the movement of the disks in the wheels produces the $T^{n-m}$ and the movement of the wheels past each other produces the $S^{m-2}$. 
It follows that there can be at most $\frac{3w}{2}$ wheels in a filter.

Given an ordered set of wheels $\{W_{1}, \dots, W_{m}\}$ on disjoint labels such that the $i^{\text{th}}$-wheel consists of $n_{i}$ disks, let $F(W_{1}, \dots, W_{m})$ denote the filter arising from $P\big(\{W_{1},\dots, W_{m}\}, \{n_{1}, \dots, n_{m}\}, w\big)$.
The ordering of the wheels $W_{1},\dots, W_{m}$ corresponds to insisting that the wheel $W_{i}$ always moves above the wheel $W_{j}$ if $i<j$. 
If the wheels of a filter are proper we say that the filter is \emph{proper} and we write $F\big(W(i_{1, 1},\dots, i_{1, n_{1}}),\dots, W(i_{m,1},\dots, i_{m, n_{m}})\big)$ for the proper filter on the proper wheels $W(i_{1, 1},\dots, i_{1, n_{1}})$, $\dots$, $W(i_{m,1},\dots, i_{m, n_{m}})$. See Figure \ref{filteron3wheels}.

\begin{figure}[h]
\centering
\captionsetup{width=.8\linewidth}
\includegraphics[width = 6cm]{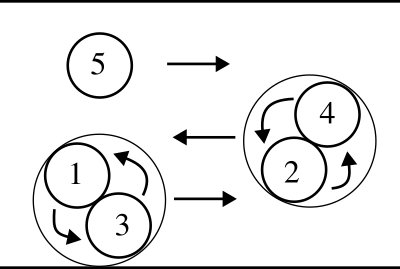}
\caption{The filter $F\big(W(5), W(4,2), W(3,1)\big)$. 
The movement of the disks in the wheels $W(4,2)$ and $W(3,1)$ generate a $2$-torus that can be thought of as the product of two $1$-tori.
The movement of the wheels past each other generates an $S^{1}$.
As a result, $F\big(W(5), W(4,2), W(3,1)\big)$ can be represented by an embedded non-contractible $S^{1}\times T^{2}$ in $\text{conf}(5,4)$.
}
\label{filteron3wheels}
\end{figure}

In the case $m=2$ we have that
\[
F(W_{1}, W_{2}):=W_{1}|W_{2}+(-1)^{\big(n_{1}-1\big)\big(n_{2}-1\big)+1}W_{2}|W_{1},
\]
where the concatenation product is the one arising from the chain complexes.
These filters on $2$ wheels are different than the filters on $2$ wheels defined by Alpert--Manin, as we have a different convention for the sign of the second concatenation product of wheels \cite[Section 1]{alpert2021configuration1}. 
This is a result of our weighted boundary operator, and this convention ensures that if $n_{1}+n_{2}\le w$, the filter $F(W_{1}, W_{2})$ is trivial in homology, see Proposition \ref{commutewheels}.
This is not necessarily true for Alpert--Manin's filters on $2$ wheels.
Later, we will do away with filters on $2$ wheels as we will want a presentation for rational homology as a twisted algebra that does not use such filters as generators, as they worsen the first-order representation stability range and obfuscate the process of taking the product of two wheels.
First, we state Alpert--Manin's basis theorem as we will use it to prove that our basis for rational homology is, in fact, a basis.

\begin{thm}\label{AMthmB}
(Alpert--Manin \cite[Theorem B]{alpert2021configuration}) The homology group $H_{k}\big(\text{conf}(n,w)\big)$ is free abelian and has a basis consisting of concatenations of proper wheels where the largest labeled disk comes first and non-trivial proper filters on at least 2 wheels such that in each wheel the largest labeled disk comes first. 
We say one proper wheel ranks above another if it has more disks, or has the same number of disks and its largest label is greater. A cycle is in the basis if and only if:
\begin{enumerate}
\item The proper wheels inside each filter on more than $3$ wheels are in order of increasing largest label (regardless of the number of disks). 
\item The proper wheels inside a filter on 2 wheels are in order of increasing rank.
\item Adjacent proper wheels not inside a proper filter are in order of decreasing rank.
\item Every proper wheel immediately to the left of a proper filter ranks above the least wheel in the filter.
\end{enumerate}
\end{thm}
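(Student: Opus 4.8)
The plan is to compute $H_*\big(\text{conf}(n,w)\big)$ via the splitting of Proposition \ref{AMdecomp2} and then transport bases of the summands back to $\text{cell}(n,w)$ along the maps $\text{spin}_\sigma\circ i_{id}$. Since a choice of $\sigma\in S_A$ records exactly a partition of $\{1,\dots,n\}$ into wheels together with the internal linear (``tall-tree'') order of each wheel, and since $\text{spin}_\sigma\circ i_{id}$ carries cellular chains of $P\big(A-\#\sigma,\mathcal{W}(\sigma),w\big)$ to concatenation products of wheels and filters in $\text{cell}(A,\mathcal{W},w)$, it suffices to (i) produce a basis of the homology of an arbitrary weighted permutohedron $P(B,\mathcal{V},w)$, (ii) check each such basis class maps to (a combination of) proper wheels and proper filters, and (iii) straighten the resulting over-complete generating set down to the cycles satisfying conditions (1)--(4). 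Freeness of $H_*\big(\text{conf}(n,w)\big)$ is then automatic: each $H_*\big(P(B,\mathcal{V},w)\big)$ is free and Proposition \ref{AMdecomp2} is a direct-sum splitting.

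For the permutohedron step I would use the homotopy equivalence $P(B,\mathcal{V},w)\simeq\text{no}_{w+1}(B,\mathcal{V})$ together with an induction on $|B|$ and on $w$. When $w$ is at least the total weight of $B$ the complex is a point; when $w$ is one less than the total weight, $P(B,\mathcal{V},w)\simeq S^{|B|-2}$ with fundamental class the filter cycle, exactly as in the Remark preceding the definition of filters. In general a cell of $P(B,\mathcal{V},w)$ sits over an ordered set partition of $B$ whose blocks all have weight at most $w$; a chain-level deformation (equivalently a Mayer--Vietoris argument peeling off minimally-overweight clusters from the left) shows that $H_*\big(P(B,\mathcal{V},w)\big)$ has a basis of iterated concatenation products $Z_1\mid\cdots\mid Z_r$, where each $Z_j$ is either a single element or the boundary cycle of a minimally-overweight sub-family (a filter cycle), the families partition $B$, and a canonical order is imposed. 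Letting $w\to\infty$ destroys every filter and recovers the classical ``tall trees'' computation of $H_*\big(F_\bullet(\R^2);\Q\big)$ of Arnol'd \cite{arnold1969cohomology} and Cohen \cite{cohen2007homology}, which anchors the induction.

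It remains to straighten and count. The image of a permutohedron basis class under $\text{spin}_\sigma\circ i_{id}$ is a concatenation of wheels and filters built from the wheels of $\sigma$, but these need not be proper and distinct $\sigma$ can contribute homologous cycles. Using the commutation relation of Proposition \ref{spinorder} together with the antisymmetry- and Jacobi-type identities among the maps $\text{spin}_{a:b,c}$ (and the corresponding relations among filter cycles), one runs a straightening algorithm: every wheel rewrites as a signed sum of proper wheels with the largest label first, every filter on $\ge 2$ wheels rewrites as a signed sum of proper filters, and the concatenation-ordering rules (1)--(4) select one representative in each equivalence class. Finally one verifies that the number of straightened cycles equals $\sum_{\sigma}\dim H_{*-\#\sigma}\big(P(A-\#\sigma,\mathcal{W}(\sigma),w)\big)$; since they span and are this many, they form a basis.

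The main obstacle is precisely this straightening-and-counting step: one must pin down the exact relations holding among wheels and among filters --- the width-truncated analogues of the antisymmetry and Jacobi relations for the Poisson/Gerstenhaber structure on $H_*\big(F_\bullet(\R^2)\big)$ --- show the algorithm terminates with output obeying (1)--(4), and match the count with the permutohedron side. Linear independence could instead be obtained by exhibiting an explicit dual pairing that detects each candidate basis cycle, but the cleanest route is the dimension bookkeeping furnished by Proposition \ref{AMdecomp2}.
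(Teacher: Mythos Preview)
The paper does not prove this theorem: it is quoted as Alpert--Manin's Theorem~B and used as a black box, with no argument supplied beyond the citation. So there is no ``paper's own proof'' to compare against here.

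That said, your outline is essentially the strategy Alpert--Manin themselves use. The splitting you invoke (Proposition~\ref{AMdecomp2}) is precisely their Theorem~4.1, and the program of computing $H_*\big(P(B,\mathcal{V},w)\big)$ via the no-$(w{+}1)$-equal model and then transporting along $\text{spin}_\sigma\circ i_{id}$ is the backbone of their argument. Your honest admission that the straightening-and-counting step is the crux is accurate: in the actual Alpert--Manin paper the bulk of the work lies in determining a basis for the homology of the weighted permutohedra (their ``barrier/filter'' cycles) and in verifying that under the splitting these map bijectively to the cycles satisfying (1)--(4). Your sketch of an inductive/Mayer--Vietoris computation of $H_*\big(P(B,\mathcal{V},w)\big)$ is plausible but vague; Alpert--Manin instead give an explicit discrete-Morse-flavored argument producing exactly the concatenations of singletons and minimal-overweight filter cycles you describe. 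If you want to flesh this out, the substantive gap is making precise which concatenations of filter cycles form a basis of $H_*\big(P(B,\mathcal{V},w)\big)$ and proving it --- the rest (properness of wheels, the rank conditions) then falls out from the choice of $\sigma$ indexing the summand.
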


This basis is not preserved by the action of the symmetric group as the order of the wheels in a filter and the order of the disks in a wheel matters. 
The latter problem is a minor one due to Proposition \ref{properwheels}. 
The former is much more of a challenge. 
Soon, we will replace filters with an average of filters. 
These averages behave nicely when the wheels or the disks are permuted, and they will allow us to provide a finite presentation for $H_{*}\big(\text{cell}(\bullet,w);\Q\big)$ as a twisted algebra.

\subsection{Wheel and Filter Relations}
We prove several relations on wheels and filters that will be used throughout this paper to find a new basis for the homology of the ordered configuration space of open unit-diameter disks in the infinite strip of width $w$, as well as demonstrate a finite presentation the homology of this space as a twisted algebra.

\begin{prop}\label{properwheels}
For $n\le w$, any wheel $W$ on $n$ disks can be written as a sum of proper wheels on $n$ disks such that the largest label in each proper wheel comes first.
\end{prop}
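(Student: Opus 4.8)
The plan is to prove Proposition \ref{properwheels} by induction on $n$, using the $\text{spin}$-map definition of a general wheel together with the equivariance properties already established. Recall that a wheel on $n$ disks (with $n\le w$) is the image of the fundamental class of the $0$-cycle $W\in C_0\big(P(A,\mathcal W,w)\big)$, where $(A,\mathcal W)=\big(\{W\},\{n\}\big)$, under $i_{id}$ composed with an arbitrary sequence of $\text{spin}$-maps of the form $\text{spin}_{W':W_1',W_2'}$, where each such map splits a symbol $W'$ into two symbols $W_1'$ and $W_2'$ with $W_1'W_2'$ (in some order respecting the internal orders) equal to a permutation of $W'$. Because $n\le w$, both $P(A,\mathcal W,w)$ and $\text{cell}(A,\mathcal W,w)$ are points, so $i_{id}$ is a chain isomorphism, and all the work is in understanding the composite of the $\text{spin}$-maps.

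The base case $n=1$ (or $n=2$) is immediate: there is nothing to reorder, and a wheel on one disk is the canonical generator, while a wheel on two disks $W(a,b)$ versus $W(b,a)$ differ only by the sign coming from $\text{wsgn}$, which is recorded in the definition of $\text{spin}_{a:b,c}$ (the $c\,b$ cell carries sign $(-1)^{w_bw_c-1}$); so $W(a,b)$ and $-W(b,a)$ agree, and in particular each is $\pm$ a proper wheel with the larger label first. For the inductive step, I would take a general wheel $W$ on $n$ disks and look at the \emph{last} $\text{spin}$-map applied in its construction, the one that recovers the full label set from a symbol on $n-1$ ``letters'' (where one letter is a sub-wheel on two original disks). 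Stripping this off expresses $W$ as $\text{spin}_{a:b,c}$ of a wheel $W''$ on $n-1$ symbols, one of which, say $a$, is a two-element block; by the inductive hypothesis applied to the appropriate smaller weighted complex, $W''$ is a signed sum of proper wheels in which the ``largest'' symbol comes first. Applying $\text{spin}_{a:b,c}$ to each such proper wheel and then, if necessary, using the cyclic/transposition relations on the newly split-off disk to move labels into decreasing order — each such move costing only a sign, exactly as in the $n=2$ case — writes $W$ as a signed sum of proper wheels on $n$ disks with the largest label first.

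The technical heart, and the step I expect to be the main obstacle, is the bookkeeping needed to show that after splitting off the last disk via $\text{spin}_{a:b,c}$ one can indeed permute the resulting block into the ``largest-first, decreasing'' normal form using only relations among wheels that have already been established — concretely, that a general $\text{spin}$-sequence can be rewritten, up to sign, as a \emph{proper} $\text{spin}$-sequence (one that always peels off the last entry of the current block). This amounts to checking that the two ways of splitting a symbol, $a\mapsto b\,c$ and $a\mapsto c\,b$, which agree on the overlap $\{0,1\}\times P(\cdots)$ by the $\Z/2\Z$-equivariance in Lemma \ref{AM technical lemma}(2), induce chain maps that differ by the controlled sign $(-1)^{w_bw_c-1}$, and that iterating this lets us convert any order of unraveling into the canonical one at the cost of an overall $\text{wsgn}$. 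I would organize this as a short sublemma: \emph{for a single symbol $W'$ of weight $n'\le w$ and any two ways of completely unraveling it into singletons, the resulting classes in $H_{n'-1}\big(\text{cell}(n',w)\big)$ agree up to the sign $\text{wsgn}$ of the permutation relating the two orderings, and each equals $\pm$ the proper wheel with the largest label first.} Once that sublemma is in place, Proposition \ref{properwheels} follows by applying it inside the ambient complex $\text{cell}(A,\mathcal W,w)$ and invoking Proposition \ref{spinorder} to commute the unraveling of this wheel past any other $\text{spin}$-maps that may be present.
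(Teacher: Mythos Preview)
Your proposed approach has a genuine error, and it is also quite different from the paper's argument.

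The paper's proof is a single sentence: since $n\le w$, the only basis elements of $H_{n-1}\big(\text{conf}(n,w)\big)$ in Theorem~\ref{AMthmB} are the proper wheels on $n$ disks with the largest label first; hence any wheel, being a class in this group, is a $\Z$-linear combination of them. No induction, no spin-map bookkeeping.

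Your inductive argument could be made to work, but not in the form you have stated it. The error is your proposed sublemma, which asserts that any two unravelings of a single symbol differ only by the sign $\text{wsgn}$, and in particular that every wheel on $n$ disks equals $\pm$ \emph{one} proper wheel with the largest label first. This is false for $n\ge 3$: when $n\le w$ we have $\text{conf}(n,w)\simeq F_{n}(\R^{2})$, and $H_{n-1}\big(F_{n}(\R^{2})\big)$ has rank $(n-1)!$, not rank $1$. For instance with $n=3$ the two proper wheels $W(3,1,2)$ and $W(3,2,1)$ are linearly independent, and a general wheel such as $W(1,2,3)$ is a genuine sum of the two (via a Jacobi-type relation), not $\pm$ either one alone. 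So the step ``each such move costing only a sign, exactly as in the $n=2$ case'' breaks down precisely at $n=3$.

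The relations you would actually need to carry out your induction are not transposition signs but the Jacobi identities among iterated spin-maps; this is exactly what the paper invokes in the subsequent Proposition~\ref{spin relation} (citing Sinha) to upgrade Proposition~\ref{properwheels} to the chain level. If you repair the sublemma to say that any spin-sequence equals a $\Z$-linear combination of proper spin-sequences (rather than $\pm$ a single one), your induction can be salvaged, but at that point you are essentially redoing the tall-trees argument for $H_{*}\big(F_{n}(\R^{2})\big)$, and the one-line appeal to Theorem~\ref{AMthmB} is much cleaner.
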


\begin{proof}
The only elements in the basis for $H_{n-1}\big(\text{conf}(n,w)\big)$ in Theorem \ref{AMthmB} are the proper wheels on $n$ disks where the largest labeled disk comes first.
\end{proof}

Proposition \ref{properwheels} holds at the level of chain complexes. 

\begin{prop}\label{spin relation}
Given a wheel $W$ on $n\le w$ disks, there are proper wheels on the same set of disks of the form $W(i_{1}, \dots, i_{n})$ such that $i_{1}>i_{j}$ for all $j\neq1$ such that 
\[
\text{spin}_{W}=\sum a_{(i_{1},\dots, i_{n})}\text{spin}_{W(i_{1}, \dots, i_{n})}.
\]
\end{prop}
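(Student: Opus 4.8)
The plan is to reduce the statement to the chain-level version of Proposition~\ref{properwheels}, exploiting that for $n\le w$ every complex in sight is concentrated in a single degree, so that homology classes, cycles, and cellular chains all coincide there and a chain map is pinned down by the image of a single generator. Concretely: since $n\le w$, the weighted complex $\text{cell}\big(\{W\},\{n\},w\big)$ has exactly one cell --- the symbol consisting of the one block $W$, of weighted dimension $n-1$ --- so its cellular chain complex is free of rank one, concentrated in degree $n-1$; the same holds for $P\big(\{W\},\{n\},w\big)$, and $i_{id}$ is an isomorphism between the two. On the target side, $\text{cell}(n,w)=\text{cell}\big(\{1,\dots,n\},\{1,\dots,1\},w\big)$ has no cells of weighted dimension $n$, because a single block containing all $n$ weight-one disks already realizes the top dimension $n-1$; hence there are no $(n-1)$-boundaries, and $H_{n-1}\big(\text{cell}(n,w)\big)$ is literally the group of $(n-1)$-cycles. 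Thus, in degree $n-1$ the phrases ``homologous'' and ``equal as cellular chains'' mean the same thing --- which is precisely why Proposition~\ref{properwheels} holds at the level of chains.

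Next I would unwind the definitions. Let $z$ be a generator of the rank-one chain complex $C_*\big(P(\{W\},\{n\},w)\big)$. By the definition of a wheel, $\text{spin}_W\big(i_{id}(z)\big)$ is a cycle in $C_{n-1}\big(\text{cell}(n,w)\big)$ representing $W$, and for each permutation $i_1\cdots i_n$ having $i_1$ as its largest entry, $\text{spin}_{W(i_1,\dots,i_n)}\big(i_{id}(z)\big)$ is the cycle representing the proper wheel $W(i_1,\dots,i_n)$. Applying the chain-level form of Proposition~\ref{properwheels} to $W$ produces coefficients $a_{(i_1,\dots,i_n)}$, indexed by exactly such permutations, for which
\[
\text{spin}_W\big(i_{id}(z)\big)=\sum a_{(i_1,\dots,i_n)}\,\text{spin}_{W(i_1,\dots,i_n)}\big(i_{id}(z)\big)
\]
holds in $C_{n-1}\big(\text{cell}(n,w)\big)$.

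Finally I would promote this identity of chains to the asserted identity of maps. A chain map out of a rank-one chain complex concentrated in a single degree is determined by the image of the generator; by the first paragraph, $C_*\big(P(\{W\},\{n\},w)\big)$, and hence --- via the isomorphism $i_{id}$ --- the domain $C_*\big(\text{cell}(\{W\},\{n\},w)\big)$ of $\text{spin}_W$, is such a complex, so the displayed equation forces $\text{spin}_W=\sum a_{(i_1,\dots,i_n)}\,\text{spin}_{W(i_1,\dots,i_n)}$ once the rank-one domains of the various $\text{spin}$-maps are identified in the evident way. I do not expect a genuine obstacle here; the only point deserving care is the first paragraph's observation that in weighted degree $n-1$ the homological assertion of Proposition~\ref{properwheels} is already a statement about cellular chains, after which applying $\text{spin}$-maps and $i_{id}$ transports it verbatim into the desired relation.
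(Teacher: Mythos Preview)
Your argument for the chain-level identity is correct and in fact cleaner than the paper's: rather than invoking a Jacobi-type identity, you observe that $\text{cell}(n,w)$ has no cells above dimension $n-1$, so $H_{n-1}$ injects into $C_{n-1}$ and the homological relation of Proposition~\ref{properwheels} is already an equality of cycles. That is a genuine simplification of the paper's first step.

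However, there is a gap in the final step. You treat $\text{spin}_W$ as a map whose domain is the rank-one complex $C_*\big(\text{cell}(\{W\},\{n\},w)\big)$, and you conclude the identity of maps from the identity on a generator. But the proposition is invoked later (for instance in Propositions~\ref{filter on m wheels is a proper filter on m wheels} and~\ref{replace filter with sum of proper proper}) with $\text{spin}_W$ acting on complexes of the form $\text{cell}\big(\{W_1,\dots,W_m\},\{n_1,\dots,n_m\},w\big)$, where $W$ is only one of several weighted labels; in that setting the domain is far from rank one, and your ``determined by the image of the generator'' argument does not apply. The paper handles this by the explicit observation that at the chain level a single $\text{spin}_{a:b,c}$ acts locally --- it replaces the label $a$ by the signed sum of $b\,c$ and $c\,b$ and leaves every other label untouched --- so a relation among composites of spin-maps that holds when $W$ is the only label automatically holds in the presence of additional labels. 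You should add this locality argument (or something equivalent) to pass from the rank-one case to the general one.
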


\begin{proof}
One can check that Proposition \ref{properwheels} holds at the level of chain complexes through the use of a Jacobi type identity---Sinha does something equivalent to this in the proof of \cite[Proposition 2.7]{sinha2006homology}. 
It follows that this is true for the $\text{spin}$-maps from $C_{0}\Big(P\big(\{W\}, \{n\}, w\big)\Big)$ to $C_{n-1}\big(\text{cell}(n,w)\big)$. 
At the chain level the $\text{spin}$-maps send a cell that includes the label $a$ to the sum of the $2$ cells where $a$ is replaced by $b\,c$ and $c\,b$, i.e.,  $a\mapsto b\,c+(-1)^{w_{b}w_{c}-1} c\,b$.
It follows that we can ignore the rest of the labels, so the relation holds for all $\text{spin}_{W}$.
\end{proof}

Next, we state a relation that will allow us to (anti)-commute certain pairs of wheels.

\begin{prop}\label{commutewheels}
Let $W_{1}$ and $W_{2}$ be two wheels on $n_{1}$ and $n_{2}$ disks, respectively, such that $n_{1}+n_{2}\le w$. Then,
\[
W_{1}|W_{2}=(-1)^{\big(n_{1}-1\big)\big(n_{2}-1\big)}W_{2}|W_{1}.
\] 
\end{prop}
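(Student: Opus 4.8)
The plan is to deduce the identity from the homological triviality of the two–wheel filter $F(W_1,W_2)$ under the hypothesis $n_1+n_2\le w$, and then simply read off the sign from the explicit chain–level formula $F(W_1,W_2)=W_1|W_2+(-1)^{(n_1-1)(n_2-1)+1}W_2|W_1$ recorded just above. Recall that, by definition, $F(W_1,W_2)$ is the image of the $0$–cycle $Z\in C_0\big(P(\{W_1,W_2\},\{n_1,n_2\},w)\big)$, where $Z$ is the boundary of the single–block symbol $W_1\,W_2\in C_1\big(P(\{W_1,W_2\},\{n_1,n_2\})\big)$, under the composition of $i_{id}$ with a sequence of $\text{spin}$–maps that unravel $W_1$ and $W_2$ into their constituent disks.

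The key point is that the hypothesis $n_1+n_2\le w$ is exactly what makes the one–block symbol $W_1\,W_2$, which has total weight $n_1+n_2$, an honest cell of the \emph{weight-restricted} permutohedron $P(\{W_1,W_2\},\{n_1,n_2\},w)$; equivalently, this permutohedron is contractible, as noted just after the definition of filters. Hence $Z=\partial(W_1\,W_2)$ is already a boundary in $C_*\big(P(\{W_1,W_2\},\{n_1,n_2\},w)\big)$. Since $i_{id}$ and each $\text{spin}_{a:b,c}$ induce chain maps, their composition sends $Z$ to the boundary of the image of $W_1\,W_2$, so $F(W_1,W_2)$ is a boundary in $C_*\big(\text{cell}(n,w)\big)$ and therefore vanishes in $H_*\big(\text{cell}(n,w)\big)$. (Every intermediate complex along the sequence of $\text{spin}$–maps is a genuine $\text{cell}(\,\cdot\,,w)$: no block ever acquires weight exceeding $w$, since each $n_i\le n_1+n_2\le w$.) Substituting this into $F(W_1,W_2)=W_1|W_2+(-1)^{(n_1-1)(n_2-1)+1}W_2|W_1$ and using $(-1)^{(n_1-1)(n_2-1)+1}=-(-1)^{(n_1-1)(n_2-1)}$ yields $W_1|W_2=(-1)^{(n_1-1)(n_2-1)}W_2|W_1$ in homology.

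I do not expect a real obstacle here: the argument is bookkeeping on top of facts already established. The only point deserving attention is that $W_1$ and $W_2$ are arbitrary wheels, not necessarily proper ones, so I would either invoke the formula $F(W_1,W_2)=W_1|W_2+(-1)^{(n_1-1)(n_2-1)+1}W_2|W_1$ directly for general wheels (it follows from the definition of the weighted boundary operator together with the sign convention $\text{wsgn}$), or first use Proposition~\ref{spin relation} to express each $W_i$ as a linear combination of proper wheels, reduce to that case by bilinearity of the concatenation product, and observe that the exponent $(n_1-1)(n_2-1)$ depends only on $n_1,n_2$ and so is unaffected. Either route closes the proof.
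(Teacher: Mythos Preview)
Your proposal is correct and is essentially the same argument as the paper's: both hinge on the observation that $n_1+n_2\le w$ makes the single-block cell $W_1\,W_2$ live in the weight-restricted permutohedron $P(\{W_1,W_2\},\{n_1,n_2\},w)$, so its boundary is null-homologous and remains so after applying $i_{id}$ and the spin chain maps. The only cosmetic difference is that the paper computes $\partial(W_1W_2)=(-1)^{n_1}W_1|W_2+(-1)^{n_2+n_1n_2}W_2|W_1$ directly from the weighted boundary formula and then simplifies the sign, whereas you quote the already-recorded expression $F(W_1,W_2)=W_1|W_2+(-1)^{(n_1-1)(n_2-1)+1}W_2|W_1$ and set it to zero; these are the same computation. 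Your closing remark about general versus proper wheels is unneeded, since the argument already works for arbitrary wheels.
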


\begin{proof}
Note that the homology classes $W_{1}|W_{2}$ and $W_{2}|W_{1}$ arise from the cycles $W_{1}|W_{2}$ and $W_{2}|W_{1}$ in $C_{0}\Big(P\big(\{W_{1}, W_{2}\}, \{n_{1}, n_{2}\}, w\big)\Big)$, respectively, and the spin-maps $\text{spin}_{W_{1}}$ and $\text{spin}_{W_{2}}$.
Since $n_{1}+n_{2}\le w$, the cell $W_{1}W_{2}$ is in $P\big(\{W_{1}, W_{2}\}, \{n_{1}, n_{2}\}, w\big)$. 
Taking the boundary of $W_{1}W_{2}$ we see that
\[
\partial(W_{1}W_{2})=(-1)^{n_{1}}W_{1}|W_{2}+(-1)^{n_{2}}(-1)^{n_{1}n_{2}}W_{2}|W_{1}.
\]
In homology it follows that the class arising from $(-1)^{n_{1}}W_{1}|W_{2}+(-1)^{n_{2}}(-1)^{n_{1}n_{2}}W_{2}|W_{1}$ is trivial.
Therefore
\[
(-1)^{n_{1}}W_{1}|W_{2}+(-1)^{n_{2}}(-1)^{n_{1}n_{2}}W_{2}|W_{1}=0,
\]
which can be rewritten as
\[
W_{1}|W_{2}=(-1)^{\big(n_{1}-1\big)\big(n_{2}-1\big)}W_{2}|W_{1}.
\] 
\end{proof}

Proposition \ref{commutewheels} has a nice topological interpretation.
The wheels $W_{1}$ and $W_{2}$ can freely move past each other in the strip as $n_{1}+n_{2}\le w$. 
It follows that the $0$-cells $W_{1}|W_{2}$ and $W_{2}|W_{1}$ are homologous up to sign.
The difference in sign arises from the way we parametrize the embedded torus in $\text{conf}(n_{1}+n_{2}, w)$ yielding these classes.
This $(n_{1}+n_{2}-2)$-torus is the product of an $(n_{1}-1)$-torus yielding $W_{1}$ and an $(n_{2}-1)$-torus yielding $W_{2}$.
Switching the order in which we parametrize these subtori yields a sign change of $(-1)^{\big(n_{1}-1\big)\big(n_{2}-1\big)}$ as we need $\big(n_{1}-1\big)\big(n_{2}-1\big)$ transpositions to do so, and transpositions act by multiplication by $-1$.


We state a relation that proves that any sufficiently nice filter can be written as a sum of proper filters in Alpert--Manin's basis.
We use this family of relations to prove that we can replace filters with rational homology classes that we will call averaged-filters.

\begin{prop}\label{filter on m wheels is a proper filter on m wheels}
For $m\ge 3$, let $W_{1},\dots, W_{m}$ be wheels on $n_{1},\dots, n_{m}$ disks, respectively, such that if $i<j$ the highest labeled disk in $W_{j}$ has a larger label than the highest labeled disk in $W_{i}$, and if $m=2$, then $W_{2}$ outranks $W_{1}$.
Then, the filter
\[
F(W_{1}, \dots, W_{m})
\]
can be written as a sum of proper filters
\[
F(W_{1}, \dots, W_{m})=\sum a_{i_{1},
\dots, i_{n_{1}+\cdots+i_{n_{m}}}} F\big(W(i_{1, 1},\dots, i_{1, n_{1}}), \dots, W(i_{m, 1}, \dots, i_{m, n_{m}})\big)
\]
such that each filter is in the basis of Theorem \ref{AMthmB}, and the labels of disks in the wheel $W_{j}$ are the same as the labels of the disks in the wheel $W(i_{j, 1}, \dots, i_{j, n_{j}})$.
\end{prop}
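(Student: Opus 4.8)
The plan is to reduce Proposition \ref{filter on m wheels is a proper filter on m wheels} to a purely ``chain-level'' statement about the $\text{spin}$-maps, and then invoke Proposition \ref{spin relation} wheel-by-wheel. Recall that the filter $F(W_1,\dots,W_m)$ is, by definition, the image of the cycle $Z=\partial(W_1\cdots W_m)$ in $C_{m-2}\big(P(A,\mathcal{W},w)\big)$ under $i_{id}$ followed by the composite of $\text{spin}$-maps $\text{spin}_{W_1},\dots,\text{spin}_{W_m}$ that unravel the $j$-th abstract wheel $W_j$ into the disks of $A_j$. The point is that each $\text{spin}_{W_j}$ acts only on the block containing the symbol $W_j$, replacing that symbol by an ordered string of the disks in $A_j$; it does not interact with the other symbols or with the combinatorics of $P(A,\mathcal{W},w)$. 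Hence the operators $\text{spin}_{W_1},\dots,\text{spin}_{W_m}$ commute (this is exactly Proposition \ref{spinorder} applied to the abstract wheel-symbols), and we may apply them in any order.

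First I would fix attention on a single wheel, say $W_1$, a wheel on $n_1\le w$ disks. By Proposition \ref{spin relation}, $\text{spin}_{W_1}=\sum_{(i_{1,1},\dots,i_{1,n_1})} a_{(i_{1,1},\dots,i_{1,n_1})}\,\text{spin}_{W(i_{1,1},\dots,i_{1,n_1})}$, the sum over proper wheels on $A_1$ with $i_{1,1}$ the largest label of $A_1$, as an identity of chain maps $C_0\big(P(\{W_1\},\{n_1\},w)\big)\to C_{n_1-1}\big(\text{cell}(A_1,\dots)\big)$. Because $\text{spin}_{W_1}$, when viewed as acting on the larger complex, only touches the symbol $W_1$ inside its block and replaces it by the appropriate string(s) of disks (the ``$a\mapsto b\,c+c\,b$, ignore everything else'' mechanism quoted in the proof of Proposition \ref{spin relation}), this identity persists after we precompose with $i_{id}$ and postcompose with the remaining $\text{spin}_{W_2},\dots,\text{spin}_{W_m}$. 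Applying this substitution to each of $W_1,\dots,W_m$ in turn, and using that the $\text{spin}$-operators for distinct wheels commute, I obtain
\[
F(W_1,\dots,W_m)=\sum a_{i_{1,1},\dots,i_{m,n_m}}\,F\big(W(i_{1,1},\dots,i_{1,n_1}),\dots,W(i_{m,1},\dots,i_{m,n_m})\big),
\]
where each summand is a proper filter on the same sets of labels, with the largest label of each $A_j$ appearing first in its wheel, and the ordering of the wheels $W_1,\dots,W_m$ inherited unchanged.

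It remains to argue that each proper filter appearing on the right is, or can be further rewritten as, a cycle in the basis of Theorem \ref{AMthmB}. For $m\ge 3$ the basis condition on a filter is that the proper wheels inside it are in order by largest label; this is exactly the hypothesis we imposed on $W_1,\dots,W_m$ (if $i<j$ then the top label of $A_j$ exceeds that of $A_i$), and it is preserved by the substitution above since we only changed the internal ordering of each wheel, not which set of labels is the $j$-th wheel. For $m=2$ the basis condition is that the two wheels are in order of increasing rank, which is the hypothesis ``$W_2$ outranks $W_1$''; note $F(W_1,W_2)$ is, up to the sign convention recorded just before Theorem \ref{AMthmB}, precisely $W_1|W_2+(-1)^{(n_1-1)(n_2-1)+1}W_2|W_1$, and this is $\pm$ a single basis filter. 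Thus every summand lies in the span of the Theorem \ref{AMthmB} basis filters, and collecting coefficients gives the claimed expansion. I expect the only genuinely delicate point to be the careful bookkeeping that the ``proper-wheel'' rewriting of Proposition \ref{spin relation}, which is stated for a lone wheel, really does commute past the other $\text{spin}$-maps and survive the inclusion $i_{id}$ — i.e.\ that $\text{spin}_{W_j}$ is ``local'' to its block in the strong sense needed; but this is immediate from the explicit block-level description of the $\text{spin}$-maps given in Section \ref{rat hom sec} together with Proposition \ref{spinorder}, so no real obstruction arises.
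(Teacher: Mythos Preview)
Your proposal is correct and follows essentially the same approach as the paper: both arguments write each $\text{spin}_{W_j}$ as a sum of proper spin-maps via Proposition~\ref{spin relation}, use commutativity (Proposition~\ref{spinorder}) to distribute through the composite, and observe that each summand is by definition a proper filter $F\big(W(i_{1,1},\dots,i_{1,n_1}),\dots,W(i_{m,1},\dots,i_{m,n_m})\big)$ satisfying the ordering conditions of Theorem~\ref{AMthmB}. Your write-up is in fact slightly more explicit than the paper's in verifying the basis conditions for the $m\ge 3$ and $m=2$ cases separately.
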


\begin{proof}
By the definition of a filter, the filter $F(W_{1}, \dots, W_{m})$ is the image of $\text{spin}_{W_{m}}\circ\cdots\circ\text{spin}_{W_{1}}\circ i_{id}(Z)$ in $H_{n-2}\big(\text{conf}(n,w)\big)$, where $Z$ is the boundary of the top dimensional cell of $P\big(\{W_{1}, \dots, W_{m}\}, \{n_{1},\dots, n_{m}\}, w\big)$, i.e., $Z:=\partial(W_{1}\,\cdots\,W_{m}).$

By Proposition \ref{spin relation} we can write $\text{spin}_{W_{j}}$ as a sum of maps of the form $\text{spin}_{W(i_{j, 1}, \dots, i_{j, n_{j}})}$, where the labels of disks are the same as the labels of the disks in $W_{j}$, i.e., 
\[
\text{spin}_{W_{j}}=\sum a_{(i_{j,1},\dots, i_{j, n_{j}})}\text{spin}_{W(i_{j, 1}, \dots, i_{j, n_{j}})}.
\]

Since these are chain maps it follows that 
\begin{multline*}
\text{spin}_{W_{m}}\circ\cdots\circ\text{spin}_{W_{1}}\circ i_{id}(Z)\\
=\big(\sum a_{(i_{m,1},\dots, i_{m, n_{m}})}\text{spin}_{W(i_{m, 1}, \dots, i_{m, n_{m}})}\big)\circ\cdots\circ\big(\sum a_{(i_{1,1},\dots, i_{1, n_{1}})}\text{spin}_{W(i_{1, 1}, \dots, i_{1, n_{1}})}\big)\circ i_{id}(Z)\\
=\sum a_{(i_{m,1},\dots, i_{m, n_{m}})}\cdots a_{(i_{1,1},\dots, i_{1, n_{1}})}\text{spin}_{W(i_{m, 1}, \dots, i_{m, n_{m}})}\circ\cdots\circ\text{spin}_{W(i_{1, 1}, \dots, i_{1, n_{1}})}\circ i_{id}(Z).
\end{multline*}

Note that the cycle $\text{spin}_{W(i_{m, 1}, \dots, i_{m, n_{m}})}\circ\cdots\circ\text{spin}_{W(i_{1, 1}, \dots, i_{1, n_{1}})}\circ i_{id}(Z)$ represents the homology class $F\big(W(i_{1, 1},\dots, i_{1, n_{1}}), \dots, W(i_{m, 1}, \dots, i_{m, n_{m}})\big)$.
It follows that 
\[
F(W_{1}, \dots, W_{m})
\]
can be written as a sum of proper filters on $m$ wheels in the basis of Theorem \ref{AMthmB}, such that the labels of disks in the wheel $W_{j}$ are the same as the labels of the disks in the wheels $W(i_{j, 1}, \dots, i_{j, n_{j}})$:
\[
F(W_{1}, \dots, W_{m})=\sum a_{i_{1},
\dots, i_{n_{1}+\cdots+i_{n_{m}}}}F\big(W(i_{1, 1},\dots, i_{1, n_{1}}), \dots, W(i_{m, 1}, \dots, i_{m, n_{m}})\big).
\]
\end{proof}


\subsection{A New Basis for Rational Homology}

In this subsection we consider the rational homology of $\text{conf}(n,w)$.
Not much is lost in using rational coefficients as $H_{k}\big(\text{cell}(n,w)\big)$ is free abelian; moreover, there is much to be gained.
Representations of the symmetric group over $\Q$ are easier to work with than representations over $\Z$, and we need to be able to take averages over families of chain maps in order to define a new type of class in rational homology that we will call averaged-filters.
We will prove a relation between concatenation products of wheels and averaged-filters, and we will use this relation to find a basis for rational homology distinct from that of Alpert--Manin.
Additionally this relation will play a critical role in determining the how much higher order representation stability $\text{conf}(\bullet, w)$ exhibits.
All of the results in this subsection hold over a commutative ring $R$ where $n!$ is invertible.

\begin{defn}
Fix an ordering of the weighted set $(A, \mathcal{W})$, and label the cells of $P(A, \mathcal{W})$ so that the labels in each block are in increasing order. 
For $\sigma\in S_{(A,\mathcal{W})}$, let
\[
i_{\sigma}:P(A, \mathcal{W})\to \text{cell}(A, \mathcal{W})
\]
be the inclusion arising from the map that sends $a_{1}\,\dots\, a_{|A|}$, the top dimensional cell of $P(A, \mathcal{W})$, to $a_{\sigma(1)}\,\dots\, a_{\sigma(|A|)}$, the top dimensional cell in $\text{cell}(A, \mathcal{W})$. 
The \emph{averaged-inclusion} of the chain maps induced by an average of the $i_{\sigma}$, denoted \emph{$q$}, is the map
\[
q:C_{*}\big(P(A, \mathcal{W});\Q\big)\to C_{*}\big(\text{cell}(A, \mathcal{W});\Q\big)
\]
where
\[
q:=\frac{1}{|A|!}\sum_{\sigma\in S_{A}} \text{wsgn}(\sigma)(i_{\sigma})_{*}.
\]
See Figure \ref{qinclusion}.
\end{defn}

Since $q$ is a sum of chain maps, it is a chain map, and we use $q$ to define a new family of classes in rational homology of $\text{conf}(n,w)$ that are much better behaved with respect to the symmetric group actions than filters.


\begin{figure}[h]
\centering
\captionsetup{width=.8\linewidth}
\includegraphics[width = 12cm]{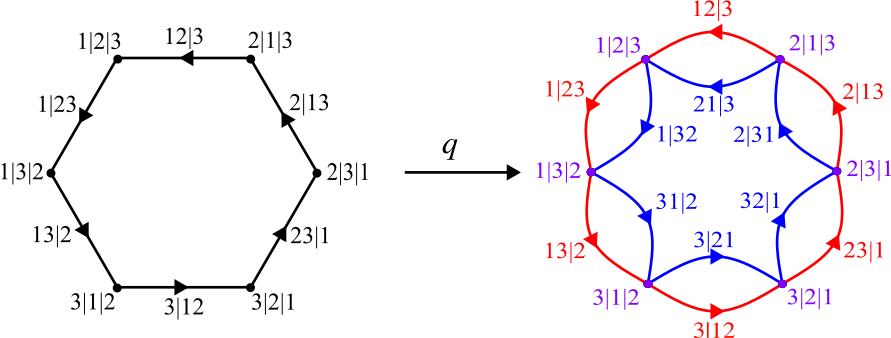}
\caption{The image of the cycle in $C_{1}\Big(P\big(\{1,2,3\}, \{1,1,1\}, 2\big);\Q\Big)$ corresponding to going once around the circle counter-clockwise. 
It is the cycle in $C_{1}\Big(\text{cell}\big(\{1,2,3\}, \{1,1,1\}, 2\big);\Q\Big)$ corresponding to taking the average of going once around the inner blue circle counter-clockwise and once around the outer red circle counter-clockwise, i.e., an average of two embedded $S^{1}$s in $\text{conf}(3,2)$.
The blue and red cells appear with coefficients $\pm \frac{1}{2}$ in the cycle.
}
\label{qinclusion}
\end{figure}

\begin{defn}
For $m\ge 2$, let $(A, \mathcal{W})=\big(\{W_{1}, \dots, W_{m}\}, \{n_{1}, \dots, n_{m}\}\big)$ be a weighted set such that each $W_{j}$ is a permutation of $A_{j}\subset\{1,\cdots, n\}$ such that $A_{j}\cap A_{i}=\emptyset$ if $j\neq i$, and $\bigcup A_{j}=\{1,\dots, n\}$; $n_{j}=|A_{j}|$, and for any $k$, we have $n_{1}+\cdots+\widehat{n_{k}}+\cdots+n_{m}\le w$. 
Let $Z$ denote the cycle in $C_{m-2}\big(P(A, \mathcal{W},w);\Q\big)$ that is the boundary of $W_{1}\,\cdots\, W_{m}\in C_{m-1}\big(P(A,\mathcal{W});\Q\big)$.
An \emph{averaged-filter} is the image of $Z$ in $H_{n-2}\big(\text{conf}(n,w);\Q\big)$ under $q$ and any sequence of $\text{spin}$-maps that yield $m$ wheels such that the $j^{\text{th}}$ wheel is on the disks in $A_{j}$.
If the  $\text{spin}$-maps yield proper wheels, we say that the averaged-filter is \emph{proper}. 
\end{defn}

In a proper averaged-filter the wheels are proper. 
It also follows from the definition that there are at most $\frac{3w}{2}$ disks in an averaged-filter, and more than $w$ disks in a non-trivial averaged-filter.
If $n\le w$, then $P(A, \mathcal{W}, w)$ is contractible and the averaged-filter is trivial in homology.
We will still consider such filters as they will aid in the statement of Lemma \ref{relationlem}. 

If $n\le \frac{3w}{2}$ and $\sigma\in S_{n}$ is such that any collection of all but one of the wheels in $\sigma$ has total weight at most $w$ and $\sigma$ has at least $2$ wheels, then $P\big(n-\#\sigma,\mathcal{W}(\sigma), w\big)$ is homotopy equivalent to $S^{m-2}$. 
As a result, a non-trivial averaged-filter has $m\ge 2$ wheels on $n>w$ total disks such that each wheel is made of at least $n-w$ disks, and the wheels are ordered by the largest label.
The wheels in the averaged-filter can move as follows:
Every wheel can perform its rotations independently, for a total of $n-m$ degrees of freedom, resulting in a $T^{n-m}$; additionally, each wheel can move left and right in the strip, crossing over or under each other in any order, and we take the average of all possible crossings in the same direction. 
Any $m-1$ wheels can have the same $x$-coordinate (since they contain at most $w$ disks total), but all $m$ cannot (this is not true for trivial averaged-filters). 
This movement can be thought of as an average of embedded copies of $S^{m-2}$ in configuration space.
As a result, the whole $(n-2)$-cycle arises from an average of embedded copies of $S^{m-2}\times T^{n-m}$. 
See Figure \ref{qinclusion} for an example of an averaged-filter on $3$ wheels each consisting of $1$ disk.

We write $AF(W_{1},\dots, W_{m})$ for the averaged-filter on the ordered set of wheels $W_{1}, \dots, W_{m}$.
Here the orientation of the corresponding class represented by the embedded $(\sum S^{m-2})\times T^{n-m}$ is determined by the order of the wheels. We write $AF\big(W(i_{1, 1}, \dots, i_{1, n_{1}}),\dots, W(i_{m, 1}, \dots, i_{m, n_{m}})\big)$ for the proper averaged-filter on the proper wheels $W(i_{1, 1}, \dots, i_{1, n_{1}}),\dots, W(i_{m, 1}, \dots, i_{m, n_{m}})$.
It follows from the definition of $q$ that
\[
AF(W_{1},\dots, W_{m})=\frac{1}{|A|!}\sum_{\sigma\in S_{A}} \text{wsgn}(\sigma)F(W_{\sigma(1)},\dots, W_{\sigma(m)}),
\]
hence an averaged-filter is an average of filters.
As a result, the averaged-filter on two wheels $AF(W_{1}, W_{2})$ is equal to the filter $F(W_{1}, W_{2})$.
Alternatively, one could note that the complexes $P\big(\{W_{1}, W_{2}\}, \{n_{1}, n_{2}\}, w\big)$ and $\text{cell}\big(\{W_{1}, W_{2}\}, \{n_{1}, n_{2}\}, w\big)$ have identical cellular structures, so $i_{id}=q$ and $AF(W_{1}, W_{2})=F(W_{1}, W_{2})$.

Unlike filters where the order of the wheels determines both the filter and its orientation, the order of the wheels in an averaged-filter only determine its orientation.
Namely, for the averaged-filter $AF(W_{1},\dots, W_{m})$, the symmetric group $S_{m}$ determines the orientation of the average of the spheres by an action of weighted sign, as well as determining the order in which we consider the tori.
The following proposition shows we can rearrange the wheels in an averaged-filter so the wheels are in increasing order of largest labeled disk or in order of increasing rank at the cost of a sign change.

\begin{prop}\label{sign of averaged-filter}
Let $AF(W_{1}, \dots, W_{m})$ be an averaged-filter, such that $W_{i}$ is a disk on $n_{i}$ wheels. Then, the weighted permutation $\sigma$ of the weighted set $\big(\{W_{1}, \dots, W_{m}\}, \{n_{1}, \dots, n_{m}\}\big)$ acts by $\text{wsgn}(\sigma)$ on $AF(W_{1}, \dots, W_{m})$.
\end{prop}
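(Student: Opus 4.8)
The plan is to trace the $S_m$-action through the three ingredients that build an averaged-filter: the boundary cycle $Z = \partial(W_1\cdots W_m)$ in $P\big(\{W_1,\dots,W_m\},\{n_1,\dots,n_m\},w\big)$, the averaged-inclusion $q$, and the sequence of $\text{spin}$-maps that unwind the wheels. Since $q$ and the $\text{spin}$-maps are fixed chain maps once the disks-in-wheels data is specified, and permuting $W_1,\dots,W_m$ does not change which disks lie in which wheel, the only place the permutation $\sigma$ can enter is through its effect on $Z$ as a chain in the weighted permutohedron. So the whole statement reduces to: the weighted permutation $\sigma$ acting on the weighted set $\big(\{W_1,\dots,W_m\},\{n_1,\dots,n_m\}\big)$ sends the fundamental cycle $Z$ to $\text{wsgn}(\sigma)\,Z$ in $C_{m-2}\big(P(A,\mathcal W,w);\Q\big)$.

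The first step is to reduce to the case where $w$ is large enough that no weight restriction is active, i.e. to $P(A,\mathcal W)$ with $A=\{W_1,\dots,W_m\}$ of total weight $n$. By the remark preceding the definition of filters, every $(m-2)$-cell of $P(A,\mathcal W)$ already lies in $P(A,\mathcal W,w)$ under the hypotheses on the $n_i$, so $Z$ and its $S_m$-translates live in the subcomplex and the computation is the same. The second step is to recall that the top cell $W_1\cdots W_m$ of $P(A,\mathcal W)$ is an $(m-1)$-dimensional polytope (a weighted permutohedron on $m$ letters) and that the $\text{wsgn}$ convention is precisely engineered — via $\text{wdim}$, $\text{wdeg}$, and the weighted sign of a transposition being $(-1)^{w_aw_b}$ — so that reordering the letters of a block acts on the oriented cell by the weighted sign. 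Concretely, swapping two adjacent letters $W_i,W_{i+1}$ inside the single top block reverses the orientation of that cell by $(-1)^{n_in_{i+1}}$; this is exactly the statement that $i_{\sigma}$ and $i_{id}$ differ by $\text{wsgn}(\sigma)$ at the chain level, which is already built into the definition of $q$. The third step is to observe that $\partial$ is $\text{wsgn}$-equivariant (its signs are defined from $\text{wlength}$ and $\text{wsgn}$, and Alpert--Manin's proof that $\partial^2=0$ extends to the weighted setting as noted in the excerpt), so $\sigma\cdot Z = \sigma\cdot\partial(W_1\cdots W_m) = \partial(\sigma\cdot W_1\cdots W_m) = \text{wsgn}(\sigma)\,\partial(W_1\cdots W_m) = \text{wsgn}(\sigma)\,Z$.

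Putting these together: apply $q$ and the $\text{spin}$-maps, which are linear chain maps unaffected by the relabeling of wheels, to both sides to conclude $AF(W_{\sigma(1)},\dots,W_{\sigma(m)}) = \text{wsgn}(\sigma)\,AF(W_1,\dots,W_m)$ in $H_{n-2}\big(\text{conf}(n,w);\Q\big)$, which is the claim. It is worth checking compatibility with the two already-recorded special cases: for $m=2$ this recovers $AF(W_1,W_2) = F(W_1,W_2) = (-1)^{(n_1-1)(n_2-1)+1}\,F(W_2,W_1)$ — note $(-1)^{n_1n_2}$ and $(-1)^{(n_1-1)(n_2-1)+1}$ agree for all parities of $n_1,n_2$, so the $\text{wsgn}$ formula is consistent with the sign convention chosen for filters on two wheels; and for $w$ large the formula degenerates to the known $S_m$-equivariance of classical filters in $H_*\big(F_\bullet(\R^2);\Q\big)$.

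The main obstacle is bookkeeping the sign convention rather than any conceptual difficulty: one must verify carefully that the weighted boundary operator $\partial$ on $P(A,\mathcal W)$ is genuinely equivariant for the $\text{wsgn}$-twisted $S_m$-action on the top cell — i.e. that the block-internal reordering sign $(-1)^{n_in_j}$ propagates consistently to every $(m-2)$-face appearing in $Z$ — and that commuting this action past the $\text{spin}$-maps introduces no extra sign, which follows because each $\text{spin}_{a:b,c}$ only touches the internal structure of a single wheel and is defined to be a chain map with its own built-in signs. Once the equivariance of $\partial$ is pinned down, the rest is immediate.
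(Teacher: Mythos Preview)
Your main argument is correct and essentially the same as the paper's: both reduce the claim to showing that the boundary cycle $Z=\partial(W_1\cdots W_m)$ in $C_{m-2}\big(P(A,\mathcal{W},w);\Q\big)$ transforms by $\text{wsgn}(\sigma)$ under the permutation of wheels, and both verify this via the weighted boundary formula. The paper does the check face-by-face, computing the sign of each codimension-$1$ cell before and after applying $\sigma$; you package the same computation as ``the top cell transforms by $\text{wsgn}(\sigma)$ and $\partial$ is equivariant,'' which is a cleaner phrasing of the identical content.

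One small correction to your closing sanity check: the claim that $(-1)^{n_1n_2}$ and $(-1)^{(n_1-1)(n_2-1)+1}$ agree for all parities of $n_1,n_2$ is false --- they differ exactly when $n_1+n_2$ is odd. This does not affect your proof; the apparent mismatch comes from a normalization: the cycle $Z=\partial(W_1W_2)$ computed from the weighted boundary formula equals $(-1)^{n_1}$ times the explicit expression $W_1|W_2+(-1)^{(n_1-1)(n_2-1)+1}W_2|W_1$, and once you track that extra $(-1)^{n_1}$ (and the corresponding $(-1)^{n_2}$ on the other side) the $m=2$ case does reduce to $\text{wsgn}(\tau)=(-1)^{n_1n_2}$ as the proposition asserts. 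So the proposition is consistent with the two-wheel formula, but not via the identity you wrote.
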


\begin{proof}
The averaged-filter $AF(W_{1}, \dots, W_{m})$ arises from the cycle that is the boundary of $W_{1}\,\cdots\,W_{m}$, the top dimensional cell of the weighted permutohedron $P(A,\mathcal{W})=P\big(\{W_{1},\dots, W_{m}\}, \{n_{1}, \dots, n_{m}\}\big)$.
The codimenison-1 cells of $P(A,\mathcal{W}, w)$ are of the form $W_{i_{1}}\cdots W_{i_{l}}|W_{i_{l+1}}\cdots W_{i_{m}}$ and every codimension-1 cell appears in the boundary of $W_{1}\,\cdots\,W_{m}$. 
The sign of the cell $W_{i_{1}}\cdots W_{i_{l}}|W_{i_{l+1}}\cdots W_{i_{m}}$ in $P(A,\mathcal{W}, w)$ is 
\[
(-1)^{n_{i_{1}}+\cdots+n_{i_{l}}}\cdot\text{wsgn}(\text{permutation } W_{1}\cdots W_{m}\mapsto W_{i_{1}}\cdots W_{i_{l}}|W_{i_{l+1}}\cdots W_{i_{m}}),
\]
and the sign of $\sigma(W_{i_{1}}\cdots W_{i_{l}}|W_{i_{l+1}}\cdots W_{i_{m}})$ in $(A,\mathcal{W}, w)$ is 
\[
(-1)^{n_{i_{1}}+\cdots+n_{i_{l}}}\cdot\text{wsgn}(\text{permutation } \sigma^{-1}(W_{1}\cdots W_{m})\mapsto W_{i_{1}}\cdots W_{i_{l}}|W_{i_{l+1}}\cdots W_{i_{m}})
\]
\[
=(-1)^{n_{i_{1}}+\cdots+n_{i_{l}}}\cdot\text{wsgn}(\text{permutation } W_{1}\cdots W_{m}\mapsto W_{i_{1}}\cdots W_{i_{l}}|W_{i_{l+1}}\cdots W_{i_{m}})\cdot \text{wsgn}(\sigma).
\]
Thus, they differ by $\text{wsgn}(\sigma)$. 
Therefore $\sigma$ acts on $AF(W_{1}, \dots, W_{m})$ by $\text{wsign}(\sigma)$.

\end{proof}

We will show that we can replace proper filters with proper averaged-filters in a basis for rational-homology. 
First, we show that every averaged-filter can be written as a sum of proper averaged-filters.

\begin{prop}\label{replace filter with sum of proper proper}
The averaged-filter $AF(W_{1}, \dots, W_{m})$ can be written as a sum of proper averaged-filters of the form $AF\big(W(i_{1, 1}, \dots, i_{1, n_{1}}),\dots, W(i_{m, 1}, \dots, i_{m, n_{m}})\big)$, where, for all $k$, the labels of the disks in the wheel $W_{k}$ are $i_{k, 1},\dots, i_{k, n_{k}}$, and $i_{k,1}>i_{k, l}$ for all $l\neq 1$.
\end{prop}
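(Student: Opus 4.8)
The plan is to mimic the proof of Proposition \ref{filter on m wheels is a proper filter on m wheels} almost verbatim, replacing the composition $i_{id}$ with the averaged-inclusion $q$. Recall that by definition, $AF(W_1,\dots,W_m)$ is the image of $Z=\partial(W_1\cdots W_m)\in C_{m-2}\big(P(A,\mathcal{W},w);\Q\big)$ under $q$ followed by a sequence of $\text{spin}$-maps $\text{spin}_{W_m}\circ\cdots\circ\text{spin}_{W_1}$ that unravel the $m$ wheels. So the class we want to decompose is
\[
\text{spin}_{W_m}\circ\cdots\circ\text{spin}_{W_1}\circ q\,(Z).
\]

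First I would invoke Proposition \ref{spin relation}, which expresses each $\text{spin}_{W_j}$ as a rational linear combination $\text{spin}_{W_j}=\sum a_{(i_{j,1},\dots,i_{j,n_j})}\text{spin}_{W(i_{j,1},\dots,i_{j,n_j})}$ of $\text{spin}$-maps attached to proper wheels on the same set of disks with the largest label first. Since all maps in sight are chain maps and composition is bilinear, substituting these expansions and expanding the product gives
\[
\text{spin}_{W_m}\circ\cdots\circ\text{spin}_{W_1}\circ q\,(Z)=\sum a_{(i_{m,1},\dots,i_{m,n_m})}\cdots a_{(i_{1,1},\dots,i_{1,n_1})}\;\text{spin}_{W(i_{m,1},\dots,i_{m,n_m})}\circ\cdots\circ\text{spin}_{W(i_{1,1},\dots,i_{1,n_1})}\circ q\,(Z).
\]
Then I would observe that each summand $\text{spin}_{W(i_{m,1},\dots,i_{m,n_m})}\circ\cdots\circ\text{spin}_{W(i_{1,1},\dots,i_{1,n_1})}\circ q\,(Z)$ is, by the very definition of a proper averaged-filter, a representative of the class $AF\big(W(i_{1,1},\dots,i_{1,n_1}),\dots,W(i_{m,1},\dots,i_{m,n_m})\big)$: the $\text{spin}$-maps now produce proper wheels on the prescribed disk sets, the underlying cycle $Z$ and the averaged-inclusion $q$ are unchanged, and the weights $(n_1,\dots,n_m)$ and the weighted set $(A,\mathcal{W})$ are the same. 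Passing to homology in $H_{n-2}\big(\text{conf}(n,w);\Q\big)$ then yields the claimed identity with the same coefficients $a_{i_1,\dots,i_{n_1+\cdots+n_m}}=\prod_j a_{(i_{j,1},\dots,i_{j,n_j})}$.

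There is essentially no obstacle here beyond bookkeeping: the only subtlety — and the one point I would be careful to check — is that $q$ really does commute past the substitution, i.e., that $q$ is a genuine chain map (stated immediately after its definition) and that replacing $\text{spin}_{W_j}$ by its expansion is legitimate at the chain level, which is exactly the content of Proposition \ref{spin relation}. Everything else is the linearity of composition of chain maps together with matching the resulting summands against the definition of a proper averaged-filter. I would also note in passing that the condition $i_{k,1}>i_{k,l}$ for $l\neq 1$ (largest label first in each wheel) is guaranteed by the form of the proper wheels produced in Proposition \ref{spin relation}, so no extra argument is needed for it.
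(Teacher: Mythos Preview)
Your proposal is correct and follows essentially the same approach as the paper's own proof: both arguments decompose each $\text{spin}_{W_j}$ into a sum of proper-wheel $\text{spin}$-maps via Proposition \ref{spin relation}, expand the composition by bilinearity, and identify each resulting summand $\text{spin}_{W(i_{m,1},\dots)}\circ\cdots\circ\text{spin}_{W(i_{1,1},\dots)}\circ q(Z)$ as a proper averaged-filter. The paper additionally cites Proposition \ref{spinorder} to justify that the order of the $\text{spin}$-maps is irrelevant, which you absorb into the phrase ``composition is bilinear''; this is harmless, and your explicit remark that $q$ is a chain map is exactly the point that distinguishes this argument from that of Proposition \ref{filter on m wheels is a proper filter on m wheels}.
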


\begin{proof}
The averaged-filter $AF(W_{1}, \dots, W_{m})$ from the cycle $Z\in C_{m-2}\Big(P\big(\{W_{1},\dots, W_{m}\}, \{n_{1}, \dots, n_{m}\}, w\big);\Q\Big)$ that is the boundary of the cell $W_{1}\,\cdots\,W_{m}$ in $P\big(\{W_{1},\dots, W_{m}\}, \{n_{1}, \dots, n_{m}\}\big)$, the map $q$, and a sequence of $\text{spin}$-maps such that the wheel $W_{k}$ has labels $i_{k, 1}, \dots, i_{k, n_{k}}$. 

By Proposition \ref{properwheels} it follows that the sequence of $\text{spin}$-maps that produce $W_{k}$ is equal to a sum of $\text{spin}$-maps that all produce proper wheels on the same set of labels and the biggest label comes first, i.e., if $\text{spin}_{W_{k}}$ is the spin-map that yields the wheel $W_{k}$ then 
\[
\text{spin}_{W_{k}}=\sum a_{i_{k, 1}, \dots, i_{k, n_{k}}} \text{spin}_{W(i_{k, 1}, \dots, i_{k, n_{k}})}
\]
where the maps $\text{spin}_{W(i_{k, 1}, \dots, i_{k, n_{k}})}$ yield proper wheels on the same set of labels as $\text{spin}_{W_{k}}$, and $i_{k,1}>i_{k,l}$ for all $l\neq 1$. 
By Proposition \ref{spinorder} the order of these maps does not matter, and since these are chain maps it follows that 
\begin{multline*}
\text{spin}_{W_{1}}\circ\cdots \circ \text{spin}_{W_{m}}\\
=\sum a_{i_{1, 1}, \dots, i_{1, n_{1}}} \text{spin}_{W(i_{1, 1}, \dots, i_{1, n_{1}})}\circ\cdots\circ \sum a_{i_{m, 1}, \dots, i_{m, n_{m}}} \text{spin}_{W(i_{m, 1}, \dots, i_{m, n_{m}})}\\
=\sum a_{i_{1, 1}, \dots, i_{1, j(1)}}\cdots \sum a_{i_{m, 1}, \dots, i_{m, n_{m}}} \text{spin}_{W(i_{1, 1}, \dots, i_{1, n_{1}})}\circ\cdots\circ  \text{spin}_{W(i_{m, 1}, \dots, i_{m, n_{k}})}.
\end{multline*}
Note that
\[
\text{spin}_{W(i_{1, 1}, \dots, i_{1, n_{k}})}\circ\cdots\circ  \text{spin}_{W(i_{m, 1}, \dots, i_{m, n_{k}})}\circ q(Z)
\]
is the proper averaged-filter on the wheels $W(i_{1, 1}, \dots, i_{1, n_{1}}),\dots, W(i_{m, 1}, \dots, i_{m, n_{m}})$, where $W_{k}$ has labels $i_{k, 1},\dots, i_{k, n_{k}}$. 
Therefore, 
\[
AF(W_{1}, \dots, W_{m})=\sum a_{i_{1, 1}, \dots, i_{1, n_{1}}}\cdots \sum a_{i_{m, 1}, \dots, i_{m, n_{m}}}  AF\big(W(i_{1, 1}, \dots, i_{1, n_{1}}),\dots, W(i_{m, 1}, \dots, i_{m, n_{m}})\big),
\]
where the labels of the disks in the wheel $W_{k}$ are $i_{k, 1},\dots, i_{k, n_{k}}$ for all $k$, and $i_{k,1}>i_{k,l}$ for all $l\neq 1$.
\end{proof}

Next, we prove a technical proposition that describes how one can write a proper averaged-filter as a sum of elements in the basis of Theorem \ref{AMthmB}.
This will allow us to replace filters with averaged-filters.

\begin{prop}\label{newfilterfromold}
For $m\ge 3$, let $AF\big(W(i_{1, 1}, \dots, i_{1, n_{1}}),\dots, W(i_{m, 1}, \dots, i_{m, n_{m}})\big)$ be a non-trivial proper averaged-filter on $n=\sum_{j=1}^{m}n_{j}$ disks, such that for all $k$, we have $i_{k,1}>i_{k,l}$ for $l\neq 1$ and $i_{k,1}<i_{k+1, 1}$.
Then,
\begin{multline*}
AF\big(W(i_{1, 1}, \dots, i_{1, n_{1}}),\dots, W(i_{m, 1}, \dots, i_{m, n_{m}})\big)\\
=F\big(W(i_{1, 1}, \dots, i_{1, n_{1}}),\dots, W(i_{m, 1}, \dots, i_{m, n_{m}})\big)+\sum_{2\le l<m} a_{F(W_{1},\dots, W_{l})}F(W_{1},\dots, W_{l})
\end{multline*}
Moreover, $F\big(W(i_{1, 1}, \dots, i_{1, n_{1}}),\dots, W(i_{m, 1}, \dots, i_{m, n_{m}})\big)$ and the $F(W_{1},\dots, W_{l})$ are filters in the basis for $H_{n-2}\big(\text{cell}(n,w);\Z\big)$ of Theorem \ref{AMthmB}.
\end{prop}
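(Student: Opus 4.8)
The plan is to unwind the definition of the proper averaged-filter as an average over the symmetric group $S_m$ of spin-maps applied to inclusions $i_\sigma$ of the weighted permutohedron, and then to recognize each term in that average as (up to sign) an ordinary proper filter. Concretely, the averaged-filter $AF\big(W(i_{1,1},\dots,i_{1,n_1}),\dots,W(i_{m,1},\dots,i_{m,n_m})\big)$ is the image under $q$ and the spin-maps $\text{spin}_{W(i_{m,1},\dots)}\circ\cdots\circ\text{spin}_{W(i_{1,1},\dots)}$ of the cycle $Z=\partial(W_1\cdots W_m)$ in $C_{m-2}\big(P(A,\mathcal W,w);\Q\big)$, where $A=\{W_1,\dots,W_m\}$ with weights $n_1,\dots,n_m$. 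Expanding $q=\frac{1}{m!}\sum_{\sigma\in S_m}\text{wsgn}(\sigma)(i_\sigma)_*$, and using that the spin-maps commute with everything in sight (Proposition \ref{spinorder}) and are chain maps, we get
\[
AF\big(W(i_{1,1},\dots),\dots,W(i_{m,1},\dots)\big)=\frac{1}{m!}\sum_{\sigma\in S_m}\text{wsgn}(\sigma)\,\text{spin}_\bullet\big((i_\sigma)_*(Z)\big).
\]
The key observation is that $\text{spin}_\bullet\circ (i_\sigma)_* (Z)$ is exactly the ordinary (possibly improper) filter $F(W_{\sigma(1)},\dots,W_{\sigma(m)})$ on the reordered wheels, since $i_\sigma$ records precisely the order in which the wheels slide past each other.

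Next I would invoke Proposition \ref{sign of averaged-filter} (or rather its underlying computation) to see that reordering the wheels in a filter introduces the weighted sign: $F(W_{\sigma(1)},\dots,W_{\sigma(m)})=\text{wsgn}(\sigma)\,F(W_1,\dots,W_m)$ up to terms that are filters on fewer wheels. More precisely, the filter $F(W_{\sigma(1)},\dots,W_{\sigma(m)})$ arising from $\partial(W_{\sigma(1)}\cdots W_{\sigma(m)})$ differs from $F(W_1,\dots,W_m)$ by $\text{wsgn}(\sigma)$ because each codimension-one cell of the permutohedron $P(A,\mathcal W,w)$ picks up exactly the weighted sign of $\sigma$ under relabeling (this is the content of the sign computation in the proof of Proposition \ref{sign of averaged-filter}). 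Summing, the $m!$ terms each contribute $\text{wsgn}(\sigma)\cdot\text{wsgn}(\sigma)=1$ times $F(W_1,\dots,W_m)$, so the average collapses to $F\big(W(i_{1,1},\dots),\dots,W(i_{m,1},\dots)\big)$ — plus a correction. The correction arises because, for a given $\sigma$, not every codimension-one cell of $P(A,\mathcal W,w)$ survives: when $\sigma$ places more than $w$ disks' worth of wheels on one side of a bar the corresponding cell is absent, so $(i_\sigma)_*(Z)$ is not literally a relabeling of $Z$ but $Z$ with some top cells deleted, and the missing cells assemble into filters on strictly fewer than $m$ wheels. These lower filters are supported on proper sub-collections of the $W_k$ in their given cyclic/linear order, hence are of the form $F(W_1,\dots,W_l)$ with $2\le l<m$, and by Proposition \ref{filter on m wheels is a proper filter on m wheels} each can be rewritten as a sum of proper filters in the basis of Theorem \ref{AMthmB}.

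Finally I would assemble these pieces: the leading term is $F\big(W(i_{1,1},\dots,i_{1,n_1}),\dots,W(i_{m,1},\dots,i_{m,n_m})\big)$, which is already in the basis of Theorem \ref{AMthmB} under the hypotheses $i_{k,1}>i_{k,l}$ and $i_{k,1}<i_{k+1,1}$ (the wheels are proper, their largest labels increase, and no wheel precedes a filter); the remaining terms are $\Q$-linear combinations of $F(W_1,\dots,W_l)$ with $2\le l<m$, which by Propositions \ref{filter on m wheels is a proper filter on m wheels} and \ref{properwheels} expand into basis filters. The main obstacle is the bookkeeping in the middle step: correctly identifying which cells of $P(A,\mathcal W,w)$ fail to exist for each permutation $\sigma$ and showing that the resulting defect is precisely a combination of genuine filters on fewer wheels (rather than some spurious non-cycle), together with tracking the weighted signs through the relabeling so the $m!$ leading contributions really do add coherently rather than cancel. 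Everything else is a formal consequence of the chain-map property of $q$ and the spin-maps and of the already-established relations.
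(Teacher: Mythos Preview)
Your approach---expand $q$ as $\tfrac{1}{m!}\sum_\sigma\text{wsgn}(\sigma)(i_\sigma)_*$ and identify each summand as a reordered filter---is different from the paper's, but the central step does not go through as written. The claim that
\[
F(W_{\sigma(1)},\dots,W_{\sigma(m)})=\text{wsgn}(\sigma)\,F(W_1,\dots,W_m)+(\text{filters on fewer wheels})
\]
is the whole difficulty, and Proposition~\ref{sign of averaged-filter} does not provide it: that proposition is about \emph{averaged}-filters, and the symmetry under wheel reordering there is precisely what the averaging buys. Ordinary filters with reordered wheels are genuinely different cycles in $\text{cell}(A,\mathcal W,w)$, and showing that their difference lies in the span of lower-complexity basis elements requires exactly the Alpert--Manin splitting of Proposition~\ref{AMdecomp2}. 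The paper uses that splitting directly: it decomposes $q(Z_\sigma)$ along $\tau\in S(\sigma)$, so the summands are automatically images of $\text{spin}_{\tau,\sigma}$ applied to cycles in smaller permutohedra (hence filters on the fewer wheels of $\tau$), and it then uses a projection $p:C_*(\text{cell})\to C_*(P)$, which annihilates anything in the image of a spin-map, to pin down the leading coefficient $a_\sigma=1$.

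Your account of the correction terms is also off. Under the nontriviality hypothesis every $(m-2)$-cell of $P(A,\mathcal W)$ already lies in $P(A,\mathcal W,w)$ (each block contains at most $m-1$ wheels and hence has weight $\le w$), so nothing is ``absent'' for any $\sigma$; the cycle $(i_\sigma)_*(Z)$ is complete. The lower-order terms arise not from missing cells but from the fact that $(i_\sigma)_*(Z)$ spreads across several $\tau$-summands of the Alpert--Manin decomposition, and extracting those summands is what produces filters on fewer wheels. Finally, you do not explain why no concatenation products of wheels appear; the paper handles this by noting that the averaged-filter becomes null-homologous for large $w$, whereas products of wheels never do.
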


\begin{proof}
Let $\sigma$ be the permutation $i_{1, 1}\, \cdots\, i_{1, n_{1}}\,\cdots\,i_{m, 1}\, \cdots\,i_{m, n_{m}}$, then $m=n-\#\sigma$, and the wheels of $\sigma$ are $i_{1, 1}\,\cdots\,i_{1, n_{1}},\dots, i_{m, 1}\, \cdots\, i_{m, n_{m}}$.
Therefore, $\Big(\big\{W(i_{1, 1}, \dots, i_{1, n_{1}}),\dots, W(i_{m, 1}, \dots, i_{m, n_{m}})\big\}, \{n_{1}, \dots, n_{m}\}\Big)=\big(n-\#\sigma, \mathcal{W}(\sigma)\big)$.
By definition, the averaged-filter $AF\big(W(i_{1, 1}, \dots, i_{1, n_{1}}),\dots, W(i_{m, 1}, \dots, i_{m, n_{m}})\big)$ is the image of the cycle $Z\in C_{m-2}\Big(P\big(n-\#\sigma, \mathcal{W}(\sigma), w\big);\Q\Big)$ that is the boundary of the top dimensional cell $W(i_{1, 1}, \dots, i_{1, n_{1}})\,\cdots\,W(i_{m, 1}, \dots, i_{m, n_{m}})$ in $P\big(n-\#\sigma, \mathcal{W}(\sigma)\big)$ under the maps $q$ and $\text{spin}_{\sigma}$, i.e., 
\[
AF\big(W(i_{1, 1}, \dots, i_{1, n_{1}}),\dots, W(i_{m, 1}, \dots, i_{m, n_{m}})\big)=\text{spin}_{\sigma}\circ q(Z).
\]

Since the averaged-filter is non-trivial it follows that for $\tau\in S(\sigma)$, the weighted permutohedron $P\big(n-\#\tau, \mathcal{W}(\tau), w\big)$ is homotopy equivalent to $S^{n-\#\tau-2}$.
Proposition \ref{AMdecomp2} proves that
\[
H_{*}\Big(\text{cell}\big(n-\#\sigma, \mathcal{W}(\sigma), w\big);\Q\Big)\cong \bigoplus_{\tau\in S(\sigma)}H_{*-\#\tau}\Big(P\big(n-\#\tau, \mathcal{W}(\tau), w\big);\Q\Big).
\]
Setting $*=m-2$, i.e., $2$ less than the number of wheels of $\sigma$, we have
\[
H_{m-2}\Big(\text{cell}\big(n-\#\sigma, \mathcal{W}(\sigma), w\big);\Q\Big)\cong \bigoplus_{\tau\in S(\sigma)}H_{m-\#\tau-2}\Big(P\big(n-\#\tau, \mathcal{W}(\tau), w\big);\Q\Big).
\]
Note that $m-\#\tau-2$ is $2$ less than the number of wheels of $\tau$.
As $q(Z)$ can be viewed as a class in $H_{m-2}\Big(\text{cell}\big(n-\#\sigma, \mathcal{W}(\sigma), w\big);\Q\Big)$, Proposition \ref{AMdecomp2} implies that
\[
q(Z)=\sum_{\tau\in S(\sigma)} a_{\tau}\text{spin}_{\tau, \sigma}\big(i_{id}(Z_{\tau})\big),
\]
where $Z_{\rho}\in C_{n-\#\rho-2}\Big(P\big(n-\#\rho, \mathcal{W}(\rho), w\big);\Q\Big)$ is the boundary of top dimensional cell of $P\big(n-\#\rho, \mathcal{W}(\rho)\big)$ if $n-\rho\ge 3$, and if $n-\rho=2$, then $Z_{\rho}$ is a sum of the two $0$-cells.
Since the $P\big(n-\#\sigma, \mathcal{W}(\sigma)\big)$ are spheres, it follows that the images of these classes under $i_{id}$ and the spin-maps generate $H_{m-2}\Big(\text{cell}\big(n-\#\sigma, \mathcal{W}(\sigma), w\big);\Q\Big)$.

We rewrite this as
\[
q(Z)=a_{\sigma}i_{id}(Z_{\sigma})+ \sum_{\tau\in S(\sigma)|\tau\neq\sigma} a_{\tau}\text{spin}_{\tau, \sigma}\big(i_{id}(Z_{\tau})\big),
\]
so 
\[
\text{spin}_{\sigma}\circ q(Z)=a_{\sigma}\text{spin}_{\sigma}\circ i_{id}(Z_{\sigma})+\sum_{\tau\in S(\sigma)|\tau\neq\sigma} a_{\tau}\text{spin}_{\sigma}\circ\text{spin}_{\tau, \sigma}\big(i_{id}(Z_{\tau})\big).
\]

If $\tau\in S(\sigma)$ has $1$ wheel, then $Z_{\tau}=0$, and we can ignore the corresponding terms.

Otherwise, $\tau\in S(\sigma)$ has between $2$ and $m$ wheels,
as $\sigma\in S(\sigma)$ has the most wheels, namely $m$ wheels, and the other elements of $S(\sigma)$ have strictly fewer wheels.
It follows from the definitions of $\text{spin}_{\sigma}$ and $\text{spin}_{\tau, \sigma}$ that homology class represented by $\text{spin}_{\sigma}\circ\text{spin}_{\tau, \sigma}\big(i_{id}(Z_{\sigma})\big)$ is of the form $F(W_{1}, \dots, W_{l})$, where the disks in $W_{j}$ are the same as the disks in the $j^{\text{th}}$-wheel of $\tau$.
By assumption the wheels in $F(W_{1}, \dots, W_{l})$ are in increasing order of largest label, so by Proposition \ref{filter on m wheels is a proper filter on m wheels} it follows that we can write $\text{spin}_{\sigma}\circ\text{spin}_{\tau, \sigma}\big(i_{id}(Z_{\tau})\big)$ as a sum of filters on $l$ wheels in the basis of Theorem \ref{AMthmB}.
Additionally, if $\tau=\sigma$, then it follows from the definition of $\text{spin}_{\sigma}$ that homology class represented by $\text{spin}_{\sigma}\circ\text{spin}_{\tau, \sigma}\big(i_{id}(Z_{\tau})\big)$ is $F\big(W(i_{1, 1}, \dots, i_{1, n_{1}}),\dots, W(i_{m, 1}, \dots, i_{m, n_{m}})\big)$.

Since $AF\big(W(i_{1, 1}, \dots, i_{1, n_{1}}),\dots, W(i_{m, 1}, \dots, i_{m, n_{m}})\big)$ is null-homologous for large enough $w$, it follows that the terms in the right hand side of the sum are null-homologous. 
Products of wheels in the basis from Theorem \ref{AMthmB} are never null-homologous while filters always are for large enough $w$. Therefore,
\begin{multline*}
AF\big(W(i_{1, 1}, \dots, i_{1, n_{1}}),\dots, W(i_{m, 1}, \dots, i_{m, n_{m}})\big)\\
=a_{\sigma}F\big(W(i_{1, 1}, \dots, i_{1, n_{1}}),\dots, W(i_{m, 1}, \dots, i_{m, n_{m}})\big)+\sum_{2\le l<m} a_{F(W_{1},\dots, W_{l})}F(W_{1},\dots, W_{l})
\end{multline*}
where all the filters are in the basis of Theorem \ref{AMthmB}. 


Next, we check that $a_{\sigma}=1$. 
Let $p$ be the map
\[
p: C_{*}\Big(\text{cell}\big(n-\#\sigma,\mathcal{W}(\sigma), w\big);\Q\Big)\to C_{*}\Big(P\big(n-\#\sigma,\mathcal{W}(\sigma), w\big);\Q\Big)
\]
that sends a cell in $\text{cell}\big(n-\#\sigma,\mathcal{W}(\sigma), w\big)$ to the cell in $P\big(n-\#\tau,\mathcal{W}(\sigma), w\big)$ that arises from forgetting the ordering of the blocks, while multiplying the sign of the cell by the weighted sign of the weighted permutation that rearranges the blocks, see Figure \ref{spintoprojection}.

\begin{figure}[h]
\centering
\captionsetup{width=.8\linewidth}
\includegraphics[width = 12cm]{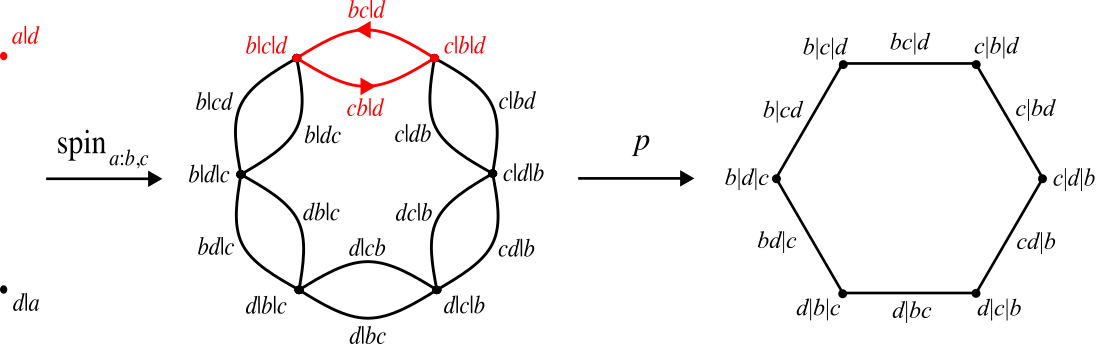}
\caption{The image of the cycle $a|d$ in $C_{0}\Big(\text{P}\big(\{a,d\}, \{1, 2\}, 2\big);\Q\Big)$ under the composition of the maps $\text{spin}_{a:b,c}$ and $p$.
The cells $bc|d$ and $cb|d$ point in different directions due to the nature of the spin-map, and, as a result, their image $bc|d$ in $C_{1}\Big(\text{P}\big(\{b,c,d\}, \{1, 1,1\}, 2\big);\Q\Big)$ has coefficient $0$, i.e., is the trivial cycle.
}
\label{spintoprojection}
\end{figure}

Any element in $H_{m-2}\Big(\text{cell}\big(n-\#\sigma, \mathcal{W}(\sigma), w\big);\Q\Big)$ arising from an application of a $\text{spin}$-map goes to $0$ under $p_{*}$ as the $\text{spin}$-map is arises from the doubling maps that produce a pair cells that project to the same cell with the opposite signs. 
Applying $p$ to the standard cycle representing the homology class $AF\big(W(i_{1, 1}, \dots, i_{1, n_{1}}),\dots, W(i_{m, 1}, \dots, i_{m, n_{m}})\big)$, we see that
\[
p\Big(\text{spin}_{\sigma}\big(i_{id}(Z_{\tau})\big)\Big)=0,
\]
for $\tau\neq \sigma$, but 
\[
p\big(i_{id}(Z_{\sigma})\big)=Z_{\sigma}.
\]
By the definition of $q$ it follows that
\[
p\big(q(Z)\big)=Z_{\sigma},
\]
so $a_{\sigma}=1$, see Figure \ref{qinclusionpprojection}.

\begin{figure}[h]
\centering
\captionsetup{width=.8\linewidth}
\includegraphics[width = 16cm]{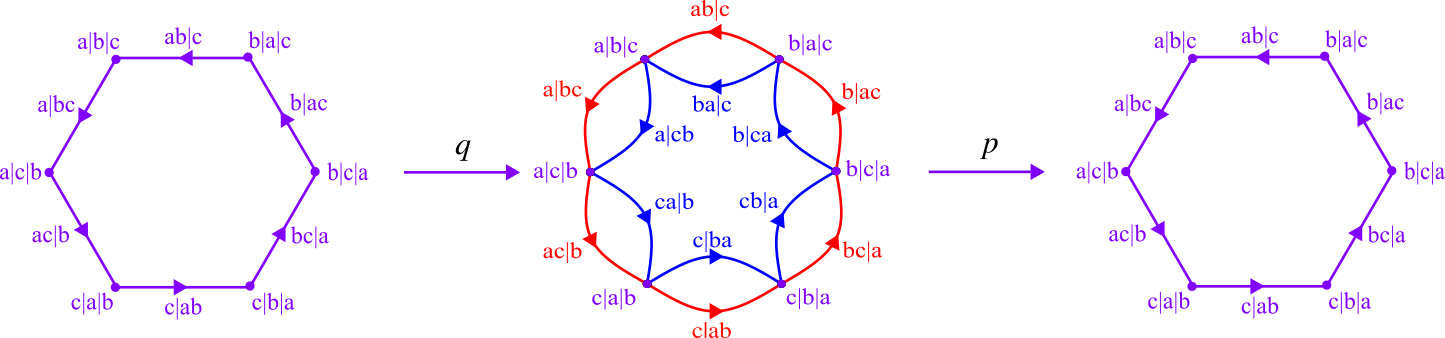}
\caption{The image of the cycle $\partial(abc)$ in $C_{1}\Big(\text{P}\big(\{a,d\}, \{1, 2\}, 2\big);\Q\Big)$ under the composition of the maps $q$ and $p$.
The image under $q$ is the average of red cells and the blue cells.
Since the $1$-cells in $C_{1}\Big(\text{cell}\big(\{a,b,c\}, \{1, 1,1\}, 2\big);\Q\Big)$ with the same boundary have coefficients pointing them in the same direction the resulting cycle is non-trivial after applying $p$.
}
\label{qinclusionpprojection}
\end{figure}
\end{proof}

\begin{exam}
In $H_{1}\big(\text{conf}(3,2);\Q\big)$ we have that
\[
AF\big(W(1),W(2),W(3)\big)=F\big(W(1),W(2),W(3)\big)-F\big(W(1), W(3,2)\big)-F\big(W(2), W(3,1)\big)-F\big(W(1,2),W(3)\big).
\]
See Figures \ref{qinclusion} and \ref{gluepermutadratogetcell}.
\end{exam}

Next, we prove that proper averaged-filters are well-behaved with respect to the action of the symmetric group.
This fact is critical in proving our representation stability results, as it will ensure that a certain number is well-defined, unlike its integral analogue.

\begin{prop}\label{snactonavfilter}
Let  $AF\big(W(i_{1, 1}, \dots, i_{1, n_{1}}),\dots, W(i_{m, 1}, \dots, i_{m, n_{m}})\big)$ be a proper averaged-filter, and set $n=n_{1}+\cdots+n_{m}$. 
Then, for any $\sigma \in S_{n}$, $\sigma\Big(AF\big(W(i_{1, 1}, \dots, i_{1, n_{1}}),\dots, W(i_{m, 1}, \dots, i_{m, n_{m}})\big)\Big)$ is a linear combination of proper averaged-filters of the form $AF\big(W(j_{1, 1}, \dots, j_{1, n_{1}}),\dots, W(j_{m, 1}, \dots, j_{m, n_{m}})\big)$, where, for all $k$, the labels $\{\sigma(i_{k, 1}),\dots, \sigma(i_{k, n_{k}})\}$ and $\{j_{k, 1},\dots, j_{k, n_{k}}\}$ are the same, and  $j_{k, 1}>j_{k, l}$ for all $l\neq1$.
\end{prop}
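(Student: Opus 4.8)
The plan is to reduce the statement to Proposition \ref{replace filter with sum of proper proper}, by first showing that $\sigma$ carries a proper averaged-filter to an averaged-filter on the relabeled wheels, which in general is no longer proper. To set this up I would first record how $S_n$ acts at the chain level: the homotopy equivalence $\text{conf}(n,w)\simeq\text{cell}(n,w)$ is $S_n$-equivariant, and on $C_*\big(\text{cell}(n,w);\Q\big)$ the permutation $\sigma$ sends the cell named by a symbol to the cell named by the symbol with every label $i$ replaced by $\sigma(i)$. No extra sign appears, since a symbol records an honest ordering of the disks and $\sigma$ merely renames them; in particular the signs occurring in the boundary operator and in the elementary doubling maps $\text{spin}_{a:b,c}$ depend only on the weights, all of which equal $1$ here, so they are unaffected by relabeling.

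Next I would unwind the definition of $AF\big(W(i_{1,1},\dots,i_{1,n_1}),\dots,W(i_{m,1},\dots,i_{m,n_m})\big)$: it is the image of the cycle $Z$ — the boundary of the top cell of the weighted permutohedron on abstract wheel-symbols of weights $n_1,\dots,n_m$ — under the averaged-inclusion $q$ followed by the commuting composite $\text{spin}_{W(i_{1,1},\dots,i_{1,n_1})}\circ\cdots\circ\text{spin}_{W(i_{m,1},\dots,i_{m,n_m})}$ (commutativity by Proposition \ref{spinorder}). Neither $Z$ nor $q$ refers to the disk labels — they are built from the abstract wheel-symbols and the weights $n_k$ alone — so the $S_n$-action enters only through the $\text{spin}$-maps, which attach the disk labels to the abstract wheel-symbols. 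Since relabeling the disks commutes with each $\text{spin}_{a:b,c}$, post-composing the defining composite with $\sigma$ just replaces each $\text{spin}_{W(i_{k,1},\dots,i_{k,n_k})}$ by $\text{spin}_{W(\sigma(i_{k,1}),\dots,\sigma(i_{k,n_k}))}$, and hence
\begin{multline*}
\sigma\Big(AF\big(W(i_{1,1},\dots,i_{1,n_1}),\dots,W(i_{m,1},\dots,i_{m,n_m})\big)\Big)\\
=AF\big(W(\sigma(i_{1,1}),\dots,\sigma(i_{1,n_1})),\dots,W(\sigma(i_{m,1}),\dots,\sigma(i_{m,n_m}))\big),
\end{multline*}
an averaged-filter whose $k^{\text{th}}$ wheel is on the disk set $\{\sigma(i_{k,1}),\dots,\sigma(i_{k,n_k})\}$, in the same wheel-order as before; note the size conditions defining an averaged-filter involve only the unchanged numbers $n_k$, so the right-hand side is again a legitimate averaged-filter. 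Its wheels $W(\sigma(i_{k,1}),\dots,\sigma(i_{k,n_k}))$ need not be proper, since $\sigma(i_{k,1})$ need not be the largest element of its block.

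Finally, I would apply Proposition \ref{replace filter with sum of proper proper} to the averaged-filter on the right-hand side above: it expresses this as a linear combination of proper averaged-filters $AF\big(W(j_{1,1},\dots,j_{1,n_1}),\dots,W(j_{m,1},\dots,j_{m,n_m})\big)$ in which, for every $k$, the $k^{\text{th}}$ wheel has disk set $\{\sigma(i_{k,1}),\dots,\sigma(i_{k,n_k})\}$ and $j_{k,1}>j_{k,l}$ for all $l\neq1$ — precisely the assertion. The step that requires genuine care, and which I expect to be the main (if essentially routine) technical point, is the chain-level equivariance displayed above: one must check carefully that applying $\sigma$ to a defining cycle of a proper averaged-filter yields a defining cycle of the averaged-filter on the relabeled wheels, i.e.\ that the $S_n$-action is compatible with the $\text{spin}$-maps and with $q$ and introduces no spurious signs.
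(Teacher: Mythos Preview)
Your proposal is correct and follows essentially the same route as the paper: both first observe that applying $\sigma$ to the defining chain gives the averaged-filter on the relabeled (generally non-proper) wheels, and then straighten each wheel into proper form. The only cosmetic difference is that the paper invokes Proposition \ref{spin relation} at the level of the individual $\text{spin}$-maps, whereas you package that step by citing Proposition \ref{replace filter with sum of proper proper}; these are equivalent here.
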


\begin{proof}
An averaged-filter 
\[
AF\big(W(i_{1, 1}, \dots, i_{1, n_{1}}),\dots, W(i_{m, 1}, \dots, i_{m, n_{m}})\big)
\]
is the image of the boundary of the cell $W(i_{1, 1}, \dots, i_{1, n_{1}})\,\cdots\,W(i_{m, 1}, \dots, i_{m, n_{m}})$ in the weighted permutohedron $P\Big(\big\{W(i_{1, 1}, \dots, i_{1, n_{1}}),\dots, W(i_{m, 1}, \dots, i_{m, n_{m}})\big\},\{n_{1},\dots, n_{m}\}\Big)$, under $q$ and the maps $\text{spin}_{W(i_{1, 1}, \dots, i_{1, n_{1}})}$ $\dots$, $\text{spin}_{W(i_{m, 1}, \dots, i_{m, n_{m}})}$. 
It follows that
\[
\sigma\Big(AF\big(W(i_{1, 1}, \dots, i_{1, n_{1}}),\dots, W(i_{m, 1}, \dots, i_{m, n_{m}})\big)\Big)
\] 
is the image of the boundary of the cell $W\big(\sigma(i_{1, 1}), \dots, \sigma(i_{1, n_{1}})\big)\,\cdots\,W\big(\sigma(i_{m, 1}), \dots, \sigma(i_{m, n_{m}})\big)$ in the weighted permutohedron $P\Big(\big\{W\big(\sigma(i_{1, 1}), \dots, \sigma(i_{1, n_{1}})\big),\dots, W\big(\sigma(i_{m, 1}), \dots, \sigma(i_{m, n_{m}}\big))\big\},\{n_{1},\dots, n_{m}\}\Big)$, under $q$ and the spin-maps $\text{spin}_{W\big(\sigma(i_{1, 1}), \dots, \sigma(i_{1, n_{1}})\big)}$, $\dots$, $\text{spin}_{W\big(\sigma(i_{m, 1}), \dots, \sigma(i_{m, n_{m}})\big)}$, i.e.,
\[
AF\Big(W\big(\sigma(i_{1, 1}), \dots, \sigma(i_{1, n_{1}})\big),\dots, W\big(\sigma(i_{m, 1}), \dots, \sigma(i_{m, n_{m}})\big)\Big).
\]

By Proposition \ref{spin relation}, we can write
\[
\text{spin}_{W\big(\sigma(i_{1, 1}), \dots, \sigma(i_{1, n_{1}})\big)}=\sum a_{j_{k, 1}, \dots, j_{k, n_{k}}} \text{spin}_{W(j_{k, 1}, \dots, j_{k, n_{k}})}
\]
where the wheels in the sum are the proper wheels on the labels $\sigma(i_{k, 1}), \dots, \sigma(i_{k, n_{k}})$ such that the largest label comes first. 

It follows that
\begin{multline*}
\sigma\Big(AF\big(W(i_{1, 1}, \dots, i_{1, n_{1}}),\dots, W(i_{m, 1}, \dots, i_{m, n_{m}})\big)\Big)\\
=AF\big(\sum a_{j_{1, 1}, \dots, l_{1,n_{1}}} W(j_{1, 1}, \dots, l_{1,n_{1}}), \cdots, \sum a_{j_{m, 1}, \dots, l_{m, n_{m}}} W(j_{m, 1}, \dots, l_{m, n_{m}})\big)\\
=\sum a_{j_{1, 1}, \dots, l_{1,n_{1}}} \cdots a_{j_{m, 1}, \dots, l_{m, n_{m}}} AF\big(W(j_{1, 1}, \dots, j_{1, n_{1}}),\dots, W(j_{m, 1}, \dots, j_{m, n_{m}})\big),
\end{multline*}
as desired.
\end{proof}

By Propositions \ref{sign of averaged-filter} and \ref{snactonavfilter} every averaged-filter is a sum of proper averaged-filters such that the largest label in each wheel comes first and the wheels are ordered in increasing order by largest labeled disk.

In order to replace filters with averaged-filter we give an ordering to the elements of the basis for $H_{n-2}\big(\text{cell}(n, w);\Q\big)$ from Theorem \ref{AMthmB}.

\begin{defn}
We say that a filter is more \emph{complex} than another if it is composed of more wheels, and that the concatenation product of two wheels is less complex than any filter. The \emph{complexity} of a filter is the number of wheels in it.
\end{defn}

Complexity gives a partial ordering to the basis for $H_{n-2}\big(\text{cell}(n, w);\Q\big)$ from Theorem \ref{AMthmB} as a filter one $n$ disks has homological degree $n-2$.
We arbitrarily extend this ordering to a total ordering on the entire basis for $H_{n-2}\big(\text{cell}(n, w);\Q\big)$.
This allows us to replace filters with averaged-filters in the simplest case.
Later, this will allow us to replace all filters on at least $3$ wheels with averaged-filters and rewrite any element of homology without using filters.

\begin{prop}\label{replacefilter}
For $w<n\le \frac{3w}{2}$ the homology group $H_{n-2}\big(\text{conf}(n,w);\Q\big)$ has a basis consisting of concatenations of proper wheels and non-trivial proper averaged-filters. 
We say one wheel ranks above another if it has more disks, or has the same number of disks and its largest label is greater. A cycle is in the basis if and only if:
\begin{enumerate}
\item The largest labeled disk in each proper wheel (regardless of whether or not it is in a (averaged-) filter) comes first.
\item The largest labeled disk in each wheel in an averaged-filter comes first and the wheels inside each averaged-filter on at least $3$ wheels are in ascending order by largest label (regardless of the number of disks). 
\item In an averaged-filter on $2$ wheels the wheels are ordered by increasing rank.
\item Adjacent wheels not inside an averaged-filter are ordered from higher to lower rank.
\end{enumerate}
\end{prop}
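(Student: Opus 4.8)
The plan is to prove Proposition~\ref{replacefilter} by exhibiting a unitriangular change of basis from Alpert--Manin's basis of Theorem~\ref{AMthmB} to the claimed one, the triangularity being taken with respect to the complexity filtration and the expansion supplied by Proposition~\ref{newfilterfromold}.

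First I would pin down what the Alpert--Manin basis $B$ of $H_{n-2}\big(\text{conf}(n,w);\Q\big)$ looks like in this degree. A concatenation of proper wheels on $k_{1},\dots,k_{j}$ disks together with proper filters containing $p_{1},\dots,p_{r}$ disks, covering $n=\sum k_{i}+\sum p_{s}$ disks, lies in homological degree $\sum(k_{i}-1)+\sum(p_{s}-2)=n-j-2r$, so degree $n-2$ forces $j+2r=2$, i.e.\ $(j,r)=(2,0)$ or $(j,r)=(0,1)$. Hence every element of $B$ is either a concatenation of exactly two proper wheels on all $n$ disks, or a single nontrivial proper filter on $m\ge 2$ wheels on all $n$ disks; the hypothesis $n>w$ is what makes such a filter nontrivial, and the stated range is exactly the range in which a filter on $n$ disks exists (recall there are at most $\tfrac{3w}{2}$ disks in an averaged-filter). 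I would then define the candidate basis $B'$ as follows: keep every two-wheel concatenation in $B$; keep every filter on $2$ wheels in $B$, observing that on two wheels one has $AF=F$ since $i_{id}=q$; and replace each proper filter $F\big(W(i_{1,1},\dots,i_{1,n_{1}}),\dots,W(i_{m,1},\dots,i_{m,n_{m}})\big)$ on $m\ge 3$ wheels in $B$ by the proper averaged-filter $AF\big(W(i_{1,1},\dots,i_{1,n_{1}}),\dots,W(i_{m,1},\dots,i_{m,n_{m}})\big)$ on the same wheels. This is a bijection $B\to B'$, so $|B'|=\dim_{\Q}H_{n-2}\big(\text{conf}(n,w);\Q\big)$, and conditions (1)--(4) of the proposition are precisely the conditions that cut out $B'$: conditions (1) and (4) restate conditions (1) and (3) of Theorem~\ref{AMthmB}; condition (3) restates condition (2) of Theorem~\ref{AMthmB} via $AF=F$ on two wheels; and condition (2) is the averaged-filter analogue of condition (1) of Theorem~\ref{AMthmB}. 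I also need the hypotheses of Proposition~\ref{newfilterfromold} ---\ $i_{k,1}>i_{k,l}$ for $l\neq1$, $i_{k,1}<i_{k+1,1}$, and nontriviality ---\ to hold for every filter of $B$ on $\ge 3$ wheels, and these follow from properness together with condition (1) of Theorem~\ref{AMthmB}.

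Next I would run the triangularity argument. Totally order $B$ by refining the complexity partial order from the definition preceding the proposition (two-wheel concatenations lowest, a filter ranked by its number of wheels) to a total order $\preceq$. By Proposition~\ref{newfilterfromold}, for each $m\ge 3$ the substitution obeys
\[
AF\big(W(i_{1,1},\dots,i_{1,n_{1}}),\dots,W(i_{m,1},\dots,i_{m,n_{m}})\big)=F\big(W(i_{1,1},\dots,i_{1,n_{1}}),\dots,W(i_{m,1},\dots,i_{m,n_{m}})\big)+\sum_{2\le l<m} a_{F}\,F,
\]
where every $F$ on the right is an element of $B$ of complexity $l<m$. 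Since the two-wheel concatenations and the two-wheel filters of $B'$ are literally the corresponding elements of $B$, the matrix expressing $B'$ in terms of $B$ relative to $\preceq$ is upper unitriangular; such a matrix is invertible over $\Q$, so $B'$ is again a $\Q$-basis of $H_{n-2}\big(\text{conf}(n,w);\Q\big)$, and both spanning and linear independence follow at once.

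The real work is the bookkeeping that makes this matrix triangular, not any computation. One must verify that \emph{every} correction term produced by Proposition~\ref{newfilterfromold} is again a \emph{basis} element of $B$ of strictly smaller complexity ---\ so that the spanning set being transformed is stable under the substitution and the order $\preceq$ is respected ---\ and that no two-wheel concatenation ever appears among those corrections; the latter holds because a nontrivial averaged-filter is null-homologous once $w$ is large while products of wheels never are, which forces the expansion of Proposition~\ref{newfilterfromold} to involve only filters. I would also double-check that the degree count leaves no further cases in the claimed basis: a concatenation of a wheel with an (averaged-)filter sits in degree $n-3$, and two averaged-filters cannot coexist since each needs more than $w$ disks while $2w>\tfrac{3w}{2}\ge n$, so the list (1)--(4) is exhaustive. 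Once these points are settled the proposition follows immediately.
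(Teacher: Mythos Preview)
Your proof is correct and follows essentially the same approach as the paper: set up the natural bijection between the Alpert--Manin basis and the proposed one (replacing each proper filter on $\ge 3$ wheels by the averaged-filter on the same wheels, leaving two-wheel objects alone), invoke Proposition~\ref{newfilterfromold} to see that the change-of-basis matrix is upper unitriangular with respect to a refinement of the complexity order, and conclude. Your additional degree count showing that in $H_{n-2}$ every basis element is either a single (averaged-)filter or a product of exactly two wheels is more explicit than the paper's version and cleanly explains why condition~(4) of Theorem~\ref{AMthmB} is vacuous here; the paper simply observes that the proposition holds with ``filter'' in place of ``averaged-filter'' by Theorem~\ref{AMthmB} and then runs the same triangular substitution.
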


\begin{proof}
By Theorem \ref{AMthmB} this proposition holds with ``averaged-filter" replaced with ``filter." Thus, it suffices to show that we can replace the filters on more than $3$ wheels with averaged-filters.

By Proposition \ref{newfilterfromold} we can write a proper averaged-filter on $m$ wheels in the proposed basis as the sum of the proper filter on the same set of wheels in the basis of Theorem \ref{AMthmB} and filters on fewer than $m$ wheels in the basis of Theorem \ref{AMthmB}, i.e.,
\begin{multline*}
AF\big(W(i_{1,1}, \dots, i_{1, n_{1}}), \dots, W(i_{m, 1}, \dots, i_{m, n_{m}})\big)\\
=F\big(W(i_{1,1}, \dots, i_{1, n_{1}}), \dots, W(i_{m,1}, \dots, i_{m, n_{m}})\big)+\sum_{l<m} a_{F(W_{1},\dots, W_{l})}F(W_{1},\dots, W_{l}).
\end{multline*}

Note that there is a clear bijection between the basis elements of $H_{n-2}\big(\text{conf}(n,w);\Q\big)$ from Theorem \ref{AMthmB} and the basis proposed in the proposition that sends concatenation products of wheels and proper filters on 2 wheels to themselves and the proper filter on more than 2 wheels to the proper averaged-filter on the same set of wheels. 
Extend the complexity ordering on the basis elements of $H_{n-2}\big(\text{conf}(n,w);\Q\big)$ from Theorem \ref{AMthmB} to a total ordering, and give this ordering to the set proposed in the proposition via the bijection described above. 
Given this pair of orderings it follows that the matrix that takes the proposed basis to the basis of Theorem \ref{AMthmB} is upper triangular with $1$s on the diagonal. 
Therefore, this matrix is invertible. Since $H_{n-2}\big(\text{conf}(n,w);\Q\big)$ is finite dimensional, it follows that the proposed basis is in fact a basis for $H_{n-2}\big(\text{conf}(n,w);\Q\big)$.
\end{proof}

Next, we prove the existence of a family of relations that relate concatenation products of wheels and averaged-filters.
These relations play an important role in the homology of the ordered configuration space of unit-diameter disks on infinite strips, and provide limits for how many orders of representation stability $\text{conf}(\bullet,w)$ exhibits.

\begin{lem}\label{relationlem}
Given $m+1\ge 3$ wheels $W(i_{1, 1},\dots, i_{1, n_{1}}),\dots, W(i_{m+1,1},\dots, i_{m+1,n_{l+1}})$ such that $\sum_{i=1}^{m+1}n_{i}=n$ and such that any sum of $m-1$ of the $n_{i}$ is at most $w$, then there is a relation of the form
\begin{multline*}
\sum_{k=1}^{m+1} \pm W(i_{k,1}, \dots, i_{k,n_{k}})|AF\big(W(i_{1,1}, \dots, i_{1,n_{1}}),\dots, \widehat{W(i_{k,1}, \dots, i_{k,n_{k}})}, \dots, W(i_{m+1,1}, \dots, i_{m+1,n_{m+1}})\big)\\
=\sum_{k=1}^{m+1} \pm AF\big(W(i_{1,1}, \dots, i_{1,n_{1}}),\dots, \widehat{W(i_{k,1}, \dots, i_{k,n_{k}})}, \dots, W(i_{m+1,1}, \dots, i_{m+1,n_{m+1}})\big)|W(i_{k,1}, \dots, i_{k,n_{k}}),
\end{multline*}
in $H_{n-3}\big(\text{conf}(n, w);\Q\big)$, where the signs are determined by the size of the wheels.
\end{lem}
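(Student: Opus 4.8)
The plan is to realize the desired relation as the image under $\mathrm{spin}$-maps of a boundary in a single weighted permutohedron, exactly in the spirit of Propositions \ref{commutewheels} and \ref{newfilterfromold}. Write $V_k := W(i_{k,1},\dots,i_{k,n_k})$ for the $m+1$ given wheels, and form the weighted set $(A,\mathcal{W}) := \big(\{V_1,\dots,V_{m+1}\},\{n_1,\dots,n_{m+1}\}\big)$. By hypothesis any sum of $m-1$ of the $n_i$ is at most $w$, so in the weighted permutohedron $P(A,\mathcal{W},w)$ every block of size $\le m-1$ (in the wheel-count) is allowed; in particular all codimension-$2$ cells of the full permutohedron $P(A,\mathcal{W})$ survive, and $P(A,\mathcal{W},w)$ is homotopy equivalent to a sphere of the appropriate dimension with at least the top cells removed. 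The key object is the chain $C := \sum_{k=1}^{m+1} \pm\, V_k \,|\, (\text{top cell of }P(A\setminus\{V_k\},\mathcal{W}))$ living in $C_{*}\big(P(A,\mathcal{W})\big)$, obtained by ``pulling one wheel to the far left'' in all $m+1$ ways, with signs chosen by the weighted-sign conventions of Section \ref{cell complexes}.

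First I would compute $\partial C$ using the Leibniz rule for the concatenation product together with the boundary formula for the permutohedron. The outer bars in $C$ (separating $V_k$ from the rest) do not bound anything; the terms of $\partial C$ where $\partial$ hits the factor $V_k$ vanish because $V_k$ is a $0$-cell; and the terms where $\partial$ hits the second factor produce $\pm\, V_k \,|\, \partial(\text{top cell of }P(A\setminus\{V_k\},\mathcal{W}))$, i.e.\ $\pm\, V_k \,|\, Z_k$ where $Z_k$ is exactly the cycle defining the (averaged-)filter on the $m$ wheels $V_1,\dots,\widehat{V_k},\dots,V_{m+1}$. Symmetrically, consider the analogous chain $C'$ with $V_k$ pulled to the far \emph{right}, whose boundary gives $\pm\, Z_k \,|\, V_k$. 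The point is that $C$ and $C'$ are homologous in $P(A,\mathcal{W},w)$: the difference $C - C'$ (after matching signs) is the boundary of a chain supported on cells in which $V_k$ travels from the far left past the other $m$ wheels to the far right — and every intermediate cell used in this homotopy has a block of at most $m-1$ wheels, hence lies in $P(A,\mathcal{W},w)$ by the weight hypothesis; this is the only place the hypothesis ``any sum of $m-1$ of the $n_i$ is at most $w$'' is used, and it is what makes the two sums equal rather than merely cohomologous to the top cell.

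Next I would push everything forward. Applying the chain maps $i_{id}$ (or rather $q$, to get averaged-filters) followed by the sequence of $\mathrm{spin}$-maps $\mathrm{spin}_{V_1},\dots,\mathrm{spin}_{V_{m+1}}$ — which commute by Proposition \ref{spinorder} and are chain maps — sends $Z_k$ to a cycle representing $AF\big(V_1,\dots,\widehat{V_k},\dots,V_{m+1}\big)$, sends $V_k | Z_k$ to a cycle representing $V_k \,|\, AF(\dots)$, and sends $Z_k | V_k$ to $AF(\dots) \,|\, V_k$, because the concatenation product is compatible with the chain-level $\mathrm{spin}$-maps and with $q$ acting blockwise. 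Since chain maps take boundaries to boundaries, the identity $C \sim C'$ in $P(A,\mathcal{W},w)$ becomes, in $C_{*}\big(\mathrm{conf}(n,w);\Q\big)$, an equality of cycles up to a boundary, hence the claimed relation in $H_{n-3}\big(\mathrm{conf}(n,w);\Q\big)$. The signs $\pm$ in the statement are then read off from the weighted-sign of the permutation moving $V_k$ to the front (resp.\ the back) and the $(-1)^{\mathrm{wlength}}$ factors in $\partial$; I would record these precisely but not belabor them, since they are forced by the conventions already fixed.

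The main obstacle I anticipate is the homotopy step in the middle paragraph: verifying that the chain interpolating between ``$V_k$ on the far left'' and ``$V_k$ on the far right'' genuinely lies in the \emph{width-$w$} permutohedron $P(A,\mathcal{W},w)$ rather than only in the unrestricted $P(A,\mathcal{W})$. This is exactly the geometric content of the lemma — $V_k$ can squeeze past the other $m$ wheels precisely because those $m$ wheels together with $V_k$ never need to be simultaneously aligned, and the weight bound on any $m-1$ of them guarantees the intermediate configurations are admissible — and it is the step where the numerology $n_1+\cdots+n_{m+1}=n$ with the $(m-1)$-wise weight bound must be used carefully. A secondary bookkeeping obstacle is tracking the signs through both the Leibniz rule and the $\mathrm{spin}$-maps consistently; I would handle this by the same device used in Proposition \ref{sign of averaged-filter}, comparing weighted signs of the relevant block permutations, so that the final signs depend only on the $n_k$ as asserted.
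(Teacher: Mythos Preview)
Your setup is right—the weighted permutohedron on the $m+1$ wheels, the chain maps $q$ and $\mathrm{spin}_{V_k}$, and the identification of $Z_k = \partial(V_1\cdots\widehat{V_k}\cdots V_{m+1})$ as the cycle underlying the averaged-filter are exactly what the paper uses. But the middle step, where you claim $C - C' = \partial D$ for some chain $D$ supported in $P(A,\mathcal{W},w)$, cannot work as stated.

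The obstruction is dimensional. Your chains $C$ and $C'$ are $(m-1)$-chains (each summand $V_k\,|\,V_1\cdots\widehat{V_k}\cdots V_{m+1}$ is a product of a $0$-cell and an $(m-1)$-cell), so $D$ would have to be an $m$-chain. The only $m$-cell in $P(A,\mathcal{W})$ is the top cell $V_1\cdots V_{m+1}$, whose single block has weight $n$; in the interesting case $n>w$ this cell is absent from $P(A,\mathcal{W},w)$, and there is no nonzero $m$-chain available. Your own description of the interpolating chain (cells ``with a block of at most $m-1$ wheels'') has the same problem: any cell with at least two blocks has dimension at most $m-1$, so its boundary has dimension at most $m-2$, one too low to produce $C-C'$. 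Relatedly, $C$ and $C'$ themselves need not lie in $P(A,\mathcal{W},w)$, since a block of $m$ wheels can have weight exceeding $w$; only their boundaries $\partial C$, $\partial C'$ are guaranteed to land there.

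The fix—and this is what the paper does—is to apply $\partial^{2}=0$ to the top cell of the \emph{unrestricted} permutohedron. Write $\partial(V_1\cdots V_{m+1}) = C + C' + E$ with the appropriate weighted signs, where $E$ collects the codimension-$1$ faces whose two blocks each contain between $2$ and $m-1$ wheels. Then $0=\partial^{2}(V_1\cdots V_{m+1})=\partial C+\partial C'+\partial E$ as $(m-2)$-chains. The point is that $E$, unlike $C$, $C'$, or the top cell, \emph{does} lie in $P(A,\mathcal{W},w)$ by the weight hypothesis, so $\partial C+\partial C' = -\partial E$ is a boundary there. Now push forward by $q$ and the commuting $\mathrm{spin}$-maps; boundaries go to boundaries and you obtain the stated relation in $H_{n-3}\big(\mathrm{conf}(n,w);\Q\big)$. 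Your intuition that the relevant cells have blocks of size at most $m-1$ is correct, but it identifies $E$—the chain whose boundary \emph{witnesses} the relation between $\partial C$ and $\partial C'$—not a filling of $C-C'$ itself.
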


\begin{proof}
We prove this by considering cycles in $C_{m-2}\Big(P\big(\{W_{1}, \dots, W_{m+1}\}, \{n_{1}, \dots, n_{m+1}\}, w\big);\Q\Big)$ as the images of these cycles in $H_{n-3}\big(\text{conf}(n, w);\Q\big)$ are the classes in question,
see Figure \ref{permutohedrarelation} for visualization of an example of this relation at the level of weighted permutohedra.
The top dimensional cell $W_{1}\, \cdots\, W_{m+1}$ is the only $m$-cell in $P\big(\{W_{1}, \dots, W_{m+1}\}, \{n_{1}, \dots, n_{m+1}\}\big)$.
Since $\partial$ is a boundary map it follows that $\partial^{2}(W_{1}\, \cdots\, W_{m+1})=0$ in $C_{m-2}\Big(P\big(\{W_{1}, \dots, W_{m+1}\}, \{n_{1}, \dots, n_{m+1}\}\big);\Q\Big)$.
By our assumption on the sizes of the wheels with respect to $w$, every $(m-2)$-cell in $P\big(\{W_{1}, \dots, W_{m+1}\}, \{n_{1}, \dots, n_{m+1}\}\big)$ is in the restriction $P\big(\{W_{1}, \dots, W_{m+1}\}, \{n_{1}, \dots, n_{m+1}\}, w\big)$, so this holds in $C_{m-2}\Big(P\big(\{W_{1}, \dots, W_{m+1}\}, \{n_{1}, \dots, n_{m+1}\}, w\big);\Q\Big)$.

By the nature of permutohedra, every codimension-1 cell in $P\big(\{W_{1}, \dots, W_{m+1}\}, \{n_{1}, \dots, n_{m+1}\}\big)$ appears with multiplicity $\pm 1$ in boundary of the top dimensional cell. 
It follows that 
\[
\partial^{2}(W_{1}\, \cdots\, W_{m+1})=\sum_{j=1}^{m}\sum_{\substack{i_{1}<\cdots<i_{j}\\i_{j+1}<\cdots< i_{m+1}}}\pm\partial(W_{i_{1}}\,\cdots\, W_{i_{j}}|W_{i_{j+1}}\,\cdots\,W_{i_{m+1}})=0
\]
in $C_{m-2}\Big(P\big(\{W_{1}, \dots, W_{m+1}\}, \{n_{1}, \dots, n_{m+1}\},w\big);\Q\Big)$.

Not all $(m-1)$-cells in $P\big(\{W_{1}, \dots, W_{m+1}\}, \{n_{1}, \dots, n_{m+1}\}\big)$ are in $P\big(\{W_{1}, \dots, W_{m+1}\}, \{n_{1}, \dots, n_{m+1}\}, w\big)$;
namely, the cells of the form $W_{k}|W_{1}\,\cdots\,\widehat{W_{k}}\,\cdots\, W_{m+1}$ and $W_{1}\,\cdots\,\widehat{W_{k}}\,\cdots\, W_{m+1}|W_{k}$ might not be in the restriction $P\big(\{W_{1}, \dots, W_{m+1}\}, \{n_{1}, \dots, n_{m+1}\}, w\big)$, though every other $(m-1)$-cell is.
Moreover, for all $k$, 
\[
\partial (W_{k}|W_{1}\,\cdots\,\widehat{W_{k}}\,\cdots\, W_{m+1})=W_{k}|\partial(W_{1}\,\cdots\,\widehat{W_{k}}\,\cdots\, W_{m+1}),
\]
and 
\[
\partial (W_{1}\,\cdots\,\widehat{W_{k}}\,\cdots\, W_{m+1}|W_{k})=\partial(W_{1}\,\cdots\,\widehat{W_{k}}\,\cdots\, W_{m+1})|W_{k}.
\]
Therefore,
\begin{multline*}
\partial^{2}(W_{1}\, \cdots\, W_{m+1})=\sum_{k=1}^{m+1}\pm W_{k}|\partial(W_{1}\,\cdots\,\widehat{W_{k}}\,\cdots\, W_{m+1})+\sum_{k=1}^{m+1}\pm\partial (W_{1}\,\cdots\,\widehat{W_{k}}\,\cdots\, W_{m+1})|W_{k}\\
+\sum_{j=2}^{m+1}\sum_{\substack{i_{1}<\cdots<i_{j}\\i_{j+1}<\cdots< i_{m+1}}}\pm\partial (W_{i_{1}}\,\cdots\, W_{i_{j}}|W_{i_{j+1}}\,\cdots\,W_{i_{m+1}}).
\end{multline*}

Note that $\partial(W_{1}\,\cdots\,\widehat{W_{k}}\,\cdots\, W_{m+1})\in C_{m-2}\Big(P\big(\{W_{1},\dots,\widehat{W_{k}},\dots, W_{m+1}\}, \{n_{1},\dots, \widehat{n_{k}}, \dots, n_{m+1}\}, w\big);\Q\Big)$ is the cycle that generates the averaged-filter 
\[
AF\big(W(i_{1,1}, \dots, i_{1,n_{1}}),\dots, \widehat{W(i_{k,1}, \dots, i_{k,n_{k}})}, \dots, W(i_{m+1,1}, \dots, i_{m+1,n_{m+1}})\big).
\]
It follows that the image of the cycle $W_{k}|\partial(W_{1}\,\cdots\,\widehat{W_{k}}\,\cdots\, W_{m+1})$ in $H_{n-3}\big(\text{conf}(n, w);\Q\big)$ under the composition $\text{spin}_{W_{k}}\circ \text{spin}_{W_{1}}\circ\cdots\circ\text{spin}_{W_{m+1}}\circ q$ is 
\[
W(i_{k,1}, \dots, i_{k,n_{k}})|AF\big(W(i_{1,1}, \dots, i_{1,n_{1}}),\dots, \widehat{W(i_{k,1}, \dots, i_{k,n_{k}})}, \dots, W(i_{m+1,1}, \dots, i_{m+1,n_{m+1}})\big).
\]
Since, $W_{i_{1}}\,\cdots\, W_{i_{j}}|W_{i_{j+1}}\,\cdots\,W_{i_{m+1}}$ is in $P\big(\{W_{1}, \dots, W_{m+1}\}, \{n_{1}, \dots, n_{m+1}\}, w\big)$, it follows that the images of $\partial(W_{i_{1}}\,\cdots\, W_{i_{j}}|W_{i_{j+1}}\,\cdots\,W_{i_{m+1}})$ are null-homologous in $H_{n-3}\big(\text{conf}(n, w);\Q\big)$.
Therefore,
\begin{multline*}
\text{spin}_{W_{k}}\circ \text{spin}_{W_{1}}\circ\cdots\circ\text{spin}_{W_{m+1}}\circ q\big(\partial^{2}(W_{1}\, \cdots\, W_{m+1})\big)\\
=\sum_{k=1}^{m+1} \pm W(i_{k,1}, \dots, i_{k,n_{k}})|AF\big(W(i_{1,1}, \dots, i_{1,n_{1}}),\dots, \widehat{W(i_{k,1}, \dots, i_{k,n_{k}})}, \dots, W(i_{m+1,1}, \dots, i_{m+1,n_{m+1}})\big)\\
+\sum_{k=1}^{m+1} \pm AF\big(W(i_{1,1}, \dots, i_{1,n_{1}}),\dots, \widehat{W(i_{k,1}, \dots, i_{k,n_{k}})}, \dots, W(i_{m+1,1}, \dots, i_{m+1,n_{m+1}})\big)|W(i_{k,1}, \dots, i_{k,n_{k}})\\
=0.
\end{multline*}
Rearranging gives the relation in the statement of the Lemma. 
\end{proof}

\begin{figure}[h]
\centering
\captionsetup{width=.8\linewidth}
\includegraphics[width = 8cm]{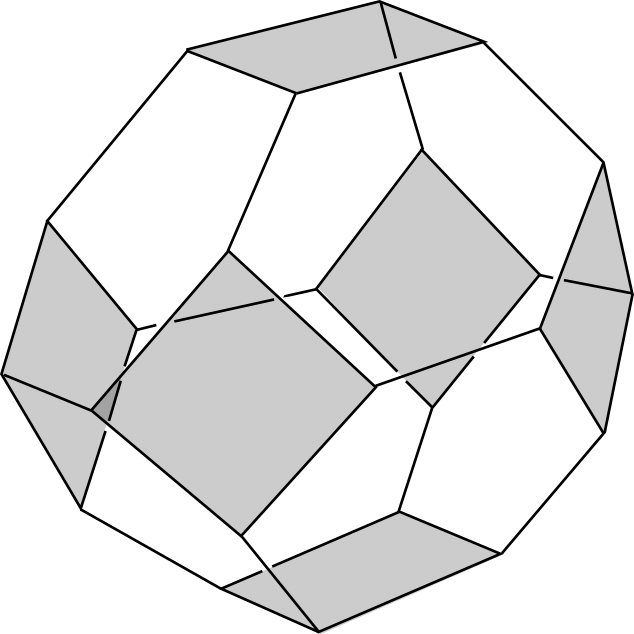}
\caption{The weighted permutohedron $P\big(\{W_{1},W_{2},W_{3},W_{4}\}, \{1,1,1,1\}, 2\big)$. 
The $6$ square faces are included, whereas the $8$ hexagonal faces are not, and their boundaries yield homology classes of the form $W_{i}|AF(W_{j}, W_{k}, W_{l})$ or $AF(W_{j}, W_{k}, W_{l})|W_{i}$.
It follows from this image that any one of these $8$ classes in first homology can be written as a sum of the other $7$.
}
\label{permutohedrarelation}
\end{figure}

Some terms in the relation of Lemma \ref{relationlem} might be trivial. 
If $n_{1}+\cdots+\widehat{n_{k}}+\cdots+n_{m+1}\le w$, then the averaged-filter
\[
AF\big(W(i_{1,1}, \dots, i_{1,n_{1}}),\dots, \widehat{W(i_{k,1}, \dots, i_{k,n_{k}})}, \dots, W(i_{m+1,1}, \dots, i_{m+1,n_{m+1}})\big)
\]
is trivial in homology as the weighted permutohedron $P\big(\{W_{1}, \dots, \widehat{W_{k}}, \dots, W_{m+1}\}, \{n_{1}, \dots, \widehat{n_{k}},\dots, n_{m+1}\},w\big)$ is homotopy equivalent to a point, not $S^{m-2}$. 
As a result, the $(m-1)$-cells $W_{k}|W_{1}\,\cdots\,\widehat{W_{k}}\,\cdots\, W_{m+1}$ and $W_{1}\,\cdots\,\widehat{W_{k}}\,\cdots\, W_{m+1}|W_{k}$ are in $P\big(\{W_{1}, \dots, \widehat{W_{k}}, \dots, W_{m+1}\}, \{n_{1}, \dots, \widehat{n_{k}},\dots, n_{m+1}\},w\big)$ and the resulting concatenation products are trivial in homology. 
While we could omit such terms, we choose not to in order to present a unified family of relations.

In this relation the coefficient of 
\[
W(i_{k,1}, \dots, i_{k,n_{k}})|AF\big(W(i_{1,1}, \dots, i_{1,n_{1}}),\dots, \widehat{W(i_{k,1}, \dots, i_{k,n_{k}})}, \dots, W(i_{m+1,1}, \dots, i_{m+1,n_{m+1}})\big)
\]
is $(-1)^{k-1}(-1)^{(n_{k}-1)(n_{1}+\cdots+n_{k-1}-k+1)}$, and the coefficient of 
\[
AF\big(W(i_{1,1}, \dots, i_{1,n_{1}}),\dots, \widehat{W(i_{k,1}, \dots, i_{k,n_{k}})}, \dots, W(i_{m+1,1}, \dots, i_{m+1,n_{m+1}})\big)|W(i_{k,1}, \dots, i_{k,n_{k}})
\]
is $(-1)^{k-1}(-1)^{(n_{k}-1)(n_{k+1}+\cdots+n_{m+1}-m+k-1)}$.
We will not need these facts.

Additionally, Lemma \ref{relationlem} holds if we replace averaged-filters with filters.

Next, we find a basis for homology containing all but one of the terms in a relation described in Lemma \ref{relationlem}.
Moreover, our basis will replace filters with averaged-filters.
We will use this basis to find another basis in which we remove the need for averaged-filters on $2$ wheels.

\begin{lem}\label{replaceallbigfilters1}
The homology group $H_{k}\big(\text{conf}(n,w);\Q\big)$ has a basis consisting of concatenations of proper wheels and proper non-trivial averaged-filters such that in each wheel inside or outside an averaged-filter the highest labeled disk comes first.
We say one wheel ranks above another if it has more disks, or has the same number of disks and its largest label is greater. 
A cycle is in the basis if and only if:
\begin{enumerate}
\item The wheels in an averaged-filter are in order of increasing rank.
\item Adjacent wheels not inside an averaged-filter are in order of decreasing rank.
\item Every wheel immediately to the left of an averaged-filter ranks above the least wheel in the averaged-filter.
\end{enumerate}
\end{lem}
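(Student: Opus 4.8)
The plan is to run the unitriangular change-of-basis argument of Proposition~\ref{replacefilter}, now in an arbitrary homological degree. Starting from Alpert--Manin's basis of Theorem~\ref{AMthmB}, I would first define a bijection $\Phi$ from that basis onto the proposed spanning set: $\Phi$ fixes every concatenation factor that is a proper wheel, sends every proper filter on two wheels to itself (recall $AF(W_{1},W_{2})=F(W_{1},W_{2})$), and sends every proper filter on $m\ge 3$ wheels to the proper averaged-filter on the same collection of wheels, now listed in increasing order of rank. By Proposition~\ref{sign of averaged-filter} the order of the wheels inside an averaged-filter only affects an overall sign, so ``increasing rank'' and Alpert--Manin's internal-ordering conventions each pick out a single normal form and correspond bijectively; conditions~(2) and~(3) of the proposed basis are verbatim conditions~(3) and~(4) of Theorem~\ref{AMthmB}, with ``least wheel of the averaged-filter'' equal to ``least wheel of the corresponding filter.'' Hence $\Phi$ is a bijection between Alpert--Manin's basis and the proposed set, and it suffices to show that, written in Alpert--Manin's basis, the proposed set arises by an invertible matrix; finite dimensionality of $H_{k}\big(\text{conf}(n,w);\Q\big)$ then forces it to be a basis.

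To see invertibility I would grade concatenation products by the \emph{total number of wheels} they contain --- standalone wheels together with wheels appearing inside filter factors --- and order the proposed basis (hence, through $\Phi$, Alpert--Manin's) so that products with more wheels come first, refining arbitrarily within each graded piece; note $\Phi$ preserves this count. Given a proposed basis element $s=\Phi(b)$, I would expand each of its averaged-filter factors: first apply Proposition~\ref{sign of averaged-filter} to relist the wheels in Alpert--Manin's internal order at the cost of a sign, then apply Proposition~\ref{newfilterfromold} to rewrite an averaged-filter on $m\ge 3$ wheels as the corresponding proper filter --- which is exactly the filter factor of $b$ --- plus a $\Q$-linear combination of proper filters on strictly fewer wheels; two-wheel averaged-filters need no expansion. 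Multiplying out, the leading term is $\pm b$ (its adjacency conditions are inherited, since only internal wheel orders changed), has the same total wheel count as $s$, and every other term is a concatenation product in which some filter factor, hence the whole product, has strictly fewer wheels.

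The main work, and the step I expect to be the obstacle, is to renormalize these lower correction products into Alpert--Manin's basis while checking that doing so never increases the total wheel count; unitriangularity of the change-of-basis matrix --- and hence the lemma --- then follows immediately. The delicate case is a filter factor that has shrunk (so its wheels have merged and the rank of its least wheel has risen), leaving a standalone wheel $W$ immediately to its left that no longer ranks above that least wheel, in violation of condition~(4). Because ``ranks above'' compares number of disks first, $W$ failing to rank above the least wheel forces $W$ to have at most as many disks as \emph{every} wheel of that filter, which is exactly the inequality needed to apply Lemma~\ref{relationlem} to the wheels $\{W,V_{1},\dots,V_{l}\}$; the nontriviality of the shrunken filter supplies the remaining weight bounds. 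Lemma~\ref{relationlem} rewrites $W\mid AF(V_{1},\dots,V_{l})$ as a $\Q$-combination of terms $V_{k}\mid AF(W,V_{1},\dots,\widehat{V_{k}},\dots,V_{l})$ and $AF(\dots)\mid V_{k}$ (and $AF(\dots)\mid W$), each of the same total wheel count, in which the wheel to the left of the averaged-filter now does rank above the least wheel. Any secondary adjacency violation among standalone wheels is removed similarly, using Proposition~\ref{commutewheels} when the two wheels are small enough to pass and otherwise the two-wheel-filter identity $W_{1}\mid W_{2}=\pm F(W_{1},W_{2})\mp W_{2}\mid W_{1}$, which again preserves the total wheel count. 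Iterating terminates by finite dimensionality, every term produced has total wheel count at most that of $s$, and the only term of that maximal count is $\pm b$ --- precisely the unitriangularity required.
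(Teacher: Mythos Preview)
Your bijection $\Phi$ and the expansion via Proposition~\ref{newfilterfromold} are set up correctly, and the leading-term identification $s=\pm b+\text{(fewer total wheels)}$ is right. The gap is in the renormalization of the correction terms. Those corrections are concatenations of proper wheels and proper \emph{filters} (not averaged-filters), yet you invoke Lemma~\ref{relationlem} in its $AF$ form. If instead you use the filter version that the paper mentions in passing, the output terms contain filters such as $F(W,V_{1},\dots,\widehat{V_{k}},\dots,V_{l})$ whose wheels are no longer in Alpert--Manin's internal order, and for genuine filters there is no analogue of Proposition~\ref{sign of averaged-filter} letting you reorder at the cost of a sign. You give no mechanism, preserving total wheel count, to bring such filters back to basis form, and ``terminates by finite dimensionality'' does not supply one. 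So unitriangularity with respect to total wheel count is asserted rather than proved.

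The paper runs the argument in the opposite direction precisely to avoid this: it writes each Alpert--Manin basis element as a sum of elements of the proposed set (spanning), and since the two sets are in bijection and the space is finite-dimensional, that suffices. The point is that after inverting Proposition~\ref{newfilterfromold} one works entirely with averaged-filters, which \emph{can} be reordered by Proposition~\ref{sign of averaged-filter}; Lemma~\ref{relationlem} then applies cleanly, and an induction on the length of a maximal string of consecutive small wheels (rather than an appeal to finite dimensionality) gives a genuine termination argument. Your route can be repaired---for instance by proving directly that total wheel count is a well-defined grading on $H_{k}\big(\text{conf}(n,w);\Q\big)$ compatible with the Alpert--Manin basis---but as written the renormalization step is incomplete.
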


\begin{proof}
There is a bijection between the basis in Theorem \ref{AMthmB} and the set described in the lemma arising from that map that takes proper filters to proper averaged-filters on the same set of wheels (note an averaged-filter on 2 wheels is the same as a filter on 2 wheels). 
Thus, if we can write the basis elements of Theorem \ref{AMthmB} in terms of the elements described in the lemma, it follows that the set described in the lemma is a basis.

An element in the basis of Theorem \ref{AMthmB} is a concatenation product of proper wheels and proper filters, i.e., it is of the form
\[
Z_{1}|\cdots|Z_{r}
\]
for some $r$, where $Z_{j}$ is either a proper wheel $W_{j}$ or a proper filter $F_{j}$, and consecutive $Z_{j}$ satisfy the requirements of points $3$ and $4$ of Theorem \ref{AMthmB}. 
By Propositions \ref{newfilterfromold} and \ref{sign of averaged-filter} we can write the proper filters on $m$ wheels in a basis element of Theorem \ref{AMthmB} as a sum of proper averaged-filters on at most $m$ wheels such these averaged-filters satisfy point $1$, after potentially applying the relation of Proposition \ref{sign of averaged-filter} to rearrange the wheels in an averaged-filter.
Moreover, the largest labeled disk in each wheel comes first. 
Therefore,
\[
Z_{1}|\cdots|Z_{r} = \sum a_{Z'_{1}|\cdots|Z'_{r}} Z'_{1}|\cdots|Z'_{r}
\]
where, if $Z_{j}$ is the proper wheel $W_{j}$, then $Z'_{j}$ is the proper wheel $W_{j}$, and if $Z_{j}$ is a proper filter on $m\ge 3$ wheels, then $Z'_{j}$ is a proper averaged-filter on at most $l\le m$ wheels $AF_{j}(W_{j,1}, W_{j,2}, \dots, W_{j,l})$.
Note, if $Z_{j}=F(W_{j,1}, W_{j,2})$, then $Z'_{j}=AF(W_{j,1}, W_{j,2})=F(W_{j,1}, W_{j,2})$.
Unfortunately, it does not follow that consecutive $Z'_{j}$ satisfy the requirements of points $2$ and $3$ of the lemma.

It suffices to show that every concatenation product of the form
\[
Z'_{1}|\cdots|Z'_{k}
\]
where the $Z'_{j}$ are proper wheels where the largest label comes first or proper averaged-filters on at least $2$ wheels satisfying point $1$ such that the largest label in each wheel comes first, can be written as a sum of elements in the set described in the lemma.

The conditions in the lemma allow anything (a wheel or an averaged-filter) to come after an averaged-filter. 
Therefore, it suffices to show that if we have a concatenation product of proper wheels such that in each proper wheel the largest label comes first and such that together the wheels satisfy property $3$, and we concatenate to the right of this any proper wheel such that the largest label comes first or any proper averaged-filter satisfying point $1$ such that in each wheel the largest label is first the resulting concatenation product is equal to a sum of elements in the set described in the lemma such that the set of proper wheels (both those not in and in an averaged-filter) is the same.

We proceed by induction on the number of wheels in the string of wheels. 
This is true if we start with the empty string of wheels, as anything can follow an averaged-filter.
Next, we assume this is true if there are $j-1$ wheels in the concatenation product. 
Since the $Z'_{1},\dots, Z'_{j}$ are all wheels, we write them as $Z'_{l}=W_{l}$ where for all $l$, where $W_{l}$ is a proper wheel such that the largest label comes first, i.e., $W_{l}=W(i_{l,1},\dots, i_{l, n_{l}})$ such that $i_{l, 1}>i_{l, s}$ for all $s\neq 1$. 
Additionally, since the $W_{l}$ satisfy point $2$, we have that $W_{1}>\cdots>W_{j}$ where the ordering is given by rank.

If $Z'_{j+1}$ is a proper wheel such that the largest label comes first we write
\[
Z'_{j+1}=W_{j+1}=W(i_{j+1, 1},\dots, i_{j+1, n_{j+1}}),
\]
where $i_{j+1, 1}>i_{j+1, s}$ for all $s\neq 1$.
If $W_{j+1}$ is outranked by $W_{j}$, then
\[
W_{1}|\cdots|W_{j}|W_{j+1}
\]
is in the set described in the lemma.

If $W_{j+1}$ outranks $W_{j}$, then
\[
W_{j}|W_{j+1}=F(W_{j}, W_{j+1})\pm W_{j+1}|W_{j}=AF(W_{j}, W_{j+1})\pm W_{j+1}|W_{j}
\]
where the sign depends on the number of disks in the wheels $W_{j}$ and $W_{j+1}$. Therefore,
\[
W_{1}|\cdots|W_{j-1}|W_{j}|W_{j+1}=W_{1}|\cdots|W_{j-1}|AF(W_{j}, W_{j+1})\pm W_{1}|\cdots|W_{j-1}|W_{j+1}|W_{j}.
\]
By the assumption, we can write
\[
W_{1}|\cdots|W_{j-1}|AF(W_{j}, W_{j+1})
\]
as a sum of elements in the set described in the lemma as $W_{j-1}$ outranks $W_{j}$.
Similarly, the induction hypothesis proves that we can write
\[
W_{1}|\cdots|W_{j-1}|W_{j+1}=\sum a_{Z''_{1},\dots, Z''_{t}} Z''_{1}|\dots|Z''_{t}
\]
where the $Z''_{1}|\dots|Z''_{t}$ are on the same set of proper wheels as $W_{1}|\cdots|W_{j-1}|W_{j+1}$ and are in the set described in the lemma. 
Note that all the wheels $W_{1},\dots, W_{j+1}$ outrank $W_{j}$ by assumption. 
Therefore, any such element $Z''_{1}|\dots|Z''_{t}$ is such that if $Z''_{t}$ is a proper wheel it ranks above $W_{j}$. 
It follows that $Z''_{1}|\dots|Z''_{t}|W_{j}$ is in the set described in the lemma, and, as a result, if $Z'_{j+1}$ is a proper wheel, the induction hypothesis holds.

We also need to check the case in which $Z'_{j+1}$ is a proper averaged-filter on $m\ge 2$ wheels satisfying property $1$, and such that in every wheel the largest label comes first. 
We write $Z'_{j+1}=AF(W_{j+1},\dots, W_{j+m})$. 

If $W_{j}$ outranks $W_{j+1}$, then
\[
W_{1}|\cdots|W_{j}|AF(W_{j+1},\dots, W_{j+m})
\]
is in the set described in the lemma.

If $W_{j}$ is outranked by $W_{j+1}$, then by Lemma \ref{relationlem}
\[
W_{j}|AF(W_{j+1},\dots, W_{j+m})=\pm AF(W_{j+1},\dots, W_{j+m})|W_{j}
\]
\[
+\sum_{i=1}^{m} \pm W_{j+i}|AF(W_{j}, \dots, \widehat{W_{j+i}},\dots, W_{j+m})
+\sum_{i=1}^{m} \pm AF(W_{j}, \dots, \widehat{W_{j+i}},\dots, W_{j+m})|W_{j+i},
\]
where the signs depend on the order of the wheels and the number of disks they are on.
Additionally, we may assume that all the averaged-filters in this relation are non-trivial as we can ignore any trivial terms.

By the induction hypothesis
\[
W_{1}|\cdots|W_{j-1}|AF(W_{j+1},\dots, W_{j+m})
\]
is equal to a sum of elements in the set described in the lemma such that these elements have the same set of proper wheels as $W_{1}|\cdots|W_{j-1}|AF(W_{j+1},\dots, W_{j+m})$. 
Let $Z''_{1}|\cdots|Z''_{t}$ be a summand. 
By assumption $W_{j}$ is outranked by all these wheels, so
\[
Z''_{1}|\cdots|Z''_{t}|W_{j}
\]
is in the set described in the lemma, and have the same set of proper wheels as
\[
W_{1}|\cdots|W_{j-1}|AF(W_{j+1},\dots, W_{j+m})|W_{j}.
\]
Therefore,
\[
W_{1}|\cdots|W_{j-1}|AF(W_{j+1},\dots, W_{j+m})|W_{j}.
\]
is equal to a sum of elements that are in the set described in the lemma, and these elements are on the same set of proper wheels.

For $1\le i\le m$, the element
\[
W_{1}|\cdots|W_{j-1}|AF(W_{j}, \dots, \widehat{W_{j+i}},\dots, W_{j+m})
\]
is in the set described in the lemma, as $W_{j-1}$ outranks $W_{j}$, so 
\[
W_{1}|\cdots|W_{j-1}|AF(W_{j}, \dots, \widehat{W_{j+i}},\dots, W_{j+m})|W_{j+i}
\]
is also in the set described in the lemma.
Clearly it is on the same set of wheels as the concatenation product $W_{1}|\cdots|W_{j}|AF(W_{j}, \dots, W_{j+m})$.

By the induction hypothesis, for $1\le i\le m$,
\[
W_{1}|\cdots|W_{j-1}|W_{j+1}
\]
is equal to a sum of elements in the set described in the lemma such that these elements are on the same set of wheels. 
Let $Z''_{1}|\cdots|Z''_{t}$ be such a summand. 
Then, every wheel in $Z''_{1}|\cdots|Z''_{t}$ outranks $W_{j}$, so 
\[
Z''_{1}|\cdots|Z''_{t}|AF(W_{j}, \dots, \widehat{W_{j+i}},\dots, W_{j+m})
\]
is on the same set of wheels as $W_{1}|\cdots|W_{j-1}|W_{j+1}|AF(W_{j}, \dots, \widehat{W_{j+i}},\dots, W_{j+m})$, and is in the set described in the lemma.
It follows that
\[
W_{1}|\cdots|W_{j}|AF(W_{j+1},\dots, W_{j+m})
\]
is equal to a sum of elements in the set described in the lemma on the same set of wheels.

We have shown that the induction hypothesis holds in the case of $j$ consecutive wheels, so the lemma holds.
\end{proof}

We do not want to have to consider averaged-filters on $2$ wheels as they can be written as a sum of concatenation products of wheels and their existence makes finding a finite presentation for the twisted algebra $H_{*}\big(\text{conf}(\bullet, w);\Q\big)$ more challenging.
As such, we find a new basis for homology devoid of averaged-filters on $2$ wheels.

\begin{thm}\label{AMWthmB''}
The homology group $H_{k}\big(\text{conf}(n,w);\Q\big)$ has a basis consisting of concatenations of proper wheels and non-trivial proper averaged-filters on $m\ge 3$ wheels such that in each wheel, inside or outside of an averaged-filter, the largest label comes first.
We say one wheel ranks above another if it has more disks or if they have the same number of disks and its largest label is greater. 
A cycle is in the basis if and only if:
\begin{enumerate}
\item The wheels inside each proper averaged-filter are in order of increasing rank. 
\item If $W_{1}$ and $W_{2}$ are adjacent proper wheels on $n_{1}$ and $n_{2}$ disks, respectively, where $W_{1}$ is to the left of $W_{2}$, then $W_{1}$ outranks $W_{2}$ or $n_{1}+n_{2} > w$.
\item Every wheel to the immediate left of a proper averaged-filter ranks above the least wheel in the averaged-filter.
\end{enumerate}
\end{thm}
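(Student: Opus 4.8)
The plan is to derive the theorem from Lemma~\ref{replaceallbigfilters1} by a triangular change of basis. Comparing the two statements, the set $B'$ proposed in the theorem differs from the basis $B$ of Lemma~\ref{replaceallbigfilters1} in exactly two respects: $B'$ contains no averaged-filters on two wheels, and the rank condition on consecutive bare wheels is relaxed from ``decreasing rank'' to ``$W_1$ outranks $W_2$ or $n_1+n_2>w$''. Since $H_k\big(\text{conf}(n,w);\Q\big)$ is finite dimensional and $B$ is a basis, it suffices to write each element of $B$ as $\pm$(one distinguished element of $B'$) plus a $\Q$-combination of elements of $B'$ that are strictly smaller in a fixed partial order, and to check that the resulting assignment of distinguished terms gives a bijection $B\to B'$; then the change-of-basis matrix is square, triangular, and unipotent, hence invertible, so $B'$ is a basis.

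First I would eliminate the two-wheel averaged-filters. A basis element of $B$ is a concatenation product of proper wheels and nontrivial proper averaged-filters, and any nontrivial averaged-filter on two wheels $AF(W_1,W_2)$ has $n_1+n_2>w$: by Proposition~\ref{commutewheels} the class $AF(W_1,W_2)=F(W_1,W_2)=W_1|W_2+(-1)^{(n_1-1)(n_2-1)+1}W_2|W_1$ is zero once $n_1+n_2\le w$. Substituting this identity for each two-wheel averaged-filter occurring in a basis element of $B$ expresses it as a signed sum of concatenation products whose blocks are proper wheels (largest label first) and proper averaged-filters on $m\ge3$ wheels (largest label first, wheels in increasing rank). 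Because $n_1+n_2>w$, the two summands $W_1|W_2$ and $W_2|W_1$ are genuinely distinct and cannot be recombined via Proposition~\ref{commutewheels}; this is precisely the phenomenon that forces the exception $n_1+n_2>w$ in condition~(2).

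Next I would bring each of these concatenation products into $B'$-normal form by an induction on the number of blocks that runs parallel to the proof of Lemma~\ref{replaceallbigfilters1}. The elementary moves are: transposing two adjacent bare wheels whose sizes sum to at most $w$ via Proposition~\ref{commutewheels}, which is the only mechanism for imposing a decreasing-rank adjacency and hence must halt exactly at adjacencies with $n_1+n_2>w$; and, when a bare wheel sits immediately to the left of an averaged-filter on $m\ge 3$ wheels without outranking its least wheel, applying the relation of Lemma~\ref{relationlem} to trade the offending $W\,|\,AF(\cdots)$ for $\pm AF(\cdots)\,|\,W$ plus terms in which $W$ has been absorbed into a filter on $\le m$ wheels or swapped with one of its wheels. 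For triangularity I would refine the ``complexity'' order of Proposition~\ref{replacefilter} (the multiset of sizes of the averaged-filters in a concatenation) by a secondary statistic counting bad pairs of adjacent bare wheels, i.e.\ those $W_1|W_2$ with $W_1$ outranked by $W_2$, and check that each transposition strictly lowers the number of bad pairs while each use of Lemma~\ref{relationlem} strictly lowers the filter-complexity. The induction then shows that every element of $B$ equals $\pm$ its distinguished $B'$-partner, obtained by replacing each two-wheel averaged-filter $AF(W_1,W_2)$ (with $W_1$ of lower rank) by $W_1|W_2$ and then re-sorting its new neighbors, modulo strictly smaller terms.

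The principal difficulty is this last induction, and in particular designing the well-order so that all cascades terminate: a single transposition can create a fresh bad adjacency elsewhere in the word, and a single application of Lemma~\ref{relationlem} spawns several new words each of which again has a bare wheel abutting a filter. This is the same delicate bookkeeping as in the proof of Lemma~\ref{replaceallbigfilters1}, with the extra wrinkle that the bare wheels liberated from a split two-wheel averaged-filter need not satisfy the old ``wheel-left-of-filter'' condition~(3) with their new right neighbors, so genuine applications of Lemma~\ref{relationlem}, not merely commutations, are unavoidable in establishing the new normal form; verifying that the partner map $B\to B'$ is actually a bijection (so that the triangular matrix is square) is the other point requiring care.
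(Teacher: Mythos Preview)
Your overall plan---start from Lemma~\ref{replaceallbigfilters1}, expand every two-wheel averaged-filter via $AF(W_1,W_2)=W_1|W_2\pm W_2|W_1$, and renormalize using Proposition~\ref{commutewheels} and Lemma~\ref{relationlem}---matches the paper's. However, your termination argument has a genuine gap: applying Lemma~\ref{relationlem} does \emph{not} lower the filter-complexity as you claim. The relation rewrites $W\,|\,AF(V_1,\dots,V_m)$ as a combination of terms of the form $AF(V_1,\dots,V_m)\,|\,W$, $V_i\,|\,AF(W,\dots,\widehat{V_i},\dots,V_m)$, and $AF(W,\dots,\widehat{V_i},\dots,V_m)\,|\,V_i$, each of which still contains one averaged-filter on exactly $m$ wheels; the multiset of filter sizes is unchanged (and ``$W$ absorbed into a filter on $\le m$ wheels'' is not what happens). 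So the order you propose does not witness termination, and the unipotent-triangular structure you want is not established.

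The paper sidesteps this by decoupling spanning from counting. For spanning it does not use a global well-order: after splitting a two-wheel filter $AF(W_{j,1},W_{j,2})$ it runs a finite case analysis, exploiting that the larger wheel $W_{j,2}$ has more than $w/2$ disks (so any wheel outranking $W_{j,1}$ already has $>w$ disks together with $W_{j,2}$), to see that each resulting word lies in $B'$ after at most one commutation or one application of Lemma~\ref{relationlem}; the outputs of that single application are then checked directly to satisfy conditions (1)--(3). For cardinality, rather than tracking a leading term, the paper writes down an explicit injection $B'\to B$: in a $B'$-word, scan left to right and pair each adjacent $Z'_j|Z'_{j+1}$ with $Z'_{j+1}$ outranking $Z'_j$ and $Z'_j$ unpaired, replacing the pair by $AF(Z'_j,Z'_{j+1})$. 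This pairing is the inverse of your ``distinguished term'' map, but its injectivity is immediate and requires no partial order on $B'$. If you want to salvage your argument, either adopt the paper's spanning-plus-injection split, or replace your filter-complexity statistic by the inductive invariant actually used in Lemma~\ref{replaceallbigfilters1} and in the paper's proof here: the length of the run of bare wheels preceding the block currently being processed.
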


\begin{proof}
We show that every element in the basis of Lemma \ref{replaceallbigfilters1} can be written as a sum of elements in the set described in the theorem. 
Then, we give an injective map from the set described in the theorem to the basis of Lemma \ref{replaceallbigfilters1}.
This proves that the set described in the theorem is a basis.

An element of the basis of Lemma \ref{replaceallbigfilters1} is of the form
\[
Z_{1}|Z_{2}|\cdots|Z_{r}
\]
for some $r$, where $Z_{j}$ is either a proper wheel whose largest label comes first or a proper averaged-filter such that in each wheel the largest label comes first and the wheels are ordered by increasing rank. 
Moreover, if $Z_{j+1}$ is a proper averaged-filter, then $Z_{j}$ is not a proper wheel of smaller rank than the wheels of $Z_{j+1}$, and if $Z_{j+1}$ is a proper wheel, it is of smaller rank than $Z_{j}$. 

Let $Z_{j} = AF(W_{j,1}, W_{j,2})=F(W_{j,1}, W_{j,2})=W_{j, 1}|W_{j, 2} \pm W_{j, 2}|W_{j,1}$ be the first (from the left) proper averaged-filter on $2$ wheels (here the sign depends on the number of disks in each wheel). 
We will replace $Z_{j}$ with $W_{j, 1}|W_{j, 2} \pm W_{j, 2}|W_{j,1}$, i.e., 
\[
Z_{1}|\cdots|Z_{j-1}|Z_{j}|Z_{j+1}|\cdots|Z_{k} =Z_{1}|\cdots|Z_{j-1}|W_{j, 1}|W_{j, 2}|Z_{j+1}|\cdots |Z_{k}\; \pm \;Z_{1}|\cdots|Z_{j-1}|W_{j, 2}|W_{j,1}|Z_{j+1}|\cdots |Z_{k}.
\]

Since $W_{j, 1}$ has smaller rank than $W_{j, 2}$ and $Z_{j-1}|Z_{j}$ is allowed, it follows that either $Z_{j-1}$ is a proper averaged-filter on at least $3$ wheels or a wheel that outranks $W_{j, 1}$. 
Either way, 
\[
Z_{1}|\cdots|Z_{j-1}|W_{j, 1}|W_{j, 2}
\]
is in the set described in the theorem. 

Since $W_{j, 2}$ outranks $W_{j, 1}$, it is wheel on at least $\big\lfloor\frac{w}{2}\big\rfloor+1$ disks. 
If $Z_{j+1}$ is wheel and outranks $W_{j,2}$, then the total number of disks in $W_{j,2}$ and $Z_{j+1}$ is greater than $W$, and 
\[
Z_{1}|\cdots|Z_{j-1}|W_{j, 1}|W_{j, 2}|Z_{j+1}
\]
is in the set described in the theorem. 

Similarly, if $Z_{j+1}$ is a wheel that is outranked by $W_{j, 2}$, then
\[
Z_{1}|\cdots|Z_{j-1}|W_{j, 1}|W_{j, 2}|Z_{j+1}
\]
is in the set described in the theorem. 

If $Z_{j+1}$ is a proper averaged-filter on at least $3$ wheels, then $W_{j, 2}$ must outrank the least wheel in $Z_{j+1}$, as the least ranked wheel in a proper averaged-filter has at most $\big\lfloor\frac{w}{2}\big\rfloor$ disks. It follows that 
\[
Z_{1}|\cdots|Z_{j-1}|W_{j, 1}|W_{j, 2}|Z_{j+1}
\]
is in the set described in the theorem.

Regardless of the nature of $Z_{j+1}$, the concatenation product
\[
Z_{1}|\cdots|Z_{j-1}|W_{j, 1}|W_{j, 2}|Z_{j+1}|\dots|Z_{l-1},
\]
where $Z_{l}$ is the next proper averaged-filter on 2 wheels, satisfies the conditions of the theorem.

For 
\[
Z_{1}|\cdots|Z_{j-1}|W_{j, 2}|W_{j,1}
\]
note that if $Z_{j-1}$ is proper averaged-filter consisting of at least $3$ wheels, then 
\[
Z_{1}|\cdots|Z_{j-1}|W_{j, 2}|W_{j,1}
\]
is in the set described in the theorem, and if $Z_{j-1}$ is a proper wheel, then, since $Z_{j-1}|Z_{j}$ is allowed, it follows that $Z_{j-1}$ outranks $W_{j,1}$ so $Z_{j-1}$ and $W_{j,2}$ have more than $w$ wheels in total. 
Therefore, 
\[
Z_{1}|\cdots|Z_{j-1}|W_{j, 2}|W_{j,1}
\]
is in the set described in the theorem.

If $Z_{j+1}$ is a wheel of rank smaller than that of $W_{j,1}$, then
\[
Z_{1}|\cdots|W_{j, 2}|W_{j,1}|Z_{j+1}
\]
is in the set described in the theorem. 

If $Z_{j+1}$ is a wheel and $Z_{j+1}$ and $W_{j,1}$ have more than $w$ disks combined, then
\[
Z_{1}|\cdots|Z_{j-1}|W_{j, 2}|W_{j,1}|Z_{j+1}
\]
is in the set described in the theorem. 

If $Z_{j+1}$ is a wheel that outranks $W_{j, 1}$ and they have at most $w$ disks combined, then, by Proposition \ref{commutewheels},
\[
W_{j,1}|Z_{j+1},
\]
is homologous up to sign to 
\[
Z_{j+1}|W_{j, 1},
\]
so 
\[
Z_{1}|\cdots|Z_{j-1}|W_{j, 2}|Z_{j+1}|W_{j,1}
\]
is in the set described in the theorem, as $W_{j,2}$ and $Z_{j+1}$ have more than $w$ disks combined. We can repeat this process as long as the subsequent $Z_{j+i}$ are wheels.

If $Z_{j+1}$ is a proper averaged-filter on at least 3 wheels, then, if $W_{j, 1}$ outranks the least wheel in $Z_{j+1}$, it follows that 
\[
Z_{1}|\cdots|Z_{j-1}|W_{j, 2}|W_{j,1}|Z_{j+1}
\]
is in the set described in the theorem.  
If $W_{j,1}$ does not outrank the least wheel in $Z_{j+1}$, then we can use the relation from Lemma \ref{relationlem} to replace $W_{j, 1}|Z_{j+1}$ with a sum of concatenation products of proper wheels and proper averaged-filters on at least $3$ wheels (and the wheels are the same wheels as in $W_{j, 1}$ and $Z_{j, 1}$), i.e., 
\[
W_{j, 1}|Z_{j+1} = \sum W'_{j, 1}|Z'_{j+1}+\sum Z'_{j+1}|W'_{j, 1} + Z_{j+1}|W_{j+1}.
\]
Since $W_{j,1}$ is outranked by all the wheels in $Z_{j+1}$, it follows that all the wheels of the form  $W'_{j, 1}$ outrank $W_{j, 1}$ and $W_{j,2}|W'_{j,1}$ satisfy the conditions of the theorem.
Additionally, it follows that $W_{j,2}$ outranks the least wheel in each averaged-filter $Z'_{j+1}$, as $W_{j, 1}$ is the least wheel in all the $Z'_{j+1}$.
Thus, all the products
\[
Z_{1}|\cdots|Z_{j-1}|W_{j, 2}|W'_{j,1}|Z_{j+1}
\]
\[
Z_{1}|\cdots|Z_{j-1}|W_{j, 2}|Z'_{j+1}|W'_{j,1}
\]
and 
\[
Z_{1}|\cdots|Z_{j-1}|W_{j, 2}|Z'_{j+1}|W_{j,1}
\]
satisfy the conditions of the theorem.

We can repeat the above process to see the summands used to replace
\[
Z_{1}|\cdots|Z_{j-1}|Z_{j}|Z_{j+1}|\cdots|Z_{l-1},
\]
where $Z_{l}$ is the next proper filter on two wheels, satisfy the conditions in the theorem.
We can repeat this process until we have no more filters on 2 wheels in the summands.
Doing so, we see that we are able to write 
\[
Z_{1}|\cdots|Z_{k}
\]
as a sum of terms in the set described by the theorem.
Therefore, the theorem describes a generating set for homology.

Next, we show that the set described in the theorem is at most the size of the basis described in Lemma \ref{replaceallbigfilters1}.
An element in the set described in the theorem is of the form
\[
Z'_{1}|\cdots|Z'_{s}
\]
where the $Z'_{i}$ are either proper wheels such that the largest label comes first or proper averaged-filters on at least $3$ wheels such that in each wheel the largest label comes first. 
Working from left to right we pair off consecutive wheels $Z'_{j}$ and $Z'_{j+1}$ such that $Z'_{j+1}$ outranks $Z'_{j}$ and $Z'_{j}$ is not already in a pair. 

The process described above that shows that the set in the theorem generates homology proves that 
\[
Z'_{1}|\cdots|Z'_{j-1}|Z'_{j}|Z'_{j+1}|\cdots|Z'_{k}
\]
appears in our decomposition of 
\[
Z'_{1}|\cdots|AF(Z'_{j}, Z'_{j+1})|\cdots|Z'_{k},
\]
as if $Z'_{j-1}$ is a wheel not paired off, then $Z'_{j-1}$ outranks $Z'_{j}$ by assumption, so it can come immediately to the left of $AF(Z'_{j}, Z'_{j+1})$, and if it is a wheel that is paired off or an averaged-filter it, or the averaged-filter on $2$ wheels that replaces it, can come to the left of $AF(Z'_{j}, Z'_{j+1})$, as anything can follow a filter on 2 wheels or an averaged-filter.
Anything can come to the right of $AF(Z'_{j}, Z'_{j+1})$. 
This process gives an injective map from the set in Theorem to the basis of Lemma \ref{replaceallbigfilters1}.
Thus, the cardinality of the set of the theorem is at most the cardinality of the basis of Lemma \ref{replaceallbigfilters1}. 
Since every element in the basis of Lemma \ref{replaceallbigfilters1} can be written in terms of the set in the theorem it follows that the set described in the theorem is a basis.
\end{proof}

Thus, we have proven that there is a basis for $H_{k}\big(\text{cell}(n,w);\Q\big)$ that consists only of concatenation products of non-trivial proper averaged-filters on at least $3$ wheels and proper wheels. 
We use this theorem to find a finite presentation for $H_{*}\big(\text{cell}(\bullet, w);\Q\big)$ as a twisted noncommutative algebra.

\section{Representation Stability}\label{rep stab sec}
In this section we recall the definition of a twisted (commutative) algebra, and use twisted algebras to state and prove our main results. 
Alpert--Manin proved that the homology of the ordered configuration space of open unit-diameter disks in the infinite strip of width $w$ has the structure of a finitely generated twisted algebra, and gave a set of generators \cite[Theorem A]{alpert2021configuration1}. 
We prove that $H_{*}\big(\text{conf}(\bullet, w);\Q\big)$ is a finitely presented twisted algebra by giving a finite presentation.
We use this presentation to prove Theorem \ref{first order stability}, which states that for all $k\ge0$, the $k^{\text{th}}$-homology groups $H_{k}\big(\text{conf}(\bullet, w);\Q\big)$ have the structure of a finitely generated $\text{Sym}\bigg(\Big(H_{k}\big(\text{conf}(1,w);\Q\big)\Big)^{b+1}\bigg)$-module for some $b\ge 0$, i.e., a twisted algebra version of Theorem \ref{first order stab intro}.
In doing so, we show that the ordered configuration space of unit-diameter disks in the infinite strip of width $w$ exhibits a notion of first-order representation stability, something that Alpert--Manin suggest might not be true if one restricts themselves to integer coefficients \cite[Proposition 8.6]{alpert2021configuration1}.
We also prove Theorem \ref{higherorderstability}, a twisted algebra version of Theorem \ref{higher order stability intro}, showing that the ordered configuration space of open unit-diameter disks in the infinite strip of width $w$ exhibits notions of $2^{\text{nd}}$ through $\big\lfloor\frac{w+1}{3}\big\rfloor^{\text{th}}$-order representation stability.

\subsection{Twisted Algebras}
We recall the definition of a twisted algebra and prove that $H_{*}\big(\text{conf}(\bullet, w);\Q\big)$ is a finitely presented twisted algebra by giving a finite presentation.
We will use this presentation to prove our representation stability results.

\begin{defn}
Let $R$ be a commutative ring. A \emph{twisted algebra over $R$} is an associative unital graded $R$-algebra $A$ supported in non-negative degrees and equipped with an $S_{n}$-action on the degree $n$ piece $A_{n}$ such that the multiplication map
\[
A_{n}\otimes A_{m}\to A_{n+m}
\]
is $S_{n}\times S_{m}$ equivariant.
\end{defn}

\begin{defn}
A twisted algebra $A$ over a ring $R$ is \emph{finitely generated} if there are finitely many elements $a_{1}, \dots, a_{k}\in A$, where $a_{j}\in A_{d_{j}}$, such that every element in $A$ can be written as a finite $R$-linear combination of symmetric group actions on products of the $a_{j}$. The set $\{a_{1}, \dots, a_{k}\}$ is a \emph{generating set} for $A$.
We say that $A$ is \emph{finitely generated in degree $d$} if there exists a finite generating set $\{a_{1}, \dots, a_{k}\}$ such that $a_{j}\in A_{d_{j}}$ where $d_{j}\le d$ for all $j$.
\end{defn}

\begin{defn}
A twisted algebra $A$ over a ring $R$ is \emph{finitely presented} if $A$ has a finite generating set $\{a_{1},\dots, a_{k}\}$ and there exists a finite set of relations $\mathcal{R}=\{r_{1},\dots, r_{t}\}$ in pure degree in terms of the generators $a_{1},\dots, a_{k}$ such that every relation in $A$ can be written as a finite $R$-linear combination of symmetric group actions on products of the $r_{j}$.
We say that $A$ is \emph{finitely presented in degree $p$} if there exists a finite generating set $\{a_{1}, \dots, a_{k}\}$ and a finite set of relations $\{r_{1},\dots, r_{t}\}$ such that $a_{j}\in A_{d_{j}}$ where $d_{j}\le p$ for all $j$ and $r_{i}\in A_{p_{i}}$ where $p_{i}\le p$ for all $i$.
\end{defn}

In their paper, Alpert--Manin proved that the homology of the ordered configuration space of open unit-diameter disks in the infinite strip of width $w$ is a finitely generated twisted algebra \cite[Theorem A]{alpert2021configuration1} generated in degree $\le\frac{3}{2}w$. 
We prove that $H_{*}\big(\text{conf}(\bullet,w);\Q\big)$ is a finitely presented twisted algebra, and give bounds for the presentation degree.

\begin{thm}\label{finitepresentation} 
For fixed $w$, the twisted algebra $H_{*}\big(\text{conf}(\bullet,w);\Q\big)$ is finitely presented.
Moreover, there is a finite presentation for $H_{*}\big(\text{conf}(\bullet,w);\Q\big)$ such that the generators are in $H_{\le\frac{3}{2}w-2}\big(\text{conf}(\le\frac{3}{2}w,w);\Q\big)$ and the relations are in $H_{\le2w-3}\big(\text{conf}(\le2w,w);\Q\big)$, i.e., $H_{*}\big(\text{conf}(\bullet,w);\Q\big)$ is generated in degree $\le\frac{3}{2}w$ and presented in degree $\le2w$.
\end{thm}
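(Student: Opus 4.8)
The plan is to produce an explicit finite presentation whose underlying basis is the one from Theorem \ref{AMWthmB''}. As generators I would take one representative, on the label set $\{1,\dots,n\}$, of each proper wheel on $n\le w$ disks with largest label first, and of each nontrivial proper averaged-filter on $m\ge 3$ wheels with $w<n\le\frac32 w$ disks satisfying condition (1) of Theorem \ref{AMWthmB''} with largest label first in each wheel; there are finitely many, a wheel lies in $H_{n-1}\subseteq H_{\le w-1}$ and such an averaged-filter lies in $H_{n-2}\subseteq H_{\le\frac32 w-2}$, so all generators sit in $H_{\le\frac32 w-2}\big(\le\text{conf}(\tfrac32 w,w);\Q\big)$ and they generate by Theorem \ref{AMWthmB''}. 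As relations I would take: (R1) the commutation relations $W_1|W_2=(-1)^{(n_1-1)(n_2-1)}W_2|W_1$ of Proposition \ref{commutewheels} for $n_1+n_2\le w$; (R2) for each generator $g$ and each transposition $\tau$ of its labels, the relation expressing $\tau\cdot g$ as the explicit $\Q$-linear combination of generators supplied by Propositions \ref{spin relation}, \ref{sign of averaged-filter}, and \ref{snactonavfilter} (finitely many, and their twisted-ideal closure accounts for all of $\sigma\cdot g$ and all relabelings); and (R3) the relations of Lemma \ref{relationlem}, rewritten purely in terms of our generators, for collections of wheels of total disk-count at most $2w$. Relations in (R1) live in $H_{\le w-2}$, those in (R2) in $H_{\le\frac32 w-2}$, and those in (R3) in $H_{\le 2w-3}$; so all relations lie in $H_{\le2w-3}\big(\text{conf}(\le2w,w);\Q\big)$, i.e.\ the presentation is in degree $\le2w$.

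Next I would set up the bookkeeping. Let $F$ be the free twisted algebra on these generators, $\pi\colon F\twoheadrightarrow H_*(\text{conf}(\bullet,w);\Q)$ the evaluation surjection, and $J\subseteq F$ the two-sided twisted ideal generated by (R1)--(R3). That $J\subseteq\ker\pi$ is immediate from the cited Propositions and from Lemma \ref{relationlem}. For the reverse inclusion it suffices, by the usual argument, to show that $F/J$ is spanned in each degree by the lifts of the basis monomials of Theorem \ref{AMWthmB''}: since $\pi$ is surjective and carries these monomials to a basis of $H_*(\text{conf}(n,w);\Q)$, a spanning set of that cardinality forces $\pi$ to be an isomorphism on $F/J$, whence $J=\ker\pi$ and the presentation is finite.

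The heart of the proof is therefore a straightening algorithm: every word of generators is, modulo $J$, a $\Q$-linear combination of Theorem \ref{AMWthmB''} basis monomials. Using (R2) I would first reduce to words whose factors are basis-form generators on their own label sets (pushing each ambient permutation into the individual factors). The remaining rewriting is exactly the argument in the proofs of Lemma \ref{replaceallbigfilters1} and Theorem \ref{AMWthmB''}: a word fails to be in basis form only if it contains a bad adjacency, which is either (i) two wheels $W_1|W_2$ with $n_1+n_2\le w$ and $W_1$ not outranking $W_2$ — fixed by (R1) — or (ii) a wheel $W$ immediately left of an averaged-filter $Z$ with $W$ not outranking the least wheel of $Z$ — fixed by applying the Lemma \ref{relationlem} relation (R3) to $W$ together with the wheels of $Z$, which rewrites $W|Z$ as a combination of products in which $W$ has been absorbed into a smaller averaged-filter or placed to the right of one; one checks these are in basis form after one more application of (R2). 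A word with no bad adjacency is in basis form, since the conditions of Theorem \ref{AMWthmB''} impose no constraint after an averaged-filter and permit $W_1|W_2$ whenever $n_1+n_2>w$. Termination is inherited from those two proofs (each step strictly decreases a complexity; two-wheel averaged-filters, which are never among our generators, are expanded away before any other rewriting, so (R3) is never triggered by a two-wheel averaged-filter).

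The one step requiring care — and the main obstacle — is the degree bound on the relations actually invoked during straightening. Relation (R1) is only ever used on pairs of wheels with $n_1+n_2\le w$, hence in degree $\le w-2$. For (ii), since $W$ is outranked by the least wheel of a proper averaged-filter $Z$ on $m\ge 3$ wheels with $n_Z\le\frac32 w$ disks, that least wheel has at most $\lfloor n_Z/m\rfloor\le\lfloor n_Z/3\rfloor\le\tfrac w2$ disks, so $W$ has at most $\tfrac w2$ disks; moreover the hypothesis of Lemma \ref{relationlem} holds for the collection $\{W\}\cup\{\text{wheels of }Z\}$, because any subset of size $m-1$ of these $m+1$ wheels omits two of them, at least one of which is a wheel of $Z$ (there being only one non-$Z$ wheel), and each wheel of $Z$ has at least $n_Z-w$ disks, so the retained sum is at most $n_Z-(n_Z-w)=w$. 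Hence the Lemma \ref{relationlem} relation used has total disk-count $n_W+n_Z\le\tfrac w2+\tfrac32 w=2w$, i.e.\ degree $\le2w-3$. Since the straightening invokes Lemma \ref{relationlem} only in this situation, the ideal $J$, generated by (R1)--(R3) in degree $\le2w$, equals $\ker\pi$, which proves that $H_*(\text{conf}(\bullet,w);\Q)$ is finitely presented with generators in $H_{\le\frac32 w-2}\big(\le\text{conf}(\tfrac32 w,w);\Q\big)$ and relations in $H_{\le2w-3}\big(\text{conf}(\le2w,w);\Q\big)$.
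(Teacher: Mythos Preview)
Your proposal is correct and follows essentially the same approach as the paper: the same generating set (proper wheels and nontrivial proper averaged-filters on $\ge 3$ wheels), the same relations (your (R1)--(R3) repackage Propositions~\ref{properwheels}, \ref{commutewheels}, \ref{sign of averaged-filter}, \ref{replace filter with sum of proper proper} and Lemma~\ref{relationlem}), and the same straightening-to-basis inductive argument drawn from Lemma~\ref{replaceallbigfilters1} and Theorem~\ref{AMWthmB''}. Your framing via the free twisted algebra $F$ and the ideal $J$ is slightly more formal than the paper's, and you make the degree bound on the Lemma~\ref{relationlem} relations explicit where the paper simply asserts it; one small wrinkle is that your sentence ``each wheel of $Z$ has at least $n_Z-w$ disks, so the retained sum is at most $n_Z-(n_Z-w)=w$'' does not literally justify the Case where $W$ is retained and two wheels of $Z$ are omitted---the correct argument there is that any $m-2$ wheels of $Z$ sum to at most $w$ minus the size of one remaining wheel of $Z$, and $n_W$ is bounded by that size since $W$ is outranked by the least wheel---but the claim and the $\le 2w$ bound are right.
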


\begin{proof}
By Theorem \ref{AMWthmB''}, for every $n, k, w$, there is a basis for $H_{k}\big(\text{conf}(n,w);\Q\big)$  consisting of concatenation products of proper wheels on at most $w$ disks and non-trivial proper averaged-filters on at least $3$ wheels such that in every wheel the largest label comes first and such that the wheels in the averaged-filters are in order of increasing rank. 
It follows that $H_{*}\big(\text{conf}(\bullet,w);\Q\big)$ is generated by the proper wheels whose largest label comes first and the non-trivial proper averaged-filters such that in every wheel the largest label comes first and such that the wheels in the average-filters are in order of increasing rank. 
By the nature of twisted algebras it suffices to restrict ourselves to one proper wheel on $d$ disks for $1\le d\le w$ and one proper averaged-filter for each partition of $n$, where $w< n\le \frac{3}{2}w$, and such that any subset of $1$ fewer element than the number of elements in the partition has sum at most $w$.
This is a finite generating set for $H_{*}\big(\text{conf}(\bullet,w);\Q\big)$.

We claim that these generators along with the relations of Propositions \ref{properwheels}, \ref{commutewheels}, \ref{sign of averaged-filter}, and \ref{replace filter with sum of proper proper} and Lemma \ref{relationlem} yield a finite presentation for $H_{*}\big(\text{conf}(\bullet,w);\Q\big)$ as a twisted algebra.
Since these relations use at most $2w$ disks and there are only finitely many ways to group at most $2w$ disks into wheels and averaged-filters, it follows that this set of relations, and therefore this presentation, is finite. 

To see that these relations yield a presentation, it suffices to show that any element of homology that arises from a symmetric group action on a product of generators can be written as a sum of the basis elements from Theorem \ref{AMWthmB''} using these relations.
Let $Z_{1}|\cdots|Z_{r}\in H_{k}\big(\text{conf}(n,w);\Q\big)$ be a product of the generators, i.e., the proper wheels such that the largest label comes first and averaged-filters on at least $3$ wheels whose wheels are ordered in increasing rank and whose wheels are such that the largest label comes first.
Given any $\sigma\in S_{n}$, consider
\[
\sigma(Z_{1}|\cdots|Z_{r}).
\]
By the twisted algebra structure of $H_{k}\big(\text{conf}(n,w);\Q\big)$ it follows that
\[
\sigma(Z_{1}|\cdots|Z_{r})=\sigma(Z_{1})|\cdots|\sigma(Z_{r}).
\]
Moreover, if $Z_{j}$ is a proper wheel on $n_{j}$ disks, then $\sigma(Z_{j})$ is a proper wheel on $n_{j}$ disks, and if $Z_{j}$ is a proper averaged-filter on $m$ wheels of size $n_{j,1}, \dots, n_{j, m}$,  then $\sigma(Z_{j})$ is proper-averaged-filter on $m$ wheels of size $n_{j,1},\dots, n_{j,m}$.
If $\sigma(Z_{j})$ is a proper wheel, we can use the relations of Proposition \ref{properwheels} to write 
\[
\sigma(Z_{j})=\sum a_{Z'_{j}}Z'_{j}
\]
for some constants $a_{Z'_{j}}$ such that the $Z'_{j}$ are proper wheels of the same size and are on the same set of disks as $\sigma(Z_{j})$ such that the largest label comes first.
Similarly, if $\sigma(Z_{j})$ is a proper averaged-filter, we can use the relations of Propositions \ref{sign of averaged-filter} and \ref{replace filter with sum of proper proper} to write 
\[
\sigma(Z_{j})=\sum a_{Z'_{j}}Z'_{j}
\]
for some constants $a_{Z'_{j}}$ such that the $Z'_{j}$ are proper averaged-filters whose wheels are of the same size and are on the same set of disks as the wheels of $\sigma(Z_{j})$, and whose wheels are ordered in increasing rank and the largest labeled disk in each wheel comes first.
As a result we have
\[
\sigma(Z_{1}|\cdots|Z_{r})=\sum a_{Z'_{1}}Z'_{1}|\cdots|\sum a_{Z'_{r}}Z'_{r}=\sum a_{Z'_{1}}\cdots \sum a_{Z'_{r}}Z'_{1}|\cdots|Z'_{r}=\sum a_{Z'_{1}}\cdots a_{Z'_{r}}Z'_{1}|\cdots|Z'_{r}.
\]
where the $Z'_{j}$ are proper wheels such that the largest label comes first or proper averaged-filters such the wheels are ordered in increasing rank and such that in each wheel the largest label comes first.
We use the relations of Proposition \ref{commutewheels} and Lemma \ref{relationlem} to write the summands $Z'_{1}|\cdots|Z'_{r}$ as a sum of elements in the basis of Theorem \ref{AMWthmB''}.

In an element of the basis of Theorem \ref{AMWthmB''}, a proper wheel or a proper averaged-filter can come to the right of a proper wheel on at least $\big\lfloor\frac{w}{2}\big\rfloor+1$ disks or a proper averaged-filter. 
Thus, much like Lemma \ref{replaceallbigfilters1}, it suffices to show that if we have a concatenation product of any number of proper wheels, each on at most $\big\lfloor\frac{w}{2}\big\rfloor$ disks, in the basis of Theorem \ref{AMWthmB''}, and we concatenate on the right with any proper wheel where the largest label comes first or any non-trivial proper averaged-filter where the wheels are ordered in increasing rank, and in each wheel the largest label comes first, then the resulting element can be written as a sum elements in the basis of Theorem \ref{AMWthmB''}, such that each element arises from the same set of wheels.

We do this by induction on the number of wheels following an averaged-filter or a wheel on at least $\big\lfloor\frac{w}{2}\big\rfloor+1$ disks. 
This is trivially true in the case there are no wheels in the concatenation product after the averaged-filter or large wheel.\
Assume this true if we have any concatenation product of at most $j-1$ wheels of the desired form. 
Let 
\[
W_{1}|\cdots|W_{j}
\]
be a concatenation product of $j$ proper wheels, each consisting of at most $\big\lfloor\frac{w}{2}\big\rfloor$ disks such that the largest label in each wheel comes first each and such that $W_{i}$ outranks $W_{i+1}$. 
We consider 
\[
W_{1}|\cdots|W_{j}|Z_{j+1}
\]
where $Z_{j+1}$ is either a proper wheel where the largest label comes first or a non-trivial proper averaged-filter such that the wheels are ordered in increasing rank and such that in each wheel the largest label comes first.

If $Z_{j+1}=W_{j+1}$ is a proper wheel outranked by $W_{j}$ or the number of disks in $W_{j}$ and $W_{j+1}$ is more than $w$, then
\[
W_{1}|\cdots|W_{j}|Z_{j+1}
\]
is in the basis of Theorem \ref{AMWthmB''}.

If $Z_{j+1}=W_{j+1}$ is a proper wheel outranking $W_{j}$ such that the number of disks in $W_{j}$ and $W_{j+1}$ is at most $w$, then, by Proposition \ref{commutewheels},
\[
W_{j}|W_{j+1}=\pm W_{j+1}|W_{j}
\]
where the sign depends on the number of disks in $W_{j}$ and the number of disks in $W_{j+1}$. 
By the induction hypothesis we can write
\[
W_{1}|\cdots|W_{j-1}|W_{j+1}
\]
as a sum of elements in the basis of Theorem \ref{AMWthmB''} that consist of the same set of wheels. 
Let $Z'_{1}|\cdots|Z'_{t}$ be a non-trivial basis element in the sum. 
By assumption every wheel in $Z'_{1}|\cdots|Z'_{r}$ outranks $W_{j}$, so 
\[
Z'_{1}|\cdots|Z'_{t}|W_{j}
\]
is in the basis of Theorem \ref{AMWthmB''}.
Therefore, we can write
\[
W_{1}|\cdots|W_{j}|Z_{j+1}
\]
as a sum of elements in the basis of Theorem \ref{AMWthmB''} arising from the same set of wheels.

If $Z_{j+1}$ is a proper averaged-filter, $AF(W_{j+1}, \dots, W_{j+m})$ such that the least wheel of the $W_{j+1}, \dots, W_{j+m}$, i.e., $W_{j+1}$, is outranked by $W_{j}$, then
\[
W_{1}|\cdots|W_{j}|Z_{j+1}=W_{1}|\cdots|W_{j}|AF(W_{j+1}, \dots, W_{j+m})
\]
is in the basis of Theorem \ref{AMWthmB''}.

If $Z_{j+1}$ is a proper averaged-filter, $AF(W_{j+1}, \dots, W_{j+m})$ such that the least wheel of the $W_{j+1}, \dots, W_{j+m}$, that is $W_{j+1}$, outranks $W_{j}$, then, by Lemma \ref{relationlem},
\begin{multline*}
W_{j}|AF(W_{j+1}, \dots, W_{j+m})=\pm AF(W_{j+1}, \dots, W_{j+m})|W_{j}\\
\pm \sum_{i=1}^{m}\pm W_{j+i}|AF(W_{j},\dots, \widehat{W_{j+i}},\dots, W_{j+m})\pm \sum_{i=1}^{m}\pm AF(W_{j},\dots, \widehat{W_{j+i}},\dots, W_{j+m})|W_{j+i}
\end{multline*}
where the signs depend on number of disks in each wheel, and we can ignore terms where the averaged-filter is trivial.

By assumption $W_{j-1}$ outranks $W_{j}$, so each 
\[
W_{1}|\cdots|W_{j-1}|AF(W_{j},\dots, \widehat{W_{j+i}},\dots, W_{j+m})|W_{j+i}
\]
is in the basis of Theorem \ref{AMWthmB''}.

By the induction hypothesis we can write
\[
W_{1}|\cdots|W_{j-1}|AF(W_{j+1}, \dots, W_{j+m})
\]
as a sum of elements in the basis of Theorem \ref{AMWthmB''} that arise from the same set of wheels. 
Let $Z'_{1}|\cdots|Z'_{t}$ be a summand. 
By assumption $W_{j}$ is outranked by every wheel $W_{1},\dots, W_{j-1}, W_{j+1}, \dots, W_{j+m}$, so
\[
Z'_{1}|\cdots|Z'_{r}|W_{j}
\]
is in the basis of Theorem \ref{AMWthmB''} and it arises from the same set of wheels.
Therefore,
\[
W_{1}|\cdots|W_{j-1}|AF(W_{j+1}, \dots, W_{j+m})|W_{j}
\]
can be written as a sum elements in the basis of Theorem \ref{AMWthmB''} arising from the same set of wheels.

Finally, by the induction hypothesis, for each $i$ we can write
\[
W_{1}|\cdots|W_{j-1}|W_{j+i}
\]
as a sum of elements in the basis of Theorem \ref{AMWthmB''} arising from the same set of wheels. 
Let $Z'_{1}|\cdots|Z'_{t}$ be a summand. 
By assumption $W_{j}$ is outranked by every wheel $W_{1}, \dots, W_{j-1}, W_{j+i}$ so
\[
Z'_{1}|\cdots|Z'_{t}|AF(W_{j},\dots, \widehat{W_{j+i}},\dots, W_{j+m})
\]
is in the basis of Theorem \ref{AMWthmB''} and whose set of wheels is $\{W_{1}, \dots, W_{j+m}\}$.

Therefore, we can write
\[
W_{1}|\cdots|W_{j}|AF(W_{j+1},\dots, W_{j+m})
\]
as sum of elements of Theorem \ref{AMWthmB''} arising from the same set of wheels. 
We have proved the induction step, and therefore the theorem, as any relation not arising from the relations of Propositions \ref{properwheels}, \ref{commutewheels}, \ref{sign of averaged-filter} and \ref{replace filter with sum of proper proper} and Lemma \ref{relationlem}, would imply the existence of a basis strictly smaller than the basis of Theorem \ref{AMWthmB''}, a contradiction.
\end{proof}

Letting $w\to \infty$ eliminates the need for averaged-filters and the relations involving them, and doing this recovers the planetary systems/tall trees presentations of $H_{*}\big(F_{\bullet}(\R^{2});\Q\big)$ as a twisted algebra; for more, see \cite{sinha2006homology, knudsen2018configuration}.

We will use the presentation for $H_{*}\big(\text{conf}(\bullet, w);\Q\big)$ given in the proof of Theorem \ref{finitepresentation} to prove our representation stability results. 
First, we recall a few more definitions concerning twisted algebras.
For a more thorough introduction to twisted commutative algebras and their connection to representation stability see \cite{sam2012introduction,  sam2017grobner, nagpal2019noetherianity}.

\begin{defn}
A twisted algebra $A$ is \emph{commutative} if the following twisted version of the commutativity axiom holds: For $x\in A_{m}$ and $y\in A_{m}$ we have $yx=\tau(xy)$, where $\tau\in S_{n+m}$ switches the first $n$ and last $m$ elements of $[n+m]$.
We say that a twisted algebra $A$ is \emph{skew-commutative} if for $x\in A_{m}$ and $y\in A_{m}$ we have $yx=(-1)^{nm}\tau(xy)$.
\end{defn}

Note that $H_{*}\big(\text{conf}(\bullet, w);\Q\big)$ is not a twisted commutative algebra (tca).

\begin{exam}
A commutative ring $R$ is a tca concentrated in degree $0$, where the symmetric group acts trivially.
\end{exam}

\begin{exam}
For $m\le \big\lfloor\frac{w}{2}\big\rfloor$, 
\[
\mathcal{L}_{m}:=\begin{cases}\text{Sym }H_{m-1}\big(\text{conf}(m, w);\Q\big)\cong \text{Sym }H_{m-1}\big(F_{m}(\R^{2});\Q\big) &\mbox{for $m$ odd}\\\bigwedge H_{m-1}\big(\text{conf}(m, w);\Q\big)\cong \bigwedge H_{m-1}\big(F_{m}(\R^{2});\Q\big)&\mbox{for $m$ even}\end{cases}
\]
is a twisted (skew-)commutative algebra over $\Q$. 
If $m=1$, this tca is isomorphic to $\text{Sym}\big(\text{Sym}^{1}(\Q)\big)$, and if $m=2$, it is isomorphic to $\bigwedge\big(\text{Sym}^{2}(\Q)\big)$, the twisted algebras of first- and second-order representation stability for configuration spaces of points in a manifold.
See \cite{sam2012introduction} and \cite{nagpal2019noetherianity} for more information on these twisted commutative algebras.
\end{exam}

\begin{defn}
A \emph{module over a tca $A$} is a graded $A$-module (in the usual sense) equipped with an action of $S_{n}$ on $M_{n}$ for all $n$ for which the multiplication map
\[
A_{n}\otimes M_{m}\to M_{n+m}
\]
is $S_{n}\times S_{m}$ equivariant.
\end{defn}

\begin{exam}
Being a module over the tca $R$ is equivalent to being an FB-module over $R$, where FB is the category of finite sets and bijections. 
In an abuse of notation, we write FB-mod for the category of modules over $R$ when $R$ is viewed as a tca. 
Every module over a t(s)ca is an FB-module.
For more on FB, see, for example, \cite{church2015fi}.
\end{exam}

\begin{exam}
If $X$ is a connected noncompact finite type manifold of dimension $d\ge 2$, then for all $i\ge 0$, $H_{i}\big(F_{\bullet}(X)\big)$ is a module over $\mathcal{L}_{1}$. 
Here multiplication of an element $Y\in H_{i}\big(F_{m}(X)\big)$ by the fundamental class $Z$ of $H_{i}\big(F_{1}(\R^{d})\big)$ is $Z\otimes Y\in H_{i}\big(F_{m+1}(X)\big)$.
See \cite[Section 1.1]{miller2019higher}.
\end{exam}

\begin{defn}
Let \emph{$H^{A}_{0}(-)_{\bullet}$} be the functor from $A$-mod to FB-mod such that
\[
H^{A}_{0}(M)_{n}:=\text{coker}\Big(\bigoplus_{n=p\sqcup q, p\neq0}A_{p}\otimes_{\text{FB}} M_{q}\to M_{n}\Big).
\]
\end{defn}

\begin{defn}
We say that an $A$-module $M$ is \emph{finitely generated} if $\bigoplus^{\infty}_{k=0}H^{A}_{0}(M)_{k}$ is finitely generated as an $R$-module. 
\end{defn}

\begin{defn}
Let $M$ be an module over the tca $A$. We say that $\deg M\le d$ if $M_{k}=0$ for all $k>d$. We say that $M$ is \emph{generated in degree $\le d$} if $\deg H^{A}_{0}(M) \le d$.
\end{defn}

We will see that the twisted algebra $H_{*}\big(\text{conf}(\bullet, w);\Q\big)$ is a finitely generated module over a number of twisted commutative algebras.

\begin{defn}
Let $A$ be a tca. We define
\[
M^{A}:\text{FB-mod}\to A\text{-mod}
\]
via the formula
\[
M^{A}(U):=A\otimes_{\text{FB}}U.
\]
The $A$-module structure on $M^{A}(U)$ is induced by the multiplication $A\otimes A\to A$. An $A$-module of the form $M^{A}(U)$ is said to be \emph{free}. 
\end{defn}

If $M$ is a finitely generated module over a tca $A$, we say that it is \emph{representation stable}. 

\begin{thm}\label{CEF,MW}
(Church--Ellenberg--Farb \cite[Theorem 6.4.3]{church2015fi} in the oriented case and Miller--Wilson \cite[Theorem 1.1]{miller2019higher} in the general case) If $X$ is a connected noncompact finite type manifold of dimension at least $2$, then $H_{k}\big(F_{\bullet}(X);R\big)$ is a finitely generated free $\text{Sym}\Big( H_{0}\big(F_{1}(\R^{d})\big)\Big)$-module generated in degree $\le 2k$ for all $k$.
\end{thm}

We will need slightly larger twisted commutative algebras than $\mathcal{L}_{m}$ to prove first- (and higher) order representation stability results for the ordered configuration space of open unit-diameter disks in the infinite strip of width $w$.

\begin{defn}
For $m\le\big\lfloor\frac{w}{2}\big\rfloor$, let
\[
\mathcal{L}^{w}_{m}(b):=\begin{cases}\text{Sym }\bigg(\Big(H_{m-1}\big(\text{conf}(m, w);\Q\big)\Big)^{\oplus b}\bigg)&\mbox{for $m$ odd}\\\bigwedge \bigg(\Big(H_{m-1}\big(\text{conf}(m, w);\Q\big)\Big)^{\oplus b}\bigg)&\mbox{for $m$ even}\end{cases}.
\]
\end{defn}

When $b=1$, these $\mathcal{L}^{w}_{m}(b)$ are isomorphic to the $\mathcal{L}_{m}$, the twisted commutative algebras that Miller--Wilson suggested could be used to formalize higher order representation stability for the homology of the ordered configuration space of points in a connected noncompact manifold of dimension $d\ge 2$, as $F_{m}(\R^{2})\simeq \text{conf}(m, w)$ and wheels on $m$ disks (anti)-commute.
See \cite[Definition 3.29]{miller2019higher}.

Finally, we state a proposition that we will need later when proving our representation stability results.

\begin{prop}\label{FIdisFId+1}
Let $M$ an $\mathcal{L}^{w}_{n}(b)$-module. Then $M$ is also an $\mathcal{L}^{w}_{n}(b+1)$-module.
\end{prop}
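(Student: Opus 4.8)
The plan is to produce a surjective homomorphism of twisted (skew-)commutative algebras $\pi\colon\mathcal{L}^{w}_{n}(b+1)\to\mathcal{L}^{w}_{n}(b)$ and then equip $M$ with its $\mathcal{L}^{w}_{n}(b+1)$-module structure by restriction of scalars along $\pi$; the underlying point is simply that $\mathcal{L}^{w}_{n}(b)$ is a retract of $\mathcal{L}^{w}_{n}(b+1)$ as a twisted algebra.

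First I would set $V:=H_{n-1}(\text{conf}(n,w);\Q)$, regarded as an FB-module concentrated in degree $n$, so that $\mathcal{L}^{w}_{n}(c)=T(V^{\oplus c})$, where $T$ denotes the free twisted commutative algebra functor $\text{Sym}$ when $n$ is odd and the free twisted skew-commutative algebra functor $\bigwedge$ when $n$ is even. The coordinate projection $p\colon V^{\oplus(b+1)}\to V^{\oplus b}$ onto the first $b$ summands is a morphism of FB-modules, and it is split by the inclusion $\iota\colon V^{\oplus b}\hookrightarrow V^{\oplus(b+1)}$ of the first $b$ summands, i.e.\ $p\circ\iota=\mathrm{id}$. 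Applying the functor $T$ yields a homomorphism of twisted algebras $\pi:=T(p)\colon\mathcal{L}^{w}_{n}(b+1)\to\mathcal{L}^{w}_{n}(b)$ with section $T(\iota)$, so $\pi$ is surjective. Under the canonical isomorphism $T(V^{\oplus(b+1)})\cong T(V^{\oplus b})\otimes T(V)$ the map $\pi$ is identified with $\mathrm{id}\otimes\varepsilon$, where $\varepsilon\colon T(V)\to\Q$ is the augmentation; this makes it transparent that $\pi$ preserves the grading and is $S_{P}$-equivariant on each graded piece $\mathcal{L}^{w}_{n}(b+1)_{P}$, since it is obtained by applying a functor to a morphism of FB-modules.

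Next I would define, for a finite set $S=P\sqcup Q$ and elements $a\in\mathcal{L}^{w}_{n}(b+1)_{P}$, $x\in M_{Q}$, the product $a\cdot x:=\pi(a)\cdot x\in M_{S}$, using the given $\mathcal{L}^{w}_{n}(b)$-action on the right. Since $\pi$ is a unital ring homomorphism, associativity and unitality of this new action follow from the corresponding properties of the $\mathcal{L}^{w}_{n}(b)$-module structure on $M$. Since $\pi$ is $S_{P}$-equivariant on the degree-$P$ piece and the original multiplication $\mathcal{L}^{w}_{n}(b)_{P}\otimes M_{Q}\to M_{S}$ is $S_{P}\times S_{Q}$-equivariant, the new multiplication $\mathcal{L}^{w}_{n}(b+1)_{P}\otimes M_{Q}\to M_{S}$ is $S_{P}\times S_{Q}$-equivariant as well. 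This exhibits $M$ as a module over the tca $\mathcal{L}^{w}_{n}(b+1)$, as desired.

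I do not expect a genuine obstacle here. The only step requiring a (routine) verification is that the free (skew-)commutative twisted algebra functor $T$ carries the split surjection $p$ of FB-modules to a split surjection of twisted algebras respecting gradings and symmetric group actions, which is immediate from functoriality of $T$ together with the identity $T(U_{1}\oplus U_{2})\cong T(U_{1})\otimes T(U_{2})$.
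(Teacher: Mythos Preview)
Your proof is correct and follows the same underlying idea as the paper: produce a twisted algebra surjection $\mathcal{L}^{w}_{n}(b+1)\to\mathcal{L}^{w}_{n}(b)$ and restrict scalars along it. The only difference is which surjection you pick. You use the projection $V^{\oplus(b+1)}\to V^{\oplus b}$ that kills the $(b+1)^{\text{th}}$ summand, so the extra generator acts by zero on $M$. The paper instead uses the map that identifies the $(b+1)^{\text{th}}$ copy with the $b^{\text{th}}$ one, i.e.\ $X_{b+1}\cdot m:=X_{b}\cdot m$. Both choices prove the proposition as stated. The paper's choice is not accidental, however: in the applications (e.g.\ the passage to $\mathcal{L}^{w}_{1}(\infty)$ and the identification $H^{\mathcal{L}^{w}_{1}(\infty)}_{0}=H^{\mathcal{L}^{w}_{1}(b+1)}_{0}$), the convention is that inserting a wheel in a slot beyond the number of barriers should coincide with inserting at the far right, not with the zero map. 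Your version still yields the same $H_{0}$ (the image of multiplication by positive-degree elements is unchanged), but if you intend to use this proposition downstream you should be aware that the paper's module structure differs from yours on the nose.
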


\begin{proof}
Identify multiplication by an element in the $(b+1)^{\text{th}}$ copy of $H_{n-1}\big(\text{conf}(n, w);\Q\big)$ by multiplication by an isomorphic element in the $b^{\text{th}}$-copy of $H_{n-1}\big(\text{conf}(n, w);\Q\big)$, i.e., if $X_{b+1}$ is in the $(b+1)^{\text{th}}$ copy of $H_{n-1}\big(\text{conf}(n, w);\Q\big)$ and $X_{b}\cong X_{b+1}$ is in the $b^{\text{th}}$-copy of $H_{n-1}\big(\text{conf}(n, w);\Q\big)$, then for all $m\in M$ set $X_{b+1}\otimes m=X_{b}\otimes m$. 
\end{proof}

\subsection{First-Order Representation Stability for the Homology of $\text{conf}(\bullet, w)$}

We prove that for all $k\ge0$ and all $w\ge 2$, the homology groups $H_{k}\big(\text{conf}(\bullet, w);\Q\big)$ have the structure of a finitely generated $\mathcal{L}^{w}_{1}(b+1)$-module, where $b$ depends only on $k$ and $w$, i.e., that the ordered configuration space of open unit-diameter disks on the infinite strip of width $w$ is first-order representation stable.
First, we recall a similar result of Alpert for $H_{k}\big(\text{conf}(\bullet, 2);\Z\big)$ and a proposition of Alpert--Manin that suggests Alpert's technique does not generalize to larger $w$.

\begin{defn}
Let $Z\in H_{k}\big(\text{conf}(n, w);\Z\big)$ be a concatenation product of wheels and filters, then an \emph{integral barrier} in $Z$ is any wheel on $w$ disks or any filter.
\end{defn}

When $w=2$ every element in $H_{k}\big(\text{conf}(n, w);\Z\big)$ has exactly $k$ integral barriers.
Alpert proved that for all $k\ge 0$, the homology groups $H_{k}\big(\text{conf}(\bullet, 2);\Z\big)$ have the structure of a finitely generated $\mathcal{L}^{w}_{1}(k+1)$-module over $\Z$ generated in degree $\le 3k$ \cite[Theorem 6.1]{alpert2020generalized}. 
Multiplication of a class $Z\in H_{k}\big(\text{conf}(n, 2);\Z\big)$ by the generator for the $i^{\text{th}}$ copy of $H_{0}\big(\text{conf}(1,2);\Z\big)$ in $\mathcal{L}^{w}_{1}(k+1)$, corresponds to inserting a wheel on $1$ disk immediately to the left of the $i^{\text{th}}$ barrier of $Z$ if $i\le k$, and to inserting a wheel on $1$ disk at the far right of $Z$ if $i=k+1$. 
See Figure \ref{conf2FId}.

\begin{figure}[h]
\centering
\captionsetup{width=.8\linewidth}
\includegraphics[width = 10cm]{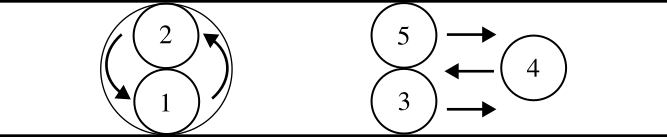}
\caption{A class in the basis for $H_{2}\big(\text{conf}(5,2);\Z\big)$ from Theorem \ref{AMthmB} consisting of a concatenation product of a proper wheel and a proper filter.
The $\mathcal{L}^{2}_{1}(3)$-module structure of $H_{2}\big(\text{conf}(\bullet,2);\Z\big)$ arises from the fact that there are 3 regions in which one could insert a wheel on $1$ disk, namely to the left of the wheel, in-between the wheel and the filter, and to the right of the filter.
}
\label{conf2FId}
\end{figure}

Alternatively, one could restate Alpert's theorem by saying that $H_{k}\big(\text{conf}(\bullet, 2);\Z\big)$ is a finitely generated FI$_{k+1}$-module over $\Z$ generated in degree $\le 3k$. 
For more on FI$_{d}$-modules and their role in representation stability, see \cite{ramos2017generalized}.

Unfortunately, Alpert's theorem does not generalize to $H_{k}\big(\text{conf}(\bullet, w);\Z\big)$ for $w\ge 3$ as the operation of counting barriers is not well defined for $w\ge 3$, as evinced by the following proposition of Alpert--Manin.

\begin{prop}\label{bad barriers}
(Alpert--Manin \cite[Proposition 8.6]{alpert2021configuration1}) There is a non-trivial class in $H_{4}(\text{conf}(8, 3);\Z)$ that can be written as a sum of products of 4 proper wheels each of which consists of 2 disks, or as a sum of products of 2 proper wheels each consisting of 2 disks and a proper filter consisting of 4 disks, or as a sum of products of 2 proper filters each consisting of 4 disks.
\end{prop}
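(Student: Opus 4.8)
The plan is to write down one explicit nonzero class in $H_{4}\big(\text{conf}(8,3);\Z\big)$ and then watch it fall apart in the three advertised ways, using only the filter-expansion identities already recorded in Section~\ref{rat hom sec} together with the Alpert--Manin basis of Theorem~\ref{AMthmB}. First I would fix four proper wheels on two disks each on the disjoint label sets $\{1,2\},\{3,4\},\{5,6\},\{7,8\}$, say $W_{1}=W(2,1)$, $W_{2}=W(4,3)$, $W_{3}=W(6,5)$, $W_{4}=W(8,7)$, chosen so that inside each of the pairs $\{W_{1},W_{2}\}$ and $\{W_{3},W_{4}\}$ the wheels are in increasing rank, and set
\[
Z:=F(W_{1},W_{2})\,|\,F(W_{3},W_{4})\ \in\ H_{4}\big(\text{conf}(8,3);\Z\big).
\]
This has dimension $2+2=4$ and uses all $8$ disks. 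Since $n_{1}+n_{2}=4>3=w$ (and likewise for $n_{3},n_{4}$), each $F(W_{i},W_{i+1})$ is a \emph{nontrivial} proper filter on two wheels with the largest label first in each wheel and the wheels in increasing rank; by contrast with Proposition~\ref{commutewheels}, the two concatenation orders of each pair are genuinely distinct, which is exactly what makes the filters nontrivial. The concatenation $F(W_{1},W_{2})|F(W_{3},W_{4})$ violates none of the conditions of Theorem~\ref{AMthmB}: there are no proper wheels lying outside a filter, and the only thing to the left of a filter here is another filter rather than a wheel. Hence $Z$ is one of the Alpert--Manin basis elements, so $Z\neq 0$; and as written it is a sum (of a single term) of products of two proper filters each consisting of $4$ disks, which is the third description.

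Next I would peel off the two filters one at a time. For wheels on two disks the definition of a filter on two wheels gives $F(W_{3},W_{4})=W_{3}|W_{4}+W_{4}|W_{3}$, since the sign is $(-1)^{(n_{3}-1)(n_{4}-1)+1}=(-1)^{2}=+1$ (with Alpert--Manin's sign convention one gets $W_{3}|W_{4}-W_{4}|W_{3}$ instead, and the argument is identical). Therefore
\[
Z=F(W_{1},W_{2})|W_{3}|W_{4}+F(W_{1},W_{2})|W_{4}|W_{3},
\]
a sum of products of one proper filter on $4$ disks with two proper wheels each consisting of $2$ disks --- the second description. Expanding the remaining filter the same way, $F(W_{1},W_{2})=W_{1}|W_{2}+W_{2}|W_{1}$, yields
\[
Z=W_{1}|W_{2}|W_{3}|W_{4}+W_{2}|W_{1}|W_{3}|W_{4}+W_{1}|W_{2}|W_{4}|W_{3}+W_{2}|W_{1}|W_{4}|W_{3},
\]
a sum of products of four proper wheels each consisting of $2$ disks --- the first description. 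This proves the proposition.

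There is no real obstacle once Theorem~\ref{AMthmB} and the filter definitions are in hand; the only thing that needs care is the bookkeeping guaranteeing $Z\neq 0$, namely checking that $F(W_{1},W_{2})|F(W_{3},W_{4})$ is honestly a basis element rather than accidentally zero or collapsing to a product of wheels --- which forces the choices $n_{1}+n_{2}>w$ and $n_{3}+n_{4}>w$ (so each filter is nontrivial) and the increasing-rank arrangement inside each pair (so the concatenation is a basis element). The point worth stressing afterward, for the sake of the discussion of integral barriers that follows, is that each term of the first description contains no wheel on $w$ disks and no filter, each term of the second contains exactly one filter, and the lone term of the third contains two filters; so the ``number of integral barriers'' one would want to attach to $Z$ is simultaneously $0$, $1$, and $2$, which is precisely the ambiguity that obstructs extending Alpert's $w=2$ argument to larger $w$.
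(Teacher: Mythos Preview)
Your proposal is correct. Note that the paper does not actually prove this proposition --- it is quoted as \cite[Proposition 8.6]{alpert2021configuration1} and stated without proof --- so there is no in-paper argument to compare against; your explicit construction $Z=F(W_{1},W_{2})\,|\,F(W_{3},W_{4})$, verified nonzero via Theorem~\ref{AMthmB} and then expanded using the definition of a filter on two wheels, is exactly the natural proof and presumably what Alpert--Manin do.
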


In short, Proposition \ref{bad barriers} proves that there is a class in $H_{4}(\text{conf}(8, 3);\Z)$ that can be alternatively written as a sum of classes with no integral barriers, as a sum of classes with $1$ integral barrier, or as a sum of classes with $2$ integral barriers.
In general, for all $w\ge 3$ and $n\gg0$, there are relations between classes with different numbers of integral barriers.
It follows that the number of integral barriers in a homology class is not well-defined, and as such one cannot extend Alpert's proof of first-order representation stability to $\text{conf}(\bullet, w)$ for $w\ge 3$.
While integral versions of most the relations in the proof of Theorem \ref{finitepresentation} hold for $H_{*}\big(\text{conf}(\bullet, w);\Z\big)$, Proposition \ref{bad barriers} shows they are not sufficient for a presentation as the integral version of the relation of Proposition \ref{sign of averaged-filter} does not hold, and that relations that do no preserve the number of barriers are needed for a presentation.
We will show that with rational coefficients there is a better notion of a barrier, and that the number of these rational barriers are preserved by the relations in proof of Theorem \ref{finitepresentation}.

\begin{defn}
%
Let $Z\in H_{k}\big(\text{conf}(n, w);\Q\big)$ be a concatenation product of wheels and non-trivial averaged-filters, then a \emph{rational $d$-barrier} in $Z$ is any wheel on $w+1-d$ disks or any averaged-filter (on at least 3 wheels).
\end{defn}

Though we have defined all averaged-filters to be rational $d$-barriers for all $d$, there are some rational $d$-barriers that commute with wheels on $d$ disks, i.e., some averaged-filters do not block the motion of certain small wheels.
For example, if $w=5$ and $d=1$, consider the averaged-filter on $3$ wheels each consisting of $2$ disks.
One can check that in this case the the relation of Lemma \ref{relationlem} degenerates into the commutation relation of the wheel on $1$ disk and this averaged-filter.
While we could be more precise, we include such filters as rational barriers in order to describe a more unified theory.
One can check that this does not change the relevant twisted algebra for representation stability, though it might make our stable range slightly worse.

We write $B(n, k, w, d, m)$ for the subspace of elements in $H_{k}\big(\text{conf}(n, w);\Q\big)$ that can be written as a linear combination of elements with $m$ rational $d$-barriers.

We will show that for $d\le \big\lfloor\frac{w+1}{3}\big\rfloor$ the subspaces $B(n, k, w, d, m)$ and $B(n, k, w, d, m')$ give a direct sum decomposition of $H_{k}\big(\text{conf}(n,w);\Q\big)$.
This will allow us to prove that $\text{conf}(n,w)$ satisfies notions of  $1^{\text{st}}$- through $\big\lfloor\frac{w+1}{3}\big\rfloor^{\text{th}}$-order representation stability.
We begin by proving that the relations in the presentation for $H_{*}\big(\text{conf}(\bullet, w);\Q\big)$ given in the proof of Theorem \ref{finitepresentation} preserve the number of rational $d$-barriers for $d\le\big\lfloor\frac{w+1}{3}\big\rfloor$.
As we will see the upper bound $\big\lfloor\frac{w+1}{3}\big\rfloor$ is a direct consequence of sizes of the wheels in the relation in Lemma \ref{relationlem}.

\begin{prop}\label{relations preserve number of barriers}
For all $w\ge 2$ and $d\le \big\lfloor\frac{w+1}{3}\big\rfloor$, the relations given in the proof of Theorem \ref{finitepresentation} preserve the number of rational $d$-barriers.
\end{prop}

\begin{proof}
The only relation we need to check is that of Lemma \ref{relationlem} as the other relations preserve the size of the wheels and number of wheels and averaged-filters.

The relation of Lemma \ref{relationlem} allows us to rewrite the product of a wheel and averaged-filter as the sum of products of a wheel in the averaged-filter and an averaged-filter on the remaining wheels:
\begin{multline*}
\sum_{k=1}^{m+1} \pm W(i_{k,1}, \dots, i_{k,n_{k}})|AF\big(W(i_{1,1}, \dots, i_{1,n_{1}}),\dots, \widehat{W(i_{k,1}, \dots, i_{k,n_{k}})}, \dots, W(i_{m+1,1}, \dots, i_{m+1,n_{m+1}})\big)\\
=\sum_{k=1}^{m+1} \pm AF\big(W(i_{1,1}, \dots, i_{1,n_{1}}),\dots, \widehat{W(i_{k,1}, \dots, i_{k,n_{k}})}, \dots, W(i_{m+1,1}, \dots, i_{m+1,n_{m+1}})\big)|W(i_{k,1}, \dots, i_{k,n_{k}}).
\end{multline*}

Without loss of generality we may assume that $n_{1}\ge n_{2}\ge \cdots\ge n_{m}\ge n_{m+1}$.
We wish to maximize $n_{1}$ such $AF\big(W(i_{2,1}, \dots, i_{2,n_{2}}), \dots, W(i_{m+1,1}, \dots, i_{m+1,n_{m+1}})\big)$ is non-trivial.
Additionally, we want for some $k$ that $n_{k}<n_{1}$, as then
\[
W(i_{1,1}, \dots, i_{1,n_{1}})|AF\big(W(i_{2,1}, \dots, i_{2,n_{2}}), \dots, W(i_{m+1,1}, \dots, i_{m+1,n_{m+1}})\big)
\]
and
\[
W(i_{k,1}, \dots, i_{k,n_{k}})|AF\big(W(i_{1,1}, \dots, i_{1,n_{1}}), \dots,\widehat{W(i_{k,1}, \dots, i_{k,n_{k}})}, \dots, W(i_{m+1,1}, \dots, i_{m+1,n_{m+1}})\big)
\]
would have different numbers of rational $w-n_{k}+1$ barriers.
Note, by our assumptions all the averaged-filters are non-trivial.

By the assumptions of Lemma \ref{relationlem} we have that
\[
n_{1}+\cdots+n_{m-1}\le w,
\]
and
\[
n_{2}+\cdots+n_{m+1}>w.
\]
Together these imply
\[
w-n_{m}-n_{m+1}<w-n_{1},
\]
or
\[
n_{1}<n_{m}+n_{m+1}.
\]
In order to maximize $n_{1}$, we need $n_{m}+n_{m+1}$ as large as possible.
Without loss of generality we may assume that $n_{k}$ is the first $n_{i}$ such that $n_{i}<n_{1}$.
To maximize $n_{m}+n_{m+1}$ we must have $n_{m+1}=n_{m}=\cdots=n_{k}$.
It follows that
\[
n_{2}+\cdots+n_{k-1}+n_{k}+\cdots+n_{m+1}=(k-2)n_{1}+(m+2-k)n_{m+1}>w.
\]
Since $n_{m+1}<n_{1}$, it follows that $n_{m+1}$ is maximized when $k=2$ and $m=3$, i.e.,
\[
n_{2}+n_{3}+n_{4}=3n_{4}>w.
\]
Since
\[
n_{1}+n_{2}=n_{1}+n_{4}\le w,
\]
and 
\[
n_{1}<n_{3}+n_{4}=2n_{4},
\]
we see that $n_{1}$ is maximized when 
\[
3n_{4}=w+1
\]
and 
\[
n_{1}=2n_{4}-1,
\]
implying 
\[
n_{1}=\frac{2w+2}{3}-1=\frac{2w-1}{3}.
\]
In this case $W(i_{1,1}, \dots, i_{1,n_{1}})$ is a rational $w-\frac{2w-1}{3}+1=\frac{w+4}{3}$ barrier, but the other wheels are not.
Since this is the extremal case we see that the relation of Lemma \ref{relationlem} preserves the number of rational $d$-barriers for $d\le\big\lfloor\frac{w+1}{3}\big\rfloor$, but not necessarily if $d=\big\lfloor\frac{w+4}{3}\big\rfloor$.
\end{proof}

Note that this bound is not sharp.
If $w=4$, directly checking the relation of Lemma \ref{relationlem}, we see that the number of rational $2>\big\lfloor\frac{4+1}{3}\big\rfloor$-barriers is preserved. 

\begin{prop}\label{decompose into d barrier subspaces}
For all $w\ge 2$, $0\le d\le \big\lfloor\frac{w+1}{3}\big\rfloor$, and $k, n\ge 0$
\[
H_{k}\big(\text{conf}(n, w);\Q\big)=\bigoplus_{m} B(n, k, w, d, m).
\]
Moreover, $B(n, k, w, d, m)$ is $S_{n}$-invariant.
\end{prop}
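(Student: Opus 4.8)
The plan is to read the decomposition off the basis of Theorem \ref{AMWthmB''} and then verify that it does not depend on the choice of representatives. Every element of that basis is a concatenation product $Z_1|\cdots|Z_r$ of proper wheels and nontrivial proper averaged-filters on at least three wheels, so it carries a well-defined number of rational $d$-barriers, $\beta_d(Z_1|\cdots|Z_r):=\#\{i:\ Z_i\text{ is an averaged-filter, or a wheel on }\ge w+1-d\text{ disks}\}$. Partitioning the basis by the value of $\beta_d$ and letting $\tilde B_m$ be the span of the block with $\beta_d=m$, one gets $H_k\big(\text{conf}(n,w);\Q\big)=\bigoplus_m\tilde B_m$ as $\Q$-vector spaces, simply because a vector space is the direct sum of the spans of the blocks of any partition of a basis. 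Thus the content of the proposition is that $\tilde B_m=B(n,k,w,d,m)$ for every $m$. The inclusion $\tilde B_m\subseteq B(n,k,w,d,m)$ is immediate, so what must be shown is that \emph{every} concatenation product $P$ of wheels and nontrivial averaged-filters with exactly $m$ rational $d$-barriers already lies in $\tilde B_m$.

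To prove that, I would reduce $P$ to the basis of Theorem \ref{AMWthmB''} by exactly the procedure used in the proofs of Proposition \ref{replacefilter} and Theorem \ref{finitepresentation}, and check that each step preserves $\beta_d$. First apply Propositions \ref{properwheels}, \ref{sign of averaged-filter}, and \ref{replace filter with sum of proper proper} to rewrite $P$ as a $\Q$-linear combination of concatenation products of proper wheels and proper averaged-filters with the wheels in increasing rank; these relations change neither the sizes of the wheels nor the number of averaged-filters, so $\beta_d$ is unchanged. Then put each summand into basis form using Proposition \ref{commutewheels} and Lemma \ref{relationlem}. Proposition \ref{commutewheels} only interchanges two adjacent wheels of total size at most $w$, and since $d\le\lfloor w/2\rfloor$ forces $2(w+1-d)>w$, at most one of those wheels can be a $d$-barrier, so the swap preserves $\beta_d$.

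The delicate point, and the main obstacle, is Lemma \ref{relationlem}, whose relation appears to mix terms with different $d$-barrier counts: a term $W_k|AF(\ldots,\widehat{W_k},\ldots)$ has $\beta_d$ equal to $1$ or $2$ according to whether the extracted wheel $W_k$ has fewer than, or at least, $w+1-d$ disks. The resolution I would use is that the ``$\beta_d=2$'' terms are spurious. In every application of Lemma \ref{relationlem} in the reduction the averaged-filter involved is on $m\ge 3$ wheels, so every pair of the $m+1$ wheels appearing in the lemma has total size at most $w$; hence if the extracted wheel $W_k$ has at least $w+1-d$ disks, then every other wheel of $AF(\ldots,\widehat{W_k},\ldots)$ has at most $w-(w+1-d)=d-1\le\lfloor w/2\rfloor-1$ disks, so $W_k$ fits past each of them. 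An induction on the number of wheels, using Proposition \ref{commutewheels} at the base case and Lemma \ref{relationlem} itself at the inductive step, then shows that such a $W_k$ slides through the averaged-filter, i.e.\ $W_k|AF(\ldots,\widehat{W_k},\ldots)=\pm AF(\ldots,\widehat{W_k},\ldots)|W_k$. Since Lemma \ref{relationlem} contains both of these terms, they combine to zero, and after discarding all such pairs the relation is homogeneous of $d$-barrier count $1$; the same sliding observation handles the case in which the term being rewritten is itself a $d$-barrier wheel immediately left of an averaged-filter, rewriting it by a degree-preserving commutation. Carrying these observations through the reduction shows $P\in\tilde B_m$, hence $\tilde B_m=B(n,k,w,d,m)$ and $H_k\big(\text{conf}(n,w);\Q\big)=\bigoplus_m B(n,k,w,d,m)$.

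For $S_n$-invariance: for $\sigma\in S_n$ and a concatenation product $P=Z_1|\cdots|Z_r$ one has $\sigma P=\sigma(Z_1)|\cdots|\sigma(Z_r)$, where $\sigma(Z_i)$ denotes $Z_i$ with each label replaced by its $\sigma$-image. If $Z_i$ is a wheel then $\sigma(Z_i)$ is a wheel on the same number of disks, and if $Z_i$ is a nontrivial averaged-filter then, by Propositions \ref{properwheels} and \ref{snactonavfilter}, $\sigma(Z_i)$ is a sum of nontrivial averaged-filters whose wheels have the same sizes. Hence $\sigma P$ is again a $\Q$-linear combination of concatenation products with $\beta_d$ equal to $\beta_d(P)$, so $\sigma$ maps $B(n,k,w,d,m)$ into itself, completing the proof.
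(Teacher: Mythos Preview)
Your approach is essentially the paper's: both arguments use the finite presentation of Theorem~\ref{finitepresentation} and check that each of the five families of relations preserves the rational $d$-barrier count, which immediately gives the $S_n$-invariant direct-sum decomposition. The paper simply asserts that the relations ``preserve the number of proper wheels on at least $\frac{w}{2}+1$ disks not in an averaged-filter as well as the number of proper averaged-filters''; you go further and correctly isolate Lemma~\ref{relationlem} as the one place where this is not immediate, and your cancellation argument (the ``big'' extracted wheel slides through the averaged-filter of small wheels) is the right idea---though the induction you sketch would need to be stated a bit more carefully, since the new averaged-filters produced at each step still have $m$ wheels, so one should induct on the number of \emph{small} wheels in the filter rather than on the total number of wheels.
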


\begin{proof}
Let $Z\in B(n, k,w, d, m)$, i.e., $Z$ is a non-trivial sum of elements with $m$ rational $d$-barriers for $d\le \big\lfloor\frac{w+1}{3}\big\rfloor$.
If we could write $Z$ as sum of elements in $\bigoplus_{i\neq m}B(n, k,w, d, i)$ this would imply that there is a relation that does not preserve the number of rational $d$-barriers.
By Proposition \ref{relations preserve number of barriers} all the relations in the proof of Theorem \ref{finitepresentation} preserve the number of rational $d$-barriers.
This implies that this new relation could not be written in terms of the relations of the proof of Theorem \ref{finitepresentation}, contradicting the fact that they suffice for a presentation.
Thus 
\[
H_{k}\big(\text{conf}(n, w);\Q\big)=\bigoplus_{m} B(n, k, w, d, m).
\]
Since the $S_{n}$-action is accounted for in the relations in the proof of Theorem \ref{finitepresentation}, we see that $B(n, k, w, d, m)$ is $S_{n}$-invariant.
\end{proof}

It follows from Proposition \ref{decompose into d barrier subspaces} that counting the number of rational $d$-barriers is a well-defined operation for $d\le \big\lfloor\frac{w+1}{3}\big\rfloor$.
Additionally, this proposition proves that we can take the basis elements for $H_{k}\big(\text{conf}(n,w);\Q\big)$ from Theorem \ref{AMWthmB''} with $m$ rational $d$-barriers to be a basis for $B(n, k, w, d, m)$, as each basis element has a fixed number of barriers. 
We use this to prove that the ordered configuration space of unit diameter disks in the infinite strip satisfies a notion of first-order representation stability.

\begin{thm}\label{first order stability}
For all $k\ge 0$ and $w\ge 2$, the homology groups $H_{k}\big(\text{conf}(\bullet, w);\Q\big)$ have the structure of an $\mathcal{L}^{w}_{1}(b+1)$-module, where $b=\big\lfloor \frac{k}{w-1}\big\rfloor$, finitely generated in degree $\le2k$ for $w\ge 3$, and degree $\le3k$ for $w=2$.
\end{thm}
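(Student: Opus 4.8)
The plan is to construct the $\mathcal{L}^w_1(b+1)$-module structure by hand from the basis of Theorem \ref{AMWthmB''} and the presentation of Theorem \ref{finitepresentation}, and then read off finite generation from a homological-degree count. Since $m=1$ is odd, $\mathcal{L}^w_1(b+1)=\mathrm{Sym}\bigl((H_0(\mathrm{conf}(1,w);\Q))^{\oplus(b+1)}\bigr)$ is the twisted commutative polynomial algebra on generators $e_1,\dots,e_{b+1}$, each equal to the fundamental class of a single disk (homological degree $0$, disk-degree $1$); a module over it is the same as an $\mathrm{FI}_{b+1}$-module. First I would fix $d=1$ in Proposition \ref{decompose into d barrier subspaces}, so that $H_k(\mathrm{conf}(n,w);\Q)=\bigoplus_m B(n,k,w,1,m)$ with the Theorem \ref{AMWthmB''} basis elements having exactly $m$ rational $1$-barriers forming a basis of $B(n,k,w,1,m)$. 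A rational $1$-barrier is a proper wheel on $w$ disks, of homological degree $w-1$, or a nontrivial proper averaged-filter, which has more than $w$ disks and hence homological degree $\ge w-1$; since homological degrees add along a concatenation product, every basis element of $H_k$ has at most $\lfloor k/(w-1)\rfloor=b$ barriers, so $m$ ranges over $0\le m\le b$.

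Next I would define the action of $e_i$, for $1\le i\le b+1$, on a basis element $Z=Z_1|\cdots|Z_r$ with exactly $m$ barriers: insert the one-disk wheel $W(n+1)$ immediately to the left of the $i$-th barrier if $i\le m$, and at the far right if $i>m$, and then re-express in the basis of Theorem \ref{AMWthmB''}; extend $\Q$-linearly. Three things must be checked. (i) Since inserting a one-disk wheel neither creates, destroys, nor reorders rational $1$-barriers, the new wheel lands in a well-defined gap between consecutive barriers, and inside that gap it is a wheel on a single disk, hence commutes (with sign $+1$, by Proposition \ref{commutewheels}, as $1+\ell\le w$ for any non-barrier neighbour of size $\ell\le w-1$) with every non-barrier wheel there; so its precise position within the gap is immaterial, and only the relations of Propositions \ref{properwheels} and \ref{commutewheels} are needed to re-express the result in the basis. (ii) Two inserted one-disk wheels commute with sign $+1$, matching the twisted commutativity of $\mathrm{Sym}$, so the $e_i$ give a well-defined action of the whole polynomial algebra, $S_j$-equivariantly in the inserted disks. (iii) For $\sigma\in S_n$ one writes $\sigma(Z)=\sigma(Z_1)|\cdots|\sigma(Z_r)$ (twisted-algebra structure), rewrites each $\sigma(Z_j)$ internally by Propositions \ref{properwheels}, \ref{sign of averaged-filter}, \ref{replace filter with sum of proper proper} without altering any disk-counts, and then puts the concatenation into the basis using only Proposition \ref{commutewheels} (available exactly when two adjacent wheels have total size $\le w$, hence never moving a barrier past anything) and Lemma \ref{relationlem} (in the relevant case all wheels involved have size $\le w-1$, so again no barrier is created, destroyed, or slid past another barrier); consequently "number of barriers to the left" is invariant and $e_i\cdot\sigma(Z)=\sigma(e_i\cdot Z)$ under $S_n\subset S_{n+1}$, which with (i)–(ii) gives the module structure.

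Finally, for finite generation I would show that every basis element of $H_k(\mathrm{conf}(n,w);\Q)$ with $n$ large contains a one-disk wheel and then "delete and relabel" it. Every generator of $H_*(\mathrm{conf}(\bullet,w);\Q)$ other than a one-disk wheel uses at most twice its homological degree in disks when $w\ge3$: a wheel on $2\le\ell\le w$ disks has ratio $\ell/(\ell-1)\le 2$, and an averaged-filter on $w<N\le\tfrac32 w$ disks has ratio $N/(N-2)\le(w+1)/(w-1)\le 2$; for $w=2$ the only averaged-filters have $3$ disks and degree $1$, so the ratio is at most $3$. Summing over the non-one-disk factors of a basis element $Z\in H_k$, the number of disks they occupy is at most $2k$ (resp.\ $3k$), so for $n>2k$ (resp.\ $n>3k$) at least one factor of $Z$ is a one-disk wheel, say on the disk labelled $\ell$, lying in the gap just after the $j$-th barrier for some $j\le b$; deleting it and relabelling $\{1,\dots,n\}\setminus\{\ell\}$ monotonically exhibits $Z$ as an $S_n$-translate of $e_{j+1}$ applied to a basis element of $H_k(\mathrm{conf}(n-1,w);\Q)$. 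Hence $H^{\mathcal{L}^w_1(b+1)}_0\bigl(H_k(\mathrm{conf}(\bullet,w);\Q)\bigr)$ is concentrated in disk-degrees $\le 2k$ (resp.\ $\le 3k$) and finite-dimensional over $\Q$, which is finite generation in the stated degree. The main obstacle is part (iii): one must verify carefully that no rewriting rule in the presentation of Theorem \ref{finitepresentation} ever transposes two rational $1$-barriers or moves a non-barrier wheel across a barrier, so that barrier-counting is a genuine invariant and the $e_i$ are honest, pairwise-commuting, $S_n$-equivariant operators; the rest is bookkeeping with the Theorem \ref{AMWthmB''} basis and the disk-per-degree estimate above.
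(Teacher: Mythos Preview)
Your proposal is correct and follows essentially the same approach as the paper: define the $\mathcal{L}^w_1(b+1)$-action by inserting a one-disk wheel into the gap before the $i$-th rational $1$-barrier (or at the far right), invoke Proposition \ref{decompose into d barrier subspaces} and Proposition \ref{commutewheels} for well-definedness and equivariance, and bound the number of barriers and the generation degree by the same disk-per-homological-degree count on the generators from Theorem \ref{finitepresentation}. Your treatment of point (iii) is more explicit than the paper's, but it is exactly the content of Proposition \ref{decompose into d barrier subspaces}, so nothing new is needed.
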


\begin{proof}
We begin by proving that $H_{k}\big(\text{conf}(\bullet, w);\Q\big)$ is an $\mathcal{L}^{w}_{1}(b+1)$-module, where $b=\big\lfloor \frac{k}{w-1}\big\rfloor$.
Then we show that this $\mathcal{L}^{w}_{1}(b+1)$-module is finitely generated through a quick counting argument.

Let $X_{i}\in \mathcal{L}^{w}_{1}(b+1)$ denote the fundamental class of the $i^{\text{th}}$ copy of $H_{0}\big(\text{conf}(1,w);\Q\big)$ in $\mathcal{L}^{w}_{1}(b+1)$, and let $Z$ be an element in the basis for $H_{k}\big(\text{conf}(n, w);\Q\big)$ from Theorem \ref{AMWthmB''} such that $Z$ has $m$ rational $1$-barriers.
For $i\le m$, set $X_{i}\otimes Z$ to be element of $H_{k}\big(\text{conf}(n+1, w);\Q\big)$ arising from inserting a wheel on $1$ disk labeled $n+1$ immediately to the left of the $i^{\text{th}}$ rational $1$-barrier of $Z$, and for $i>m$, set $X_{i}\otimes Z$ to be the element of $H_{k}\big(\text{conf}(n+1, w);\Q\big)$ arising from $Z$ by placing a wheel on $1$ disk labeled $n+1$ to the right of $Z$. 
By Proposition \ref{decompose into d barrier subspaces} this is well defined. 

Since $w\ge 2$, Proposition \ref{commutewheels} proves that two wheels each consisting of $1$ disk commute.
Therefore, the multiplication map
\[
\big(\mathcal{L}^{w}_{1}(b+1)\big)_{m}\otimes H_{k}(\text{conf}(n, w);\Q)\to H_{k}(\text{conf}(n+m, w);\Q)
\] 
is $S_{m}\times S_{n}$ invariant, proving that $H_{k}\big(\text{conf}(\bullet, w);\Q\big)$ has the structure of an $\mathcal{L}^{w}_{1}(b+1)$-module. 

To see that as an $\mathcal{L}^{w}_{1}(b+1)$-module $H_{k}\big(\text{conf}(\bullet, w);\Q\big)$ is finitely generated in degree $\le 2k$ for $w\ge 3$ and degree $\le 3k$ for $w=2$, note that there can be at most $b= \big\lfloor\frac{k}{w-1}\big\rfloor$ rational $1$-barriers in a basis element of $H_{k}(\text{conf}(\bullet, w);\Q)$. 
This follows from the fact that the $1$-barriers using the fewest disks are the wheels consisting of $w$ disks, and these wheels have homological degree $w-1$. 
It follows that an element of $H_{k}\big(\text{conf}(\bullet, w);\Q\big)$ can have at most $\big\lfloor\frac{k}{w-1}\big\rfloor$ such $1$-barriers. 

In every element of the basis of Theorem \ref{AMWthmB''}, the only place wheels on $1$ disk not in an averaged-filter are allowed is immediately to the left of a $1$-barrier or at the far right. 
Therefore, an element $Z$ in $H_{k}\big(\text{conf}(n,w);\Q\big)$ that can be written as product of proper wheels and proper averaged-filters on at least $3$ wheels, such that one of the wheels consists of a single disk arises from a non-trivial multiplication of $\big(\mathcal{L}^{w}_{1}(b+1)\big)_{1}$ on an element of $H_{k}\big(\text{conf}(n-1,w);\Q\big)$.

The generators of $H_{k}\big(\text{conf}(\bullet, w);\Q\big)$ that have the worst ratio of number of disks to homological degree are the wheels on $2$ disks for $w\ge 3$, and averaged-filters on $3$ wheels, each consisting of $1$ disk for $w=2$.
Each of these generators have homological dimension $1$.
It follows that if $w\ge 3$ and $n>2k$, then every element of $H_{k}\big(\text{conf}(n, w);\Q\big)$ can be written as a sum of elements each of which has at least $1$ wheel on $1$ disk outside of an averaged-filter.
The same holds for $w=2$ and $n>3k$.
Therefore, 
\[
H^{\mathcal{L}^{w}_{1}(b+1)}_{0}\Big(H_{k}\big(\text{conf}(\bullet, w);\Q\big)\Big)_{n}=0
\]
for all $n>2k$ when $w\ge 3$ and $n>3k$ when $w=2$, bounding the generation degree of $H_{k}\big(\text{conf}(\bullet, w);\Q\big)$ as an $\mathcal{L}^{w}_{1}(b+1)$-module.
\end{proof}

It follows that Theorem \ref{first order stability} proves that the ordered configuration space of unit-diameter disks in the infinite strip of width $w$ is first-order representation stable.
Additionally, the proof shows that $b=\big\lfloor\frac{k}{w-1}\big\rfloor$ is a direct consequence of the fact that the most efficient rational $1$-barriers are the wheels on $w$ disks, and these wheels have homological degree $w-1$.

Theorem \ref{first order stability} is equivalent to the fact that for $k\ge 0$ and $w\ge 2$, the homology groups $H_{k}\big(\text{conf}(\bullet, w);\Q\big)$ have the structure of an FI$_{b+1}$-module, where $b=\big\lfloor \frac{k}{w-1}\big\rfloor$, finitely generated in degree $2k$ for $w\ge 3$, and degree $3k$ for $w=2$, i.e., Theorem \ref{first order stab intro}.
Thus, we have generalized Theorem 6.1 of Alpert in the case of rational homology and all widths $w\ge 2$ \cite[Theorem 6.1]{alpert2021configuration1}. 
By Ramos's results on FI$_{d}$-modules, it follows that one can use Theorem \ref{first order stability} to get upper bounds for Betti numbers of $H_{k}\big(\text{conf}(n, w);\Q\big)$ for $n>2k$ if $w\ge 3$ and $n>3k$ if $w=2$ if one knows the Betti numbers for $H_{k}\big(\text{conf}(m, w);\Q\big)$ for all  $m\le2k$ if $w\ge 3$ and $m\le3k$ if $w=2$.
Additionally, one gets upper bounds for the multiplicities of irreducible $S_{n}$-representations in a decomposition of $H_{k}\big(\text{conf}(n, w);\Q\big)$ for $n>2k$ if $w\ge 3$ and $n>3k$ if $w=2$ if one knows how to decompose $H_{k}\big(\text{conf}(m, w);\Q\big)$ into irreducible $S_{m}$-representations for all $m\le2k$ if $w\ge 3$ and $m\le3k$ if $w=2$.

In the next section, we will show that the ordered configuration space of unit-diameter disks in the infinite strip of width $w$ exhibits notions of higher order representation stability.

\subsection{Higher Order Stability}
In this section, we prove that the ordered configuration space of open unit-diameter disks in the infinite strip of width $w$ exhibits reasonable notions of higher order representation stability. 
In the previous section, we proved that $H_{k}\big(\text{conf}(\bullet, w);\Q\big)$ is a finitely generated module over the twisted commutative algebra $\mathcal{L}^{w}_{1}(b+1)$ where $b=\big\lfloor\frac{k}{w-1}\big\rfloor$. 
Note as $k$ varies, $\mathcal{L}^{w}_{1}(b+1)$ varies. 
This is different from the case of first-order representation stability for configuration spaces of points in connected noncompact manifolds, where the twisted algebra of first-order stability is always isomorphic to $\mathcal{L}_{1}$, regardless of the value of $k$. 
In their statement of second-order representation stability for the ordered configuration space of points in a connected noncompact manifold, Miller--Wilson used the zeroth $\mathcal{L}_{1}$-homology of the homology of configuration space. 
Unfortunately, we cannot do the same for finite $b$.
However, by taking a limit we are able to remedy this without losing any information.

Recall that Proposition \ref{FIdisFId+1} proves that any module $M$ over $\mathcal{L}^{w}_{1}(b)$ can be made a module over $\mathcal{L}^{w}_{1}(b+1)$ by setting $X_{b+1}\otimes m=X_{b}\otimes m$ for all $m\in M$, where $X_{j}$ is the fundamental class of the $j^{\text{th}}$ copy of $H_{0}\big(\text{conf}(1,w);\Q\big)$ in $\mathcal{L}^{w}_{1}(b)$.
By setting $X_{i}\otimes m=X_{b+1}\otimes m$ for all $i>b+1$ and all $m\in M$ and where $b=\big\lfloor\frac{k}{w-1}\big\rfloor$, it follows that for all $k$ and $w$ we have that $H_{k}\big(\text{conf}(\bullet, w);\Q\big)$ is an $\mathcal{L}^{w}_{1}(\infty)$-module, and
\[
H^{\mathcal{L}^{w}_{1}(\infty)}_{0}\Big(H_{k}\big(\text{conf}(\bullet, w);\Q\big)\Big)=H^{\mathcal{L}^{w}_{1}(b+1)}_{0}\Big(H_{k}\big(\text{conf}(\bullet, w);\Q\big)\Big).
\]
As a corollary to Theorem \ref{first order stability}, for all $k\ge 0$ and $w\ge 2$, the homology groups $H_{k}\big(\text{conf}(\bullet, w);\Q\big)$ have the structure of an $\mathcal{L}^{w}_{1}(\infty)$-module, finitely generated in degree $2k$ for $w\ge 3$ and degree $3k$ for $w=2$.

It follows that the twisted algebra $H_{*}\big(\text{conf}(\bullet, w);\Q\big)$ is an $\mathcal{L}^{w}_{1}(\infty)$-module, though it is not finitely generated as we need generators in each degree $k$. 
Still, we can make sense of the the zeroth $\mathcal{L}^{w}_{1}(\infty)$-homology of $H_{*}(\text{conf}(\bullet, w);\Q)$.
Doing so will allow us prove higher order representation stability results.

\begin{prop}\label{zeroth Winf(1) homology presentation}
For all $w\ge 2$, 
\[
H^{\mathcal{L}^{w}_{1}(\infty)}_{0}\Big(H_{*}\big(\text{conf}(\bullet, w);\Q\big)\Big)
\]
is a finitely presented twisted algebra.
Moreover, there is a presentation for $H^{\mathcal{L}^{w}_{1}(\infty)}_{0}\Big(H_{*}\big(\text{conf}(\bullet, w);\Q\big)\Big)$ with the same generators and relations as $H_{*}\big(\text{conf}(\bullet, w);\Q\big)$ has in the proof of Theorem \ref{finitepresentation}, along the extra relation that wheels on $1$ disk not in an averaged-filter are trivial. 
\end{prop}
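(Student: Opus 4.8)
Write $M:=H_{*}\big(\text{conf}(\bullet,w);\Q\big)$ and $A:=\mathcal{L}^{w}_{1}(\infty)$, and recall that $H^{A}_{0}(M)$ is by definition the quotient of $M$ by the $A$-submodule
\[
A_{\ge 1}\cdot M\;:=\;\mathrm{im}\Big(\bigoplus_{S=P\sqcup Q,\ P\neq\emptyset}A_{P}\otimes M_{Q}\ \longrightarrow\ M_{S}\Big).
\]
The plan is to identify $A_{\ge 1}\cdot M$ with the two-sided ideal $J$ of the twisted algebra $M$ generated by the fundamental class of $H_{0}\big(\text{conf}(1,w);\Q\big)$, i.e.\ by a wheel on one disk (this space is one-dimensional, so $J$ is a homogeneous, $S_{n}$-invariant two-sided ideal generated by a single element in degree $1$); once this identification is established the proposition is essentially formal.

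First I would prove $A_{\ge 1}\cdot M\subseteq J$. The classes $X_{i}$ spanning $A$ in degree $1$ generate $A$, so $A_{\ge 1}\cdot M$ is spanned by elements $X_{i}\otimes z$ with $z\in M$; and for a basis element $b$ of Theorem \ref{AMWthmB''}, the module structure built in the proof of Theorem \ref{first order stability} gives $X_{i}\otimes b=b'\,|\,W\,|\,b''$, where $b=b'\,|\,b''$ is the splitting of $b$ at the insertion site and $W$ is a wheel on one disk. Hence $X_{i}\otimes b\in J$, and by linearity $X_{i}\otimes M\subseteq J$, so $A_{\ge 1}\cdot M\subseteq J$.

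For the reverse inclusion, the proof of Theorem \ref{first order stability} already shows that every basis element of Theorem \ref{AMWthmB''} containing a wheel on one disk not lying inside an averaged-filter arises as $X_{i}\otimes(\cdots)$ and hence lies in $A_{\ge 1}\cdot M$; writing $N$ for the span of all such basis elements, we get $N\subseteq A_{\ge 1}\cdot M\subseteq J$. It therefore suffices to prove that $N$ is a two-sided ideal of $M$: since a wheel on one disk is itself such a basis element, $N$ would then be an ideal containing a generator of $J$, so $N\supseteq J$, and the chain $N\subseteq A_{\ge 1}\cdot M\subseteq J\subseteq N$ would collapse, giving $A_{\ge 1}\cdot M=J$. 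To verify that $N$ is an ideal I would take a basis element $b$ with a standalone wheel on one disk, concatenate it on either side with an arbitrary basis element, and rewrite the product into the basis of Theorem \ref{AMWthmB''} by the procedure in the proof of Theorem \ref{finitepresentation}, namely successive applications of Proposition \ref{commutewheels} and Lemma \ref{relationlem}. Proposition \ref{commutewheels} manifestly preserves the presence of a standalone one-disk wheel, so the only danger is Lemma \ref{relationlem}, which can absorb a standalone one-disk wheel into an averaged-filter; the point is that the right-hand side of Lemma \ref{relationlem} simultaneously ejects wheels out of that averaged-filter, so that some standalone one-disk wheel survives in every term (in the term with the wheel itself as the outermost factor it is the original wheel, and the remaining terms are controlled using condition (3) of Theorem \ref{AMWthmB''}, which rules out a standalone one-disk wheel sitting immediately to the left of an averaged-filter whose least wheel is larger). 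Carrying out this bookkeeping — tracking a standalone one-disk wheel through the rewriting and checking that the procedure never destroys all of them simultaneously — is the main obstacle; everything else is formal.

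Granting $A_{\ge 1}\cdot M=J$, we obtain $H^{A}_{0}(M)=M/J$, which, being the quotient of the twisted algebra $M$ by a homogeneous $S_{n}$-invariant two-sided ideal, is again a twisted algebra. A presentation of $M/J$ is obtained from any presentation of $M$ by adjoining, for each generator of $J$, the relation setting it to zero; since $J$ is generated by the one-dimensional space of wheels on one disk, the finite presentation of Theorem \ref{finitepresentation} together with the single relation that a wheel on one disk not contained in an averaged-filter is trivial presents $H^{A}_{0}(M)$. As we have added one relation, in degree $1\le 2w$, to a finite presentation, this presentation is finite, so $H^{A}_{0}(M)$ is a finitely presented twisted algebra, as claimed.
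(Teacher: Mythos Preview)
Your overall strategy of identifying $A_{\ge 1}\cdot M$ with the two-sided ideal $J$ generated by a wheel on one disk is correct and is what the paper does. Where your argument fails is the claim that $N$, the span of basis elements of Theorem~\ref{AMWthmB''} containing a standalone one-disk wheel, is a two-sided ideal. It is not, and this is not merely a bookkeeping difficulty: take $w=4$ and proper wheels $W_1,W_2,W_3$ each on two disks, so that $AF(W_1,W_2,W_3)$ is a nontrivial averaged-filter and a basis element. A one-disk wheel $W$ lies in $N$, but $W\cdot AF(W_1,W_2,W_3)=W\,|\,AF(W_1,W_2,W_3)$ does not: rewriting it via Lemma~\ref{relationlem} produces terms such as $W_1\,|\,AF(W,W_2,W_3)$ and $AF(W,W_2,W_3)\,|\,W_1$, which are basis elements in which the one-disk wheel $W$ has been absorbed into the averaged-filter while the ejected wheel $W_1$ has two disks. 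So your assertion that ``some standalone one-disk wheel survives in every term'' is false, and condition~(3) of Theorem~\ref{AMWthmB''} does not prevent this---it is precisely the reason Lemma~\ref{relationlem} is invoked. Since $N$ is not a right ideal, $J\not\subseteq N$, and the chain $N\subseteq A_{\ge 1}\cdot M\subseteq J\subseteq N$ you aim for cannot hold.

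The paper avoids $N$ and the rewriting procedure entirely. It uses only Proposition~\ref{commutewheels}: for any basis element $e$, the product $W\,|\,e$ equals $e$ with $W$ slid rightward to sit immediately to the left of the first barrier (since a one-disk wheel commutes past every wheel on at most $w-1$ disks), which is exactly $X_1\otimes e$; by linearity $W\,|\,z=X_1\otimes z$ for all $z\in M$, and similarly $z\,|\,W=X_{\text{large}}\otimes z$. This identifies left and right concatenation by a one-disk wheel with the module action of some $X_i$, and the paper concludes directly that quotienting by $A_{\ge 1}\cdot M$ amounts to imposing the relation that one-disk wheels (outside averaged-filters) are trivial. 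Lemma~\ref{relationlem} is never needed.
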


\begin{proof}
First, note that $H^{\mathcal{L}^{w}_{1}(\infty)}_{0}\Big(H_{*}\big(\text{conf}(\bullet, w);\Q\big)\Big)$ is a twisted algebra as it is an associative graded unital $\Q$-algebra with an $S_{n}$-action on $H^{\mathcal{L}^{w}_{1}(\infty)}_{0}\Big(H_{*}\big(\text{conf}(n, w);\Q\big)\Big)$ for all $n$, and multiplication of an element in $H^{\mathcal{L}^{w}_{1}(\infty)}_{0}\Big(H_{k}\big(\text{conf}(n, w);\Q\big)\Big)$ by an element in $H^{\mathcal{L}^{w}_{1}(\infty)}_{0}\Big(H_{j}\big(\text{conf}(m, w);\Q\big)\Big)$ is $S_{n}\times S_{m}$ invariant.

Let $X_{i}\in \big(\mathcal{L}^{w}_{1}(\infty)\big)_{1}$ be the fundamental class of the $i^{\text{th}}$ copy of $H_{0}\big(\text{conf}(1, w);\Q\big)$ in $\mathcal{L}^{w}_{1}(\infty)$. 
Multiplication by $X_{i}$ corresponds inserting a wheel on $1$ disk immediately to the left of the $i^{\text{th}}$ $1$-barrier for small $i$ and at the far right for large $i$. 
By applications of the (anti)-commutation relations of Proposition \ref{commutewheels} it follows that multiplication by a wheel on $1$ disk in $H_{*}\big(\text{conf}(\bullet, w);\Q\big)$ corresponds to multiplication by $X_{i}$ for some $i$.
Therefore, $H^{\mathcal{L}^{w}_{1}(\infty)}_{0}\Big(H_{*}\big(\text{conf}(\bullet, w);\Q\big)\Big)$ is a twisted algebra with the same generators and relations as $H_{*}\big(\text{conf}(\bullet, w);\Q\big)$ has in Theorem \ref{finitepresentation} along with the extra relation that wheels on $1$ disk are trivial.
\end{proof}

We prove two results that will help us prove that the ordered configuration space of unit diameter disks in the infinite strip of width $w$ exhibits notions of $2^{\text{nd}}-$ through $\big\lfloor\frac{w+1}{3}\big\rfloor^{\text{th}}$-order representation stability.

\begin{prop}\label{L infinity module}
For $0\le d\le \big\lfloor\frac{w+1}{3}\big\rfloor$, the homology groups $H_{*}\big(\text{conf}(\bullet, w);\Q\big)$ have the structure of an $\mathcal{L}^{w}_{d}(\infty)$-module.
\end{prop}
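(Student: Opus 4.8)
For $d=0$ the algebra $\mathcal{L}^w_0(\infty)=\operatorname{Sym}(0)=\Q$ acts on every graded vector space, and the case $d=1$ is the corollary to Theorem \ref{first order stability} recorded just above; so assume $2\le d\le\lfloor w/2\rfloor$, whence $2d\le w$. The plan is to run the proof of Theorem \ref{first order stability} with ``wheel on one disk'' replaced by ``wheel on $d$ disks'' and ``rational $1$-barrier'' replaced by ``rational $d$-barrier''. Write $V:=H_{d-1}\big(\text{conf}(d,w);\Q\big)\cong H_{d-1}\big(F_d(\R^2);\Q\big)$ for the $S_d$-representation, placed in twisted degree $d$, that freely generates $\mathcal{L}^w_d(\infty)$ (this being $\operatorname{Sym}(V^{\oplus\infty})$ for $d$ odd and $\bigwedge(V^{\oplus\infty})$ for $d$ even), and let $X^{(i)}$ denote its $i$-th copy inside $V^{\oplus\infty}$. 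By Proposition \ref{decompose into d barrier subspaces} each $H_k\big(\text{conf}(n,w);\Q\big)$ splits $S_n$-equivariantly as $\bigoplus_m B(n,k,w,d,m)$, and the basis elements of Theorem \ref{AMWthmB''} having exactly $m$ rational $d$-barriers form a basis of $B(n,k,w,d,m)$.

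I would define the action as follows. Realise $v\in V$ as a class in $H_{d-1}\big(\text{conf}(\{n+1,\dots,n+d\},w);\Q\big)$, i.e.\ a $\Q$-combination of proper wheels on $\{n+1,\dots,n+d\}$ with largest label first; for a basis element $Z$ of $B(n,k,w,d,m)$ let $X^{(i)}(v)\cdot Z$ be the class obtained by concatenating $v$ immediately to the left of the $i$-th rational $d$-barrier of $Z$ when $i\le m$ and at the far right of $Z$ when $i>m$, and extend $\Q$-bilinearly in $v$ and $Z$ and then over all of $\mathcal{L}^w_d(\infty)$. Linear independence of these bases (Proposition \ref{decompose into d barrier subspaces}) makes this well defined. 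Because $2d\le w$ gives $d<w+1-d$, a wheel on $d$ disks is never itself a rational $d$-barrier, so the insertion preserves the number of rational $d$-barriers and keeps the barrier indexing stable under iteration; in particular $X^{(i)}(v)\cdot Z\in B(n+d,k+d-1,w,d,m)$. Equivalently, since any class in a fixed $H_k$ carries only finitely many rational $d$-barriers, one may first extract an $\mathcal{L}^w_d(b+1)$-module structure on $H_k\big(\text{conf}(\bullet,w);\Q\big)$ for suitable $b$ and then pass to $\mathcal{L}^w_d(\infty)$ via Proposition \ref{FIdisFId+1}, exactly as in the $d=1$ case.

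It remains to verify the module axioms. The $S_d\times S_n$-equivariance of the action maps holds because the $S_d$-action on the $d$ new labels is the representation structure of $V=H_{d-1}(F_d(\R^2);\Q)$ (concatenation is equivariant and the relevant inclusions of configuration spaces are natural) and because rational $d$-barriers and their left-to-right order are $S_n$-equivariantly defined (Proposition \ref{decompose into d barrier subspaces}). Compatibility with the multiplication of $\mathcal{L}^w_d(\infty)$ reduces, since that algebra is the free (skew-)commutative twisted algebra on $V^{\oplus\infty}$ and $\operatorname{char}\Q=0$, to the single geometric input that two wheels on $d$ disks satisfy $W_1|W_2=(-1)^{(d-1)^2}W_2|W_1=(-1)^{d-1}W_2|W_1$ in $H_*\big(\text{conf}(\bullet,w);\Q\big)$, which is Proposition \ref{commutewheels}, valid precisely because $2d\le w$. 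This sign $(-1)^{d-1}$ is exactly the one built into $\mathcal{L}^w_d(\infty)$ ($\operatorname{Sym}$ for $d$ odd, $\bigwedge$ for $d$ even), so inserting $d$-wheels at barriers respects all of the defining relations and the assignment extends to a genuine $\mathcal{L}^w_d(\infty)$-module structure. The step that really needs care is this last bookkeeping: tracking how iterated insertions at the various barrier-slots interact and confirming that the parity sign of Proposition \ref{commutewheels} matches the symmetric-versus-exterior dichotomy of $\mathcal{L}^w_d(\infty)$; and it is here, through the inequality $2d\le w$, that the hypothesis $d\le\lfloor w/2\rfloor$ is essential, since it guarantees both that Proposition \ref{commutewheels} applies to pairs of $d$-wheels and that a $d$-wheel is never a $d$-barrier.
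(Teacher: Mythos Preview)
Your proof is correct and takes essentially the same approach as the paper: decompose $H_k\big(\text{conf}(n,w);\Q\big)$ into the $d$-barrier pieces $B(n,k,w,d,m)$ via Proposition \ref{decompose into d barrier subspaces}, and define the $\mathcal{L}^w_d(\infty)$-action by inserting a wheel on $d$ disks immediately to the left of the $i$-th rational $d$-barrier (or at the far right for large $i$). If anything, your write-up is more complete than the paper's own proof, which asserts $S_d\times S_n$-equivariance and stops, whereas you also explicitly verify that the (skew)-commutativity of $\mathcal{L}^w_d(\infty)$ is matched by Proposition \ref{commutewheels} under the hypothesis $2d\le w$.
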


\begin{proof}
For fixed $d$, we can use Proposition \ref{decompose into d barrier subspaces} to decompose $H_{k}\big(\text{conf}(n, w);\Q\big)$:
\[
H_{k}\big(\text{conf}(n, w);\Q\big)=\bigoplus_{m} B(n, k, w, d, m).
\]

By Proposition \ref{properwheels} the $S_{d}$-span of the proper wheel $W(d, d-1, \dots, 1)$ generates $H_{d-1}\big(\text{conf}(d, w);\Q\big)$. 
Let $X_{i}\in \mathcal{L}^{w}_{d}(\infty)$ correspond to the wheel $W(d, d-1, \dots, 1)$ in the $i^{\text{th}}$-copy of $H_{d-1}\big(\text{conf}(d, w);\Q\big)$ in $\mathcal{L}^{w}_{d}(\infty)$, and let multiplication of $Z\in B(n, k, w, d, m)\subseteq H_{k}\big(\text{conf}(n, w);\Q\big)$ by $X_{i}$ correspond to placing a wheel $W(n+d, n+d-1, \dots, n+1)$ to the immediate left of the $i^{\text{th}}$ rational $d$-barrier of $z$ if $i\le m$, and to the far right if $i>m$. 
Since wheels on $d\le \big\lfloor\frac{w+1}{3}\big\rfloor$ disks (anti)-commute, thsis yields an $S_{m}\times S_{n}$-equivariant map
\[
\mathcal{L}^{w}_{m}(\infty)_{m}\otimes H_{k}\big(\text{conf}(n, w);\Q\big)\to H_{k+m-1}\big(\text{conf}(n+m, w);\Q\big),
\]
and this hold for all $n, k\ge 0$. 
Since the $X_{i}$ generate $\mathcal{L}^{w}_{d}(\infty)$ it follows that this defines an $\mathcal{L}^{w}_{d}(\infty)$-module structure on $H_{*}\big(\text{conf}(\bullet, w);\Q\big)$.
\end{proof}

Next, we shows that this $\mathcal{L}^{w}_{d}(\infty)$-module structure extends to a series of quotients of $H_{*}\big(\text{conf}(\bullet, w);\Q\big)$.
We will use this fact to give higher order representation stability structures.

\begin{lem}\label{higherorderpresentation}
For $0\le d\le \big\lfloor\frac{w+1}{3}\big\rfloor$, the quotient 
\[
H^{\mathcal{L}^{w}_{d}(\infty)}_{0}\Bigg(\cdots \bigg(H^{\mathcal{L}^{w}_{1}(\infty)}_{0}\Big(H_{*}\big(\text{conf}(\bullet, w);\Q\big)\Big)\bigg)\cdots\Bigg)
\]
is a finitely presented twisted algebra.
Moreover, it has a finite presentation consisting of the generators and the relations given for 
\[
H_{*}\big(\text{conf}(\bullet, w);\Q\big)
\]
in the proof of Theorem \ref{finitepresentation} along with the relations that any proper wheel on $d$ disks or fewer that is not in an averaged-filter is trivial. 
\end{lem}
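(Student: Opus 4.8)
The plan is to induct on $d$. For $d=0$ there is nothing to do: the ``iterated quotient'' is $H_*\big(\text{conf}(\bullet,w);\Q\big)$ itself, finitely presented by Theorem~\ref{finitepresentation}, and there are no wheels on $0$ disks to kill; and $d=1$ is Proposition~\ref{zeroth Winf(1) homology presentation}. So fix $d$ with $2\le d\le\lfloor w/2\rfloor$, assume the statement for $d-1$, and write $A_{d-1}$ for the iterated quotient $H^{\mathcal{L}^w_{d-1}(\infty)}_0\big(\cdots\big)$ through level $d-1$; by the inductive hypothesis $A_{d-1}$ is the twisted algebra presented by the generators and relations of Theorem~\ref{finitepresentation} together with the relations that every proper wheel on at most $d-1$ disks not lying in an averaged-filter is trivial.

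First I would descend the $\mathcal{L}^w_d(\infty)$-module structure of Proposition~\ref{L infinity module} from $H_*\big(\text{conf}(\bullet,w);\Q\big)$ to $A_{d-1}$. The generator $X_i$ of $\mathcal{L}^w_d(\infty)$ acts by inserting a wheel on $d$ disks immediately to the left of the $i$-th rational $d$-barrier (or at the far right). Since $d\le\lfloor w/2\rfloor$ forces $d+(d-1)\le w$, Proposition~\ref{commutewheels} shows that a wheel on $d$ disks commutes freely with any wheel on at most $d-1$ disks, so inserting a $d$-disk wheel commutes with killing the ``small'' wheels that cut $A_{d-1}$ out of $H_*\big(\text{conf}(\bullet,w);\Q\big)$; hence the action passes to $A_{d-1}$. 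Arguing as in Proposition~\ref{zeroth Winf(1) homology presentation}, a $d$-disk wheel also commutes (Proposition~\ref{commutewheels}) past any concatenation factor that is not itself a rational $d$-barrier, so multiplication in $A_{d-1}$ on either side by a proper wheel on $d$ disks agrees with the action of some $X_i$; consequently the submodule $N:=\mathcal{L}^w_d(\infty)_{\ge1}\cdot A_{d-1}$ is precisely the two-sided ideal of the twisted algebra $A_{d-1}$ generated by the proper wheels on $d$ disks that occur outside averaged-filters.

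It then follows that
\[
H^{\mathcal{L}^w_d(\infty)}_0\Big(H^{\mathcal{L}^w_{d-1}(\infty)}_0\big(\cdots\big)\Big)=A_{d-1}/N,
\]
the quotient of $A_{d-1}$ by the ideal generated by proper $d$-disk wheels occurring outside averaged-filters. Adjoining to the presentation of $A_{d-1}$ the relations ``every proper wheel on $d$ disks not in an averaged-filter is trivial'' therefore gives a presentation of this quotient, and combining these with the inductive relations yields exactly the asserted presentation. Finiteness is immediate: $A_{d-1}$ is finitely presented by hypothesis, and since the $S_d$-orbit of the single proper wheel $W(d,d-1,\dots,1)$ already spans $H_{d-1}\big(\text{conf}(d,w);\Q\big)$ by Proposition~\ref{properwheels}, only finitely many new relations are adjoined, all of homological degree $d-1\le\lfloor w/2\rfloor-1$, hence within the degree bounds of Theorem~\ref{finitepresentation}.

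The step I expect to be the main obstacle is the verification that $N$ really is the full two-sided ideal generated by the $d$-disk wheels, equivalently that the one-sided $\mathcal{L}^w_d(\infty)$-action together with left and right multiplication in $A_{d-1}$ reaches every such product. This amounts to a barrier-by-barrier analysis of how a freshly inserted (or relabeled) $d$-disk wheel can be transported through a Theorem~\ref{AMWthmB''}-type normal form: it commutes past every non-barrier factor via Proposition~\ref{commutewheels} and is reorganized against averaged-filters and against wheels on at least $w+1-d$ disks via Lemma~\ref{relationlem}, so that any occurrence of a $d$-disk wheel outside an averaged-filter reduces to the image of some $X_i$. When Lemma~\ref{relationlem} absorbs such a wheel into an averaged-filter, the resulting averaged-filters are still on at least $3$ wheels, hence among the generators, and the identities among the surviving classes produced this way are consequences of the relations already listed; so no relations beyond the stated ones are forced, and the argument closes exactly as at the end of the proof of Theorem~\ref{finitepresentation}.
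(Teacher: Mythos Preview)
Your proposal is correct and follows essentially the same inductive route as the paper: both arguments reduce to showing that in $A_{d-1}$ every occurrence of a proper $d$-disk wheel outside an averaged-filter can be commuted (via Proposition~\ref{commutewheels}, using $d+(w-d)\le w$) to sit adjacent to a rational $d$-barrier or at the far right, so that the $\mathcal{L}^w_d(\infty)$-action and multiplication by $d$-disk wheels coincide. Your final paragraph invoking Lemma~\ref{relationlem} to reorganize $d$-disk wheels against averaged-filters is more than the paper uses---the paper simply observes that since every smaller wheel is already trivial in $A_{d-1}$ and a $d$-disk wheel commutes past every non-barrier wheel, there is nothing further to check---but this extra care does no harm.
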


\begin{proof}
The case $d=0$ is Theorem \ref{finitepresentation}, and the case $d=1$ is Proposition \ref{zeroth Winf(1) homology presentation}.
Assume that we have proved the lemma through the case for $d\le j$, for some $j\le \big\lfloor\frac{w+1}{3}\big\rfloor-1$. 
We will show that the Lemma holds in the case $d=j+1$.

Note that 
\[
H^{\mathcal{L}^{w}_{j+1}(\infty)}_{0}\Bigg(\cdots \bigg(H^{\mathcal{L}^{w}_{1}(\infty)}_{0}\Big(H_{*}\big(\text{conf}(\bullet, w);\Q\big)\Big)\bigg)\cdots\Bigg)
\] 
satisfies the definition of a twisted algebra for the same reasons 
\[
H^{\mathcal{L}^{w}_{1}(\infty)}_{0}\Big(H_{*}\big(\text{conf}(\bullet, w);\Q\big)\Big)
\]
does.

Since we have assumed that the Lemma holds in the case $d=j$, we have that
\[
H^{\mathcal{L}^{w}_{j}(\infty)}_{0}\Bigg(\cdots \bigg(H^{\mathcal{L}^{w}_{1}(\infty)}_{0}\Big(H_{*}\big(\text{conf}(\bullet, w);\Q\big)\Big)\bigg)\cdots\Bigg)
\]
is a finitely presented twisted algebra, and it has a presentation consisting of the same generators and relations as the presentation for $H_{*}\big(\text{conf}(\bullet, w);\Q\big)$ found in the proof of Theorem \ref{finitepresentation} along with the relations that all proper wheels on at most $j$ disks not inside an averaged-filter are trivial. 
Note, these extra relations preserve the number of rational $j+1$-barriers as only wheels on more than $n-j>\big\lfloor\frac{w+1}{3}\big\rfloor$ disks can act as rational $j+1$-barriers. 
Thus, the image of $B(n, k, w, j+1, m)\subset H_{k}\big(\text{conf}(n, w);\Q\big)$ under the quotient is well-defined for all $n$, $k$, and $m$, and multiplication by $X_{i}\in \mathcal{L}^{w}_{j+1}(\infty)$, where $x$ is a proper wheel on $j+1$ disks in the $i^{\text{th}}$-copy of $H_{j}\big(\text{conf}(j+1, w);\Q\big)$ is well-defined. 

Every element of $H^{\mathcal{L}^{w}_{j}(\infty)}_{0}\Bigg(\cdots \bigg(H^{\mathcal{L}^{w}_{1}(\infty)}_{0}\Big(H_{*}\big(\text{conf}(\bullet, w);\Q\big)\Big)\bigg)\cdots\Bigg)$ can be written as sum of products of generators such that any wheel on $j+1$ disks is  part of a sequence of consecutive wheels on $j+1$ disks immediately to the right of a rational $j+1$-barrier or at the far right of the product. 
This follows since the relations given in the presentation in the proof of Theorem \ref{finitepresentation} allow us to (anti)-commute all smaller wheels as $j+1\le \big\lfloor\frac{w+1}{3}\big\rfloor$ and all smaller wheels are trivial by the induction hypothesis. 
Therefore, multiplication by $X_{i}\in\mathcal{L}^{w}_{j+1}(\infty)$ corresponds to multiplication in $H^{\mathcal{L}^{w}_{j}(\infty)}_{0}\Bigg(\cdots \bigg(H^{\mathcal{L}^{w}_{1}(\infty)}_{0}\Big(H_{*}\big(\text{conf}(\bullet, w);\Q\big)\Big)\bigg)\cdots\Bigg)$ by a wheel on $j+1$ disks, and the converse holds as well.

Thus, the quotient
\[
H^{\mathcal{L}^{w}_{j+1}(\infty)}_{0}\Bigg(\cdots \bigg(H^{\mathcal{L}^{w}_{1}(\infty)}_{0}\Big(H_{*}\big(\text{conf}(\bullet, w);\Q\big)\Big)\bigg)\cdots\Bigg)
\]
has a finite presentation consisting of the same generators and relations we gave for $H_{*}\big(\text{conf}(\bullet, w);\Q\big)$ in the proof of Theorem \ref{finitepresentation}, along with the extra relations that all wheels on no more than $j+1$ disks are trivial.
\end{proof}

We use Lemma \ref{higherorderpresentation} to prove that the ordered configuration space of unit diameter disks in the infinite strip of width $w$ exhibits notions of $2^{\text{nd}}$- through $\big\lfloor\frac{w+1}{3}\big\rfloor^{\text{th}}$-order representation stability.
First, we prove an intermediate proposition.

\begin{prop}\label{finitely generated infinity module}
Let
\[
\mathcal{W}^{d}_{i}(\bullet):=H^{\mathcal{L}^{w}_{d-1}(\infty)}_{0}\Bigg(\cdots \bigg(H^{\mathcal{L}^{w}_{1}(\infty)}_{0}\Big(H_{\frac{(d-1)|\bullet|+i}{d}}\big(\text{conf}(\bullet, w);\Q\big)\Big)\bigg)\cdots\Bigg).
\]
For $1\le d\le\big\lfloor\frac{w+1}{3}\big\rfloor$, the sequence $\mathcal{W}^{d}_{i}(\bullet)$ has the structure of a finitely generated $\mathcal{L}^{w}_{d}(\infty)$-module generated in degree $(d+1)i$ for $2d+1\le w$, and degree $(d+1)i+d$ for $2d=w$.
\end{prop}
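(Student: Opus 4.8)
The plan is to combine the finite presentation of the iterated quotient with the explicit basis of Theorem~\ref{AMWthmB''}, and then run a counting argument parallel to the one in the proof of Theorem~\ref{first order stability}. By Lemma~\ref{higherorderpresentation}, the twisted algebra $\mathcal{A}:=H^{\mathcal{L}^{w}_{d-1}(\infty)}_{0}\big(\cdots H^{\mathcal{L}^{w}_{1}(\infty)}_{0}\big(H_{*}(\text{conf}(\bullet,w);\Q)\big)\cdots\big)$ is finitely presented with the generators and relations of Theorem~\ref{finitepresentation} together with the relations killing every proper wheel on at most $d-1$ disks that is not inside an averaged-filter; in particular the basis of Theorem~\ref{AMWthmB''}, with the killed wheels discarded, is a basis of $\mathcal{A}$, and intersecting it with the homological degree $\tfrac{(d-1)n+i}{d}$ part of the degree-$n$ piece gives a basis of $\mathcal{W}^{d}_{i}(n)$. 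First I would check that $\mathcal{W}^{d}_{i}(\bullet)$ is an $\mathcal{L}^{w}_{d}(\infty)$-module: by Proposition~\ref{L infinity module} (together with the fact, recorded in the proof of Lemma~\ref{higherorderpresentation}, that the rational $d$-barriers are preserved by the defining relations), $\mathcal{A}$ carries an $\mathcal{L}^{w}_{d}(\infty)$-action given by inserting a proper wheel on $d$ disks immediately to the left of the $j^{\text{th}}$ rational $d$-barrier, or at the far right. Since such an insertion sends $(n,k)\mapsto(n+d,k+d-1)$ and therefore fixes the quantity $i=n-d(n-k)$, this action restricts to an $\mathcal{L}^{w}_{d}(\infty)$-module structure on each $\mathcal{W}^{d}_{i}(\bullet)$.

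The heart of the argument is to identify the $\mathcal{L}^{w}_{d}(\infty)$-module generators of $\mathcal{W}^{d}_{i}(\bullet)$. I claim that a basis element $Z=Z_{1}|\cdots|Z_{r}$ lies in the image of $\mathcal{L}^{w}_{d}(\infty)_{d}\otimes\mathcal{W}^{d}_{i}(\bullet)$ if and only if some $Z_{\ell}$ is a proper wheel on exactly $d$ disks not contained in an averaged-filter. One direction is immediate from the definition of the action. For the other, observe that a proper wheel on $d$ disks has the smallest possible rank among the building blocks of $\mathcal{A}$, so by condition (2) of Theorem~\ref{AMWthmB''} it cannot be immediately to the left of a proper wheel of strictly larger rank unless that wheel has at least $w+1-d$ disks, i.e.\ is itself a rational $d$-barrier. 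Hence a maximal run of proper wheels on $d$ disks is followed either by a rational $d$-barrier (a proper averaged-filter or a large proper wheel) or by nothing at all, and in either case $Z$ is obtained from a basis element with $d$ fewer disks by the insertion operation, after the same commuting-and-re-expanding bookkeeping---using Proposition~\ref{commutewheels} (valid because $2d\le w$) and Lemma~\ref{relationlem}---carried out in the proofs of Theorems~\ref{finitepresentation} and~\ref{first order stability}. Thus a basis element is an $\mathcal{L}^{w}_{d}(\infty)$-module generator precisely when it contains no proper wheel on exactly $d$ disks outside an averaged-filter.

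It then remains to bound the degree of such generators by a count, from which finite generation follows since $\mathcal{W}^{d}_{i}(n)$ is finite-dimensional for every $n$. Let $Z$ be a generator with $n$ disks in homological degree $k$, so $i=n-d(n-k)$, and group the blocks of $Z$ as $a$ proper wheels that are rational $d$-barriers (at least $w+1-d$ disks each), $c$ proper wheels that are not (between $d+1$ and $w-d$ disks each, there being no wheel on exactly $d$ disks outside an averaged-filter), and $b$ nontrivial proper averaged-filters (more than $w$ disks each). Counting disks against homological degree gives $n-k=a+c+2b$, and computing the contribution of each block to $i$ gives $i\ge(w+1-2d)(a+b)+c$, since a barrier wheel on $m$ disks contributes $m-d\ge w+1-2d$, an averaged-filter on $N$ disks contributes $N-2d\ge w+1-2d$, and a non-barrier wheel contributes at least $1$. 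When $w\ge 2d+1$ this yields $a+c+2b\le i$, hence $n=i+d(a+c+2b)\le(d+1)i$; the case $w=2d$, where necessarily $c=0$, is handled by the same estimate with closer attention to the contributions of the smallest averaged-filters and gives the bound $(d+1)i+d$. I expect the main obstacle to be the second step rather than this final count: pinning down exactly which basis elements are module generators, and verifying that possessing a single proper wheel on $d$ disks forces membership in the image, requires the full strength of the rank conditions in Theorem~\ref{AMWthmB''} and of the relations in Lemma~\ref{relationlem}, and that is where the argument is most delicate.
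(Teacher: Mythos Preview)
Your approach is essentially the same as the paper's: both invoke Lemma~\ref{higherorderpresentation} to identify the $\mathcal{L}^{w}_{d}(\infty)$-module generators of $\mathcal{W}^{d}_{i}(\bullet)$ as those basis elements containing no wheel on $\le d$ disks outside an averaged-filter, and then bound the generation degree by finding the ``least efficient'' remaining building block. Your explicit inequality $i\ge(w+1-2d)(a+b)+c$ makes the counting step for $w\ge 2d+1$ more transparent than the paper's informal efficiency argument, while both treatments handle the boundary case $w=2d$ only sketchily.
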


We start with $H_{\frac{(d-1)|\bullet|+i}{d}}\big(\text{conf}(\bullet, w);\Q\big)$ as we wish to increase homological degree by $d-1$ while adding $d$ new disks as adding a wheel on $d$ disks does exactly that.
The shifting by $i$ ensures that every homology group appears in some $\mathcal{W}^{d}_{i}(\bullet)$.

\begin{proof}
By Lemma \ref{higherorderpresentation}, 
\[
H^{\mathcal{L}^{w}_{d-1}(\infty)}_{0}\Bigg(\cdots \bigg(H^{\mathcal{L}^{w}_{1}(\infty)}_{0}\Big(H_{*}\big(\text{conf}(\bullet, w);\Q\big)\Big)\bigg)\cdots\Bigg)
\]
is an $\mathcal{L}^{w}_{d}(\infty)$-module. 
Since multiplication by a generator of $\mathcal{L}^{w}_{d}(\infty)$ increases homological degree by $d-1$ and the number of disks by $d$, it follows that
\[
H^{\mathcal{L}^{w}_{d-1}(\infty)}_{0}\Bigg(\cdots \bigg(H^{\mathcal{L}^{w}_{1}(\infty)}_{0}\Big(H_{*}\big(\text{conf}(\bullet, w);\Q\big)\Big)\bigg)\cdots\Bigg)
\]
decomposes as a direct sum of $\mathcal{L}^{w}_{d}(\infty)$-modules of the form $\mathcal{W}^{d}_{i}(\bullet)$, i.e., 
\begin{multline*}
H^{\mathcal{L}^{w}_{d-1}(\infty)}_{0}\Bigg(\cdots \bigg(H^{\mathcal{L}^{w}_{1}(\infty)}_{0}\Big(H_{*}\big(\text{conf}(\bullet, w);\Q\big)\Big)\bigg)\cdots\Bigg)\\
=\bigoplus_{i=0}^{\infty}H^{\mathcal{L}^{w}_{d-1}(\infty)}_{0}\Bigg(\cdots \bigg(H^{\mathcal{L}^{w}_{1}(\infty)}_{0}\Big(H_{\frac{(d-1)|\bullet|+i}{d}}\big(\text{conf}(\bullet, w);\Q\big)\Big)\bigg)\cdots\Bigg).
\end{multline*}

We count the maximal number of rational $d$-barriers in $\mathcal{W}^{d}_{i}(\bullet)$ as this will give us a bound on the generation degree. 
The maximal number of rational $d$-barriers in $\mathcal{W}^{d}_{i}(\bullet)$ is equal to the largest non-trivial term of the sequence $H^{\mathcal{L}^{w}_{d}(\infty)}_{0}\big(\mathcal{W}^{d}_{i}(\bullet)\big)$.
By Lemma \ref{higherorderpresentation}, every element in $H^{\mathcal{L}^{w}_{d}(\infty)}_{0}\big(\mathcal{W}^{d}_{i}(\bullet)\big)$ can be written as a sum of products of wheels on at least $d+1$ disks, and averaged-filters.

When $2d+1\le w$, the least efficient of these generators in terms of homological degree to the number of disks are the wheels on $d+1$ disks. 
When $|\bullet|=(d+1)i$, the quotient $H^{\mathcal{L}^{w}_{d-1}(\infty)}_{0}\big(\mathcal{W}^{d}_{i}(\bullet)\big)\neq0$, as the product of $i$ wheels on $d+1$ disks is non-trivial.
For $|\bullet|>(d+1)i$, any element in $H_{\frac{(d)|\bullet|+i}{d}}\big(\text{conf}(\bullet, w);\Q\big)$ can be written as a sum elements, each of which includes a wheel on fewer than $d+1$ disks not inside an averaged-filter. 
Such products are trivial in $H^{\mathcal{L}^{w}_{d}(\infty)}_{0}\big(\mathcal{W}^{d}_{i}(\bullet)\big)$ by Lemma \ref{higherorderpresentation}. 
Therefore, $\mathcal{W}^{d}_{i}(\bullet)$ is generated in degree $\le (d+1)i$ as an $\mathcal{L}^{w}_{d}(\infty)$-module. 
Since each term of $\mathcal{W}^{d}_{i}(\bullet)$ is finite dimensional, it follows that $\mathcal{W}^{d}_{i}(\bullet)$ is finitely generated as an $\mathcal{L}^{w}_{d}(\infty)$-module.

If $2d=w$, the least efficient of the generators in terms of homological degree to the number of disks are the proper averaged-filters on $w+1$ disks. 
When $|\bullet|=(d+1)i+d$, the quotient $H^{\mathcal{L}^{w}_{d}(\infty)}_{0}\big(\mathcal{W}^{d}_{i}(\bullet)\big)\neq0$, as the product of $i$ averaged-filters each on $w+1$ disks is non-trivial. 
For $|\bullet|>(d+1)i+d$, it follows that any product of elements in $H_{\frac{(d-1)|\bullet|+i}{d}}\big(\text{conf}(\bullet, w);\Q\big)$ can be written as a sum of elements each of which includes a wheel on fewer than $d+1$ disks. 
Such products are trivial in $H^{\mathcal{L}^{w}_{d}(\infty)}_{0}\big(\mathcal{W}^{d}_{i}(\bullet)\big)$ by Lemma \ref{higherorderpresentation}. 
It follows that $\mathcal{W}^{d}_{i}(\bullet)$ is generated in degree $\le (d+1)i+d$ as an $\mathcal{L}^{w}_{d}(\infty)$-module. 
Since each term of $\mathcal{W}^{d}_{i}(\bullet)$ is finite dimensional, $\mathcal{W}^{d}_{i}(\bullet)$ is finitely generated as an $\mathcal{L}^{w}_{d}(\infty)$-module.
\end{proof}

We use Proposition \ref{finitely generated infinity module} to prove the following theorem which shows that the ordered configuration space of open unit diameter disks in the infinite strip of width $w$ exhibits reasonable notions of $1^{\text{st}}-$ through $\big\lfloor\frac{w+1}{3}\big\rfloor^{\text{th}}$-order representation stability.

\begin{thm}\label{higherorderstability}
Let
\[
\mathcal{W}^{d}_{i}(\bullet):=H^{\mathcal{L}_{d-1}(\infty)}_{0}\Bigg(\cdots \bigg(H^{\mathcal{L}_{1}(\infty)}_{0}\Big(H_{\frac{(d-1)|\bullet|+i}{d}}\big(\text{conf}(\bullet, w);\Q\big)\Big)\bigg)\cdots\Bigg).
\]
For $1\le d\le\big\lfloor\frac{w+1}{3}\big\rfloor$,
\[
\mathcal{W}^{d}_{i}(\bullet)
\]
is a finitely generated $\mathcal{L}_{d}(b+1)$-module, where $b=\big\lfloor\frac{di}{w-d}\big\rfloor$, generated in degree $(d+1)i$ for $ 2d+1\le w$, and degree $(d+1)i+d$ for $2d=w$.
\end{thm}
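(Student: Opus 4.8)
The plan is to bootstrap from Proposition \ref{finitely generated infinity module}, which already gives that $\mathcal{W}^{d}_{i}(\bullet)$ is a finitely generated $\mathcal{L}^{w}_{d}(\infty)$-module, generated in degree $\le(d+1)i$ when $2d+1\le w$ and in degree $\le(d+1)i+d$ when $2d=w$. Since $\mathcal{L}^{w}_{d}(b+1)$ is exactly the subalgebra of $\mathcal{L}^{w}_{d}(\infty)$ generated by the first $b+1$ copies of $H_{d-1}\big(\text{conf}(d,w);\Q\big)$, the module $\mathcal{W}^{d}_{i}(\bullet)$ is automatically an $\mathcal{L}^{w}_{d}(b+1)$-module by restriction of scalars; the only real content of the theorem is that it stays finitely generated, in the same degree, over this smaller algebra. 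The quantitative input needed is a bound on the number of rational $d$-barriers appearing in a class of $\mathcal{W}^{d}_{i}(n)$.

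First I would establish that bound. By Lemma \ref{higherorderpresentation}, every class of $\mathcal{W}^{d}_{i}(n)$ is a sum of products of proper wheels on at least $d$ disks and nontrivial proper averaged-filters (each of which uses at least $w+1$ disks); and by the argument of Proposition \ref{decompose into d barrier subspaces}, together with the observation that the extra relations setting wheels on at most $d-1$ disks to zero do not affect the count (such wheels are not rational $d$-barriers), the number of rational $d$-barriers is a well-defined $S_{n}$-invariant statistic giving a direct-sum decomposition of $\mathcal{W}^{d}_{i}(n)$. In a product $Z_{1}|\cdots|Z_{r}$ lying in homological degree $k=\frac{(d-1)n+i}{d}$, so that $i=dk-(d-1)n$, a wheel on $n_{j}$ disks contributes $n_{j}-d$ to $i$ and an averaged-filter on $n_{j}$ disks contributes $n_{j}-2d$; since a rational $d$-barrier is a wheel on at least $w+1-d$ disks or an averaged-filter on at least $w+1$ disks, each barrier contributes at least $w+1-2d$ to $i$, while every non-barrier wheel (on $d,\dots,w-d$ disks) contributes a nonnegative amount. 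Hence the number of rational $d$-barriers is at most $\big\lfloor\frac{i}{w+1-2d}\big\rfloor$, and the elementary inequality $(d-1)(w-2d)\ge0$, valid for $1\le d$ and $w\ge2d$, gives $\frac{i}{w+1-2d}\le\frac{di}{w-d}$, so the number of barriers is at most $b=\big\lfloor\frac{di}{w-d}\big\rfloor$.

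Next I would note that the number of rational $d$-barriers is invariant under the $\mathcal{L}^{w}_{d}(\infty)$-action: multiplication by a generator inserts a proper wheel on exactly $d$ disks, and such a wheel is never a rational $d$-barrier because $d\le\frac{w}{2}<w+1-d$. Consequently, for any generator $Z$ of $\mathcal{W}^{d}_{i}(\bullet)$ over $\mathcal{L}^{w}_{d}(\infty)$, say with $m\le b$ rational $d$-barriers, every element of $\mathcal{L}^{w}_{d}(\infty)\cdot Z$ again has exactly $m$ barriers, and on each such element the operator $X_{j}$ with $j>m$ acts identically to $X_{m+1}$ — both insert a wheel on $d$ disks at the far right — so $\mathcal{L}^{w}_{d}(\infty)\cdot Z=\mathcal{L}^{w}_{d}(m+1)\cdot Z$. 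Since $m+1\le b+1$, Proposition \ref{FIdisFId+1} identifies this with $\mathcal{L}^{w}_{d}(b+1)\cdot Z$. As Proposition \ref{finitely generated infinity module} provides a finite generating set for $\mathcal{W}^{d}_{i}(\bullet)$ over $\mathcal{L}^{w}_{d}(\infty)$ concentrated in degree $\le(d+1)i$ (resp. $\le(d+1)i+d$), those same classes generate $\mathcal{W}^{d}_{i}(\bullet)$ as an $\mathcal{L}^{w}_{d}(b+1)$-module in the same degree, which is precisely the assertion of the theorem.

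I expect the main obstacle to be bookkeeping rather than any genuinely new idea: one must check carefully that the rational $d$-barrier count descends cleanly through the iterated quotient defining $\mathcal{W}^{d}_{i}(\bullet)$ and interacts correctly with the $\mathcal{L}^{w}_{d}(\infty)$-action — so that the replacement of $X_{j}$ ($j>m$) by $X_{m+1}$ is legitimate at the level of the module and not merely on individual basis elements — and that the arithmetic bookkeeping of disk counts versus homological degrees indeed makes the stated $b=\big\lfloor\frac{di}{w-d}\big\rfloor$ a valid (if, for $d\ge2$ and $2d<w$, not optimal) bound.
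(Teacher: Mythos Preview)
Your proposal is correct and follows essentially the same route as the paper: invoke Proposition \ref{finitely generated infinity module} to get finite generation over $\mathcal{L}^{w}_{d}(\infty)$, bound the number of rational $d$-barriers by $b$, and then observe that on any class with $m\le b$ barriers the operators $X_{j}$ for $j>m$ coincide with $X_{m+1}$, so that the $\mathcal{L}^{w}_{d}(\infty)$-module structure factors through $\mathcal{L}^{w}_{d}(b+1)$ without loss.

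The one genuine difference is in how the barrier bound is obtained. The paper bounds barriers by working at the generation degree: there the homological degree is $di$ (respectively $di+d-1$ when $2d=w$), and since every rational $d$-barrier has homological degree at least $w-d$, one reads off $b=\big\lfloor\frac{di}{w-d}\big\rfloor$. Your argument instead tracks the invariant $i=dk-(d-1)n$, observes that each barrier contributes at least $w+1-2d$ to $i$ while every other factor contributes nonnegatively, and gets the sharper bound $\big\lfloor\frac{i}{w+1-2d}\big\rfloor$, which you then relax to the paper's $b$ via $(d-1)(w-2d)\ge 0$. Your version has the advantage of applying uniformly at every $n$ (not just at the generation degree) and of making explicit the step that the $\mathcal{L}^{w}_{d}(\infty)$-action preserves the barrier count, which the paper leaves implicit; the paper's version has the advantage of yielding the stated $b$ directly without an auxiliary inequality. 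Both are valid, and the rest of the argument is the same.
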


\begin{proof}
By Proposition \ref{finitely generated infinity module}, for $1\le d\le \big\lfloor\frac{w+1}{3}\big\rfloor$,
\[
\mathcal{W}^{d}_{i}(\bullet):=H^{\mathcal{L}^{w}_{d-1}(\infty)}_{0}\Bigg(\cdots \bigg(H^{\mathcal{L}^{w}_{1}(\infty)}_{0}\Big(H_{\frac{(d-1)|\bullet|+i}{d}}\big(\text{conf}(\bullet, w);\Q\big)\Big)\bigg)\cdots\Bigg)
\]
is a finitely generated $\mathcal{L}^{w}_{d}(\infty)$-module generated in degree $(d+1)i$ for $w\ge 2d+1$, and degree $(d+1)i+d$ for $w=2d$.

We count the maximum number of barriers in an element in $\mathcal{W}^{d}_{i}(\bullet)$ as this will allow us to replace  $\mathcal{L}^{w}_{d}(\infty)$ with the finitely generated twisted algebra $\mathcal{L}^{w}_{d}(b+1)$ without loss of information.

For $2d+1\le w$, the module $\mathcal{W}^{d}_{i}(\bullet)$ is generated in degree at most $(d+1)i$. 
This occurs in homological degree 
\[
\frac{(d-1)(d+1)i+i}{d}=di.
\]
By Lemma \ref{higherorderpresentation}, every element in $H^{\mathcal{L}^{w}_{d}(\infty)}_{0}\big(\mathcal{W}^{d}_{i}(\bullet)\big)$ can be written as a product of wheels on at least $d+1$ disks, and averaged-filters. 
The smallest of these generators that are also rational $d$-barriers are the wheels on $w+1-d$ disks, which have homological degree $w-d$. 
We can have at most $\big\lfloor\frac{di}{w-d}\big\rfloor$ rational $d$-barriers in an element of $\mathcal{W}^{d}_{i}(\bullet)$.
Since multiplication is well defined by Proposition \ref{decompose into d barrier subspaces}, we only need the first $b+1:=\big\lfloor\frac{di}{w-d}\big\rfloor+1$ of the copies of $H_{d-1}\big(\text{conf}(d, w);\Q\big)$ in $\mathcal{L}^{w}_{d}(\infty)$, as multiplication by the other terms is equal to multiplication by the $(b+1)^{\text{th}}$ copy of $H_{d-1}\big(\text{conf}(d, w);\Q\big)$. 
Therefore, we can restrict $\mathcal{L}^{w}_{d}(\infty)$ to $\mathcal{L}^{w}_{d}(b+1)$, and not lose information on the structure of $\mathcal{W}^{d}_{i}(\bullet)$ as an $\mathcal{L}^{w}_{d}(\infty)$-module. It follows from Proposition \ref{finitely generated infinity module}, that for $w\ge 2d+1$, $\mathcal{W}^{d}_{i}(\bullet)$ is a finitely generated $\mathcal{L}^{w}_{d}(b+1)$-module generated in degree $(d+1)i$.

For $2d=w$, the module $\mathcal{W}^{d}_{i}(\bullet)$ is generated in degree at most $(d+1)i+d$. This occurs in homological degree 
\[
\frac{(d-1)\big((d+1)i+d\big)+i}{d}=\frac{d^{2}i-i+d^{2}-d+i}{d}=di+d-1.
\]
By Lemma \ref{higherorderpresentation}, every element in $H^{\mathcal{L}^{w}_{d}(\infty)}_{0}\big(\mathcal{W}^{d}_{i}(\bullet)\big)$ can be written as a product of wheels on at least $d+1$ disks, and averaged-filters. 
The smallest of these generators that are also rational $d$-barriers in terms of the homological degree are the wheels on $w+1-d$ disks, which have homological degree $w-d$.
Therefore, since $2d=w$, we can have at most 
\[
\bigg\lfloor\frac{di+d-1}{w-d}\bigg\rfloor=\bigg\lfloor\frac{di+d-1}{d}\bigg\rfloor=\bigg \lfloor \frac{di}{d}+\frac{d-i}{d}\bigg\rfloor=\bigg \lfloor \frac{di}{w-d}+\frac{d-i}{d}\bigg\rfloor=\bigg\lfloor\frac{di}{w-d}\bigg\rfloor
\]
of these rational $d$-barriers. 
Since multiplication is well defined by Proposition \ref{decompose into d barrier subspaces}, we only need the first $b+1:=\big\lfloor\frac{di}{w-d}\big\rfloor+1$ of the copies of $H_{d-1}\big(\text{conf}(d, w);\Q\big)$ in $\mathcal{L}^{w}_{d}(\infty)$, as multiplication by the other generators is equal to multiplication by the generator of the $(b+1)^{\text{th}}$ copy of $H_{d-1}\big(\text{conf}(d, w);\Q\big)$. 
Therefore, we can restrict $\mathcal{L}^{w}_{d}(\infty)$ to $\mathcal{L}^{w}_{d}(b+1)$ and not lose information on the structure of $\mathcal{W}^{d}_{i}(\bullet)$ as an $\mathcal{L}^{w}_{d}(\infty)$-module. It follows from Proposition \ref{finitely generated infinity module}, that for $w\ge 2d+1$, the $\mathcal{L}^{w}_{d}(b+1)$-module $\mathcal{W}^{d}_{i}(\bullet)$ is finitely generated in degree $(d+1)i$.
\end{proof}

Our proof of Theorem \ref{higherorderstability} shows that for $i$ and $|\bullet|$ finite, there exist integer valued functions $b_{m}(i, |\bullet|)$ such that one can replace the $H^{\mathcal{L}^{w}_{m}(\infty)}_{0}$ in the definition of $\mathcal{W}^{d}_{i}(\bullet)$ with $H^{\mathcal{L}^{w}_{m}\big(b_{m}(i, |\bullet|)+1\big)}_{0}$, as all but the first $b_{m}(i, |\bullet|)$ of the copies of $H_{(m-1)}\big(\text{conf}(m, w);\Q\big)$ in $\mathcal{L}^{w}_{m}(\infty)$ will prove to be redundant. 
Thus, we have shown that the ordered configuration space of unit diameter disks in the infinite strip of width $w$ exhibits notions of $1^{\text{st}}$ through $\big\lfloor\frac{w+1}{3}\big\rfloor^{\text{th}}$-order representation stability.

In the spirit of \cite{church2014fi, church2015fi, ramos2017generalized, miller2019higher}, Theorem \ref{higherorderstability} suggests that if we have a decomposition of $H_{d-1}\big(\text{conf}(d, w);\Q\big)$ into irreducible representations and we know how to decompose $\mathcal{W}^{d}_{i}(n)$ into irreducible $S_{m}$-representations for all $m\le (d+1)i$ (or $m\le(d+1)i+d$ for $w=2$), then we have upper bounds on the multiplicities of the irreducible $S_{n}$-representations in a decomposition of $\mathcal{W}^{d}_{i}(n)$ for $n>(d+1)i$ (or $n>(d+1)i+d$ for $w=2$).

\bibliographystyle{amsalpha}
\bibliography{HigherOrderStabilityDisksOnAStrip}

\providecommand{\bysame}{\leavevmode\hbox to3em{\hrulefill}\thinspace}
\providecommand{\MR}{\relax\ifhmode\unskip\space\fi MR }
\providecommand{\MRhref}[2]{%
  \href{http://www.ams.org/mathscinet-getitem?mr=#1}{#2}
}
\providecommand{\href}[2]{#2}
\begin{thebibliography}{CGKM12}

\bibitem[AKM21]{alpert2021configuration}
Hannah Alpert, Matthew Kahle, and Robert MacPherson, \emph{Configuration spaces
  of disks in an infinite strip}, Journal of Applied and Computational Topology
  (2021), 1--34.

\bibitem[Alp20]{alpert2020generalized}
Hannah Alpert, \emph{Generalized representation stability for disks in a strip
  and no-k-equal spaces}, arXiv preprint arXiv:2006.01240 (2020).

\bibitem[AM21]{alpert2021configuration1}
Hannah Alpert and Fedor Manin, \emph{Configuration spaces of disks in a strip,
  twisted algebras, persistence, and other stories}, arXiv preprint
  arXiv:2107.04574 (2021).

\bibitem[Arn69]{arnold1969cohomology}
Vladimir~I Arnol'd, \emph{The cohomology ring of the colored braid group},
  Vladimir I. Arnold-Collected Works, Springer, 1969, pp.~183--186.

\bibitem[BBK14]{BBK}
Yuliy Baryshnikov, Peter Bubenik, and Matthew Kahle, \emph{Min-type {M}orse
  theory for configuration spaces of hard spheres}, International Mathematics
  Research Notices \textbf{2014} (2014), no.~9, 2577--2592.

\bibitem[BZ14]{blagojevic2014convex}
Pavle~VM Blagojevi{\'c} and G{\"u}nter~M Ziegler, \emph{Convex equipartitions
  via equivariant obstruction theory}, Israel Journal of Mathematics
  \textbf{200} (2014), no.~1, 49--77.

\bibitem[CEF15]{church2015fi}
Thomas Church, Jordan~S Ellenberg, and Benson Farb, \emph{{FI}-modules and
  stability for representations of symmetric groups}, Duke Mathematical Journal
  \textbf{164} (2015), no.~9, 1833--1910.

\bibitem[CEFN14]{church2014fi}
Thomas Church, Jordan~S Ellenberg, Benson Farb, and Rohit Nagpal,
  \emph{{FI}-modules over noetherian rings}, Geometry \& Topology \textbf{18}
  (2014), no.~5, 2951--2984.

\bibitem[CGKM12]{carlsson2012computational}
Gunnar Carlsson, Jackson Gorham, Matthew Kahle, and Jeremy Mason,
  \emph{Computational topology for configuration spaces of hard disks},
  Physical Review E \textbf{85} (2012), no.~1, 011303.

\bibitem[CLM76]{cohen2007homology}
Frederick~Ronald Cohen, Thomas~Joseph Lada, and Peter~J May, \emph{The homology
  of iterated loop spaces}, vol. 533, Springer, 1976.

\bibitem[Dia09]{diaconis2009markov}
Persi Diaconis, \emph{The markov chain monte carlo revolution}, Bulletin of the
  American Mathematical Society \textbf{46} (2009), no.~2, 179--205.

\bibitem[Far08]{farber2008invitation}
Michael Farber, \emph{Invitation to topological robotics}, vol.~8, European
  Mathematical Society, 2008.

\bibitem[Ho20]{ho2020higher}
Quoc~P Ho, \emph{Higher representation stability for ordered configuration
  spaces and twisted commutative factorization algebras}, arXiv preprint
  arXiv:2004.00252 (2020).

\bibitem[Knu18]{knudsen2018configuration}
Ben Knudsen, \emph{Configuration spaces in algebraic topology}, arXiv preprint
  arXiv:1803.11165 (2018).

\bibitem[McD75]{mcduff1975configuration}
Dusa McDuff, \emph{Configuration spaces of positive and negative particles},
  Topology \textbf{14} (1975), no.~1, 91--107.

\bibitem[MW19]{miller2019higher}
Jeremy Miller and Jennifer Wilson, \emph{Higher-order representation stability
  and ordered configuration spaces of manifolds}, Geometry \& Topology
  \textbf{23} (2019), no.~5, 2519--2591.

\bibitem[NSS19]{nagpal2019noetherianity}
Rohit Nagpal, Steven~V. Sam, and Andrew Snowden, \emph{Noetherianity of some
  degree two twisted skew-commutative algebras}, Selecta Mathematica
  \textbf{25} (2019), no.~1, 1--26.

\bibitem[Ram17]{ramos2017generalized}
Eric Ramos, \emph{Generalized representation stability and {FI}$_{d}$-modules},
  Proceedings of the American Mathematical Society \textbf{145} (2017), no.~11,
  4647--4660.

\bibitem[Seg79]{segal1979topology}
Graeme Segal, \emph{The topology of spaces of rational functions}, Acta
  Mathematica \textbf{143} (1979), 39--72.

\bibitem[Sin06]{sinha2006homology}
Dev Sinha, \emph{The homology of the little disks operad}, arXiv preprint
  arXiv:0610236 (2006).

\bibitem[SS12]{sam2012introduction}
Steven~V. Sam and Andrew Snowden, \emph{Introduction to twisted commutative
  algebras}, arXiv preprint arXiv:1209.5122 (2012).

\bibitem[SS17]{sam2017grobner}
Steven Sam and Andrew Snowden, \emph{Gr{\"o}bner methods for representations of
  combinatorial categorie}, Journal of the American Mathematical Society
  \textbf{30} (2017), no.~1, 159--203.

\bibitem[Waw22a]{wawrykow2022On}
Nicholas Wawrykow, \emph{On the symmetric group action on rigid disks in a
  strip}, Journal of Applied and Computational Topology \textbf{6} (2022).

\bibitem[Waw22b]{wawrykow2022secondary}
\bysame, \emph{Secondary representation stability and the once-punctured
  torus}, Topology and its Applications \textbf{319} (2022).

\end{thebibliography}
\end{document}